\newtheorem{theorem}{Theorem}[section]
\newtheorem{lemma}[theorem]{Lemma}
\newtheorem{proposition}[theorem]{Proposition}
\newtheorem{corollary}[theorem]{Corollary}
{ \theoremstyle{definition}
\newtheorem{definition}[theorem]{Definition}}
{ \theoremstyle{remark}
\newtheorem{remark}[theorem]{Remark}}
\newcommand*{\rom}[1]{\expandafter\@slowromancap\romannumeral #1@}
\numberwithin{equation}{section}
\numberwithin{thm}{subsection}
\theoremstyle{definition} }
\theoremstyle{remark} }
\theoremstyle{remark}
\def\M{ {\kappa}}
\def\){ {\mathcal O}}
\def\lM{ {\tt M}}
\def\S{ {\mathfrak X_N^{\theta}}}
\newcommand{\diam}{\mathbin{\raisebox{0.25mm}{\scalebox{1.72}[1]{{\rotatebox[origin=c]{90}{$\diamond$}}} }}}
\newcommand{\ldiam}{\mathbin{\raisebox{1mm}{ \rotatebox[origin=c]{60}{$\diam$}}}}
\newcommand{\rdiam}{\mathbin{\raisebox{-0.5mm}{ \rotatebox[origin=c]{-60}{$\diam$}}}}
\def\i{\mathrm{i}}
\def\t {{\bf t}}
\def\vm {{\bf v}}
\def\P {{\mathbb P}}
\definecolor{dullmagenta}{rgb}{0.4,0,0.4}   
\definecolor{darkblue}{rgb}{0,0,0.4}
\begin{document}

\pagestyle{plain}

\title{Asymptotics of discrete $\beta$-corners processes via two-level discrete loop equations}
\author{Evgeni Dimitrov}

\address[Evgeni Dimitrov]{Department of Mathematics, Columbia University,
 New York, NY, USA. E-mail: esd2138@columbia.edu}

\author{Alisa Knizel}

\address[Alisa Knizel]{Department of Statistics, University of Chicago,
Chicago, IL, USA. E-mail: alknizel@gmail.com}

\begin{abstract} 
 We introduce and study a class of discrete particle ensembles that
 naturally arise in connection with classical random matrix ensembles, log-gases
 and Jack polynomials. Under technical assumptions on a general
 analytic potential we prove that the global
fluctuations of these ensembles are asymptotically Gaussian  with a universal covariance
that remarkably differs from its counterpart in random matrix theory. Our main tools are certain novel
 algebraic identities that we have discovered. They play a role of discrete multi-level analogues of loop equations.
\end{abstract}
\maketitle

\tableofcontents

%
\section{Introduction}\label{Section1}

%

\subsection{Preface}\label{Section1.1}

The goal of this paper is to introduce a new class of discrete particle ensembles and develop tools to study the fluctuations of its linear statistics. Discreteness adds an additional degree of complexity to the problem and to solve it new methods are required. The continuous counterparts of these ensembles are closely related to $\beta$ log-gases and random matrix theory and we will discuss applications of our results in this setup. This connection is one of the motivations for our work. 

A continuous $\beta$ log-gas is a probability distribution on $N$-tuples of ordered real numbers $y_1 < y_2 < \cdots < y_N$ with density proportional to
\begin{equation} \label{eq:distr1}
  \prod_{1\leq i<j \leq N} \left(y_j-y_i \right)^{\beta} \cdot \prod_{i=1}^{N}\exp( - N V(y_i)), 
\end{equation}
where $V(y)$ is a continuous function called the {\em potential}. The study of continuous log-gases for general potentials is a rich subject that is of special interest to random matrix theory, see e.g. \cite{Meh,Forr,AGZ,PS}.  For example, when $V(y) = \frac{\beta y^2}{4}$ and $\beta = 1,2,4$, the distribution (\ref{eq:distr1}) is the joint density of the eigenvalues of random matrices from the Gaussian Orthogonal/Unitary/Symplectic ensembles (GOE, GUE and GSE) \cite{Forr, AGZ}.

The above random matrix ensembles at $\beta = 1, 2, 4$ come with an additional structure,
which is a natural coupling between the distributions \eqref{eq:distr1} with varying number of particles $N$.
In the case of the Gaussian Unitary Ensemble, take $M = [M_{ij}]^N_{i,j=1}$ to be a random Hermitian
matrix with probability density proportional to $\exp\left(-\textup{Trace}(M^2/2)\right).$
Let $y^k_1 \leq y^k_2\leq \dots \leq y^k_k$ for
$k = 1,\dots, N$, denote the set of (real) eigenvalues of the top-left $k \times k$ corner $[M_{ij} ]^k_{i,j=1}.$ The
eigenvalues satisfy the interlacing conditions 
$y^j_i\leq y^{j-1}_i \leq y^j_{i+1}$ for all meaningful values of
$i$ and $j$, with the inequalities being strict almost surely.

In this way (\ref{eq:distr1}) is canonically extended to the measure on the Gelfand-Tsetlin cone
$$GT_N := \{y\in\mathbb R^{N(N+1)/2}:y_{i}^{j}<y_i^{j-1}<y_{i+1}^{j}, \mbox{ $i = 1, \dots, j$, $j = 1,\dots, N$}\}$$ formed by the eigenvalues of corner submatrices. This ensemble is known as the {\it GUE-corners process} (the term {\it GUE-minors} process is also used) \cite{N, GelN, Bar}.

Similar constructions are available for GOE and GSE ($\beta=1$, $\beta=4$). One can notice that in the resulting formulas for the distribution of the corners process $\{y^j_i\}$,
$i = 1, \dots, j$, $j = 1,\dots, N$, the parameter $\beta$ enters in a simple way (see e.g. \cite[Proposition
1.1]{Ne}). This readily leads to the generalization of the definition of the corners process to the
case of general $\beta > 0$ and general potential $V$ and it is given by
the formula
\begin{equation}\label{eq:gen_cont_beta}
\frac{1}{Z}\prod\limits_{1 \leq i<j \leq N}(y_j^N-y_i^N) \prod_{k = 1}^{N-1}\left(\prod\limits_{1\leq i<j\leq k}
(y_j^k-y_i^k)^{2-\beta} \prod\limits_{a=1}^k\prod_{b=1}^{k+1}|y_a^k-y_b^{k+1}|^{\beta/2-1}\right) \cdot \prod_{i = 1}^N e^{-NV(y^N_i)}, 
\end{equation}
where $Z$ is a normalization constant, see \cite{Ne} and \cite{OO}. The fact that the projection of (\ref{eq:gen_cont_beta}) to the top level $y^N$ is given by (\ref{eq:distr1}) can be deduced from the Dixon-Anderson integration formula (see \cite{Dixon, Anderson}), which has been studied in the context of {\em Selberg integrals} (see \cite{S}, \cite{Meh} and \cite[Chapter 4]{Forr}). For $V(y) = y^2$ the above ensemble also carries the name {\em Hermite $\beta$-corners process}. If $e^{-NV(y)} =y^p (1 -y)^q$ where $p,q > - 1$ then (\ref{eq:gen_cont_beta}) is called the {\em $\beta$-Jacobi corners process} and describes the joint distribution of the eigenvalues of a (different type) random matrix corners ensemble \cite{Sun16}.\\

The aim of the present paper is to initiate a detailed study of the fluctuations of general $\beta$-corner processes and their discrete analogues that we introduce next. In \cite{BGG} the authors proposed the following integrable discretizations of (\ref{eq:distr1}), called {\em discrete $\beta$-ensembles} or {\em discrete log-gases}. These are probability distributions that depend on a parameter $\theta = \beta/2 > 0$ and a positive function $w(x;N)$, and have the form
\begin{equation} \label{eq:distr2}
\begin{split}
& \mathbb{P}(\ell_1, \dots, \ell_N)  \propto \prod_{1\leq i<j \leq N} \frac{\Gamma(\ell_i - \ell_j + 1)\Gamma(\ell_i - \ell_j+ \theta)}{\Gamma(\ell_i - \ell_j)\Gamma(\ell_i - \ell_j-\theta + 1)}  \prod_{i=1}^{N}w(\ell_i; N),
\end{split}
\end{equation}
where $\ell_i = \lambda_i + (N - i) \cdot \theta$ and $\lambda_N \leq \lambda_{N-1} \leq \dots \leq \lambda_1$ are integers \footnote{Note that we have reversed the order of the indices so that $\ell_1$ is now largest and $\ell_N$ is the smallest -- this convention is more consistent with the symmetric function origin of (\ref{eq:distr2}).}. When $\theta=1$ the interaction term becomes $ \prod_{1\leq i<j \leq N} (\ell_j-\ell_i)^2.$

It is important to stress that although the discretization of the form

\begin{equation} \label{eq:distr3}
\begin{split}
& \mathbb{P}(\ell_1, \dots, \ell_N)  \propto \prod_{1\leq i<j \leq N} (\ell_j-\ell_i)^\beta \prod_{i=1}^{N}w(\ell_i; N),
\end{split}
\end{equation}
looks more natural, it seemingly lacks the integrability that is present in the distribution (\ref{eq:distr2}). In particular, we are not aware of any techniques currently available to study fluctuations for this discretization for general $\beta.$ The same holds for the multi-level generalization that we consider. On the other hand, there is also a lot of interest in measures of the form (\ref{eq:distr2}) coming from integrable probability; specifically, due to their connection to uniform random tilings, $(z,w)$-measures, Jack measures --- see \cite[Section 1]{BGG} for more details. 

Central objects of interest to us in the present paper are certain extensions of (\ref{eq:distr2}) to multi-level settings that are natural discrete analogues of (\ref{eq:gen_cont_beta}) the same way that (\ref{eq:distr2}) is a discrete analogue of (\ref{eq:distr1}). The models we study depend on a parameter $\theta = \beta/2 >0,$ and $N, M \in \mathbb N$ as well as a positive function $w(x;N)$. We explain the construction below.

 A Gelfand-Tsetlin scheme (pattern) is a triangular array of integers integers $\lambda_i^k$ such that: 
\begin{align}
	\begin{array}{c}
	\begin{tikzpicture}[scale=1]
		\def\h{0.8}
		\def\x{2}
		\node at (-5,5) {$\lambda^N_N$};
		\node at (-3,5) {$\lambda^N_{N-1}$};
		\node at (0-\x/2,5) {$\ldots\ldots\ldots\ldots$};
		\node at (3-\x,5) {$\lambda^N_{2}$};
		\node at (5-\x,5) {$\lambda^N_{1}$};
		\node at (-4,5-\h) {$\lambda^{N-1}_{N-1}$};
		\node at (-2,5-\h) {$\lambda^{N-1}_{N-2}$};
		\node at (0-\x/2+.1,5-\h) {$\ldots$};
		\node at (2-\x,5-\h) {$\lambda^{N-1}_{2}$};
		\node at (4-\x,5-\h) {$\lambda^{N-1}_{1}$};
		\node at (-3,5-2*\h) {$\lambda^{N-2}_{N-2}$};
		\node at (3-\x,5-2*\h) {$\lambda^{N-2}_{1}$};
		\foreach \LePoint in {(4.5-\x,5-\h/2),(2.5-\x,5-\h/2),(-3.5,5-\h/2),
		(-2.5,5-3*\h/2),(3.5-\x,5-3*\h/2),(2.5-\x,5-5*\h/2),(1.7-\x,5-7*\h/2)} {
    		\node [rotate=45] at \LePoint {$\le$};
    	};
    	\foreach \GePoint in {(3.5-\x,5-\h/2),(-4.5,5-\h/2),(-2.5,5-\h/2),
    	(-3.5,5-3*\h/2),(-2.5,5-5*\h/2),(.5,5-3*\h/2),(-1.7,5-7*\h/2)} {
    		\node [rotate=135] at \GePoint {$\ge$};
    	};
    	\node at (0-\x/2,5-3*\h) {$\ldots\ldots\ldots\ldots$};
    	\node at (0-\x/2+.08,5-4*\h) {$\lambda^{1}_1$};
	\end{tikzpicture}
	\end{array}
	\label{GT_scheme}
\end{align}
Any two  adjacent levels of a Gelfand-Tsetlin pattern satisfy the following interlacing property $\lambda^{i+1}_{i+1} \leq  \lambda^i_i \leq \lambda_{i}^{i+1} \leq \cdots \leq \lambda_1^i \leq \lambda_{1}^{i+1}$, which we denote by $\lambda^i \preceq\lambda^{i+1}$. The state space of our measure consists of Gelfand-Tsetlin patterns such that $0 \leq \lambda_N^N\leq \lambda^N_1\leq M$ (here $M \in \mathbb{N}$ is an additional parameter of the model). The measure we put on this space is more easily expressible in terms of the shifted coordinates $ \ell^i_j = \lambda^i_j + (N - j)\cdot\theta$ and we will frequently use these coordinates, going back and forth without mention. We write $\ell^i \preceq \ell^{i+1}$ to mean $\lambda^i \preceq \lambda^{i+1}$. 

With the above data we define the following measure
\begin{equation}\label{eq:measure_k}
\mathbb{P}^{\theta}_N(\ell^1, \dots, \ell^N) \propto \prod_{1 \leq i < j \leq N} \frac{\Gamma(\ell^N_i - \ell^N_j + 1)}{\Gamma(\ell^N_i - \ell^N_j- \theta + 1)}  \cdot \prod_{k = 1}^{N-1}  I(\ell^{k+1}, \ell^k) \cdot \prod\limits_{i=1}^{N}w(\ell_i^{N}; N), \mbox{ with }
\end{equation}
\begin{align}\label{tm}
\begin{split}
I(\ell^s, \ell^{s-1}) = &\prod_{1 \leq i < j \leq s}\frac{\Gamma(\ell^s_i - \ell^s_j -\theta + 1 )}{\Gamma(\ell^s_i - \ell^s_j) } \cdot \prod_{1 \leq i < j \leq s-1} \frac{\Gamma(\ell^{s-1}_i - \ell^{s-1}_j + 1)}{\Gamma(\ell^{s-1}_i - \ell^{s-1}_j + \theta)} \\
&\times \prod_{1 \leq i < j \leq s} \frac{\Gamma(\ell^{s-1}_i - \ell^s_j)}{ \Gamma(\ell^{s-1}_i - \ell^s_j  - \theta + 1)}  \cdot \prod_{1 \leq i \leq j \leq s-1} \frac{\Gamma(\ell^{s}_i - \ell^{s-1}_j + \theta)}{\Gamma(\ell^s_i - \ell^{s-1}_j + 1)}.
\end{split}
\end{align}
The fact that the projection of (\ref{eq:measure_k}) to the top level $\ell^N$ is given by (\ref{eq:distr2}) is a consequence of the branching relations for Jack symmetric functions \cite{Mac} and can be deduced from \cite[Section 2]{GS}, see Proposition \ref{PropExtension} in the main text below.

Note that when $\theta = \beta/2 = 1$ the terms $I(\ell^s, \ell^{s-1}) $ all become $1$ and the conditional distribution of $\ell^1, \ell^2, \dots, \ell^{N-1}$, given $\ell^N$ becomes uniform on the discrete set specified by the interlacing conditions $\ell^N \succeq \ell^{N-1} \succeq \cdots \succeq \ell^1$. We view such an extension as a canonical {\em Gibbsian} extension of the measures in (\ref{eq:distr2}) to multiple levels. One reason to view (\ref{tm}) as a canonical or integrable extension of (\ref{eq:distr2}) for general $\theta > 0$ is due to connections to different integrable models of $2d$ statistical mechanics, such as random tilings, ensembles of non-intersecting paths and Jack ascending processes  \cite{Go, J, GS}. We will refer to the measures defined by (\ref{eq:measure_k}) as {\em discrete $\beta$-corners processes}. 

Note that if we set $\ell^i_j=T \cdot y^i_{i+1-j}$ then by \cite{TE} we have
\begin{align*}
&\prod_{1 \leq i < j \leq N} \frac{\Gamma(\ell^N_i - \ell^N_j + 1)}{\Gamma(\ell^N_i - \ell^N_j - \theta + 1)}  \cdot \prod_{k = 1}^{N-1}  I(\ell^{k+1}, \ell^k)  \sim T^{N(N-1) (\theta - 1/2)}  \\ 
& \times \prod_{1 \leq i<j \leq N}(y_j^N-y_i^N) \prod_{k = 1}^{N-1}\left(\prod\limits_{1\leq i<j\leq k} (y_j^k-y_i^k)^{2-2 \theta} \prod\limits_{a=1}^k\prod_{b=1}^{k+1}|y_a^k-y_b^{k+1}|^{\theta-1}\right), \text{ as }T\rightarrow\infty
\end{align*}
 which mimics \eqref{eq:gen_cont_beta} for $\theta=\frac{\beta}{2}$  and is another reason one might consider (\ref{eq:measure_k}) as a reasonable discretization. \\

In the present paper we investigate the projections of (\ref{eq:measure_k}) to the top two levels $(\ell^N, \ell^{N-1})$ and certain generalizations of the latter. More specifically, we consider measures on pairs $(\ell, m),$ where
$$\ell=(\ell_1, \dots, \ell_N), \quad m=(m_1, \dots, m_{N-1}),\text { with }$$
$$ \ell_j = \lambda_j + (N - j)\cdot\theta \text{ for } i=1\dots, N, \quad m_i = \mu_i + (N- i)\cdot\theta \text{ for } i=1\dots, N-1\text{ and }$$
$$M\geq\lambda_1\geq \mu_1\geq \lambda_2  \geq \dots \geq  \mu_{N-1}\geq \lambda_N\geq 0,\quad \lambda_i, {\mu_i}\in \mathbb Z $$ 
of the form

\begin{equation}\label{S1PDef}
\mathbb{P}^\theta_N(\ell, m) = Z_N^{-1} \cdot H^t(\ell) \cdot H^b(m) \cdot I(\ell, m), \mbox{ where }
\end{equation}

\begin{align}\label{S1PDef2}
\begin{split}
&H^t(\ell) = \prod_{1 \leq i < j \leq N} \frac{\Gamma(\ell_i - \ell_j + 1)}{\Gamma(\ell_i - \ell_j  - \theta + 1)} \cdot \prod_{i = 1}^N  w(\ell_i;N); \\
&   H^b(m) =  \prod_{1 \leq i < j \leq N-1} \frac{\Gamma(m_i - m_j + \theta)}{\Gamma(m_i - m_j)} \cdot \prod_{i = 1}^{N-1} \tau(m_i;N);\\
&I(\ell, m) = \prod_{1 \leq i < j \leq N} \frac{\Gamma(\ell_i - \ell_j  - \theta + 1)}{\Gamma(\ell_i - \ell_j) }\cdot \prod_{1 \leq i < j \leq N-1} \frac{\Gamma(m_i - m_j + 1)}{\Gamma(m_i - m_j + \theta)} \\
& \times \prod_{1 \leq i < j \leq N} \frac{\Gamma(m_i - \ell_j)}{ \Gamma(m_i - \ell_j - \theta + 1)}  \cdot \prod_{1 \leq i \leq j \leq N-1} \frac{\Gamma(\ell_i - m_j + \theta)}{\Gamma(\ell_i - m_j + 1)}.
\end{split}
\end{align}

In (\ref{S1PDef2}) $w(x;N)$ is a positive function on $[0, M + (N-1) \cdot \theta]$ and $\tau(x; N)$ is a positive function on $[\theta, M + (N-1) \cdot \theta]$. In all the asymptotics results that we prove we set $\tau=1$ (then the distribution of $(\ell, m)$ in (\ref{S1PDef}) is exactly the same as that of $(\ell^N, \ell^{N-1})$ in (\ref{eq:measure_k}), see Section \ref{Section7} for a proof of this fact). However, from the algebraic perspective (see Section \ref{Section1.3}) we can add different potentials attached to the levels of the Gelfand-Tsetlin scheme. It would be interesting to investigate the influence of the interplay between these potentials on the asymptotics of the measures in (\ref{S1PDef}), but we will not do this in the present paper. 

One can also think about $\ell$ and $m$ as the locations of a collection of $2$ types of particles on a line and then \eqref{S1PDef} can be viewed as a certain discrete version of a
two-component plasma on a line with charges required to alternate in space. This model has attracted a significant interest in the physics literature due to its equivalence to
impurity Kondo problem \cite{Sa, FA, AY1, AY2}. \\

In this paper, we focus on the study of the asymptotics of the global fluctuations of the measures in \eqref{S1PDef}. Our main tools are certain novel two-level analogues of the discrete loop equations from \cite{BGG} that we have discovered. Loop equations (also known as Schwinger-Dyson equations) have proved to be a very efficient tool in the study of global fluctuations of discrete and continuous log-gases,  see \cite{JL, BoGu, S, BoGu2, KS} and the references therein. In the discrete setup they are known as Nekrasov's equations (see \cite{BGG, DK1}). These are functional equations for certain observables of the log-gases \eqref{eq:distr1} that are related to the Stieltjes transforms of the empirical measure and their cumulants.  Since their introduction loop equations have become a powerful tool for studying not only global fluctuations but also establishing {\em local} and {\em edge universality} for continuous and discrete $\beta$ ensembles \cite{BEY, BFG, GH}. 

A priori there was no evidence that multi-level loop equations even exist, and one of the main contributions of this paper is the construction of these objects. Currently, we are only able to do this for two levels but our hope is that we can construct such equations for arbitrary number of levels. In the two-level setting our loop equations can be thought of as functional equations that relate the Stieltjes transforms of the empirical measures on the two levels and their cumulants, and can be used to extract meaningful probabilistic information for various systems. To demonstrate their potential we use our discrete loop equations to study the global fluctuations of the measures in (\ref{S1PDef}) for a large class of weights, establishing Gaussian fluctuations. In a different direction, through a diffuse limit of the equations we obtain two-level analogues of the loop equations in \cite{BoGu}, which to our knowledge are novel and are of separate interest. An important feature of our approach is that both in the discrete and continuous level the multi-level loop equations we derive hold for arbitrary $\beta$ and analytic potential functions.

It is worth mentioning that in the continuous setting the problem of deriving asymptotic two-level global fluctuations was investigated in \cite{ES}, but only in the context when the measures in (\ref{eq:gen_cont_beta}) come from a random matrix theory model, like the GUE. In the discrete setting or in the continuous setting when the measure in (\ref{eq:gen_cont_beta}) is given for an arbitrary potential function $V$ there is no random matrix model giving rise to the corners process. This prohibits one from using any tools from random matrix theory to study the general $\beta$ corners processes, and one of the purposes of the multi-level loop equations is to become a toolbox that meets this purpose. Our two-level loop equations are notably more complex than the single-level loop equations that were previously known, and this reflects the increased complexity of going from single to two-level log-gases. Despite the increased complexity one can still use the equations to obtain the two-level global fluctuations of discrete $\beta$ corners processes, which to our knowledge are not accessible by any other means.

%
\subsection{Main results} \label{Section1.2}
We next turn to explaining our asymptotic results, for which we assume that $\tau(x; N)\equiv1$. We start by listing the limiting regime and corresponding regularity assumptions.\\

  We assume that we are given parameters $\theta > 0$, $\lM > 0$. In addition, we assume that we have a sequence of parameters $M_N \in \mathbb{N}$ such that $M_N/N\rightarrow \lM$ as $N \rightarrow \infty.$ We assume further that the weight function $w(x;N)$ in the interval $[0, M_N + (N-1) \cdot \theta]$ has the form
$$w(x;N) = \exp\left( - N V(x/N)\right),$$
for a function $V$ that is bounded and continuous on $ [0, \lM + \theta]$. We also require that $V(s)$ is differentiable on $(0, \lM + \theta)$ and for some $C > 0$
\begin{equation}\label{S1DerPot}
\left| V'(s) \right| \leq C \mbox{ for } s \in [0, \lM + \theta].
\end{equation}
We denote by  $\mathbb{P}^{\theta}_N$ measures defined by (\ref{S1PDef}) for $ M= M_N$ and  $\tau(x;N)=1.$ The above assumptions can be refined further as explained in Section \ref{Section3}, see Assumptions 1 and 2.

Let us briefly explain the significance of the above assumptions. Consider the random probability measure $\mu_N$ on $\mathbb{R}$ given by
\begin{equation}\label{S1EmpMeas}
\mu_N = \frac{1}{N} \sum_{i = 1}^N \delta \left( \frac{\ell_i}{N} \right), \mbox{ where $\ell = (\ell_1, \dots, \ell_N)$ is the marginal of the $\mathbb{P}^{\theta}_N$-distributed $(\ell,m)$}.
\end{equation}

Define for a compactly supported probability measure $\rho$ the functional
\begin{equation}\label{S1energy}
I_V[\rho] = \theta \iint_{x\neq y} \log|x - y| \rho(x) \rho(y)dxdy - \int_{\mathbb{R}} V(x) \rho(x)dx.
\end{equation}
Then the above assumptions ensure that the empirical measures $\mu_N$ concentrate with large probability around the maximizer of the energy functional $I_V[\rho]$ on the space of all probability measures on $[0, \lM + \theta]$ with density bounded by $\theta^{-1}$ (the density constraint is a consequence of the discrete nature of our problem, which prohibits two particles $\ell_i, \ell_j$ from coming closer than distance $\theta$). One can show, see Proposition \ref{LLN} in the main text, that $I_V$ has a unique maximizer $\mu,$ which is called  the {\em equilibrium measure}, and under the above assumptions the measures $\mu_N$ converge weakly in probability to $\mu.$ Proposition \ref{LLN}, which is proved in \cite{BGG}, requires some regularity assumption on the potential function $V$ -- this is the significance of (\ref{S1DerPot}) above.

In order to study the fluctuations we need to assume that we have an open set $\mathcal{M}  \subseteq \mathbb{C}$, such that $[0, \lM + \theta] \subset \mathcal{M}$ and $V$ is analytic in $\mathcal{M}$ and real-valued on $\mathcal{M} \cap \mathbb{R}$. A refined version of this assumption can be found as Assumption 3 in Section \ref{Section3}. In addition to the above assumptions we require two technical assumptions, which can be found as Assumptions 4 and 5 in Section \ref{Section3}. We forgo stating these assumptions here but mention that these assumptions imply that the equilibrium measure $\mu$ has a continuous density, which we write as $\mu(x)$, and the set of points where $\mu(x) \in (0,\theta^{-1})$ form a single open interval $(\alpha, \beta)$ --- this interval is sometimes referred to as a {\em band}, see \cite{BGG}. In the case when $\mu$ is given by the semi-circle law the closure of the band is precisely the support of the measure.

We may now state our main asymptotic result.
\begin{theorem} \label{CLTfun_main}
Suppose that the above assumptions and the technical Assumptions 4-5 (see Section \ref{Section3}) all
hold. For $n \geq 1$ let $f_1, \dots, f_n$ be analytic functions in a complex neighborhood $\mathcal{M}_1$ of $[0, \lM + \theta]$ and real-valued on $\mathcal{M}_1 \cap \mathbb{R}$ and define 
$$\mathcal L^t_{f_k}= \sum_{i = 1}^N f_k(\ell_i/N) - \mathbb{E}  \left[ \sum_{i = 1}^Nf_k(\ell_i/N)  \right] \mbox{, } \mathcal L^b_{f_k}= \sum_{i = 1}^{N-1} f_k(m_i/N) - \mathbb{E}  \left[ \sum_{i = 1}^{N-1} f_k(m_i/N)  \right] \mbox{ and }$$
$$\mathcal L^m_{f_k}=N^{1/2} \cdot \left[\mathcal L^t_{f_k} -  \mathcal L^b_{f_k}\right] \mbox{ for $k = 1, \dots, n$} .$$
Then the random variables $\{\mathcal{L}^m_{f_i} \}_{i = 1}^n$, $\{\mathcal{L}^t_{f_i} \}_{i = 1}^n$, $\{\mathcal{L}^b_{f_i} \}_{i = 1}^n$ converge jointly in the sense of moments to a $3n$-dimensional centered Gaussian vector $\xi = (\xi^m_1,\dots, \xi^m_n, \xi^t_1, \dots, \xi^t_n, \xi^b_n, \dots, \xi^b_n)$ with covariance
\begin{align}\label{eq:cov}
\begin{split}
& Cov(\xi^t_i, \xi^m_j )  =  Cov(\xi^b_i, \xi^m_j )   = 0,   \\
&Cov(\xi^{t}_i, \xi^{t}_j) = Cov(\xi^{b}_i, \xi^{b}_j) = Cov(\xi^{t}_i, \xi^{b}_j) = \frac{1}{(2\pi \i )^2} \oint_{\Gamma_1} \oint_{\Gamma_1} f_i(s) f_j(t) \mathcal{C}_{\theta, \mu}(s,t)dsdt,  \\
&Cov(\xi^{m}_i, \xi^m_j) = \frac{1}{(2\pi \i )^2} \oint_{\Gamma_1} \oint_{\Gamma_1} f_i(s) f_j(t) \Delta \mathcal{C}_{\theta, \mu}(s,t)dsdt, \\
\end{split}
\end{align}
for all $i,j = 1, \dots , n$, where
\begin{align}\label{eq:var_main}
\begin{split}
& \mathcal{C}_{\theta, \mu}(z_1, z_2) = -\frac{\theta^{-1}}{2(z_1-z_2)^2} \left(1 - \frac{(z_1 - \alpha)(z_2- \beta) + (z_2 - \alpha )(z_1- \beta)}{2\sqrt{(z_1 -\alpha )(z_1- \beta)}\sqrt{(z_2 - \alpha)(z_2- \beta)}} \right) \mbox{ and }\\
& \Delta \mathcal{C}_{\theta, \mu} (z_1, z_2) = \frac{1}{2\pi \i}\int_{\Gamma_2}  \frac{dz}{e^{\theta G_\mu(z)} - 1} \cdot \left[ - \frac{1}{(z-z_2)^2(z-z_1)^2}\right],
\end{split}
\end{align}
where $(\alpha,\beta)$ denotes the unique band of the equilibrium measure $\mu$, $G_\mu(z)$ is the Stieltjes transform of the equilibrium measure and $\Gamma_1, \Gamma_2$ are positively oriented contours that contain the segment $[0, \lM + \theta]$, are contained in $\mathcal{M} \cap \mathcal{M}_1$ with $\mathcal{M}$ as in Assumption 3 and $\Gamma_1$ contains $\Gamma_2$ in its interior.
\end{theorem}
\begin{remark} In (\ref{eq:var_main}) we pick the branch of the square root so that $\sqrt{(z -\alpha )(z- \beta)}$ is analytic in $\mathbb{C} \setminus [\alpha, \beta]$ and $\sqrt{(z -\alpha )(z- \beta)} \sim z$ as $|z| \rightarrow \infty$. See also Section \ref{Section1.5}. Also $\i = \sqrt{-1}$.
\end{remark}
\begin{remark} Note that $\mathcal{C}_{\theta, \mu}$ depends on the equilibrium
measure $\mu$ only through the quantities $\alpha, \beta$ -- the endpoints of the unique by assumption band of $\mu$. On the other hand,
$\Delta \mathcal{C}_{\theta, \mu}$ depends on the Stieltjes transform of $\mu$ and not just $(\alpha, \beta)$, see also Remark \ref{RemPrefactor}. In this sense the covariance is {\it universal} for systems that have the same limiting equilibrium measure $\mu$. We also remark that the form of the covariance $ \mathcal{C}_{\theta, \mu}$ is exactly the same as in the continuous setting, which was observed in related contexts in \cite{BGG,DK1}. Consequently, for fixed levels the global fluctuations of discrete and continuous corners processes are the same.

The situation is different for the differences of two consecutive levels. While the order of rescaling one needs to do to get a non-trivial limit for the differences of two adjacent levels is the same as in the continuous setting and the limiting behavior is still Gaussian, the covariance $\Delta \mathcal{C}_{\theta, \mu}$ is different. For example \cite{ES} studied the analogue of our setup for Wigner matrices, and more specifically the GUE. Our computations show that if we substitute $\mu$ with the semicircle law in $\Delta \mathcal{C}_{\theta, \mu}$ we do not recover the same formula in \cite{ES}. In this sense, it appears that for differences of two adjacent levels the limiting behavior {\em feels} the discreteness of the model and behaves differently from the case of continuous corners processes. We perform a comparison between the discrete and continuous setting for quadratic potential in detail in Section \ref{Section7.2.3}.

Similar quantities as in Theorem \ref{CLTfun_main} have been considered in the context of random matrix theory also in \cite{GZ, Ahn}. In the law of large numbers setting they were previously considered in  \cite{Buf} and \cite{Ker}. 
\end{remark}

%
\subsection{Methods}\label{Section1.3}

We next explain the main algebraic component of our argument, which we call {\em the two-level Nekrasov's equations}. The equations take a different form depending on whether $\theta = 1$ or $\theta \neq 1$. We forgo stating the equation for the case $\theta = 1$ until the main text, see Theorem \ref{TN1Theta1}.

\begin{theorem}\label{Nekrasov} Let $\mathbb{P}^{\theta}_N$ be a probability distribution as in (\ref{S1PDef}) for $\theta \neq 1$, $M \geq 1$ and $N \geq 2$. Let $\mathcal{M} \subseteq \mathbb{C}$ be open and $[0 , M + 1 + (N-1)\cdot \theta] \subset \mathcal{M}.$
Suppose there exist six functions $\phi_i^t$, $\phi_i^b$ and $\phi_i^m$ for $i = 1,2$ that are analytic in $\mathcal{M}$ and such that
\begin{equation}\label{S1ratioNE}
\frac{\phi_1^t(z)}{ \phi_1^m(z)} =   \frac{w(z-1)}{w(z)}, \hspace{2mm}  \frac{\phi_1^b(\hat{z})}{\phi_1^m(\hat{z})} =  \frac{\tau(\hat{z})}{\tau(\hat{z}-1)} \mbox{ and }\frac{\phi^{t}_2(z)}{\phi^{m}_2(z)} =  \frac{w(z)}{w(z-1)}, \hspace{2mm} \frac{\phi^{b}_2(\hat{z} - \theta)}{\phi^{m}_2(\hat{z} - \theta)}= \frac{\tau (\hat{z} - 1)}{\tau(\hat{z})},
\end{equation}
for all $z \in [ 1, M + (N-1) \cdot \theta]$ and $\hat{z} \in [ \theta + 1, M + (N-1)\cdot \theta]$. In addition we suppose that
\begin{equation*}\label{vansihEnd}
\phi_1^t(0) = \phi_1^b(M + 1 + (N-1) \cdot \theta) = \phi^m_1(M + 1 + (N-1) \cdot \theta) = \phi^t_2( M + 1 + (N-1) \cdot \theta) = \phi_2^m(0) = \phi_2^b(0) = 0.
\end{equation*}
Define $R_1(z)$ and $R_2(z)$ through
\begin{align}\label{S1mN}
\begin{split}
R_1(z)= & \phi_1^t(z) \cdot \mathbb{E} \left[ \prod_{i = 1}^N\frac{z- \ell_i -\theta}{z - \ell_i}  \right] + \phi_1^b(z) \cdot \mathbb{E} \left[ \prod_{i = 1}^{N-1}\frac{z- m_i + \theta - 1}{z - m_i - 1} \right]  \\
& + \frac{\theta}{1 - \theta} \cdot \phi_1^m(z) \cdot \mathbb{E} \left[  \prod_{i = 1}^N \frac{z- \ell_i -\theta}{z - \ell_i - 1}  \prod_{i = 1}^{N-1}\frac{z- m_i + \theta - 1}{z - m_i} \right],
\end{split}
\end{align}
\begin{align}\label{S1mNv2}
\begin{split}
R_2(z) = &\phi^{t}_2(z)  \cdot \mathbb{E} \left[ \prod_{i = 1}^N\frac{z - \ell_i + \theta - 1}{z - \ell_i - 1} \right] + \phi^{b}_2(z) \cdot \mathbb{E} \left[ \prod_{i = 1}^{N-1}\frac{z - m_i}{z - m_i + \theta} \right] \\
&+ \frac{\theta}{1 - \theta} \cdot  \phi^{m}_2(z) \cdot  \mathbb{E} \left[  \prod_{i = 1}^N \frac{z - \ell_i + \theta - 1}{z - \ell_i} \prod_{i = 1}^{N-1} \frac{z - m_i}{z - m_i + \theta - 1} \right].
\end{split}
\end{align}
Then $R_1(z)$ and $R_2(z)$ are analytic in $\mathcal{M}$. 
\end{theorem}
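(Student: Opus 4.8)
The plan is to show that $R_1$ and $R_2$ are \emph{a priori} rational functions of $z$ whose singularities are simple poles confined to the finite set
\[
\mathcal M \supset \mathcal P \;:=\; \bigcup_{(\ell,m)}\Bigl(\{\ell_i\}\cup\{\ell_i+1\}\cup\{m_i\}\cup\{m_i+1\}\Bigr)\;\subset\;\bigl[\,0,\,M+1+(N-1)\theta\,\bigr],
\]
the union running over the finitely many admissible configurations, and then to prove that every point of $\mathcal P$ is a removable singularity; since $\phi_i^t,\phi_i^b,\phi_i^m$ are analytic on $\mathcal M$, this yields analyticity of $R_1$ and $R_2$ on all of $\mathcal M$. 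Write $R_1=\phi_1^tA_1+\phi_1^bB_1+\tfrac{\theta}{1-\theta}\phi_1^mC_1$ with $A_1,B_1,C_1$ the three expectations appearing in \eqref{S1mN}. Since $\mathbb P^\theta_N$ is supported on a finite set of interlacing integer arrays, each of $A_1,B_1,C_1$ is a finite convex combination of rational functions: the poles of $A_1$ lie among the $\ell_i$, those of $B_1$ among the $m_i+1$, and those of $C_1$ among the $\ell_i+1$ and the $m_i$. Within each level consecutive coordinates differ by at least $\theta$, so all of these are simple; the only way $C_1$ could acquire a double pole is through a coincidence $\ell_i+1=m_j$, but the interlacing $M\ge\lambda_1\ge\mu_1\ge\lambda_2\ge\cdots$ forces such a coincidence to occur only in a degenerate configuration (with $\mu_{i-1}=\lambda_i$, etc.), which then supplies a compensating zero in the integrand of $C_1$. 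It thus suffices to prove $\residue_{z=p}R_1(z)=0$ for each $p\in\mathcal P$, and symmetrically for $R_2$; at the two extreme values $p\in\{0,\,M+1+(N-1)\theta\}$ this is automatic, as every term that could contribute a pole there has its $\phi$-prefactor annihilated by \eqref{vansihEnd}.

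For an interior $p\in\mathcal P$, I would write $\residue_{z=p}R_1$ as an explicit finite sum over the state space. The pole at $p$ is fed by four families of configurations: those with some $\ell_k=p$ (through $\phi_1^tA_1$), some $\ell_k=p-1$ (through the $\ell$-product in $\tfrac{\theta}{1-\theta}\phi_1^mC_1$), some $m_k=p$ (through the $m$-product in $\tfrac{\theta}{1-\theta}\phi_1^mC_1$), and some $m_k=p-1$ (through $\phi_1^bB_1$). The key input is a family of elementary \emph{one-box move} identities for the unnormalized weight $W(\ell,m)=H^t(\ell)\,H^b(m)\,I(\ell,m)$ of \eqref{S1PDef2}: decrementing a single coordinate, $\ell_k\mapsto\ell_k-1$ or $m_k\mapsto m_k-1$, multiplies $W$ by an explicit product of linear factors in the remaining particle positions, times $w(\ell_k-1)/w(\ell_k)$ (resp.\ $\tau(m_k-1)/\tau(m_k)$); these identities are obtained by telescoping the $\Gamma$-quotients in $H^t$ and $H^b$ and, importantly, in the cross-level factors $\Gamma(m_i-\ell_j)/\Gamma(m_i-\ell_j+1-\theta)$ and $\Gamma(\ell_i-m_j+\theta)/\Gamma(\ell_i-m_j+1)$ of $I(\ell,m)$. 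Plugging in the ratio relations \eqref{S1ratioNE}, the residue contribution of a configuration through $\phi_1^tA_1$ exactly cancels that of its $\ell_k$-decrement through $\tfrac{\theta}{1-\theta}\phi_1^mC_1$, and the residue contribution through the $m$-product in $\tfrac{\theta}{1-\theta}\phi_1^mC_1$ cancels that of the $m_k$-decrement through $\phi_1^bB_1$; summing over the full state space, $\residue_{z=p}R_1$ telescopes to $0$, provided every one-box move stays inside the state space. A move is obstructed only when the displaced particle would cross a neighbour: if it would collide with a same-level particle the obstruction is harmless, since the relevant rational prefactor then already carries a vanishing factor (exactly as in the one-level situation); if an $\ell$-particle would collide with the adjacent $m$-particle, i.e.\ $\lambda_k=\mu_k$ (equivalently $\ell_k=m_k$), the contribution must be rerouted through the matching move on the other level, which is where both $\tau$-relations in \eqref{S1ratioNE} come into play; and the remaining obstructions, $\lambda_1=M$ and $\lambda_N=0$, occur precisely at the already-handled values $p\in\{0,\,M+1+(N-1)\theta\}$. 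The argument for $R_2$ is the mirror image, with $\ell_k\mapsto\ell_k+1$, $m_k\mapsto m_k+1$, the dual relations in \eqref{S1ratioNE} (note the shift $\hat z\mapsto\hat z-\theta$ in the $\phi_2^b/\phi_2^m$ relation) and the $\phi_2$-conditions in \eqref{vansihEnd}; the coefficient $\tfrac{\theta}{1-\theta}$, finite precisely because $\theta\ne1$, is exactly what makes the three pieces balance (for $\theta=1$ the middle term degenerates and the statement takes the different form of Theorem~\ref{TN1Theta1}).

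The main obstacle is combinatorial rather than conceptual. One must (i) derive the one-box weight-ratio identities with care, tracking which particle positions occur and with what multiplicity — in particular the cross-level $\Gamma$-factors of $I(\ell,m)$, which are responsible for $m$-positions showing up in the residue produced by $A_1$ and $\ell$-positions in the one produced by $B_1$; (ii) match these ratios against the residues of all three rational prefactors in \eqref{S1mN} (and \eqref{S1mNv2}), where the middle term has two independent families of poles and carries the heaviest bookkeeping; and (iii) organize the interface cancellations, where a one-box move on one level is blocked and its contribution must be transferred to a move on the other level — this is the step that genuinely uses the two-level structure together with the full strength of \eqref{S1ratioNE}, including the precise $\hat z$ versus $\hat z-\theta$ shifts and both $\tau$-conditions. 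Once these identities are in place, checking that the would-be double poles of the middle term are self-compensating and that the only uncancelled boundary terms are exactly those killed by \eqref{vansihEnd} is a matter of direct inspection, and the removal of all singularities — hence the analyticity of $R_1$ and $R_2$ on $\mathcal M$ — follows.
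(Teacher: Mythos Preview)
Your approach is essentially the paper's: reduce to a residue computation, cancel residues in pairs via one-box weight-ratio identities (the paper derives these as \eqref{eq:l_ratio}, \eqref{S2eq:m_ratio}, \eqref{eq:lm_ratio}), and kill the two boundary poles with \eqref{vansihEnd}. One sharpening of your interface picture: when an $\ell$-decrement is blocked by $\ell_k=m_k$, the paper does not reroute to a single move on the other level but performs a \emph{simultaneous} decrement of both $\ell_k$ and $m_k$, cancelling the $\phi_1^t$-residue directly against a $\phi_1^b$-residue via \eqref{eq:lm_ratio}; the middle term's leftover poles (configurations where neither the $A_i$- nor the $B_i$-pairing applies) then self-cancel on their own, because the blocking constraint forces a compensating zero in the numerator of the double product.
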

We call the above expressions that define $R_{1,2}$ equations because once we multiply both sides by any holomorphic function and integrate it around a closed contour we get 0 due to analyticity. Theorem \ref{Nekrasov} is a two-level analogue of the discrete loop equations from \cite{BGG} for the measures (\ref{eq:distr2}), and we refer to the latter as {\em one-level or single-level Nekrasov's equations} so as to distinguish them from our equations.  While our proof is similar in spirit to the one in \cite{BGG}, essentially performing a careful residue calculation, we remark that the computation is much more subtle in the two level-case and the main difficulty was in constructing $R_{1,2}(z).$ In \cite{BGG} the construction comes from the work of \cite{N}, however, we are not aware of a proper analogue for $R_{1,2}$ in the physics literature.

We readily see from (\ref{S1mN}) and (\ref{S1mNv2}) that the case $\theta = 1$ is special because of the vanishing of the denominator $1-\theta$. We believe that it was crucial for us to find the correct observables in the Theorem \ref{Nekrasov} for general $\theta$ first and then specialize to $\theta=1.$
\begin{remark} One has the following asymptotic expansion
\begin{equation*}\label{exp_intro}
\begin{split}
\prod_{ i = 1}^N \frac{Nz - \ell_i - \theta}{Nz - \ell_i} =\exp \left[ \sum_{i = 1}^N \log \left( 1 - \frac{1}{N}\frac{\theta}{z - \ell_i/N } \right)\right] =\exp\left[ -\frac{\theta G^t_N(z) }{N} + \frac{\theta^2\partial_z G^t_N(z)}{2N^2} +  O(N^{-2})\right],
\end{split}
\end{equation*}
where $G^t_N(z) =\sum_{i=1}^N\frac{1}{z-\ell_i/N}$ and the error term is uniform in $z$ over compact subsets of $\mathbb{C} \setminus \mathbb{R}$. If we denote $G^b_N(z) =\sum_{i=1}^{N-1}\frac{1}{z-m_i/N}$ we can perform similar expansions for the other products in our two-level Nekrasov's equations. In this sense, the expansion of the two-level equations leads to certain functional equations involving $G^t_N(z)$ and  $G^b_N(z)$, and our asymptotic results are a consequence of a careful analysis of the lower order terms of this expansion. 
\end{remark}

\begin{remark}The structure of the discrete loop equations is intimately related to the structure of the discrete space that underlies it. In particular, the form of the equations we have written in Theorem \ref{Nekrasov} depends on the fact that the underlying space is given by shifted integer lattices. One can extend the one-level loop equations to the case of shifted $q$-lattices \cite{BGG} or even shifted quadratic lattices \cite{DK1}. We hope to extend our two-level equations to such lattices in the future.
\end{remark}

The next result we present is obtained by studying the diffuse limits of our two-level discrete loop equations, which lead to two-level analogues of the loop equations in \cite{BoGu}. We call these objects {\em two level-loop equations} and similarly to \cite{BoGu} they come with various ranks. While the usual loop equations have rank parametrized by $n \in \mathbb{N}$, the two level equations are parametrized by $(m,n) \in \mathbb{Z}_{\geq 0} \times \mathbb{Z}_{\geq 0}$. As the formulas are quite involved we forgo stating them here, and only write down the equations of rank $(0,0)$. The interested reader is referred to Section \ref{Section6} for more details.

\begin{theorem}\label{S1TLLoop}
Suppose that $\theta > 0$,  $a_-, a_+ \in \mathbb{R}$ with $a_- < a_+$ and let $V^t, V^b$ be analytic in a neighborhood $\mathcal{M}$ of $[a_-, a_+]$ that are real-valued on $\mathcal{M} \cap \mathbb{R}$. Suppose $(X_1, \dots, X_N, Y_1, \dots, Y_{N-1})$ is a random $(2N-1)$-dimensional vector with density
\begin{equation}\label{S1twologgas}
f(x, y) = \frac{1}{Z^c} \prod_{1 \leq i < j \leq N} \hspace{-4mm}(x_j - x_i) \hspace{-4mm} \prod_{1 \leq i < j \leq N-1}  \hspace{-4mm} (y_j - y_i) \prod_{i = 1}^{N-1} \prod_{j = 1}^N |y_i - x_j|^{\theta - 1} \prod_{i = 1}^{N-1} e^{-N\theta V^b(y_i)} \prod_{i = 1}^N e^{-N\theta V^t(x_i)},
\end{equation}
where $f(x,y)$ is supported on $\mathcal{G} = \{(x,y) \in \mathbb{R}^{2N-1}: a_- < x_1 < y_1 < x_2 < y_2 < \cdots < y_{N-1} < x_N < a_+\}$ and $Z^c$ is a normalization constant such that the integral of $f(x,y)$ over $\mathcal{G}$ is $1$. Then 
\begin{align}\label{S1TLRank00}
\begin{split} 
& 0 =  \frac{N\theta}{2\pi \i}\int_{\Gamma} \frac{ dz (z - a_-)(z - a_+) }{(z-v)(v - a_-) (v-a_+)} \left[  \mathbb{E}[G^t(z)] \partial_z V^t(z) +   \mathbb{E}[G^b(z)] \partial_z V^b(z) \right] + \frac{\mathbb{E}[G^t(v)^2]}{2}  \\
&    + \hspace{-1mm}\frac{\mathbb{E}[G^b(v)^2]}{2} - \frac{ \partial_v \mathbb{E}[G^t(v)] + \partial_v \mathbb{E}[G^b(v)]}{2} \hspace{-1mm} - (1-\theta) \mathbb{E}[G^t(v)G^b(v)] -  \frac{N^2 - (1-\theta)N(N-1)}{(v - a_-) (v-a_+)} ,
\end{split}
\end{align}
where $G^t(z) = \sum_{i = 1}^N \frac{1}{z - X_i}$, $G^b(z) = \sum_{i = 1}^{N-1} \frac{1}{z- Y_i}$ and $\Gamma$ is a positively oriented contour inside $\mathcal{M}$ that contains the segment $[a_-, a_+]$ in its interior. 
\end{theorem}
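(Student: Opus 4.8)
The plan is to derive \eqref{S1TLRank00} as a \emph{diffuse limit} of the two-level Nekrasov equations of Theorem \ref{Nekrasov}, mirroring the passage from the one-level discrete equations of \cite{BGG} to the continuous loop equations of \cite{BoGu}. Fix $N,\theta,V^t,V^b$ and $a_-<a_+$, and introduce a large scaling parameter $T$. First I would approximate \eqref{S1twologgas} by the discrete measures \eqref{S1PDef}: choose $M=M_T\in\mathbb Z$ with $M_T/T\to a_+-a_-$ and weights $w(x;N)=\exp(-N\theta\,V^t(x/T+a_-))$, $\tau(x;N)=\exp(-N\theta\,V^b(x/T+a_-))$; under the rescaling $\ell_i\mapsto\ell_i/T+a_-$, $m_i\mapsto m_i/T+a_-$ the state space exhausts $\mathcal G$, and using $\Gamma(x+\alpha)/\Gamma(x+\beta)=x^{\alpha-\beta}(1+O(1/x))$ together with the fact that the typical spacings $\ell_i-\ell_j$, $m_i-m_j$ are of order $T$, one shows the rescaled measure converges weakly to \eqref{S1twologgas} and that all joint moments of $G^t_T(v):=\sum_i(v-\ell_i/T-a_-)^{-1}$ and $G^b_T(v):=\sum_i(v-m_i/T-a_-)^{-1}$ converge, for $v$ in a fixed compact subset of $\mathcal M\setminus[a_-,a_+]$, to those of $G^t,G^b$ under \eqref{S1twologgas}.

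Next I would apply Theorem \ref{Nekrasov} with $\phi_1^m(z)=\phi_2^m(z)=T^{-2}z\,(M_T+1+(N-1)\theta-z)$ and $\phi_i^t,\phi_i^b$ equal to $\phi_i^m$ times the ratios prescribed in \eqref{S1ratioNE} (analytic in $\mathcal M$ since $w,\tau$ are non-vanishing and analytic, and satisfying \eqref{vansihEnd} because $\phi_i^m$ vanishes at both endpoints; note $w(z-1)/w(z)=1+\tfrac{N\theta}{T}\partial_zV^t+O(T^{-2})$ and $\tau(z)/\tau(z-1)=1-\tfrac{N\theta}{T}\partial_zV^b+O(T^{-2})$). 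Integrating the expression \eqref{S1mN} for $R_1(z)$ against $dz/(v-z)$ over a contour around $[0,M_T+1+(N-1)\theta]$ with $v$ outside --- and using that \eqref{S1mN} defines an analytic function, so the integral vanishes --- gives an exact finite-$T$ identity relating the observables $A^t(z)=\mathbb E[\prod_i\tfrac{z-\ell_i-\theta}{z-\ell_i}]$, $A^b(z)=\mathbb E[\prod_i\tfrac{z-m_i+\theta-1}{z-m_i-1}]$, $A^m(z)=\mathbb E[\prod_i\tfrac{z-\ell_i-\theta}{z-\ell_i-1}\prod_i\tfrac{z-m_i+\theta-1}{z-m_i}]$ (and similarly from \eqref{S1mNv2}). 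Expanding in $1/T$ via \eqref{exp_intro} and its analogues --- so that after rescaling $A^t=1-\tfrac{\theta}{T}\mathbb E[G^t_T]+\tfrac{\theta^2}{2T^2}\mathbb E[(G^t_T)^2+\partial G^t_T]+O(T^{-3})$, with the $-1$ shifts in the $m$-denominators and the second-order expansions of the $w,\tau$-ratios contributing at order $T^{-2}$ --- the $O(1)$ and $O(1/T)$ coefficients vanish identically (the $\mathbb E[G^{t,b}_T]$ terms cancel, due to the $\tfrac{\theta}{1-\theta}$ prefactor in \eqref{S1mN}), and the $O(T^{-2})$ coefficient, multiplied by $T^2$, reproduces \eqref{S1TLRank00}: its quadratic pieces yield $\tfrac12\mathbb E[G^t(v)^2]+\tfrac12\mathbb E[G^b(v)^2]+(1-\theta)\mathbb E[G^t(v)G^b(v)]$, the $\partial_v$ pieces yield $-\tfrac12\partial_v\mathbb E[G^t(v)]-\tfrac12\partial_v\mathbb E[G^b(v)]$, the $w,\tau$-ratios combined with $\phi_i^m$ and the kernel yield the contour integral against $\tfrac{(z-a_-)(z-a_+)}{(v-z)(v-a_-)(v-a_+)}$, and the residues at the two endpoints (using the decay of $G^{t,b}_T$ at infinity) yield $-\tfrac{N^2+(1-\theta)N(N-1)}{(v-a_-)(v-a_+)}$. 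Sending $T\to\infty$ and invoking the moment convergence from the first step gives \eqref{S1TLRank00}.

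The principal obstacle is the $O(T^{-2})$ bookkeeping: one must track every second-order correction (from the $-1$ shifts in the denominators, from expanding the $w,\tau$-ratios to second order, and from the Jacobian-type terms hidden inside the exponentiated products), verify the somewhat delicate cancellation at orders $1$ and $1/T$, and determine whether the $R_1$-identity already has the form \eqref{S1TLRank00} or must be combined with the $R_2$-identity (via the top/bottom symmetry interchanging $\ell$ and $m$) to produce it. Making the first step rigorous --- a priori tightness, control of the behaviour at the edges $a_\pm$, and control near particle coincidences $|y_i-x_j|\to0$ --- is also technical.

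As an independent derivation and consistency check, an equation of the same shape follows directly from \eqref{S1twologgas} by a Schwinger--Dyson argument. Apply the change of variables $x_i\mapsto x_i+\varepsilon\psi(x_i)$, $y_i\mapsto y_i+\varepsilon\psi(y_i)$ with the \emph{same} vector field $\psi(z)=\tfrac{(z-a_-)(a_+-z)}{(v-z)(v-a_-)(a_+-v)}$ on both levels: since $\psi$ vanishes at $a_\pm$, its time-$\varepsilon$ flow is, for small $\varepsilon$, an increasing diffeomorphism of $[a_-,a_+]$ and hence maps $\mathcal G$ onto itself, preserving all the interlacing inequalities. Differentiating $\int_{\mathcal G}f=1$ at $\varepsilon=0$ and simplifying the four types of contributions --- the divergence $\sum_i\psi'(x_i)+\sum_i\psi'(y_i)$, the two Vandermonde terms $\sum_{i<j}\tfrac{\psi(x_j)-\psi(x_i)}{x_j-x_i}$ and its $y$-analogue, the cross interaction $(\theta-1)\sum_{i,j}\tfrac{\psi(x_j)-\psi(y_i)}{x_j-y_i}$, and the potentials $-N\theta\sum_i\psi(x_i)\partial_zV^t(x_i)-N\theta\sum_i\psi(y_i)\partial_zV^b(y_i)$, the last rewritten via $\sum_i\tfrac{\partial_zV(x_i)}{v-x_i}=\tfrac{1}{2\pi i}\oint_\Gamma\tfrac{\partial_zV(z)G^t(z)}{v-z}\,dz$ --- produces \eqref{S1TLRank00}. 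This route is considerably lighter for rank $(0,0)$, but the diffuse-limit argument above is the one that generalizes to the higher-rank equations of Section \ref{Section6}.
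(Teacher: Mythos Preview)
Your diffuse-limit approach is essentially the paper's own proof (Section \ref{Section6}, specifically Section \ref{Section6.4} building on Proposition \ref{prop_cont_limit} for the weak convergence). The paper packages the endpoint vanishing slightly differently---rather than imposing \eqref{vansihEnd} on the $\phi$'s, it multiplies through by $P_L(z)=(Lz-\lceil a_-L\rceil)(Lz-\lfloor a_+L\rfloor-(N-1)\theta)$, which has the same effect---but the structure (integrate against $dz/(z-v)$, expand to second order in $1/L$, take the limit) is identical. Two small clarifications: first, you need only $R_1$; the paper notes explicitly that the $R_2$ limit yields the same equation, so no combination is required. Second, the $\tfrac{\theta}{1-\theta}$ prefactor forces a separate (but entirely parallel) argument at $\theta=1$ via Theorem \ref{TN1Theta1}, which you do not mention.

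Your Schwinger--Dyson alternative via the flow of $\psi(z)=\tfrac{(z-a_-)(a_+-z)}{(v-z)(v-a_-)(a_+-v)}$ is \emph{not} in the paper and is a genuinely different, more direct route for rank $(0,0)$: it bypasses the discrete approximation and the $1/T$ bookkeeping entirely, at the cost of not immediately generalizing to the higher-rank equations (which the paper obtains by inserting the deformed measures $\mathbb{P}_L^{\mathbf t,\mathbf v}$ and differentiating in the $t$'s before taking the limit). The paper's approach, while heavier here, is what produces the full hierarchy of Theorem \ref{TwoLevelLoopThm} uniformly.
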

\begin{remark} The loop equation of rank $1$, see e.g. \cite{BoGu}, is the single level analogue of the equation in (\ref{S1TLRank00}). It is the main ingredient in establishing the global fluctuations of continuous $\beta$ log-gases in \cite{JL, BoGu}. We are hopeful that (\ref{S1TLRank00}) and its higher rank versions  in Theorem \ref{TwoLevelLoopThm} can be used to study fluctuations of general $\beta$-corners processes.
\end{remark}

%
\subsection{Structure of the paper}\label{Section1.4}

The paper consists of seven sections and two appendices. The dependence of the different sections on each other is depicted in Figure \ref{S2_2}. Section \ref{Section1} is an introductory section that motivates our work, defines the basic objects that we study and presents (simplified versions of) the main statements we prove: Theorem \ref{CLTfun_main} is the main asymptotic statement we prove, Theorem \ref{Nekrasov} is the main algebraic tool we use and Theorem \ref{S1TLLoop} is a continuous degeneration of Theorem \ref{Nekrasov} that we believe to be of separate interest. The proof of the two-level Nekrasov's equations is given in Section \ref{Section2}. Section \ref{Section3} defines the two-level interacting particle system whose global analysis we are interested in establishing, and lists our assumptions on it. This section contains several statements, whose proofs are the content of Appendices A and B.

\begin{figure}[h]
\centering
\scalebox{0.4}{\includegraphics{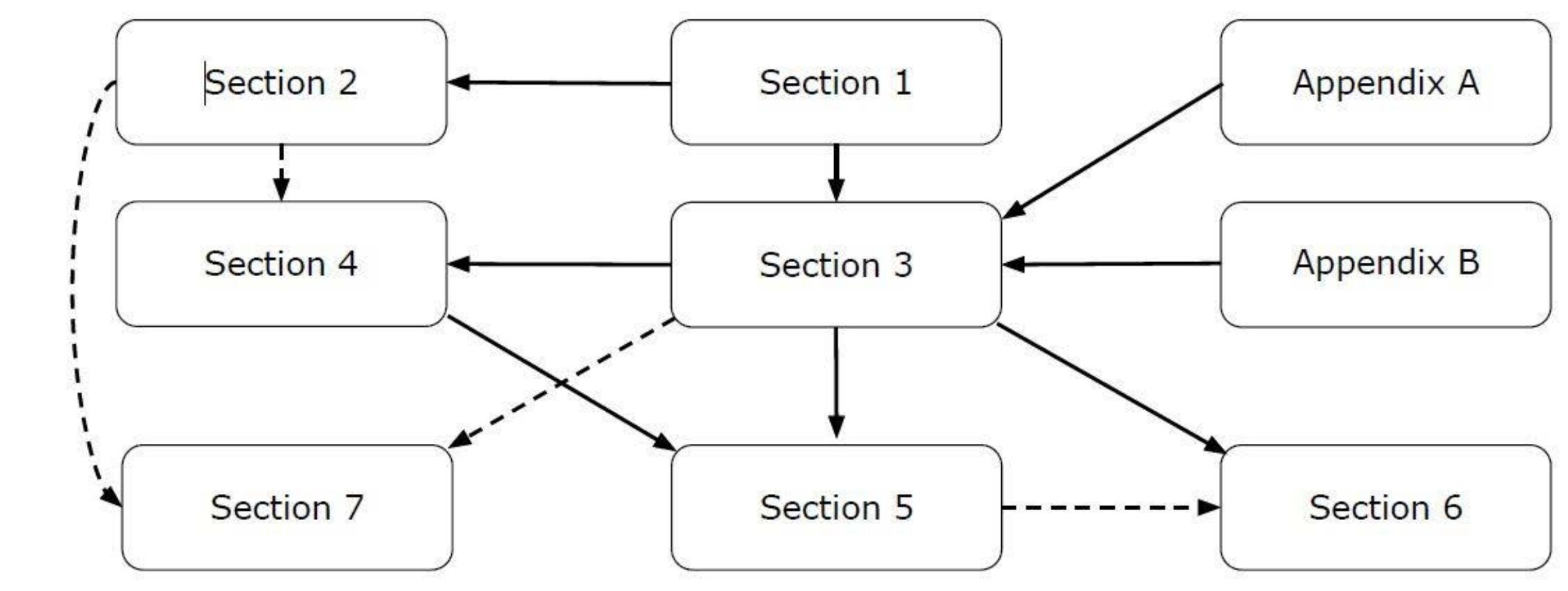}}
\caption{Dependence of sections in the paper. The filled arrows indicate a strong dependence of one section on another. Dashed arrows indicate very weak dependence. For example: Section \ref{Section6} requires only the statement of Theorem \ref{CLTfun} from Section \ref{Section5} and nothing else. Also the dependence on Section \ref{Section2} is only through the statements of Theorems \ref{TN1} and \ref{TN1Theta1}. }
\label{S2_2}
\end{figure}

In Section \ref{Section4} we use our two-level equations from Section \ref{Section2} to obtain integral equations relating certain observables (in this case joint cumulants) for the measures in Section \ref{Section3}. These integral equations are then used in Section \ref{Section5} to prove that the global fluctuations of the systems in Section \ref{Section3} are asymptotically Gaussian with an explicit covariance. This statement is Theorem \ref{CLTfun} in the text and is proved in Section \ref{Section5}. After that we demonstrate in Section \ref{Section7} a generic way to obtain measures of the type in Section \ref{Section3} and give two examples of models that fit into our framework and explain what our results say about them. In particular, the examples illustrate how our result fits into the context of related problems studied in integrable probability and random matrix theory.

In Section \ref{Section6} we prove a generalization of Theorem \ref{S1TLLoop}, namely we derive rank $(m,n)$ continuous loop equations for continuous $\beta$-corners processes -- this is Theorem \ref{TwoLevelLoopThm}. The latter are two-level analogues of the loop equations derived in \cite{BoGu}. To our knowledge these rank $(m,n)$ continuous loop equations have not appeared before in the literature and provide novel  identities even for the GUE corners process. The way we derive these equations is by first demonstrating how one can obtain continuous $\beta$-corners processes as diffuse limits of our discrete $\beta$-corners processes. This is done in Section \ref{Section6.2}. Armed with this result, obtaining the two-level loop equations in Theorem \ref{TwoLevelLoopThm} is relatively straightforward (although technical) and is the content of Section \ref{Section6.4}. Section \ref{Section6.3} reproduces the result in \cite{BoGu} using our framework and we included it due to its conceptual importance.\\

%
\subsection{Notation} \label{Section1.5} In this section we summarize some notation used throughout the paper. We write $\i = \sqrt{-1}$. If $\mu$ is a probability measure on $\mathbb{R}$, which is absolutely continuous with respect to Lebesgue measure, we write $\mu(x)$ for its density and also denote $\mu$ by $\mu(x)dx$. All complex logarithms in the paper are defined with respect to the principal branch. We denote by $\mathrm{arccos}(x)$ the function, which is $\pi$ on $(-\infty, -1]$, $0$ on $[1 ,\infty)$, and the usual arccosine function on $(-1,1)$. 

Given $a, b \in \mathbb{R}$ with $a<b$, we write $f(z) = \sqrt{(z - a)(z- b)} $ to mean $$f(z) = \begin{cases} \exp \left( \frac{1}{2} \log (z-a) + \frac{1}{2} \log (z-b)  \right)&\mbox{ when $z \in \mathbb{C} \setminus (-\infty, b]$ }, \\ -\sqrt{a -z }\sqrt{b - z} &\mbox{ when $z \in (-\infty, a)$ }. \end{cases}$$ 
Observe that in this way $f$ is holomorphic on $\mathbb{C} \setminus [a,b]$. Indeed, the analyticity on $\mathbb{C} \setminus (-\infty, b]$ is immediate and by direct inspection the function extends continuously on $(-\infty, a)$, which implies the analyticity on $\mathbb{C} \setminus [a,b]$ by the Symmetry principle (Theorem 2.5.5 in \cite{SS}).

\subsection*{Acknowledgments}  The authors are deeply grateful to
Alexei Borodin, Vadim Gorin, Konstantin Matetski, Nikita Nekrasov, Andrei Okounkov and
Oleksandr Tsymbaliuk for very helpful discussions. The authors would also like to thank the anonymous referees for their careful reading of the article and their thoughtful comments and suggestions.

E.D. is partially supported by the Minerva Foundation Fellowship. A.K. is partially supported through NSF grant DMS:1704186.

%

\section{Nekrasov's equations}\label{Section2}
In this section we present the main algebraic component of our argument, which we call {\em the two-level Nekrasov's equations}. The equations have different formulations when $\theta \neq 1$ -- see Theorem \ref{TN1}, and when $\theta = 1$ -- see Theorem \ref{TN1Theta1}. 

%
\subsection{Preliminary computations}\label{Section2.1} For $N, M \in \mathbb{N}$ and $\theta  > 0$  we define
\begin{align}\label{S2GenState}
\begin{split}
&\Lambda_N = \{  (\lambda_1, \dots, \lambda_N) : \lambda_1\geq  \lambda_2 \geq \cdots \geq \lambda_N, \lambda_i \in \mathbb{Z}  \mbox{ and }0  \leq \lambda_i  \leq M \}, \\
&\mathbb{W}^\theta_{N,k} = \{ (\ell_1, \dots, \ell_k):  \ell_i = \lambda_i + (N - i)\cdot\theta, \mbox{ with } (\lambda_1, \dots, \lambda_k) \in \Lambda_k\}, \\
&\mathfrak{X}^\theta_N = \{ (\ell, m) \in \mathbb{W}^{\theta}_{N, N} \times \mathbb{W}^{\theta}_{N, N-1}: \ell \succeq m \},
\end{split}
\end{align}
where $\ell \succeq m$ means that if $ \ell_i = \lambda_i + (N - i)\cdot\theta$ and $m_i = \mu_i + (N - i) \cdot \theta$ then $\lambda_1 \geq \mu_1 \geq \lambda_2 \geq \mu_2 \geq \cdots \geq \mu_{N-1} \geq \lambda_N$. The set $\mathfrak{X}^\theta_N$ is the state space of our point configuration $(\ell, m)$. In Section \ref{Section2} we work with complex measures $\mathbb{P}^\theta_N$ on the state space $\mathfrak{X}^\theta_N$ which have the form (\ref{S1PDef}). We assume that $w(x;N) \neq 0$ for $x \in [0, M + (N-1) \cdot \theta]$ and $\tau(x; N) \neq 0$ for $x \in [\theta, M + (N-1) \cdot \theta]$. We also assume that the normalization constant $Z_N$ in (\ref{S1PDef}) is non-zero, so that the total mass of $\mathbb{P}^\theta_N$ is one.

Fix $i\in\{1, \dots, N\}$ and  $\ell \in \mathbb{W}^{\theta}_{N,N} $. If $\ell_i = s$ we define
$$\ell^-= (\ell_1, \dots, \ell_{i-1}, s-1,
\ell_{i+1}, \dots, \ell_N) \text{ and }
\ell^+= (\ell_1, \dots, \ell_{i-1}, s+1, \ell_{i+1}, \dots,
\ell_N),$$
where we suppress the dependence on $i$ from the notation, whenever it is clear from context. We assume in the first case that $s>m_i$ and in the second one that $m_{i-1}-\theta>s,$ as otherwise $(\ell^\pm, \mu) \not\in \S$. Using the functional equation $\Gamma(z+1) = z\Gamma(z)$ and (\ref{S1PDef2}) one readily observes that
\begin{equation}\label{eq:l_ratio}
\frac{\mathbb P^{\theta}_N(\ell^-, m)}{\mathbb P^{\theta}_N(\ell, m)} 
=\prod_{j \neq i}^N \frac{s-\ell_j  - 1}{s - \ell_j }  \cdot \prod_{j = 1}^{N-1} \frac{s - m_j}{s - m_j + \theta - 1}\cdot \frac{w(s-1)}{w(s)},
\end{equation}
where, to ease notation, we suppress the dependence of $w$ and $\tau$ on $N$. Analogously, for $m \in \mathbb{W}^\theta_{N,N-1}$ and $i\in \{1,\dots, N-1\}$ we define 
  $$m^- = (m_1, \dots, m_{i-1}, t -1, m_{i+1}, \dots,
m_{N-1}) \text{ and } m^+ = (m_1, \dots, m_{i-1}, t +1, m_{i+1},
\dots, m_{N-1}),$$ 
where $m_i = t.$ We assume $t-\theta>\ell_{i+1}$ in the first case and $\ell_i>t$ in the second, as, otherwise $(\ell, \mu^\pm) \not \in \S$. As before we use $\Gamma(z+1) = z\Gamma(z)$ and (\ref{S1PDef2}) to get
\begin{equation}\label{S2eq:m_ratio}
\frac{\mathbb P^{\theta}_N(\ell, m^-)}{\mathbb P^{\theta}_N(\ell, m)} 
=\prod_{j \neq i}^{N-1} \frac{t -m_j - 1 }{t - m_j } \cdot \prod_{j = 1}^{N}\frac{t - \ell_j - \theta}{t  - \ell_j - 1}\cdot  \frac{\tau(t-1)}{\tau(t)} .
\end{equation}
Finally, suppose we have $\ell_i=m_i= s$ and assume $s>\ell_{i+1}+\theta.$ Then we get from (\ref{eq:l_ratio}) and (\ref{S2eq:m_ratio})
\begin{align}
\begin{split}\label{eq:lm_ratio}
\frac{\mathbb P_N^{\theta}(\ell^-, m^-)}{\mathbb P_N^{\theta}(\ell, m)}&  =\frac{\mathbb P_N^{\theta}(\ell, m^-)}{\mathbb P_N^{\theta}(\ell, m)} \cdot  \frac{\mathbb P_N^{\theta}(\ell^-, m^-)}{\mathbb P_N^{\theta}(\ell, m^-)} \\
& = \prod_{j \neq i}^{N-1} \frac{s -m_j - 1 }{s - m_j + \theta -1} \cdot \prod_{j \neq i}^{N} \frac{s-\ell_j  - \theta}{s - \ell_j }\cdot\frac{ w(s-1) \tau(s-1)}{w(s) \tau(s)}.
\end{split}
\end{align}

%

\subsection{Two-level Nekrasov's equations: $\theta \neq 1$.} 
The goal of this section is to state and prove the two-level Nekrasov's equations for the case $\theta \neq 1$. We write the equations and proof for $\theta = 1$ in the next section.
\begin{theorem}\label{TN1} Let $\mathbb{P}^{\theta}_N$ be as in Section \ref{Section2.1} for $\theta \neq 1$, $M \geq 1$ and $N \geq 2$. Let $\mathcal{M} \subseteq \mathbb{C}$ be open and $[0 , M + 1 + (N-1) \cdot \theta] \subset \mathcal{M}.$
Suppose there exist six functions $\phi_i^t$, $\phi_i^b$ and $\phi_i^m$ for $i = 1,2$ that are analytic in $\mathcal{M}$ and such that
\begin{equation}\label{ratioNE}
\frac{\phi_1^t(z)}{ \phi_1^m(z)} =   \frac{w(z-1)}{w(z)}, \hspace{2mm}  \frac{\phi_1^b(\hat{z})}{\phi_1^m(\hat{z})} =  \frac{\tau(\hat{z})}{\tau(\hat{z}-1)} \mbox{ and }\frac{\phi^{t}_2(z)}{\phi^{m}_2(z)} =  \frac{w(z)}{w(z-1)}, \hspace{2mm} \frac{\phi^{b}_2(\hat{z} - \theta)}{\phi^{m}_2(\hat{z} - \theta)}= \frac{\tau (\hat{z} - 1)}{\tau(\hat{z})},
\end{equation}
for all $z \in [ 1, M + (N-1) \cdot \theta]$ and $\hat{z} \in [ \theta + 1, M + (N-1) \cdot \theta]$. Define $R_1(z)$ and $R_2(z)$ through
\begin{align}\label{S2mN}
\begin{split}
R_1(z)= & \phi_1^t(z) \cdot \mathbb{E} \left[ \prod_{i = 1}^N\frac{z- \ell_i -\theta}{z - \ell_i}  \right] + \phi_1^b(z) \cdot \mathbb{E} \left[ \prod_{i = 1}^{N-1}\frac{z- m_i + \theta - 1}{z - m_i - 1} \right]   \\
& + \frac{\theta}{1 - \theta} \cdot \phi_1^m(z) \cdot \mathbb{E} \left[  \prod_{i = 1}^N \frac{z- \ell_i -\theta}{z - \ell_i - 1}  \prod_{i = 1}^{N-1}\frac{z- m_i + \theta - 1}{z - m_i} \right] - \frac{r_1^-(N)}{ z} - \frac{r_1^+(N)}{z - s_N},
\end{split}
\end{align}
\begin{align}\label{S2mNv2}
\begin{split}
R_2(z) = &\phi^{t}_2(z)  \cdot \mathbb{E} \left[ \prod_{i = 1}^N\frac{z - \ell_i + \theta - 1}{z - \ell_i - 1} \right] + \phi^{b}_2(z) \cdot \mathbb{E} \left[ \prod_{i = 1}^{N-1}\frac{z - m_i}{z - m_i + \theta} \right]  \\
& + \frac{\theta}{1 - \theta} \cdot  \phi^{m}_2(z) \cdot  \mathbb{E} \left[  \prod_{i = 1}^N \frac{z - \ell_i + \theta - 1}{z - \ell_i} \prod_{i = 1}^{N-1} \frac{z - m_i}{z - m_i + \theta - 1} \right]- \frac{r_2^-(N)}{ z } - \frac{r_2^+(N)}{z - s_N},
\end{split}
\end{align}
where $s_N = M + 1 + (N-1) \cdot \theta$ and
\begin{align}\label{S2NE1Res}
\begin{split}
&r_1^-(N) = \phi_1^t(0) \cdot (-\theta) \cdot \mathbb{P}^{\theta}_N(\ell_N = 0) \cdot \mathbb{E} \left[ \prod_{i = 1}^{N-1} \frac{ \ell_i + \theta}{  \ell_i } \Big{\vert} \ell_N = 0 \right], \\
& r_1^+(N) =   \phi_1^m(s_N) \cdot  \theta  \cdot  \mathbb{P}^{\theta}_N(\ell_1 = s_N - 1) \cdot \mathbb{E} \hspace{-1mm} \left[ \hspace{-0.5mm} \prod_{i = 2}^{N} \frac{s_N - \ell_i  - \theta }{s_N  - \ell_i - 1 } \hspace{-1mm} \prod_{ i = 1}^{N-1} \hspace{-1mm} \frac{s_N  - m_i + \theta - 1}{s_N - m_i }\Big{\vert} \ell_1 = s_N - 1\right] \\
&+ \phi_1^b(s_N) \cdot \theta \cdot \mathbb{P}^{\theta}_N(m_1 = s_N - 1) \cdot \mathbb{E} \left[ \prod_{i = 2}^{N-1} \frac{s_N - m_i + \theta - 1 }{s_N- m_i - 1 } \Big{\vert} m_1 = s_N - 1 \right], \\
&r_2^-(N) = \phi_2^b(0) \cdot (- \theta) \cdot \mathbb{P}^{\theta}_N(m_{N-1} = \theta) \cdot \mathbb{E} \left[ \prod_{i = 1}^{N-2} \frac{  m_i }{  m_i - \theta } \Big{\vert} m_{N-1} = \theta \right]  \\
& + \phi_2^m(0) \cdot  (- \theta) \cdot  \mathbb{P}^{\theta}_N(\ell_N = 0 ) \cdot \mathbb{E} \left[ \prod_{i = 1}^{N-1} \frac{ \ell_i  - \theta + 1}{  \ell_i} \prod_{i = 1}^{N-1} \frac{   m_i}{ m_i  - \theta + 1}\Big{\vert} \ell_N = 0 \right], \\
&r_2^+(N) = \phi^t_2(s_N ) \cdot \theta \cdot \mathbb{P}^\theta_N( \ell_1 = s_N-1) \cdot \mathbb{E} \left[ \prod_{i = 2}^N\frac{s_N - \ell_i + \theta - 1}{s_N- \ell_i - 1}\Big{\vert} \ell_1 = s_N - 1 \right] .
\end{split} 
\end{align}
Then $R_1(z)$ and $R_2(z)$ are analytic in $\mathcal{M}$. 
\end{theorem}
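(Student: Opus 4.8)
The plan is to prove analyticity of $R_1$ and $R_2$ by a residue calculation, in the spirit of the single-level equations of \cite{BGG} but with considerably heavier bookkeeping. As a preliminary, I would note that the six functions $\phi_i^t,\phi_i^b,\phi_i^m$ are analytic in $\mathcal M$ and that every expectation in \eqref{S2mN}--\eqref{S2mNv2} is a finite sum over configurations $(\ell,m)\in\S$ of rational functions of $z$; hence $R_1$ and $R_2$ are \emph{a priori} meromorphic in $\mathcal M$ with finitely many poles, all contained in $[0,M+1+(N-1)\cdot\theta]$ at points of the form $(\text{integer})+\theta\cdot(\text{integer})$, and in particular $z=0$ and $z=s_N$ are among the candidates. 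It therefore suffices to show that the residue of $R_1$ (and of $R_2$) vanishes at each such point; when $\theta$ takes a value for which two particle coordinates can coincide and a candidate pole becomes double, one checks in the same way that the second-order coefficient vanishes too, so I will not dwell on it.

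The core of the argument is a reindexing. Fix a candidate pole $p$ and write the residue of $R_1$ at $z=p$ as a finite sum over $(\ell,m)\in\S$ weighted by $\mathbb{P}^\theta_N(\ell,m)$; a configuration contributes only when $p$ equals, up to a shift by $0$ or $1$, one of its coordinates, and the contributions split into four types according to the term of \eqref{S2mN} producing them: from $\phi_1^t$ when $\ell_j=p$, from $\phi_1^b$ when $m_j=p-1$, and two from $\phi_1^m$, when $\ell_j=p-1$ and when $m_j=p$. Comparing the $\phi_1^t$-contribution of $(\ell^+,m)$ with the ``$\ell_j=p-1$'' part of the $\phi_1^m$-contribution of $(\ell,m)$, and evaluating the probability ratio $\mathbb{P}^\theta_N(\ell^+,m)/\mathbb{P}^\theta_N(\ell,m)$ by \eqref{eq:l_ratio} while converting the factor $w(p-1)/w(p)$ into $\phi_1^t(p)/\phi_1^m(p)$ via the first relation in \eqref{ratioNE}, one finds these two contributions are negatives of one another; symmetrically, \eqref{S2eq:m_ratio} and the second relation in \eqref{ratioNE} show that the ``$m_j=p$'' part of the $\phi_1^m$-contribution of $(\ell,m^+)$ cancels the $\phi_1^b$-contribution of $(\ell,m)$. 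Thus whenever the shifted configuration $(\ell^+,m)$ (resp.\ $(\ell,m^+)$) again lies in $\S$ the two contributions cancel in pairs.

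It remains to treat the configurations for which the relevant shift exits $\S$. For $p$ in the interior $(0,s_N)$ this happens exactly when interlacing is saturated at an adjacent coordinate (for example $m_{j-1}=p+\theta-1$ when one tries to raise $\ell_j$ from $p-1$ to $p$, or a coincidence $\ell_j=m_j$), and in precisely those cases the corresponding contribution carries a vanishing numerator and drops out; the coincident configurations with $\ell_j=m_j\in\{p-1,p\}$ not handled by the two pairings above are instead dealt with by the joint move \eqref{eq:lm_ratio}, which cancels the combined ``$\phi_1^t$-plus-$m_j=p$'' contribution of $(\ell,m)$ against the combined ``$\phi_1^b$-plus-$\ell_j=p-1$'' contribution of $(\ell^-,m^-)$. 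This yields vanishing of the residue at every interior candidate. At $p=0$ the only un-paired contributions come from configurations with $\ell_N=0$ (a downward $\ell$-shift would give $\lambda_N=-1\notin\Lambda_N$), and they sum to exactly $r_1^-(N)$; at $p=s_N=M+1+(N-1)\cdot\theta$ the un-paired contributions come from configurations with $\ell_1=s_N-1$ or $m_1=s_N-1$ (an upward shift would give $\lambda_1=M+1$ or $\mu_1=M+1$), and they sum to exactly $r_1^+(N)$. Consequently the residues of the main part of \eqref{S2mN} at $0$ and $s_N$ are killed by $-r_1^-(N)/z-r_1^+(N)/(z-s_N)$, so $R_1$ is analytic in $\mathcal M$. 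The equation for $R_2$ is handled by the same scheme with the functions $\phi_2^t,\phi_2^b,\phi_2^m$ and the third and fourth relations of \eqref{ratioNE}; the one genuinely new feature is that the $m$-dependent poles of $R_2$ sit at arguments shifted by $\theta$, which is precisely why the fourth relation of \eqref{ratioNE} is written at the point $\hat z-\theta$, and the boundary analysis has to be redone, again leaving surviving terms only at $0$ and $s_N$, equal to $r_2^-(N)$ and $r_2^+(N)$.

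The step I expect to be the main obstacle is the combinatorial bookkeeping of the two middle paragraphs: checking that $\ell\leftrightarrow\ell^+$, $m\leftrightarrow m^+$, and the joint move $(\ell,m)\leftrightarrow(\ell^-,m^-)$ at coincidences really set up a bijection between the summands of the residue; correctly identifying which configurations are left unpaired; and verifying that the unpaired contributions vanish identically at interior candidate poles but, at $z=0$ and $z=s_N$, add up to exactly the quantities $r_i^\pm(N)$ in \eqref{S2NE1Res}. Since $R_2$ mixes integer-shifted and $\theta$-shifted arguments, its pole set and boundary contributions cannot be read off from the $R_1$ computation and must be re-derived. (The case $\theta=1$ is excluded here because then all factors $I(\ell,m)$ collapse to $1$ and the equations take a different shape; that case is recorded in Theorem~\ref{TN1Theta1}.)
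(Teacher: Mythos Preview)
Your proposal is correct and follows essentially the same residue-cancellation argument as the paper: the single-shift pairings (the paper's $A_i,B_i$), the joint move for coincident $\ell_j=m_j$ (the paper's $C_i$), the vanishing-numerator argument for leftover $\phi_1^m$ residues (the paper's $D$), and the boundary analysis at $z=0$ and $z=s_N$ are all present in your sketch. One small correction: the candidate poles are always \emph{simple}---the paper shows that whenever two denominator factors of the $\phi_1^m$ double product could share a root, the interlacing forces a compensating zero in the numerator---so there is no second-order coefficient to check.
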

\begin{remark}
Note that if we put $\phi_1^t(0) = \phi_1^b(M + 1 + (N-1) \cdot \theta) = \phi^m_1(M + 1 + (N-1) \cdot \theta) = 0$ then $r_1^+ = r_1^- = 0$  (\ref{S2NE1Res}). Analogously, if $\phi^t_2( M + 1 + (N-1) \cdot \theta) = \phi_2^m(0) = \phi_2^b(0) = 0$ then $r_2^+ = r_2^- = 0$. Consequently, Theorem \ref{TN1} implies Theorem \ref{Nekrasov}.
\end{remark}

%
\subsubsection{Analyticity of $R_1(z)$} The function $R_1(z)$ has possible poles at $s = a +(N- i)\cdot \theta$, where $i= 1, \dots, N$ and $a\in \{0,\dots, M + 1\}$. Note that all of these poles are simple. This is
 obvious for the expectations in the first line of (\ref{S2mN}). In addition, for the third expectation since for $(\ell, m )\in \S$ we have that $\ell_i, m_i$ separately are {\em strictly} increasing and so each of the two products has simple poles. We still need to consider the case when the two products share a pole. The latter would be possible only if  $m_i = \ell_j+ 1 = s$. Therefore, 
$$\mu_i -i \theta=(\lambda_j+1) -j \theta \iff  \mu_i-\lambda_j-1=(i-j)\theta. $$
Since $\mu_i\geq \lambda_j \text{ for } i< j$ and $\mu_i\leq \lambda_j \text{ for } i\geq j$ 
the above equality can only happen for $i<j$ and $\mu_i=\lambda_j.$ But then by interlacing $\mu_{i} = \mu_{i+1} = \cdots = \mu_{j-1}=\lambda_j$ and so
necessarily $m_{j-1}=\ell_j+\theta = s - 1 + \theta$ producing an extra zero in the second product and reducing the order of the pole by $1.$ Next, we compute the residues at these
possible poles and show that they sum to zero.\\

Fix a possible pole $s$ and assume $s\neq 0, (M+1)+(N-1) \cdot \theta.$ The expectation $\mathbb E$ is a sum over elements  $(\ell, m)\in \S.$ Such an element
contributes to a residue if
\begin{enumerate}
\item $\ell_i=s \text{ or } \ell_i=s-1,$ for some $i = 1, \dots , N$ or 
\item $m_i=s \text{ or }m_i=s-1$ for some $i=1,\dots, N - 1.$ 
\end{enumerate}
Then the residue at $s$ is given by
\begin{equation}\label{BigSum}
\theta \left( \sum_{i  = 1}^{N-1} (A_i + B_i + C_i) +  A_N + E_N + D \right) , \mbox{ where }
\end{equation}

\begin{align*}
A_i = & \sum\limits_{(\ell, m)\in \mathcal  G^i_1} \hspace{-4mm} - \phi_1^t(s)\mathbb P^{\theta}_N(\ell, m) \hspace{-1mm} \left[\prod_{ j\neq i}^N\frac{s-\ell_j-\theta}{s-\ell_j}\right ]\hspace{-1mm}  +\phi_1^{m}(s) \mathbb P^{\theta}_N(\ell^-,m) \hspace{-1mm} \left[\prod_{ j\neq i}^{N}\frac{s-\ell_j-\theta}{s-\ell_j-1}\prod_{j=1}^{N-1}  \frac{s-m_j+\theta-1}{s-m_j}\right] ;
\end{align*}
\begin{align*}
B_i = &  \sum\limits_{(\ell, m)\in \mathcal G^i_2}- \phi_1^{m}(s) \mathbb P^{\theta}_N(\ell, m^+)\left[\prod_{j=1}^{N} \frac{s-\ell_j-\theta}{s-\ell_j - 1} \prod_{ j\neq i}^{N-1}\frac{s-m_j+\theta-1}{s-m_j}\right]\\
& + \phi_1^{b}(s)\mathbb P^{\theta}_N(\ell, m)\left[\prod_{ j\neq i}^{N-1}\frac{s-m_j+\theta-1}{s-m_j-1}\right]; \\
C_i = &\sum\limits_{(\ell, m)\in \mathcal G^i_3}- \phi_1^t(s)\mathbb P^{\theta}_N(\ell, m)
\left[\prod_{ j\neq  i}^N\frac{s-\ell_j-\theta}{s-\ell_j}\right ] +\sum\limits_{(\ell, m)\in \mathcal G^i_4} \phi_1^b(s)\mathbb P^{\theta}_N(\ell^-, m^-)
\left[\prod_{ j\neq i}^{N-1}\frac{s-m_j+\theta-1}{s-m_j-1}\right]; \\
D = &   \phi_1^{m}(s) \sum\limits_{(\ell, m)\in \mathcal{G}}  \mathbb P^{\theta}_N(\ell,m) \mbox{Res}_{z = s} \left[  \prod_{i = 1}^N \frac{z- \ell_i -\theta}{z - \ell_i - 1}  \prod_{i = 1}^{N-1}\frac{z- m_i + \theta - 1}{z - m_i} \right]; \\
E_N =  & \sum\limits_{(\ell, m)\in \mathcal G^N_3}- \phi_1^t(s)\mathbb P^{\theta}_N(\ell, m) \left[\prod_{ j\neq  i}^N\frac{s-\ell_j-\theta}{s-\ell_j}\right ] , \mbox{ where }
\end{align*}
$$  \mathcal G^i_1=\{(\ell, m) | \ell_i=s, (\ell, m)  \hspace{-0.5mm} \in \hspace{-0.5mm}\S, (\ell^- \hspace{-2mm}, m) \hspace{-0.5mm} \in \hspace{-0.5mm} \S \}, \mathcal G^i_2 = \{(\ell, m) | m_i=s - 1, (\ell, m) \in \S, (\ell, m^+) \hspace{-0.5mm}\in \hspace{-0.5mm}\S \}, $$
$$ \mathcal G^i_3=\{(\ell, m) | \ell_i=s, (\ell^-\hspace{-2mm}, m) \not \in \S, (\ell, m) \in \S \}, \mathcal G^i_4=\{(\ell, m) | m_i = s, (\ell^- \hspace{-2mm}, m) \not \in \S , (\ell^- \hspace{-2mm}, m^-) \in \S\}, $$
$$ \mathcal{G} = \{ (\ell, m) \in \mathfrak{X}_N^\theta | (\ell^{+,i}, m) \not \in  \mathcal{G}^i_1 \mbox{ for $i = 1, \dots, N$}, \mbox{ and } (\ell, m^{-,i}) \not \in \mathcal{G}^i_2 \mbox{ for $i = 1, \dots, N-1$ } \}. $$
In the above expressions the operations $(\ell, m)\rightarrow (\ell, m^{\pm}) \text{ and }(\ell, m)\rightarrow (\ell^{\pm}, m)$ are performed in the $i$-th component. Also in the definition of $\mathcal{G}$ the notations $(\ell^{+,i}, m)$ and $(\ell, m^{-,i})$ indicate, in which coordinate the operation is performed. In the second sum in the definition of $C_i$ we have performed a relabeling of the summation variables from $(\ell, m)$ to $(\ell^-, m^-)$, which is reflected in $\mathcal G^i_4$. 

We mention that the first sums in $A_i, C_i$ and $E_N$ together give the contribution of the residue coming from $\ell_i = s$ from the first expectation in (\ref{S2mN}), the second sums in $B_i, C_i$ together give the residue coming from $m_i = s-1$ from the second expectation in (\ref{S2mN}) and the remaining sums together give the residue coming from $m_i = s$ of $\ell_i = s-1$ from the third expectation in (\ref{S2mN}). We have split these sums in a way that will make the cancelation of these terms easier to follow.\\

We first observe that $A_i = 0$ for $i = 1, \dots, N$ and $B_i = 0$ for all $i = 1, \dots, N-1$ because each summand equals zero as follows from  (\ref{eq:l_ratio}), (\ref{S2eq:m_ratio}) and (\ref{ratioNE}). 

We next observe that we can restrict the two sums in $C_i$ to be over $\mathcal{G}_3^i \cap \mathcal G^i_4$ without affecting the values of each sum. For the first sum, we note that if $(\ell,m) \in \mathcal{G}^i_3$ then $\ell_i = s = m_i$ and so $(\ell, m) \not \in  \mathcal{G}^i_4$ implies that $\ell_{i + 1} = s -1$. The latter means that the product in the first sum vanishes for all $(\ell,m) \in \mathcal{G}^i_3$ such that $(\ell,m) \not \in \mathcal{G}^i_4$ and so we do not affect the value of the sum by removing such terms. Similarly, if $(\ell, m) \in \mathcal{G}^i_4$ then $m_i = s = \ell_i$ and so  $(\ell, m) \not \in  \mathcal{G}^i_3$ implies either $s = M + 1 + (N-1) \cdot \theta$ if $i = 1$ (which we ruled out) or $m_{i - 1} = s$. The latter means that the product in the second sum vanishes for all $(\ell,m) \in \mathcal{G}^i_4$ such that $(\ell,m) \not \in \mathcal{G}^i_3$ and so we do not affect the value of the sum by removing such terms. Finally, once we restrict the two sums in $C_i$ to the same set $\mathcal{G}^i_3 \cap \mathcal{G}^i_4$ we can cancel the two sums term by term using (\ref{eq:lm_ratio}) and  (\ref{ratioNE}). 

We see next that $E_N = 0$ since $\mathcal{G}_3^N =\varnothing$ by our assumption that $s \neq 0$.

Let us explain why each summand in $D$ is zero. If $(\ell, m)$ is such that $\ell_i \neq s - 1$ for all $i = 1,\dots,N$ and $m_j \neq s$ for all $j = 1, \dots, N-1$ then the double product is analytic near $s$ and the residue is zero. If $\ell_i = s-1$ for some $i$ and $m_j \neq s$  for all $j$ then by the fact that $(\ell^{+,i},m) \not \in \mathcal{G}^i_1 $ we see that either $s = M+1 + (N-1) \cdot \theta$ if $i = 1$ (which we ruled out) or $m_{i-1} = s-1 + \theta$. It then follows that in the double product, both the numerator and denominator have one factor of $(z- s)$ coming from $(z - m_{i-1} + \theta - 1)$ and $(z - \ell_i - 1)$ respectively --  they cancel and so the residue is zero. Similarly, if $\ell_i \neq s-1$ for all $i = 1, \dots, N$ and $m_j = s$ for some $j$ then since $(\ell, m^{-,j}) \not \in \mathcal{G}^j_2$ we conclude that $\ell_{j+1} = s -\theta$.  It then follows that in the double product, both the numerator and denominator have one factor of $(z- s)$, coming from $(z - \ell_{j+1} - \theta)$ and $(z - m_j)$ respectively -- they cancel and so the residue is zero.

Suppose $(\ell, m)$ is such that $\ell_i = s-1$ for some $i \in \{1, \dots, N\}$ and $m_j = s$ for some $j\in \{1, \dots, N-1\}$. This is only possible when $\theta$ has the form $\theta = k^{-1}$ with $k \geq 2$ (we know $\theta \neq 1$), which we assume in the remainder. If $\theta = k^{-1}$ then the previous situation implies $i- j = k \geq 2$ and the denominator of the product has two factors $(z -s)$ coming from $(z- \ell_i - 1)$ and $(z - m_{j})$. But because $\ell \succeq  m$, we know that $m_{i-1} = s + \theta - 1$ and $\ell_{j+1} = s - \theta$, which means that the numerator in the double product has two factors $(z-s)$ coming from $(z - \ell_{j+1} - \theta)$ and $(z - m_{i - 1} + \theta - 1)$. These factors cancel with those in the denominator and so the residue is zero. Overall we conclude that $R_1(z)$ has no pole at $s$ if $s\neq 0, (M+1)+(N-1)\cdot \theta.$ \\

We finally consider the residues at $s = 0$ and $s = M+1 + (N-1)\cdot \theta$ starting with the former. If $s = 0$ we get a contribution from the first expectation in (\ref{S2mN}) only when $\ell_N = 0$ and we get no contributions from the other two expectations. Consequently, the residue is given by
$$ \phi_1^t(0) \cdot (-\theta) \cdot \mathbb{P}^{\theta}_N(\ell_N = 0) \cdot \mathbb{E} \left[ \prod_{i = 1}^{N-1} \frac{ \ell_i + \theta }{ \ell_i } \Big{\vert} \ell_N = 0 \right]- r_1^-(N),$$
which vanishes by definition of $r_1^-(N)$. If $s =M + 1 + (N-1)\cdot \theta$ then we get a contribution from the second expectation in (\ref{S2mN}) only when $m_1 = M + (N-1)\cdot \theta$ and from the third expectation only when $\ell_1 = M + (N-1)\cdot \theta$, and there is no contribution from the first expectation. Thus the residue is 
\begin{align*}
\begin{split}
& \phi_1^b(s) \cdot \theta \cdot \mathbb{P}^{\theta}_N(m_1 = s - 1) \cdot \mathbb{E} \left[ \prod_{i = 2}^{N-1} \frac{s - m_i + \theta-1 }{s- m_i - 1 } \Big{\vert} m_1 = s - 1 \right] \\
& + \phi_1^m(s) \cdot  \theta \cdot  \mathbb{P}^{\theta}_N(\ell_1 = s - 1) \cdot \mathbb{E} \left[ \prod_{i = 2}^{N} \frac{s - \ell_i  - \theta }{s - \ell_i-1 } \prod_{ i = 1}^{N-1} \frac{s - m_i + \theta-1}{s - m_i }\Big{\vert} \ell_1 = s - 1\right] - r^+_1(N),
\end{split}
\end{align*}
which is zero by the definition of $r_1^+(N)$. This proves the analyticity of $R_1(z)$.

%
\subsubsection{Analyticity of $R_2(z)$.} Our strategy is exactly the same as in the previous section, namely we compute the residue of the right side of (\ref{S2mNv2}) at all possible poles and show that they are all zero. $R_2(z)$ has possible poles at $s = a+(N- i)\cdot \theta$ where $a = 0, \dots, M+1$ and $i  = 1, \dots, N$. Note that all of these poles are simple. This is again  obvious for the first two expectations in (\ref{S2mNv2}). In addition, for the last  expectation since for $(\ell, m )\in \S$ and  $\ell_i, m_i$  separately are {\em strictly} increasing each of the two products has  simple poles. We still need to consider the case when the two products share a pole. The latter would be possible only if  $\ell_i = m_j+1-\theta.$ Therefore, 
$$\lambda_i -i \theta=(\mu_j+1) -(j+1) \theta \iff \lambda_i-\mu_j-1=(i-j-1)\theta.$$
Since $\mu_i\geq \lambda_j \text{ for } i< j$ and $\lambda_i \geq \mu_j \text{ for } i\leq j$
the above equality can only happen for $i\leq j$ and $\mu_j=\lambda_i.$ But then $m_{i}=\ell_i$ producing an extra zero
in the second product and reducing the order of the pole by $1.$ Next, we compute the residues at these possible poles and show that they sum to zero.\\

Fix a possible pole $s$ and assume $s\neq 0, M+1+(N-1) \cdot \theta.$ A pair $(\ell, m)\in \S$ contributes to a residue if
\begin{enumerate}
\item $\ell_i=s\text{ or }s-1,$ for some $i \in \{1,\dots, N\}$ or
\item $m_{i-1}=s+\theta \text{ or }s+\theta-1$ for some $i \in \{2,\dots, N\}.$
\end{enumerate}

Then the residue at $s$ is given by
\begin{equation}\label{BigSumv2}
\theta \left( \sum_{i  = 2}^{N} A_i + B_i + C_i +  A_1 + E_1 + D \right), \mbox{ where }
\end{equation}

\begin{equation*}
\begin{split}
&A_i = \sum\limits_{(\ell, m)\in \mathcal G^i_1}\phi_2^t(s)\mathbb P^{\theta}_N(\ell, m) \left[\prod_{ j\neq i}^N\frac{s-\ell_j+\theta-1}{s-\ell_j-1}\right ]-\phi_2^{m}(s) \mathbb P^{\theta}_N(\ell^+,m)\left[  \prod_{ j\neq i}^{N}\frac{s-\ell_j+\theta-1}{s-\ell_j}\prod_{j=1}^{N-1}  \frac{s-m_j}{s-m_j+\theta-1}\right]; \\
&B_i = \sum_{(\ell,m) \in \mathcal{G}^i_2}\phi_2^{m}(s) \mathbb P^{\theta}_N(\ell, m^-)\left[\prod_{j=1}^{N}  \frac{s-\ell_j+\theta-1}{s-\ell_j}   \prod_{ j\neq i-1}^{N-1}\frac{s-m_j}{s-m_j+\theta-1}\right]- \phi_2^b(s)\mathbb P^{\theta}_N(\ell, m) \left[\prod_{ j\neq i-1}^{N-1}\frac{s-m_j}{s-m_j+\theta}\right]; \\
&C_i = \sum_{(\ell,m) \in \mathcal{G}^i_3} \phi_2^t(s)\mathbb P^{\theta}_N(\ell^-, m^-) \left[\prod_{ j\neq  i}^N\frac{s-\ell_j+\theta-1}{s-\ell_j-1}\right ] - \sum\limits_{(\ell, m)\in \mathcal G^i_{4}}\hspace{-2mm}\phi_2^b(s)\mathbb P^{\theta}_N(\ell, m)\left[\prod_{ j\neq i-1}^{N-1}\frac{s-m_j}{s-m_j+\theta}\right]; \\
&D =   \phi_1^{m}(s) \sum\limits_{(\ell, m)\in \mathcal{G}}  \mathbb P^{\theta}_N(\ell,m) \mbox{Res}_{z = s} \left[   \prod_{i = 1}^N \frac{z - \ell_i + \theta - 1}{z - \ell_i} \prod_{i = 1}^{N-1} \frac{z - m_i}{z - m_i + \theta - 1} \right];\\
&E_1  = \sum_{(\ell,m) \in \mathcal{G}^1_3}\phi_2^t(s)\mathbb P^{\theta}_N(\ell, m) \left[\prod_{ j\neq i}^N\frac{s-\ell_j+\theta-1}{s-\ell_j-1}\right ],  \text{ where}
\end{split}
\end{equation*}
$$  \mathcal G^i_1:=\{(\ell, m) | \ell_i=s - 1, (\ell, m) \in \mathfrak{X}_N^1, (\ell^+, m) \in \mathfrak{X}_N^1 \}, \mathcal G^i_2 := \{(\ell, m) | m_{i-1}=s +\theta , (\ell, m) \in \mathfrak{X}_N^1, (\ell, m^-) \in \mathfrak{X}_N^1 \}, $$
$$ \mathcal G^i_3:=\{(\ell, m) | \ell_i=s, (\ell^-, m^-) \in \mathfrak{X}_N^1,  (\ell, m^-) \not \in \mathfrak{X}_N^1\}, \mathcal G^i_4:=\{(\ell, m) | m_{i-1} = s + \theta, (\ell, m^-) \not \in \mathfrak{X}_N^1 , (\ell, m) \in \mathfrak{X}_N^1\}, $$
$$ \mathcal{G} := \{ (\ell, m) \in \mathfrak{X}_N^\theta | (\ell^{-,i}, m) \not \in  \mathcal{G}^i_1 \mbox{ for $i = 1, \dots, N$ and } (\ell, m^{+,j-1}) \not \in  \mathcal{G}^j_2 \mbox{ for $j = 2, \dots, N$} \}. $$
In the above expressions the operations $\ell^{\pm}$ are applied to the $i$-th component, while $m^{\pm}$ to the $(i -1)$-th. Also in the definition of $\mathcal{G}$ the notations $(\ell^{-,i}, m)$ and $(\ell, m^{+,i})$ indicate, in which coordinate the operation is performed. 

We first observe that $A_i = 0$ for $i = 1, \dots, N$ and $B_i = 0$ for all $i = 1, \dots, N-1$ because each summand equals zero as follows from  (\ref{eq:l_ratio}), (\ref{S2eq:m_ratio}) and (\ref{ratioNE}). 

We next observe that we can restrict the two sums in $C_i$ to be over $\mathcal{G}_3^i \cap \mathcal G^i_4$ without affecting the values of each sum. For the first sum, we note that if $(\ell,m) \in \mathcal{G}^i_3$ then $\ell_i = s$ and $m_{i-1} = s + \theta$ and so $(\ell, m) \not \in  \mathcal{G}^i_4$ implies that $\ell_{i - 1} = s -1 +\theta$. The latter means that the product in the first sum vanishes for all $(\ell,m) \in \mathcal{G}^i_3$ such that $(\ell,m) \not \in \mathcal{G}^i_4$ and so we do not affect the value of the sum by removing such terms. Similarly, if $(\ell, m) \in \mathcal{G}^i_4$ then $m_{i - 1} =s +\theta$ and $\ell_i = s$ and so  $(\ell, m) \not \in  \mathcal{G}^i_3$ implies either $s = 0$ if $i = N$ (which we ruled out) or $m_{i} = s$. The latter means that the product in the second sum vanishes for all $(\ell,m) \in \mathcal{G}^i_4$ such that $(\ell,m) \not \in \mathcal{G}^i_3$ and so we do not affect the value of the sum by removing such terms. Finally, once we restrict the two sums in $C_i$ to the same set $\mathcal{G}^i_3 \cap \mathcal{G}^i_4$ we can cancel the two sums term by term using (\ref{eq:lm_ratio}) and  (\ref{ratioNE}). 

We see next that $E_1 = 0$ since $\mathcal{G}_3^1 =\varnothing$ by our assumption that $s \neq M + 1 + (N-1)\cdot \theta$.

Let us explain why each summand in $D$ is zero. If $(\ell, m)$ is such that $\ell_i \neq s$ for all $i = 1,\dots,N$ and $m_j \neq s + \theta - 1$ for all $j = 1, \dots, N-1$ then the double product is analytic near $s$ and the residue is zero. If $\ell_i = s$ for some $i$ and $m_j \neq s + \theta - 1$  for all $j$ then by the fact that $(\ell^{-,i},m) \not \in \mathcal{G}^i_1 $ we see that either $s = 0$ if $i = N$ (which we ruled out) or $m_{i} = s$. It then follows that in the double product, both the numerator and denominator have one factor of $(z- s)$ coming from $(z - m_{i})$ and $(z - \ell_i )$ respectively --  they cancel and so the residue is zero. Similarly, if $\ell_i \neq s$ for all $i = 1, \dots, N$ and $m_j = s + \theta - 1$ for some $j$ then since $(\ell, m^{j,+}) \not \in \mathcal{G}^{j+1}_2$ we conclude that $\ell_{j} = s - 1 + \theta$.  It then follows that in the double product, both the numerator and denominator have one factor of $(z- s)$, coming from $(z - \ell_{j} + \theta - 1)$ and $(z - m_j + \theta - 1)$ respectively -- they cancel and so the residue is zero.

Suppose $(\ell, m)$ is such that $\ell_i = s$ for some $i \in \{1, \dots, N\}$ and $m_j = s + \theta - 1$ for some $j\in \{1, \dots, N-1\}$. This is only possible when $\theta$ has the form $\theta = k^{-1}$ with $k \geq 2$ (we know $\theta \neq 1$), which we assume in the remainder. If $\theta = k^{-1}$ then the previous situation implies $j- i - 1 = k \geq 2$ and the denominator of the product has two factors $(z -s)$ coming from $(z- \ell_i)$ and $(z - m_{j} + \theta - 1)$. But because $\ell \succeq  m$, we know that $m_{i} = s$ and $\ell_{j} = s + \theta - 1$, which means that the numerator in the double product has two factors $(z-s)$ coming from $(z - \ell_{j} + \theta - 1)$ and $(z - m_{i})$. These factors cancel with those in the denominator and so the residue is zero. Overall we conclude that $R_2(z)$ has no pole at $s$ if $s\neq 0, (M+1)+(N-1)\cdot  \theta.$ \\

We finally consider the residues at $s = 0$ and $s = M+1 + (N-1) \cdot \theta$ starting with the former. If $s = 0$ we get no contribution from the first expectation in (\ref{S2mNv2}) and we get a contribution from the second expectation only when $m_{N-1} = \theta$ and from the third expectation only when $\ell_N = 0$. Consequently, the residue is 
\begin{equation*}
\begin{split}
& \phi_2^b(0) \cdot (- \theta) \cdot \mathbb{P}^{\theta}_N(m_{N-1} = \theta) \cdot \mathbb{E} \left[ \prod_{i = 1}^{N-2} \frac{ m_i }{ m_i - \theta } \Big{\vert} m_{N-1} = \theta \right] + \\
& \phi_2^m(0) \cdot  (- \theta) \cdot  \mathbb{P}^{\theta}_N(\ell_N = 0 ) \cdot \mathbb{E} \left[ \prod_{i = 1}^{N-1} \frac{  \ell_i  - \theta + 1}{ \ell_i} \prod_{i = 1}^{N-1} \frac{  m_i}{  m_i  - \theta + 1}\Big{\vert} \ell_N = 0 \right] - r^-_2(N),
\end{split}
\end{equation*}
which is zero by the definition of $r_2^-(N)$. 

If $s = M + 1 + (N-1) \cdot \theta$ then we get a contribution from the first expectation in (\ref{S2mNv2}) only when $\ell_1 = M + (N-1) \cdot \theta$ and we get no contributions from the other two expectations. Consequently, the residue is given by
$$ \phi^t(s ) \cdot \mathbb{P}^\theta_N(\ell_1 = s-1) \theta \cdot \mathbb{E} \left[ \prod_{i = 2}^N\frac{s - \ell_i + \theta - 1}{s- \ell_i - 1}\Big{\vert} \ell_1 = s - 1 \right]  - r_2^+(N)$$
which is zero by the definition of $r_2^+(N)$. This proves the analyticity of $R_2(z)$.

%

\subsection{Two-level Nekrasov's equations: $\theta= 1$}\label{Section2.3} We state the $\theta = 1$ version of Theorem \ref{TN1}.

\begin{theorem}\label{TN1Theta1} Let $\mathbb{P}^{1}_N$ be as in Section \ref{Section2.1} for $\theta = 1$, $M \geq 1$ and $N \geq 2$. Let $\mathcal{M} \subseteq \mathbb{C}$ be open and $[0 , M  + N] \subset \mathcal{M}.$ Suppose there exist six functions $\phi_i^t$, $\phi_i^b$ and $\phi_i^m$ for $i = 1,2$ that are analytic in $\mathcal{M}$ and such that
\begin{equation}\label{ratioNETheta1}
\frac{\phi_1^t(z)}{ \phi_1^m(z)} =   \frac{w(z-1)}{w(z)}, \hspace{2mm}  \frac{\phi_1^b(\hat{z})}{\phi_1^m(\hat{z})} =  \frac{\tau(\hat{z})}{\tau(\hat{z}-1)} \mbox{ and }\frac{\phi^{t}_2(z)}{\phi^{m}_2(z)} =  \frac{w(z)}{w(z-1)}, \hspace{2mm} \frac{\phi^{b}_2(\hat{z}- 1)}{\phi^{m}_2(\hat{z}-1)}= \frac{\tau (\hat{z}- 1)}{\tau(\hat{z} )},
\end{equation}
for all $z \in [1, M +N -1]$ and $\hat{z} \in [2, M +N -1]$. Define $R_1(z)$ and $R_2(z)$ through
\begin{align}\label{S2mNTheta1}
\begin{split}
R_1(z)= & \phi_1^t(z) \cdot \mathbb{E} \left[ \prod_{i = 1}^N\frac{z- \ell_i -1}{z - \ell_i}  \right] + \phi_1^b(z) \cdot \mathbb{E} \left[ \prod_{i = 1}^{N-1}\frac{z- m_i }{z - m_i - 1} \right]  \\
& + \phi_1^m(z) \cdot \mathbb{E} \left[ \sum_{i = 1}^N \frac{1}{z - \ell_i - 1} - \sum_{i = 1}^{N-1} \frac{1}{z - m_i}  \right] - \frac{r_1^-(N)}{ z } - \frac{r_1^+(N)}{z - s_N},
\end{split}
\end{align}
\begin{align}\label{S2mNv2Theta1}
\begin{split}
R_2(z) = &\phi^{t}_2(z)  \cdot \mathbb{E} \left[ \prod_{i = 1}^N\frac{z - \ell_i}{z - \ell_i - 1} \right] + \phi^{b}_2(z) \cdot \mathbb{E} \left[ \prod_{i = 1}^{N-1}\frac{z - m_i}{z - m_i + 1} \right] \\
& + \phi^{m}_2(z) \cdot  \mathbb{E} \left[ \sum_{i = 1}^{N-1} \frac{1}{z - m_i} - \sum_{i = 1}^N \frac{1}{z - \ell_i} \right]- \frac{r_2^-(N)}{ z } - \frac{r_2^+(N)}{z - s_N} .
\end{split}
\end{align}
where $s_N = M + N$ and 
\begin{align}\label{S2NE1ResTheta1}
\begin{split}
&r_1^-(N) =  \phi_1^t(0) \cdot (-1) \cdot \mathbb{P}^{1}_N(\ell_N = 0) \cdot \mathbb{E} \left[ \prod_{i = 1}^{N-1} \frac{\ell_i + 1}{ \ell_i } \Big{\vert} \ell_N = 0 \right],\\
& r_1^+(N) =   \phi_1^b(s_N)   \mathbb{P}^{1}_N(m_1 = s_N - 1)  \mathbb{E} \left[ \prod_{i = 2}^{N-1} \frac{s_N - m_i }{s_N - m_i - 1 } \Big{\vert} m_1 = s_N - 1 \right] + \phi_1^m(s_N)    \mathbb{P}^{1}_N(\ell_1 = s_N - 1),\\
&r_2^-(N) = \phi_2^b(0) \cdot (- 1) \cdot \mathbb{P}^{1}_N(m_{N-1} = 1) \cdot \mathbb{E} \left[ \prod_{i = 1}^{N-2} \frac{m_i}{ m_i - 1 } \Big{\vert} m_{N-1} = 1 \right] + \phi_2^m(0) \cdot  (- 1) \cdot  \mathbb{P}^{1}_N(\ell_N = 0 ),\\
&r_2^+(N) = \phi^t(s_N ) \cdot \mathbb{P}^1_N(\ell_1 = s_N-1) \cdot \mathbb{E} \left[ \prod_{i = 2}^N\frac{s_N - \ell_i }{s_N- \ell_i - 1}\Big{\vert} \ell_1 = s_N - 1 \right].
\end{split} 
\end{align}
Then $R_1(z)$ and $R_2(z)$ are analytic in $\mathcal{M}$. 
\end{theorem}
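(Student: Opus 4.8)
The plan is to follow the template of the proof of Theorem~\ref{TN1}. Both $R_1(z)$ and $R_2(z)$ are manifestly meromorphic on $\mathcal{M}$ with at worst simple poles, all lying in the finite set $\{a+(N-i):1\le i\le N,\ 0\le a\le M+1\}$, so it suffices to compute the residue at each candidate pole $s$ and check that it vanishes. The ratio identities (\ref{eq:l_ratio}), (\ref{S2eq:m_ratio}) and (\ref{eq:lm_ratio}) were established for arbitrary $\theta>0$ and hence apply at $\theta=1$; together with (\ref{ratioNETheta1}) they provide the pairwise cancellations of contributions exactly as before. I would first record that for $(\ell,m)\in\S$ the coordinates $\ell_1<\cdots<\ell_N$ and $m_1<\cdots<m_{N-1}$ are strictly increasing, so each product and each sum in (\ref{S2mNTheta1})--(\ref{S2mNv2Theta1}) has only simple poles.

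For an interior candidate pole $s\neq 0,\,M+N$ I would organize $\mathrm{Res}_{z=s}R_1(z)$ into exactly the sums $A_i,B_i,C_i,E_N$ and the term $D$ appearing in the proof of Theorem~\ref{TN1}, now specialized to $\theta=1$: the vanishing $A_i=B_i=0$, the reduction of $C_i$ to a sum over $\mathcal{G}^i_3\cap\mathcal{G}^i_4$ followed by term-by-term cancellation, and $E_N=0$ all go through verbatim. The only genuinely new point is the $D$-term. At $\theta=1$ the third summand of (\ref{S2mNTheta1}) is the difference of simple-pole sums $\sum_i(z-\ell_i-1)^{-1}-\sum_i(z-m_i)^{-1}$, whose residue at $s$ equals $\#\{i:\ell_i=s-1\}-\#\{i:m_i=s\}$. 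I would show this vanishes for every $(\ell,m)\in\mathcal{G}$: if $\ell_i=s-1$, then $(\ell^{+,i},m)\notin\mathcal{G}^i_1$ together with $s\neq M+N$ forces the move $\lambda_i\mapsto\lambda_i+1$ to break interlacing, so $\mu_{i-1}=\lambda_i$ and hence $m_{i-1}=s$; symmetrically $m_j=s$ forces $\mu_j=\lambda_{j+1}$ and hence $\ell_{j+1}=s-1$. Thus the two index sets are matched by $i\leftrightarrow i-1$, and since $\ell,m$ are strictly increasing each set has at most one element, so the residue of the $D$-term is zero summand by summand and $R_1(z)$ has no pole at $s$. This also subsumes the ``collision'' $\ell_i+1=m_{i-1}$ of the $\theta\neq1$ equation, which here is just the exact cancellation of two simple-pole terms.

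For the boundary poles I would compute directly. At $s=0$ only the first expectation in (\ref{S2mNTheta1}) contributes, and only through configurations with $\ell_N=0$; the residue there is $\phi_1^t(0)\cdot(-1)\cdot\mathbb{P}^1_N(\ell_N=0)\,\mathbb{E}\big[\prod_{i=1}^{N-1}(\ell_i+1)/\ell_i\mid\ell_N=0\big]$, cancelled precisely by $r_1^-(N)$. At $s=M+N$ the contributions come from the second expectation (only when $m_1=s_N-1$) and from the third (only when $\ell_1=s_N-1$), and their sum matches $r_1^+(N)$ from (\ref{S2NE1ResTheta1}). This proves analyticity of $R_1(z)$. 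The argument for $R_2(z)$ is entirely parallel, using the mirror bookkeeping from the proof of Theorem~\ref{TN1} (poles tracked through $\ell_i\in\{s-1,s\}$ and $m_{i-1}\in\{s,s+1\}$ since $\theta=1$), with the analogous index pairing $\ell_i=s\leftrightarrow m_i=s$ handling the degenerated $D$-term and the boundary residues matched against $r_2^\pm(N)$.

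I expect the main obstacle to be bookkeeping rather than any new idea: one must verify carefully that the sets $\mathcal{G}^i_j$ and $\mathcal{G}$ are arranged so that every contributing configuration is counted exactly once, and that at $\theta=1$ the double-pole collisions of the $\theta\neq1$ equation have genuinely become cancellations in the difference-of-sums form used here. A lighter alternative worth noting is to derive Theorem~\ref{TN1Theta1} as the limit $\theta\to1$ of Theorem~\ref{TN1}: expanding $\frac{\theta}{1-\theta}\prod_i\frac{z-\ell_i-\theta}{z-\ell_i-1}\prod_i\frac{z-m_i+\theta-1}{z-m_i}$ about $\theta=1$ produces, up to terms analytic in $z$, exactly $\phi_1^m(z)\big(\sum_i(z-\ell_i-1)^{-1}-\sum_i(z-m_i)^{-1}\big)$, and similarly for $R_2$; but making this rigorous requires continuity control of the measure and its state space in $\theta$, so the direct residue computation is the cleaner route.
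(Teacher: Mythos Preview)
Your proposal is correct and follows essentially the same residue-cancellation strategy as the paper, with the same $A_i,B_i,C_i$ pairings and the same underlying bijection for the middle term. The one organizational difference worth noting is in the $D$-term: the paper's proof of Theorem~\ref{TN1Theta1} introduces new sets $\mathcal{G}^i_5,\mathcal{G}^i_6$, splits the leftover $\phi^m$-contribution into indexed pieces $D_i=\phi_1^m(s)\big[\mathbb{P}(\ell_i=s-1,m_{i-1}=s)-\mathbb{P}(\ell_{i+1}=s-1,m_i=s)\big]$ that \emph{telescope}, and then cancels the boundary remainder against a nonzero $E_N$. Your version instead keeps the $\theta\neq1$ bookkeeping (a single $D$ over $\mathcal{G}$, with $E_N$ containing only the first-expectation piece and hence vanishing) and shows the residue is zero \emph{configuration by configuration} via the pairing $\ell_i=s-1\Leftrightarrow m_{i-1}=s$; this is the same observation repackaged, and arguably more uniform with the $\theta\neq1$ proof.
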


\begin{remark}
From (\ref{S2NE1ResTheta1}) we see that if $\phi_1^t(0) = \phi_1^b(M + N) = \phi^m_1(M + N) = 0$ then $r_1^+(N) = r_1^-(N) = 0$. Analogously, if $\phi^t_2( M  + N) = \phi_2^b(0) = \phi_2^b(0) = 0$ then $r_2^+(N) = r_2^-(N) = 0$. 
\end{remark}

\begin{proof}
We will deduce the theorem from Theorem \ref{TN1} by performing an appropriate $\theta \rightarrow 1$ limit transition. We begin with the analyticity of $R_1$.

From the first line in (\ref{ratioNETheta1}) we know that $w, \tau$  have (unique) analytic continuations to a complex neighborhood $U \subseteq \mathcal{M}$ of $[\theta, M +N]$, which we continue to call $w, \tau$. Let $\theta$ be sufficiently close to $1$ so that $[0, M+1 +(N -1)\theta]\subset U$ and write $\P$ for the measure as in (\ref{S1PDef}) with the weights $w, \tau$ we just introduced. It follows from (\ref{S2mN}) that 
\vspace{-1mm}
\begin{align}\label{S23E1}
\begin{split}
R^\theta_1(z)= & \phi_1^t(z) \cdot \mathbb{E}^\theta \left[ \prod_{i = 1}^N\frac{z- \ell_i -\theta}{z - \ell_i}  \right] + \phi_1^b(z) \cdot \mathbb{E}^\theta \left[ \prod_{i = 1}^{N-1}\frac{z- m_i + \theta - 1}{z - m_i - 1} \right]   \\
& + \frac{\theta}{1 - \theta} \cdot \phi_1^m(z) \cdot \mathbb{E}^\theta \left[  \prod_{i = 1}^N \frac{z- \ell_i -\theta}{z - \ell_i - 1}  \prod_{i = 1}^{N-1}\frac{z- m_i + \theta - 1}{z - m_i} \right] - \frac{r_1^-(N)}{ z} - \frac{r_1^+(N)}{z - s_N},
\end{split}
\end{align}
is analytic in $U$. In (\ref{S23E1}) we have inserted $\theta$ into the notation to indicate the dependence of the expressions on it. Subtracting $G^{\theta}_1(z) := \frac{\theta}{1-\theta}\phi^{m}_1(z)  $
from both sides of (\ref{S23E1}) and letting $\theta \rightarrow 1$, we see that the right side converges uniformly over compact subsets of $U \setminus [0, M + N]$ to the right side of (\ref{S2mNTheta1}). In particular, we see that for $z \in U \setminus [0, M+N]$ we have 
\begin{equation}\label{S23E2}
\lim_{\theta \rightarrow 1} [R_1^{\theta}(z) - G^{\theta}_1(z)]  = R_1(z).
\end{equation}

Let $\epsilon > 0$ be sufficiently small so that $V_{\epsilon} = \{z \in \mathbb{C}: d(z,[0, M+N]) < \epsilon \} \subseteq U$ and let $\gamma$ denote a positively oriented contour that encloses $V_{\epsilon}$, and is contained in $U$. From Cauchy's theorem and (\ref{S23E2}) we have for $z \in V_{\epsilon/2} $
$$\lim_{\theta \rightarrow 1} [R_1^{\theta}(z) - G^{\theta}_1(z)] = \lim_{\theta \rightarrow 1} \frac{1}{2\pi \i}\int_{\gamma} \frac{R_1^{\theta}(\zeta) - G^{\theta}_1(\zeta)}{\zeta -z} d\zeta = \frac{1}{2\pi \i}\int_{\gamma} \frac{R_1(\zeta) }{\zeta -z} d\zeta,$$
and the latter convergence is uniform on $V_{\epsilon/2} $. From Theorem \ref{TN1} for $\theta \neq 1$ we know that $R_1^{\theta}$ are analytic and $G^{\theta}_1$ are analytic. This means that 
$$\frac{1}{2\pi \i}\int_{\gamma} \frac{R_1(\zeta) }{\zeta -z} d\zeta$$
defines an analytic function on $V_{\epsilon/2}$ as the uniform limit of analytic functions, see \cite[Chapter 2, Theorem 5.2]{SS}. In addition, by (\ref{S23E2}) we know that this function agrees with $ R_1(z)$ on $V_{\epsilon/2} \setminus [0, M+N].$ The latter shows that $R_1(z)$, which from (\ref{S2mNTheta1}) is clearly analytic in $\mathcal{M} \setminus [0, M+N]$,  has an analytic continuation to $\mathcal{M}$. Since from (\ref{S2mNTheta1}) we know that $R_1(z)$ is meromorhpic on $\mathcal{M}$, we conclude that it is in fact analytic there as desired.

For the analyticity of $R_2$ we argue in a similar fashion.
\end{proof}

%

\section{Particular setup} \label{Section3}

In this section we specialize the measures $\mathbb{P}_N^{\theta}$ that were defined in Section \ref{Section1.1} to the case when $\tau(\cdot) \equiv 1$ -- this is the main object in our asymptotic analysis. In Section \ref{Section3.1} below we list some properties of this measure as well as the particular assumptions we make about the way its parameters are scaled. In Section \ref{Section3.2} we introduce certain deformations of the measure $\mathbb{P}_N^\theta$ that will be useful in our analysis.

%
\subsection{Properties of the system} \label{Section3.1}
We begin with some necessary notation, some of it being recalled from Sections \ref{Section1} and \ref{Section2}. Let $\theta > 0, M \in \mathbb{Z}_{\geq 0}$ and $N \in \mathbb{N}$. For such parameters we set
\begin{align}\label{GenState}
\begin{split}
&\Lambda_N = \{  (\lambda_1, \dots, \lambda_N) : \lambda_1\geq  \lambda_2 \geq \cdots \geq \lambda_N, \lambda_i \in \mathbb{Z}  \mbox{ and }0  \leq \lambda_i  \leq M \}, \\
&\mathbb{W}^\theta_{N,k} = \{ (\ell_1, \dots, \ell_k):  \ell_i = \lambda_i + (N - i)\cdot\theta, \mbox{ with } (\lambda_1, \dots, \lambda_k) \in \Lambda_k\}, \\
&\mathfrak{X}^\theta_N = \{ (\ell, m) \in \mathbb{W}^{\theta}_{N, N} \times \mathbb{W}^{\theta}_{N, N-1}: \ell \succeq m \},
\end{split}
\end{align}
where $\ell \succeq m$ means that if $ \ell_i = \lambda_i + (N - i)\cdot\theta$ and $m_i = \mu_i + (N - i) \cdot \theta$ then $\lambda_1 \geq \mu_1 \geq \lambda_2 \geq \mu_2 \geq \cdots \geq \mu_{N-1} \geq \lambda_N$. We interpret $\ell_i$'s and $m_i$'s as locations of two classes of particles. If $\theta = 1$ then all particles live on the integer lattice, while for general $\theta$ the particles of index $i$ live on the shifted lattice $\mathbb{Z} +(N - i) \cdot \theta$. Throughout the text we will frequently switch from $\ell_i$'s to $\lambda_i$'s and from $m_i$'s to $\mu_i$'s without mention using the formulas 
\begin{equation}\label{Eqcoord}
\ell_i = \lambda_i + (N- i)\cdot\theta \mbox{ and } m_i = \mu_i + (N - i) \cdot \theta. 
\end{equation}

We define a probability measure $\mathbb{P}_N$ on $\mathfrak{X}^\theta_N$ through
\begin{equation}\label{PDef}
\mathbb{P}_N(\ell, m) = Z_N^{-1} \cdot H^t(\ell) \cdot H^b(m) \cdot I(\ell, m), \mbox{ where }
\end{equation}
\begin{align}\label{PDef2}
\begin{split}
H^t(\ell) = &\prod_{1 \leq i < j \leq N} \frac{\Gamma(\ell_i - \ell_j + 1)}{\Gamma(\ell_i - \ell_j  - \theta + 1)}  \prod_{i = 1}^N w(\ell_i;N), \hspace{2mm} H^b(m) = \prod_{1 \leq i < j \leq N-1} \frac{\Gamma(m_i - m_j + \theta)}{\Gamma(m_i - m_j)} \\
I(\ell, m) = &\prod_{1 \leq i < j \leq N} \frac{\Gamma(\ell_i - \ell_j - \theta + 1)}{\Gamma(\ell_i - \ell_j) } \cdot \prod_{1 \leq i < j \leq N-1} \frac{\Gamma(m_i - m_j + 1)}{\Gamma(m_i - m_j + \theta)} \\
&\times \prod_{1 \leq i < j \leq N} \frac{\Gamma(m_i - \ell_j)}{ \Gamma(m_i - \ell_j - \theta + 1)}  \cdot \prod_{1 \leq i \leq j \leq N-1} \frac{\Gamma(\ell_i - m_j + \theta)}{\Gamma(\ell_i - m_j + 1)}.
\end{split}
\end{align}
Here $Z_N$ is a normalization function (called the {\em partition function}) and $w(x,N)$ is a weight function, which is assumed to be positive for $x \in [0 , M + (N-1) \cdot \theta]$. 
This is precisely the measure from (\ref{S1PDef}) when $\tau \equiv 1$. 

The measure $\mathbb{P}_N$ satisfies the following important property. The projection of $\mathbb{P}_N$ on the particles $\ell_1, \cdots, \ell_N$ satisfies
\begin{equation}\label{SingleLevelMeasure}
\mathbb{P}_N(\ell_1, \dots, \ell_N) \propto\prod_{1 \leq i < j \leq N} \frac{\Gamma(\ell_i - \ell_j + 1)\Gamma(\ell_i - \ell_j + \theta)}{\Gamma(\ell_i - \ell_j)\Gamma(\ell_i - \ell_j -\theta + 1)}  \prod_{i = 1}^N w(\ell_i; N).
\end{equation}
The law in (\ref{SingleLevelMeasure}) is known as a {\em discrete $\beta$-ensemble} and its global fluctuations were studied in \cite{BGG}. We will prove (\ref{SingleLevelMeasure}) later in Proposition \ref{PropExtension}.

The fact that the projection of $\mathbb{P}_N$ to the $\ell_i$'s is given by (\ref{SingleLevelMeasure}) is the main reason we consider $\tau \equiv 1$, since it allows us to use the results that were already established in \cite{BGG} about these measures. We believe that one should be able to extend the results of \cite{BGG} to the more general measures in (\ref{S1PDef}), but we do not pursue this direction in this paper as it deviates significantly from our main goal. We mention here that $\tau$ plays the role of an external potential for the particles $(m_1, \dots, m_{N-1})$ and setting $\tau \equiv 1$ corresponds to having no external potential acting on these particles. \\

We are interested in obtaining asymptotic statements about $\mathbb{P}_N$ as $N \rightarrow \infty$. This requires that we scale our parameter $M$ and impose some regularity assumptions on the weight functions $w(x;N)$. We list these assumptions below.\\

{\raggedleft \bf Assumption 1.} We assume that we are given parameters $\theta > 0$, $\lM > 0$. In addition, we assume that we have a sequence of parameters $M_N \in \mathbb{N}$ such that
\begin{equation}\label{GenPar}
\mbox{$M_N \geq 0$ and }, \left| M_N - N\lM\right| \leq A_1, \mbox{ for some $A_1 > 0$.}
\end{equation}
The measures $\mathbb{P}_N$ will then be as in (\ref{PDef}) for $ M= M_N, \theta$ and $N$.\\

{\raggedleft \bf Assumption 2.} We assume that $w(x;N)$ in the interval $[0, M_N + (N-1) \cdot \theta]$ has the form
$$w(x;N) = \exp\left( - N V_N(x/N)\right),$$
for a function $V_N$ that is continuous in the interval $I_N = [0, M_N \cdot N^{-1} + (N-1) \cdot N^{-1} \cdot \theta]$. In addition, we assume that there is a continuous function $V(s)$ on $I = [0, \lM + \theta]$ such that 
\begin{equation}\label{GenPot}
\left| V_N(s) - V(s) \right| \leq A_2 \cdot N^{-1}\log(N), \mbox{ for $s \in I_N \cap I$ and $|V(s)| \leq A_3$ for $s \in I$, }
\end{equation}
for some constants $A_2,A_3 > 0$. We also require that $V(s)$ is differentiable and for some $A_4 > 0$
\begin{equation}\label{DerPot}
\left| V'(s) \right| \leq A_4 \cdot \left[ 1 + \left| \log |s | \right|  + | \log |s -  \lM - \theta||  \right], \mbox{ for } s \in \left[0, \lM + \theta \right], \mbox{ for $s \in I$}.
\end{equation}
\begin{remark}
In plain words, Assumption 2 states that the weights $w(x;N)$ underlying the discrete model $\mathbb{P}_N$ asymptotically look like $e^{-NV(x/N)}$ for some function $V$ that plays the role of an external potential in our model. This external potential is assumed to be differentiable on the interval $(0, \lM + \theta)$, but its derivative is allowed to have logarithmic singularities near the endpoints $0$ and $\lM + \theta$ -- some of the applications we have in mind satisfy this condition. Also in applications (see \cite[Section 9]{BGG}) the function $V$ may depend on $N$, therefore, we keep this dependence in the notation.  We believe that one can take more general remainders in the above two assumptions, without significantly influencing the arguments in the later parts of the paper. However, we do not pursue this direction due to the lack of natural examples.
\end{remark}

Consider the random probability measure $\mu_N$ on $\mathbb{R}$ given by
\begin{equation}\label{EmpMeas}
\mu_N = \frac{1}{N} \sum_{i = 1}^N \delta \left( \frac{\ell_i}{N} \right), \mbox{ where $(\ell_1, \dots, \ell_N)$ is $\mathbb{P}_N$-distributed}.
\end{equation}
Under Assumptions 1 and 2 we have the following result, which can be found in \cite[Theorem 5.3]{BGG}.
\begin{proposition}\label{LLN}
Suppose that Assumptions 1 and 2 hold. Then the measures $\mu_N$ converge weakly in probability to a certain deterministic probability measure $\mu(x)dx$. More precisely, for each Lipschitz function $f(x)$ defined in a real neighborhood of $[0, \lM + \theta]$ and each $\varepsilon > 0$ the random variables
$$N^{1/2 - \varepsilon} \left| \int_{\mathbb{R}} f(x) \mu_N(dx) - \int_{\mathbb{R}}f(x) \mu(x)dx \right|$$
converge to $0$ in probability and in the sense of moments. The measure $\mu(x)dx$ 
\begin{enumerate}
\item is supported on a subset of $[0, \lM + \theta]$;
\item has density $\mu(x)$ such that $0 \leq \mu(x) \leq \theta^{-1}$ 
\end{enumerate}
and is the unique maximizer of the functional
\begin{equation}\label{energy}
I_V[\rho] = \theta \iint_{x\neq y} \log|x - y| \rho(x) \rho(y)dxdy - \int_{\mathbb{R}} V(x) \rho(x)dx
\end{equation}
among all measures that satisfy the above two properties.
\end{proposition}
We call the measure $\mu$ in Proposition \ref{LLN} the {\em equilibrium measure}. 

{\raggedleft \bf Assumption 3.}  We assume that we have an open set $\mathcal{M}  \subseteq \mathbb{C}$, such that $[0, \lM + \theta] \subset \mathcal{M}$.
 In addition, for all large $N$ one is provided with holomorphic functions $\Phi^+_N, \Phi^-_N$ on $\mathcal{M}$ such that 
\begin{equation}\label{eqPhiN}
\begin{split}
&\frac{w(Nx;N)}{w(Nx-1;N)}=\frac{\Phi_N^+(Nx)}{\Phi_N^-(Nx)},
\end{split}
\end{equation} 
whenever $x \in [ N^{-1}, M_N \cdot N^{-1}+  (N-1) \cdot N^{-1} \cdot \theta]$. Moreover, 
\begin{equation*}
\begin{split}
&\Phi^{-}_N(Nz) = \Phi^{-}(z) + O \left(N^{-1} \right) \mbox{ and } \Phi^{+}_N(z) = \Phi^{+}(Nz) + O \left(N^{-1} \right),
\end{split}
\end{equation*}
where the constants in the big $O$ notation are uniform over $z$ in compact subsets of $\mathcal{M}$. All aforementioned functions are holomorphic in $\mathcal{M}$, $\Phi^{\pm}$ do not depend on $N$ and are positive (in particular real) on $(0, \lM + \theta)$.\\

Under Assumptions 1-3 we have the following single level Nekrasov's equation for $\mathbb{P}_N$, which is an analogue of \cite[Theorem 4.1]{BGG}. Since the way we state the result is a bit different we also supply the proof in Section \ref{Section10}.
\begin{proposition}\label{SingleLevelNekrasov}
 Suppose that Assumptions 1-3 hold. Define
\begin{equation}\label{SingleLevelEquation}
R_N(z) = \Phi^-_N(z) \cdot \mathbb{E}_{\mathbb{P}_N} \left[ \prod_{i = 1}^N  \frac{z - \ell_i - \theta}{z - \ell_i}\right] + \Phi^+_N(z) \cdot \mathbb{E}_{\mathbb{P}_N} \left[ \prod_{i = 1}^N  \frac{z - \ell_i + \theta - 1}{z - \ell_i - 1} \right] - \frac{r^-(N)}{z} - \frac{r^+(N)}{z - s_N}, 
\end{equation}
where $s_N = M_N+ 1 + (N-1) \cdot \theta$ and 
\begin{align}\label{RemSL}
\begin{split}
&r^-(N) = \Phi^-_N(0) \cdot (-\theta) \cdot \mathbb{P}_N(\ell_N = 0) \cdot \mathbb{E}_{\mathbb{P}_N} \left[ \prod_{i= 1}^{N-1}  \frac{ \ell_i + \theta }{ \ell_i}\Big{\vert}  \ell_N = 0 \right],\\
&r^+(N) = \Phi^+_N(s_N) \cdot \theta \cdot \mathbb{P}_N(\ell_1 = s_N -1 ) \cdot \mathbb{E}_{\mathbb{P}_N} \left[\prod_{i = 2}^N  \frac{s_N - \ell_i  + \theta -1}{s_N - \ell_i - 1} \Big{\vert}  \ell_1 = s_N - 1 \right].
\end{split}
\end{align}
Then $R_N(z)$ is a holomorphic function on the rescaled domain $N \cdot \mathcal{M}$ in Assumption 3.
\end{proposition}

The next assumption we require aims to upper bound the quantities $r^{\pm}(N)$ in (\ref{RemSL}).\\
{\raggedleft \bf Assumption 4.} We assume that there are constants $C, c, a > 0$ such that for all large $N$
\begin{align}\label{RUB}
\begin{split}
&\mathbb{P}_N(\ell_N = 0) \cdot \left|\Phi_N^-(0)\right| \leq C \exp ( - c N^a) \mbox{ and } \\
&\mathbb{P}_N(\ell_1= M_N + (N-1) \cdot \theta ) \cdot \left| \Phi_N^+(M_N + 1 +(N-1) \cdot \theta) \right| \leq C \exp ( - c N^a).
\end{split}
\end{align}
Note that by the interlacing condition we have $\mathbb{P}_N(m_{N-1} = 0) \leq \mathbb{P}_N(\ell_{N} = 0)$ and $\mathbb{P}_N(m_{1} = M_N + (N-1) \cdot \theta) \leq \mathbb{P}_N(\ell_{1} = M_N + (N-1) \cdot \theta)$, which implies from (\ref{RUB}) that
\begin{align*}
\begin{split}
&\mathbb{P}_N(m_{N-1} = 0) \cdot \left|\Phi_N^-(0)\right| \leq C \exp ( - c N^a) \mbox{ and } \\
&\mathbb{P}_N(m_1= M_N + (N-1) \cdot \theta ) \cdot \left| \Phi_N^+(M_N + 1 +(N-1) \cdot \theta) \right| \leq C \exp ( - c N^a).
\end{split}
\end{align*}
\begin{remark} The significance of Assumption 4 is that the last two terms in equation (\ref{SingleLevelEquation}) are very small and can be essentially ignored when using the $N \rightarrow \infty$ limit of that equation.  One case when these terms can be completely ignored is if $\Phi^-_N(0) = 0$ and $\Phi^+_N(M_N + 1 +(N-1) \cdot \theta) = 0$. If these functions do not vanish then, in view of Assumption 3, we know that $\Phi_N^-(0)$ and $\Phi_N^+(M_N + 1 +(N-1) \cdot \theta)$ are bounded for all large $N$ and so (\ref{RUB}) would hold if $\mathbb{P}_N(\ell_N = 0) \leq C \exp ( - c N^a) $ and $\mathbb{P}_N(\ell_1= M_N + (N-1) \cdot \theta )\leq  C \exp ( - c N^a)$, i.e. if it is very unlikely to find particles at the ends of the interval $I_N = [0, M_N +(N-1) \cdot \theta]$.
\end{remark}

The final assumption we require is about the equilibrium measure $\mu$. A convenient way to encode $\mu$ is through its Stieltjes transform
\begin{equation}\label{GmuDef}
G_{\mu}(z) := \int_\mathbb{R} \frac{\mu(x)dx}{z - x}.
\end{equation}
The following two functions naturally arise in the asymptotic study of the measures $\mu_N$
\begin{align}\label{QRmu}
\begin{split}
&R_{\mu}(z) = \Phi^-(z) \cdot e^{- \theta G_{\mu} (z) }+  \Phi^+(z) \cdot e^{ \theta G_{\mu} (z) }\\
&Q_{\mu}(z) = \Phi^-(z) \cdot e^{- \theta G_{\mu} (z) } -  \Phi^+(z) \cdot e^{ \theta G_{\mu} (z) }.
\end{split}
\end{align}
For the above functions we have the following result. 
\begin{lemma}\label{S3AnalRQ}
If Assumptions 1-4 hold then $R_\mu$ and $Q_\mu^2$ in (\ref{QRmu}) are analytic on $\mathcal{M}$ and real-valued on $\mathcal{M} \cap \mathbb{R}$. 
\end{lemma}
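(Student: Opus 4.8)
The plan is to derive both claims from the single-level Nekrasov equation (Theorem~\ref{SingleLevelNekrasov}) combined with the law of large numbers (Theorem~\ref{LLN}), via the elementary identity $Q_\mu(z)^2 = R_\mu(z)^2 - 4\,\Phi^-(z)\Phi^+(z)$, which is immediate from~(\ref{QRmu}). Since $\Phi^+$ and $\Phi^-$ are holomorphic on $\mathcal{M}$ (Assumption~3), once we know $R_\mu$ is real analytic on $\mathcal{M}$ the statement for $Q_\mu^2$ follows. Note we cannot hope to treat $Q_\mu$ itself by the same method: the difference of the two expectations is not holomorphic on $\mathcal{M}$ — only their sum, appearing in $R_N$, is — which is precisely why the lemma asks about $Q_\mu^2$ and the squaring identity is essential.

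So the content is to extend $R_\mu$ — which is a priori only defined and real analytic on $\mathcal{M}\setminus[0,\lM+\theta]$, since $G_\mu$ is holomorphic exactly there — across the segment $[0,\lM+\theta]$. The mechanism: $R_\mu$ is a limit of the rescaled observables $\tilde R_N(z) := R_N(Nz)$, which by Theorem~\ref{SingleLevelNekrasov} are honestly holomorphic on all of $\mathcal{M}$. First I would fix $z\in\mathcal{M}$ at distance $\delta>0$ from $[0,\lM+\theta]$ and show $\tilde R_N(z)\to R_\mu(z)$: by Assumption~3, $\Phi^\pm_N(Nz)\to\Phi^\pm(z)$; expanding as in~(\ref{exp_intro}) gives $\log\prod_{i=1}^N\frac{Nz-\ell_i-\theta}{Nz-\ell_i} = -\theta\int\frac{\mu_N(dx)}{z-x}+O(N^{-1})$ with a \emph{deterministic} error (since the $\ell_i/N$ lie within $O(N^{-1})$ of the support, hence at distance $\geq\delta/2$ from $z$ for large $N$), and Theorem~\ref{LLN} gives $\int\frac{\mu_N(dx)}{z-x}\to G_\mu(z)$ in probability; as the product is moreover uniformly bounded, dominated convergence upgrades this to $\mathbb{E}_{\mathbb{P}_N}\big[\prod_{i=1}^N\frac{Nz-\ell_i-\theta}{Nz-\ell_i}\big]\to e^{-\theta G_\mu(z)}$, and similarly the second expectation in~(\ref{SingleLevelEquation}) tends to $e^{\theta G_\mu(z)}$; finally Assumption~4 forces the two remainder terms $r^{\pm}(N)/(Nz)$ and $r^{\pm}(N)/(Nz-s_N)$ to vanish. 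Hence $\tilde R_N\to R_\mu$ pointwise on the open set $\mathcal{M}\setminus[0,\lM+\theta]$.

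The step I expect to need the most care is promoting this to a genuine holomorphic extension, via a normal-families (Vitali--Porter) argument: the obstruction is that individual products blow up as $z$ approaches particle locations inside the support, so one needs a uniform bound for $\tilde R_N$ on a compact neighborhood of $[0,\lM+\theta]$ coming from the fact that $\tilde R_N$, as a whole, is holomorphic there. I would choose a contour $\Gamma\subset\mathcal{M}$ enclosing such a neighborhood $K$ and lying at distance $\delta>0$ from $[0,\lM+\theta]$; on $\Gamma$ one has $|Nz-\ell_i|\geq N\delta/2$ for all $i$, all configurations, and large $N$, so each product is bounded by $(1+\tfrac{2\theta}{N\delta})^N\leq e^{2\theta/\delta}$, the prefactors $\Phi^\pm_N(Nz)$ are bounded by Assumption~3, and the remainder terms are negligible by Assumption~4; this gives $\sup_\Gamma|\tilde R_N|\leq C$ uniformly in $N$, whence by the maximum principle (legitimate because $\tilde R_N$ is holomorphic \emph{inside} $\Gamma$) also $\sup_K|\tilde R_N|\leq C$. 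The Vitali--Porter theorem then produces a holomorphic limit of $\tilde R_N$ on $\mathcal{M}$ which, by the previous paragraph, coincides with $R_\mu$ off the segment; this is the desired extension. Reality on $\mathcal{M}\cap\mathbb{R}$ follows since $\Phi^\pm$ and $G_\mu$ are real there off the support and analytic continuation is unique, so $R_\mu$ is real analytic on $\mathcal{M}$.

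To finish, $Q_\mu^2 = R_\mu^2 - 4\,\Phi^-\Phi^+$ now exhibits $Q_\mu^2$ as a polynomial combination of functions real analytic on $\mathcal{M}$ (namely $R_\mu$ by the above and $\Phi^\pm$ by Assumption~3), hence $Q_\mu^2$ is real analytic on $\mathcal{M}$, which completes the proof.
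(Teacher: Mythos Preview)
Your proposal is correct and follows the same overall strategy as the paper: show that the rescaled Nekrasov observable $\tilde R_N(z)=R_N(Nz)$ (holomorphic on all of $\mathcal{M}$ by Theorem~\ref{SingleLevelNekrasov}) converges to $R_\mu(z)$ off the support, use this to extend $R_\mu$ across $[0,\lM+\theta]$, and then read off $Q_\mu^2 = R_\mu^2 - 4\Phi^-\Phi^+$.

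The implementations differ in two respects. First, for the convergence of the expectations you invoke only the soft law of large numbers (Theorem~\ref{LLN}) to get pointwise convergence, whereas the paper uses the quantitative concentration estimate (Corollary~\ref{BC1}) to obtain \emph{uniform} convergence on the contour. Second, for the extension step you package the argument as Vitali--Porter (uniform bounds inside $\Gamma$ via the maximum principle, plus pointwise convergence on a set with an accumulation point), whereas the paper writes out the Cauchy integral representation $\tilde R_\mu(v)=\frac{1}{2\pi i}\oint_\gamma \frac{R_\mu(z)}{v-z}\,dz$ explicitly and shows $R_N(Nv)\to\tilde R_\mu(v)$ by passing the limit under the integral. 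Your route is more economical---normal families plus the LLN suffice, no large deviations needed---while the paper's is more self-contained. One small point you gloss over: Assumption~4 only bounds $\Phi_N^-(0)\,\mathbb{P}_N(\ell_N=0)$ (and its analogue at the right endpoint), not the full remainder $r^-(N)$, which also carries a conditional expectation of $\prod_{i=1}^{N-1}\frac{\ell_i+\theta}{\ell_i}$; the paper notes in one line that this product is deterministically at most $N$ (since $\ell_i\geq(N-i)\theta$), so the exponential decay from Assumption~4 still dominates.
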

\begin{proof}
See the proof of Lemma \ref{AnalRQ}.
\end{proof}
We detail the relationship between $R_\mu$ and the equilibrium measure $\mu$ in the following statement.
\begin{lemma}\label{S3Lsupp} If Assumptions 1-4 hold then $\mu$ has density
\begin{equation}\label{S3eqMForm}
 \mu(x) =  \frac{1}{\theta \pi } \cdot \mathrm{arccos} \left( \frac{R_\mu(x)}{2 \sqrt{\Phi^-(x)  \Phi^+(x)}}\right),
\end{equation}
for $x \in [0, \lM + \theta]$ and $0$ otherwise. We recall that $\mathrm{arccos}$ is as in Section \ref{Section1.5} and $\Phi^+(x) \Phi^-(x) > 0$ on $(0, \lM+ \theta)$ by Assumption 3. In particular, $\mu(x)$ is continuous in $[0, \lM + \theta]$. 
\end{lemma}
\begin{proof}
See the proof of Lemma \ref{Lsupp}.
\end{proof}

Similarly to \cite[Section 4]{BGG} we impose the following technical condition on the function $Q_\mu(z)$ and refer to that paper for a discussion on its significance.

{\raggedleft \bf Assumption 5.} Assume there are a holomorphic function $H(z)$ and numbers $\alpha, \beta$ such that
\begin{itemize}
\item $0 \leq \alpha < \beta \leq \lM + \theta$;
\item $H(z) \neq 0$ for all $z \in [0, \lM + \theta]$;
\item $Q_\mu(z) = H(z) \sqrt{(z - \alpha)(z- \beta)}$.
\end{itemize}
We recall that $\sqrt{(z - \alpha)(z- \beta)}$ is as in Section \ref{Section1.5} and note that $H$ here is different from $H^t$ and $H^b$ in (\ref{PDef2}), which should cause no confusion. We remark that the form $Q_\mu(z) = H(z) \sqrt{(z - \alpha)(z- \beta)}$ ensures that $\mu$ has a single interval of support in $[0, \lM + \theta]$ and also the set of points where $\mu(x) \in (0,\theta^{-1})$ is precisely the interval $(\alpha,\beta)$ -- see the proofs of Lemmas \ref{Lsupp} and \ref{NonVanish}. Maximal connected intervals where $\mu(x) \in (0,\theta^{-1})$ are referred to as {\em bands}, see \cite{BGG}, and Assumption 5 implies that $\mu$ has a single band $(\alpha, \beta)$. In reality, Assumption 5 is stronger than the single band assumption and its precise form is made in a way that is suitable for the application of the loop equations and dates back to \cite[Assumption 4]{BGG}.  \\

We isolate an important consequence of Assumption 5 and postpone its proof until Section \ref{Section9}.
\begin{lemma}\label{S3NonVanish}
Under Assumptions 1-5, we have that $\Phi^-(z) + \Phi^+(z) - R_\mu(z) \neq 0$ for all $z \in [0, \lM + \theta]$. 
\end{lemma}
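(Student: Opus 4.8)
The plan is to leverage the explicit formula for the density $\mu(x)$ from Lemma \ref{S3Lsupp} together with the factorization of $Q_\mu$ provided by Assumption 5. First I would recall the defining relations $R_\mu = \Phi^- e^{-\theta G_\mu} + \Phi^+ e^{\theta G_\mu}$ and $Q_\mu = \Phi^- e^{-\theta G_\mu} - \Phi^+ e^{\theta G_\mu}$, which give $R_\mu^2 - Q_\mu^2 = 4 \Phi^- \Phi^+$. Since $\Phi^\pm$ are real analytic and positive on $(0, \lM + \theta)$, and $R_\mu, Q_\mu^2$ are real analytic on $\mathcal{M}$ by Lemma \ref{S3AnalRQ}, all quantities appearing are well-behaved on $[0, \lM + \theta]$; the only possible degenerations are at the two endpoints $0$ and $\lM + \theta$, which should be handled by a separate limiting argument using Assumption 3.

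The key computation is to evaluate $\Phi^-(z) + \Phi^+(z) - R_\mu(z)$ on $(0, \lM + \theta)$. Writing $u = e^{\theta G_\mu(z)}$ (for real $z$ where $G_\mu$ is real, i.e.\ off the support of $\mu$, this is positive; on the support one must take boundary values), one has $\Phi^- + \Phi^+ - R_\mu = \Phi^-(1 - e^{-\theta G_\mu}) + \Phi^+(1 - e^{\theta G_\mu})$. On the bulk of the support, where $0 < \mu(x) < \theta^{-1}$, Lemma \ref{S3Lsupp} tells us $R_\mu(x) = 2\sqrt{\Phi^-(x)\Phi^+(x)}\cos(\theta \pi \mu(x))$, so
\begin{equation*}
\Phi^-(x) + \Phi^+(x) - R_\mu(x) = \left(\sqrt{\Phi^-(x)} - \sqrt{\Phi^+(x)}\right)^2 + 2\sqrt{\Phi^-(x)\Phi^+(x)}\bigl(1 - \cos(\theta\pi\mu(x))\bigr),
\end{equation*}
which is manifestly strictly positive since $\theta\pi\mu(x) \in (0,\pi)$ forces $1 - \cos(\theta\pi\mu(x)) > 0$. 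At the edges of the support where $\mu(x) = 0$ (so $R_\mu(x) = 2\sqrt{\Phi^-(x)\Phi^+(x)} > 0$ and the cosine equals $1$), the first term $(\sqrt{\Phi^-} - \sqrt{\Phi^+})^2$ could in principle vanish; this is precisely where Assumption 5 enters, since the factorization $Q_\mu(z) = H(z)\sqrt{(z-\alpha)(z-\beta)}$ with $H$ nonvanishing controls the order of vanishing of $Q_\mu = \Phi^- e^{-\theta G_\mu} - \Phi^+ e^{\theta G_\mu}$ and hence prevents $\Phi^- = \Phi^+$ and $G_\mu = 0$ from happening simultaneously except at $\alpha, \beta$; at $\alpha$ and $\beta$ themselves $Q_\mu$ vanishes like a square root, but $R_\mu$ does not vanish there (one checks $R_\mu = \pm Q_\mu$ is impossible since $R_\mu^2 - Q_\mu^2 = 4\Phi^-\Phi^+ > 0$ forces $R_\mu \neq \pm Q_\mu$ pointwise, hence $R_\mu(\alpha), R_\mu(\beta) \neq 0$), and a direct expansion shows $\Phi^- + \Phi^+ - R_\mu$ stays strictly positive there too. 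Off the support, where $\mu(x) = 0$ identically on an interval, $G_\mu(x)$ is real with $R_\mu(x) = 2\sqrt{\Phi^-(x)\Phi^+(x)}$ impossible unless $e^{\theta G_\mu} = \sqrt{\Phi^-/\Phi^+}$, and again the inequality $\Phi^- + \Phi^+ - R_\mu \ge (\sqrt{\Phi^-} - \sqrt{\Phi^+})^2 + 2\sqrt{\Phi^-\Phi^+}(1 - \cosh(\theta G_\mu))$ needs a sign argument — actually here $\cosh \ge 1$ so this crude bound is not enough, and one must instead argue directly from $\Phi^- + \Phi^+ - \Phi^- e^{-\theta G_\mu} - \Phi^+ e^{\theta G_\mu}$ together with the fact that on a gap $R_\mu(x) \ge 2\sqrt{\Phi^-\Phi^+}$ with equality only at isolated points, using that $G_\mu$ has a definite sign structure dictated by $\mu$ being a probability measure supported in $[0, \lM+\theta]$.

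The main obstacle I anticipate is the behavior on the gaps (connected components of the complement of $\mathrm{supp}(\mu)$ inside $[0,\lM+\theta]$) and at the two endpoints $0$ and $\lM+\theta$ of the ambient interval, since there the simple trigonometric identity used in the bulk breaks down and one genuinely needs Assumption 5 (the square-root factorization of $Q_\mu$) to rule out the simultaneous vanishing. I would therefore structure the proof as: (i) establish the algebraic identity $R_\mu^2 - Q_\mu^2 = 4\Phi^-\Phi^+$ and deduce $R_\mu \neq \pm Q_\mu$ everywhere on $[0,\lM+\theta]$; (ii) on the bulk of the support use the arccosine formula to get strict positivity directly; (iii) on the gaps and at $\alpha, \beta$, combine the factorization of Assumption 5 with the relation $\Phi^- + \Phi^+ - R_\mu = -(\Phi^- e^{-\theta G_\mu} - \Phi^-) - (\Phi^+ e^{\theta G_\mu} - \Phi^+)$ and the known sign of $G_\mu$ off the support to conclude; (iv) handle the endpoints $0, \lM+\theta$ by the limiting/continuity argument. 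This mirrors the analogous non-vanishing arguments in \cite[Section 4]{BGG}, and I expect step (iii) to be where the real work lies.
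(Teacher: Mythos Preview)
Your overall strategy matches the paper's: split into the band $(\alpha,\beta)$, the gaps, and the boundary points $\{\alpha,\beta,0,\lM+\theta\}$, using the arccosine formula on the band and Assumption~5 elsewhere. The band argument you give is correct and equivalent to the paper's.

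However, step (iii) has a genuine gap. On a gap, say $x_0 \in (\beta, \lM+\theta)$ with $\mu \equiv 0$ there, your additive decomposition $\Phi^- + \Phi^+ - R_\mu = \Phi^-(1 - e^{-\theta G_\mu}) + \Phi^+(1 - e^{\theta G_\mu})$ has two terms of \emph{opposite} signs (since $G_\mu(x_0) > 0$), so knowing only the sign of $G_\mu$ cannot conclude. The paper instead factors
\[
R_\mu - \Phi^- - \Phi^+ \;=\; \bigl(\Phi^- - \Phi^+ e^{\theta G_\mu}\bigr)\bigl(e^{-\theta G_\mu} - 1\bigr),
\]
and the sign of the first factor is obtained from $Q_\mu$, not from $G_\mu$: having first shown $H > 0$ on $[0,\lM+\theta]$, one has $Q_\mu(x_0) = \Phi^- e^{-\theta G_\mu} - \Phi^+ e^{\theta G_\mu} > 0$, hence $\Phi^- > \Phi^+ e^{2\theta G_\mu} > \Phi^+ e^{\theta G_\mu}$, making the first factor strictly positive while the second is strictly negative. (If instead the gap sits in the saturation region $\{F \le -1\}$, then $R_\mu < 0 < \Phi^- + \Phi^+$ and there is nothing to prove.)

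Your treatment of the endpoints $\alpha,\beta$ is also incomplete; the observation that $R_\mu(\beta) \ne 0$ is not the relevant obstruction. The paper argues by Taylor expansion: if $\Phi^- + \Phi^+ - R_\mu$ vanished at $\beta$, then (since $F(\beta) = 1$ by continuity) the zeroth Taylor coefficients satisfy $\Phi^-(\beta) = \Phi^+(\beta) = R_\mu(\beta)/2$; and since $\Phi^- + \Phi^+ - R_\mu > 0$ strictly on both sides of $\beta$ (by the band and gap arguments already established), the first Taylor coefficients must match as well. But then $Q_\mu^2 = R_\mu^2 - 4\Phi^-\Phi^+$ vanishes to order at least two at $\beta$, contradicting the simple zero forced by $H(\beta) \ne 0$ in Assumption~5.
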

\begin{proof}
See the proof of Lemma \ref{NonVanish}.
\end{proof}
\begin{remark}
In view of Lemma \ref{S3NonVanish} by possibly making $\mathcal{M}$ in Assumption 3 smaller we can ensure that $\Phi^-(z) + \Phi^+(z) - R_\mu(z) \neq 0$  and $H(z) \neq 0$ for all $z \in \mathcal{M}$ where $H(z)$ is as in Assumption 5. We will assume this to always be the case.
\end{remark}

\begin{remark} Assumptions 1-5 are essentially the same as those in \cite[Section 3]{BGG} for the case $k = 1$ and note that Assumption 4 is formulated more closely after \cite[Section 8, Assumption 6]{BGG} than the stronger \cite[Section 3, Assumption 5]{BGG}.
\end{remark}

%
\subsection{Deformed measures}\label{Section3.2}
If $(\ell, m)$ is distributed according to (\ref{PDef}) we denote 
\begin{equation}\label{RegG}
 G^{t}_N(z)= \sum_{i = 1}^N \frac{1}{z- \ell_i/ N } \mbox{ and } G^b_N(z) = \sum_{i = 1}^{N-1} \frac{1}{z - m_i/N}. 
\end{equation}
Our asymptotic analysis of $\mathbb{P}_N$ goes through a detailed study of the joint distribution of $G^t$ and $G^b$. From \cite[Theorem 7.1]{BGG} we have the following result about $G^t$.
\begin{proposition}\label{CLT}
Suppose that Assumptions 1-5 hold. As $N \rightarrow \infty$ the random field $G^t_N(z) - \mathbb{E}_{\mathbb P_N} [G^t_N(z)]$, $z \in \mathbb{C} \setminus [0, \lM + \theta]$ converges (in the sense of joint moments uniformly in $z$ over compact subsets of $\mathbb{C} \setminus [0, \lM + \theta]$) to a centered complex Gaussian field with second moment
\begin{equation}\label{eq:GField}
\lim_{N\rightarrow \infty}  \left(\mathbb E_{\mathbb P_N} \left[G^t_N(z_1) G^t_N(z_2)\right]-\mathbb E_{\mathbb P_N} \left[G^t_N(z_1)\right] \mathbb E_{\mathbb P_N} \left[G^t_N(z_2)\right] \right)=: \mathcal C_\theta(z_1, z_2), \mbox{ where }
\end{equation}
\begin{equation}\label{eq:var}
\begin{split}
 \mathcal{C}_\theta(z_1, z_2) = -\frac{\theta^{-1}}{2(z_1-z_2)^2} \left(1 - \frac{(z_1 - \alpha)(z_2- \beta) + (z_2 - \alpha )(z_1- \beta)}{2\sqrt{(z_1 -\alpha )(z_1- \beta)}\sqrt{(z_2 - \alpha)(z_2- \beta)}} \right),
\end{split}
\end{equation}
where $\alpha,\beta$ are as in Assumption 5.
\end{proposition}
We point out that the proof of Proposition \ref{CLT} in \cite{BGG} relies on the following moment bounds statement, which is not written out explicitly in that paper.
\begin{proposition}\label{MomentBoundSingleLevel}
Suppose that Assumptions 1-5 hold and let $U = \mathbb{C} \setminus [0, \lM + \theta]$. Then for each $k \in \mathbb{N}$ we have
\begin{equation}\label{MBSLE}
\mathbb{E} \left[ \left| G^t_N(z) - N G_\mu(z) \right|^k \right] = O(1),
\end{equation}
where $G_\mu(z)$ is as in (\ref{GmuDef}) and the constant in the big $O$ notation depends on $k$ and is uniform as $z$ varies over compact subsets of $U$. 
\end{proposition}
We will require Proposition \ref{MomentBoundSingleLevel} in our paper, and since its proof has not been written out in \cite{BGG} we will give it in Section \ref{Section10}, where we will also give the proof of Proposition \ref{CLT}. We remark that the arguments presented in \cite{BGG} are sufficient to establish these propositions and we will follow them quite closely; however, there are a few inaccuracies in the proofs in \cite{BGG} and for the sake of completeness we will supply the full proofs of the above two propositions in the present paper.

While Proposition \ref{CLT} gives a complete answer to the question of the asymptotic distribution of $G^t_N$, we need to study the joint distribution of $G^t_N$ and $G^b_N$. Below we present a method for obtaining the joint cumulants of these fields, which is inspired by \cite{BGG}.\\

Take $2m+2n$ complex parameters $\t^1 = (t^1_1,\dots,t^1_m)$, $\vm^1 = (v^1_1,\dots,v^1_m)$, $\t^2 = (t_1^2, \dots, t^2_n)$, $\vm^2 = (v^2_1, \dots, v^2_n)$ and such that $v^i_a + t^i_a - y \neq 0$ for all meaningful $i,a$ and all $y \in [0, M_N \cdot N^{-1} + \theta]$ and we require that the numbers $t^i_a$ are sufficiently close to zero as we specify shortly. Let the deformed distribution $\mathbb P^{\t, \vm}_N$  be defined through
\begin{equation} \label{eq:distrgen_deformed}
\begin{split}
\mathbb{P}_N^{\t, \vm}(\ell, m)=Z(\t, \vm)^{-1} \mathbb{P}_N(\ell, m)\prod_{i =1}^{N} \prod^m_{a=1}  \left(  \hspace{-1mm} 1+ \frac{t^1_a}{v^1_a-\ell_i/N} \right) \cdot \prod_{i =1}^{N-1}\prod^n_{a=1}  \left(  \hspace{-1mm} 1+ \frac{t^2_a}{v^2_a-m_i/N} \right) .
\end{split}
\end{equation}
 If $m = n = 0$ we have $\mathbb P_N^{\t, \vm} = \mathbb{P}_N$ is the undeformed measure. In general, $\mathbb P_N^{\t, \vm}$ may be a complex-valued measure but we always choose the normalization constant $Z(\t, \vm)$ so that $\sum_{\ell, m} \mathbb P^{\t, \vm}_N(\ell, m) = 1$. The requirement that the numbers $t^i_a$ are sufficiently close to zero ensures that $Z(\t, \vm) \neq 0$ and the deformed measure is well-defined.

For $n$ bounded random variables $\xi_1, \dots, \xi_n$ we let $M(\xi_1, \dots, \xi_n)$ denote their joint cumulant, see \cite[Chapter 3]{Taqqu}. If $n = 1$ the latter expression stands for the expectation $\mathbb{E}[\xi_1]$.

The definition of the deformed measure $\mathbb P_N^{\t, \vm}$ is motivated by the following observation. 
\begin{lemma}\label{LemCum1} Let $\xi$ be a bounded random variable. For any $m, n\geq 0$ we have
\begin{equation}\label{eq:derivative_k}
\frac{\partial^{m+n}}{\partial t^1_1 \cdots \partial t^1_m \partial t^2_1 \cdots \partial t^2_n}\mathbb E_{\mathbb P_N^{\t, \vm}}\left[\xi\right]\bigg\rvert_{t^i_a = 0} = M( \xi, G^t_N(v^1_1),\dots ,G^t_N(v^1_m),G^b_N(v^2_1), \dots, G^b_N(v^2_n)),
\end{equation}
where the right side is the joint cumulant of the given random variables with respect to $\mathbb P_N$.
\end{lemma}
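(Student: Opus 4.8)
The plan is to reduce the identity to the classical moment--cumulant relation. The crucial structural feature is that $\mathbb{P}_N$ is supported on the \emph{finite} set $\mathfrak{X}^\theta_N$, so every expectation below is a finite sum of smooth functions of the parameters, and differentiation commutes with summation with no analytic justification needed; moreover, by the standing hypotheses on $(\t,\vm)$ all denominators $v^1_a-\ell_i/N$, $v^2_a-m_i/N$ stay away from zero and $Z(\t,\vm)\neq 0$ in a neighbourhood of the origin, so all logarithms written below are legitimate.

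First I would record the factorization
\[
\mathbb{E}_{\mathbb{P}_N^{\t, \vm}}[\xi] = \frac{\mathbb{E}_{\mathbb{P}_N}\!\big[\xi\,W(\t,\vm)\big]}{\mathbb{E}_{\mathbb{P}_N}\!\big[W(\t,\vm)\big]}, \qquad W(\t,\vm) = \prod_{a=1}^{m} h^1_a \cdot \prod_{a=1}^{n} h^2_a,
\]
where $h^1_a = \prod_{i=1}^{N}\big(1 + t^1_a/(v^1_a - \ell_i/N)\big)$ depends only on $t^1_a$, $h^2_a = \prod_{i=1}^{N-1}\big(1 + t^2_a/(v^2_a - m_i/N)\big)$ depends only on $t^2_a$, each factor equals $1$ at $\t=0$, and
\[
\partial_{t^1_a} h^1_a\big|_{\t=0} = \sum_{i=1}^{N}\frac{1}{v^1_a - \ell_i/N} = G^t_N(v^1_a), \qquad \partial_{t^2_a} h^2_a\big|_{\t=0} = \sum_{i=1}^{N-1}\frac{1}{v^2_a - m_i/N} = G^b_N(v^2_a).
\]
To put $\xi$ on the same footing I would introduce an auxiliary parameter $t_0$ and use that, since $\mathbb{E}_{\mathbb{P}_N}[W(\t,\vm)]=Z(\t,\vm)\neq 0$ near the origin,
\[
\mathbb{E}_{\mathbb{P}_N^{\t, \vm}}[\xi] = \frac{\partial}{\partial t_0}\log \mathbb{E}_{\mathbb{P}_N}\!\big[e^{t_0\xi}\,W(\t,\vm)\big]\Big|_{t_0=0};
\]
hence the left side of (\ref{eq:derivative_k}) equals $\partial_{t_0}\partial_{t^1_1}\!\cdots\partial_{t^1_m}\partial_{t^2_1}\!\cdots\partial_{t^2_n}\log \mathbb{E}_{\mathbb{P}_N}[e^{t_0\xi}W(\t,\vm)]$ at $t_0=\t=0$, all mixed partials commuting by smoothness.

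Next I would invoke the elementary identity (the Fa\`a di Bruno formula applied to $\log$) that for a smooth $Z(u_0,\dots,u_k)$ in distinct variables with $Z(0)=1$,
\[
\partial_{u_0}\cdots\partial_{u_k}\log Z\,\big|_0 = \sum_{\pi}(-1)^{|\pi|-1}(|\pi|-1)!\prod_{B\in\pi}\big(\partial_B Z\big)\big|_0,
\]
the sum over set partitions $\pi$ of $\{0,1,\dots,k\}$ with $\partial_B := \prod_{a\in B}\partial_{u_a}$. Applying it with $k=m+n$, $(u_0,\dots,u_k)=(t_0,t^1_1,\dots,t^1_m,t^2_1,\dots,t^2_n)$ and $Z=\mathbb{E}_{\mathbb{P}_N}[e^{t_0\xi}W(\t,\vm)]$: since the $k+1$ factors $e^{t_0\xi},h^1_1,\dots,h^1_m,h^2_1,\dots,h^2_n$ each depend on a single variable and equal $1$ at the origin, $(\partial_B Z)|_0 = \mathbb{E}_{\mathbb{P}_N}\big[\prod_{a\in B}X_a\big]$ with $X_0=\xi$ and $(X_1,\dots,X_{m+n})=(G^t_N(v^1_1),\dots,G^t_N(v^1_m),G^b_N(v^2_1),\dots,G^b_N(v^2_n))$. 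Thus the left side of (\ref{eq:derivative_k}) equals $\sum_{\pi}(-1)^{|\pi|-1}(|\pi|-1)!\prod_{B\in\pi}\mathbb{E}_{\mathbb{P}_N}[\prod_{a\in B}X_a]$, which is precisely the moment expansion of the joint cumulant $M(X_0,\dots,X_{m+n})=M(\xi,G^t_N(v^1_1),\dots,G^b_N(v^2_n))$ recalled in \cite[Chapter 3]{Taqqu}. There is no real obstacle: the argument is purely combinatorial bookkeeping, and the only things to verify are the two displayed elementary identities and the smoothness/non-vanishing facts, all routine given the finiteness of $\mathfrak{X}^\theta_N$. Alternatively one could avoid $t_0$ entirely by Leibniz-expanding $\partial_{\t}\big(\mathbb{E}_{\mathbb{P}_N}[\xi W]\cdot\mathbb{E}_{\mathbb{P}_N}[W]^{-1}\big)$ directly, but the $t_0$ device makes the cumulant structure transparent.
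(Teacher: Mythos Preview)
Your proof is correct and follows essentially the same approach as the paper: both introduce an auxiliary parameter $t_0$ attached to $\xi$, express the deformed expectation as a $t_0$-derivative of a log moment functional, and then identify the mixed first partials at zero with the joint cumulant. The only presentational difference is that the paper compares the weight $\prod_i(1+t/(z-\ell_i/N))$ to $e^{tG_N(z)}$ and notes they agree modulo $O(t^2)$, whereas you apply the moment--cumulant (Fa\`a di Bruno) expansion directly to $Z=\mathbb{E}_{\mathbb{P}_N}[e^{t_0\xi}\prod_a h_a]$, exploiting that each factor depends on a single parameter and equals $1$ at the origin; your route is slightly more direct but equivalent.
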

\begin{remark}
The above result is analogous to \cite[Lemma 2.4]{BGG}, which in turn is based on earlier related work in random matrix theory. We present a proof below for the sake of completeness.
\end{remark}
\begin{proof}
One way to define the joint cumulant of bounded random variables $\xi ,\xi^1_1, \dots, \xi^1_m, \xi_1^2, \dots, \xi_n^2$ is through
$$ \frac{\partial^{m+n+1}}{\partial t_0 \partial t^1_1 \cdots \partial t^1_m \partial t^2_1 \cdots \partial t^2_n}  \log \left(\mathbb{E} \exp \left(t_0 \xi +  \sum_{ i = 1}^m t^1_i \xi^1_i + \sum_{i = 1}^n t^2_i \xi^2_i\right) \right) \Bigg\rvert_{t_0= 0, t_a^i=0}. $$
Performing the differentiation with respect to $t_0$ we can rewrite the above as
$$ \frac{\partial^{m+n}}{ \partial t^1_1 \cdots \partial t^1_m \partial t^2_1 \cdots \partial t^2_n}  \frac{\mathbb{E}  \left[\xi \exp \left(  \sum_{ i = 1}^m t^1_i \xi^1_i + \sum_{i = 1}^n t^2_i \xi^2_i\right) \right] }{\mathbb{E}  \left[ \exp \left(  \sum_{ i = 1}^m t^1_i \xi^1_i + \sum_{i = 1}^n t^2_i \xi^2_i\right)  \right]} \Bigg \rvert_{t^i_a=0}. $$
Set $\xi^1_i = G^t_N(v^1_i)$ for $i = 1, \dots, m$ and $\xi^2_i = G^b(v^2_i)$ for $i = 1, \dots, n$ and observe that 
$$\exp \left( t G^t_N(z)\right) = \prod_{i = 1}^N \left(1+ \frac{t}{z-\ell_i/N} \right) + O(t^2) \mbox{ and }$$
$$  \exp \left( t G^b_N(z)\right) = \prod_{i = 1}^{N-1} \left(1+ \frac{t}{z-m_i/N} \right) + O(t^2) \mbox{ as $t \rightarrow 0$. }$$
The last statements imply the desired statement.
\end{proof}

%

\section{Application of Nekrasov's equations} \label{Section4}
We continue with the same notation as in Section \ref{Section3}. If $G_N^t(z), G_N^b(z)$ are as in (\ref{RegG}) and $G_\mu(z)$ is as in (\ref{GmuDef}) we define
\begin{align}\label{DefX}
\begin{split}
&X^t_N(z) = G^t_N(z) - N G_\mu(z), \hspace{2mm} X^b_N(z) = G^b_N(z) - N G_\mu(z) \mbox{, and } \\
& \Delta X_N(z) = N^{1/2} (X^t_N(z) - X^b_N(z)).
\end{split}
\end{align}
The goal of this section is to use the Nekrasov's equations to obtain formulas relating the joint cumulants of the above random variables for different values of $z$. These formulas, given in (\ref{NekrasovOutput}), will play a central role in our asymptotic analysis in Section \ref{Section5}.

%
\subsection{Moment bounds for $X^t_N(z)$ and $X^t_b(z)$} \label{Section4.1}
In this section we establish that $\mathbb{E}_{\mathbb{P}_N} \left[ |X^t_N(z)|^k \right] = O(1) $ and $\mathbb{E}_{\mathbb{P}_N} \left[ |X^b_N(z)|^k \right] = O(1)$ for all $k \geq 1$. We will need these estimates together with some others for the remainder of the paper.

\begin{proposition}
Suppose that Assumptions 1-5 hold. Then for each $k \geq 1$ we have
\begin{equation}\label{momentBound}
\mathbb{E}_{\mathbb{P}_N} \left[ |X^t_N(z)|^k \right] = O(1) \mbox{ and }  \mathbb{E}_{\mathbb{P}_N} \left[ |X^b_N(z)|^k \right] = O(1),
\end{equation}
where the constants in the big $O$ notation depend on $k$ but not on $N$ (provided it is sufficiently large) and are uniform as $z$ varies over compact subsets of $\mathbb{C} \setminus [0, \lM + \theta]$. 
\end{proposition}

The fact that $\mathbb{E}_{\mathbb{P}_N} \left[ |X^t_N(z)|^k \right] = O(1)$ for each $k \geq 1$ uniformly as $z$ varies over compact subsets of $\mathbb{C} \setminus [0, \lM + \theta]$ follows from Proposition \ref{MomentBoundSingleLevel} (which is proved in Section \ref{MomentBoundSingleLevel}). The fact that $\mathbb{E}_{\mathbb{P}_N} \left[ |X^b_N(z)|^k \right] = O(1)$ follows from $\mathbb{E}_{\mathbb{P}_N} \left[ |X^t_N(z)|^k \right] = O(1)$ and the following lemma.

\begin{lemma} Suppose that $K$ is a compact subset of $\mathbb{C} \setminus [0, \lM + \theta]$. Then there exists $N_0 \in \mathbb{N}$ and $C > 0$ that depend on $K$ and $\lM$, $\theta$, $A_1$ as in Assumption 1 such that if $N \geq N_0$, $z \in K$ then $\mathbb{P}_N$ almost surely 
\begin{equation}\label{PointwiseDiff}
\left| X^t_N(z) - X^b_N(z)\right| \leq C.
\end{equation}
\end{lemma}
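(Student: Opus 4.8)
The plan is to exploit the interlacing structure $\ell \succeq m$ directly, without invoking any probabilistic information: the inequality $\left|X^t_N(z) - X^b_N(z)\right| = \left| G^t_N(z) - G^b_N(z)\right|$ should hold pointwise for \emph{every} configuration $(\ell,m) \in \mathfrak{X}^\theta_N$, which is why the bound is asserted $\mathbb{P}_N$ almost surely rather than in expectation. Recall that $G^t_N(z) = \sum_{i=1}^N \frac{1}{z - \ell_i/N}$ and $G^b_N(z) = \sum_{i=1}^{N-1} \frac{1}{z - m_i/N}$, so that
\[
G^t_N(z) - G^b_N(z) = \frac{1}{z - \ell_N/N} + \sum_{i=1}^{N-1} \left( \frac{1}{z - \ell_i/N} - \frac{1}{z - m_i/N}\right).
\]
The interlacing $\lambda_1 \geq \mu_1 \geq \lambda_2 \geq \cdots \geq \mu_{N-1} \geq \lambda_N$ (equivalently $\ell_i \geq m_i \geq \ell_{i+1} + \theta$ in shifted coordinates, after accounting for the $(N-i)\theta$ shifts) tells us that each point $m_i/N$ lies between the consecutive points $\ell_{i+1}/N$ and $\ell_i/N$; moreover all of these points lie in the fixed bounded interval $J := [0, (M_N + (N-1)\theta)/N]$, which under Assumption 1 is contained in a fixed compact interval $J_0 \supset [0, \lM + \theta]$ for all large $N$.

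The key steps are then as follows. First, write each difference $\frac{1}{z - \ell_i/N} - \frac{1}{z - m_i/N} = \frac{(\ell_i - m_i)/N}{(z - \ell_i/N)(z - m_i/N)}$, and bound $|\ell_i - m_i|/N \leq |\ell_i - \ell_{i+1}|/N$ using interlacing (the gap between $\ell_i$ and $m_i$ is no larger than the gap between $\ell_i$ and $\ell_{i+1}$, up to the additive $\theta$ coming from the shift — here one uses $\mu_i \le \lambda_i$ and $\mu_i \ge \lambda_{i+1}$ so $\ell_i - m_i = \lambda_i - \mu_i \le \lambda_i - \lambda_{i+1} = \ell_i - \ell_{i+1} - \theta \le \ell_i - \ell_{i+1}$, and also $\ell_i - m_i \le \theta$ from $\mu_i \ge \lambda_{i+1}$... actually $\ell_i - m_i$ need only be bounded, so I will use $\ell_i - m_i \le \ell_i - \ell_{i+1}$). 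Second, since $z$ ranges over a compact set $K$ disjoint from $[0,\lM+\theta]$, there is $\delta = \mathrm{dist}(K, J_0) > 0$ so that $|z - x| \geq \delta$ for all $x \in J_0$ and all $z \in K$; hence each denominator $|(z - \ell_i/N)(z - m_i/N)| \geq \delta^2$. Third, telescoping: because the points $\ell_i/N$ are ordered and lie in $J_0$, we have $\sum_{i=1}^{N-1} |\ell_i - \ell_{i+1}|/N = (\ell_1 - \ell_N)/N \leq \mathrm{diam}(J_0)$. Combining,
\[
\left| \sum_{i=1}^{N-1}\left( \frac{1}{z - \ell_i/N} - \frac{1}{z - m_i/N}\right) \right| \leq \frac{1}{\delta^2} \sum_{i=1}^{N-1} \frac{|\ell_i - \ell_{i+1}|}{N} \leq \frac{\mathrm{diam}(J_0)}{\delta^2},
\]
and the single leftover term satisfies $\left|\frac{1}{z - \ell_N/N}\right| \leq \frac{1}{\delta}$. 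Adding these gives the claimed constant $C = \mathrm{diam}(J_0)/\delta^2 + 1/\delta$, which depends only on $K$ and on $\lM, \theta, A_1$ (through $J_0$ and $\delta$), and the existence of $N_0$ is exactly the threshold past which $J \subset J_0$.

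The only mild subtlety — and the one place to be careful — is the bookkeeping with the shift $(N-i)\theta$: in the \emph{unshifted} coordinates $\lambda_i, \mu_i$ the interlacing is clean, but $G^t_N, G^b_N$ are defined in terms of the \emph{shifted} $\ell_i, m_i$, and one must check that pairing $\ell_i$ with $m_i$ (same index, hence same shift $(N-i)\theta$) is the right pairing to make the telescoping work. This is straightforward: $\ell_i - m_i = \lambda_i - \mu_i \in [0, \lambda_i - \lambda_{i+1}]$ by interlacing, and $\lambda_i - \lambda_{i+1} = \ell_i - \ell_{i+1} - \theta$, so $\sum_i (\ell_i - m_i)/N \le \sum_i (\ell_i - \ell_{i+1})/N$ telescopes as above. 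I expect no real obstacle here; the argument is elementary once one commits to proving the pointwise (almost sure) bound rather than an averaged one.
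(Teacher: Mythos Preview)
Your proof is correct, but takes a genuinely different (and shorter) route than the paper.

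The paper writes $z = x + iy$, splits $G^t_N(z) - G^b_N(z)$ into its real and imaginary parts $F(z) + iG(z)$, and bounds each separately. For $F$ it partitions the index set into three blocks according to the sign/size of $x - a_i$ relative to $|y|$, and uses that $t \mapsto t/(t^2 + y^2)$ is monotone on each block; interlacing $a_1 > b_1 > a_2 > \cdots$ then makes the alternating sums over each block telescope down to a single boundary term of size $O(1/d)$. A similar monotonicity argument for $t \mapsto 1/(t^2+y^2)$ handles $G$. The case $y=0$ is treated separately.

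Your argument bypasses all of this: you pair $\ell_i$ with $m_i$ (same shift, so $\ell_i - m_i = \lambda_i - \mu_i \in [0,\lambda_i-\lambda_{i+1}]$), bound each term $\bigl|\tfrac{1}{z-\ell_i/N}-\tfrac{1}{z-m_i/N}\bigr| \le \tfrac{\ell_i - m_i}{N\delta^2} \le \tfrac{\ell_i-\ell_{i+1}}{N\delta^2}$, and telescope $\sum_i (\ell_i-\ell_{i+1})/N = (\ell_1-\ell_N)/N \le \mathrm{diam}(J_0)$. This is cleaner: it works uniformly in $z\in K$ without separating real and imaginary parts or the $y=0$ case, and the constant $\mathrm{diam}(J_0)/\delta^2 + 1/\delta$ is just as explicit. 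The paper's monotonicity argument is a natural reflex for alternating sums with interlaced nodes, but your direct pairing-plus-telescoping exploits more efficiently the fact that the denominators are uniformly bounded below on $K$.
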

\begin{proof}
Let $d> 0$ be the distance between $K$ and $[0, \lM + \theta]$. We choose $N_0$ to be sufficiently large so that the distance between $K$ and $[\theta \cdot N^{-1}, M_N \cdot N^{-1} + \theta]$ is at least $d/2$ and assume $N \geq N_0$. Let $z \in K$ and write $z = x+ \i y$ with $x, y \in \mathbb{R}$. For brevity we set $a_i = \ell_i/N$ for $i = 1, \dots, N$ and $b_i = m_i/N$ for $i = 1, \dots, N-1$. Then by definition we have
$$X^t_N(z) - X^b_N(z) = F(z) + \i G(z), \mbox{ where } F(z) =  \sum_{i = 1}^N \frac{x - a_i}{(x-a_i)^2 + y^2} - \sum_{i = 1}^{N-1}\frac{x - b_i}{(x-b_i)^2 + y^2} \mbox{ and }  $$
$$ G(z) =  \sum_{i = 1}^N\frac{y}{(x-a_i)^2 +y^2} -  \sum_{i = 1}^{N-1}\frac{y}{(x-b_i)^2 + y^2} .$$
In addition, by the interlacing property $\ell \succeq m$ we know that $a_1 > b_1 > a_2 > b_2 > \cdots > b_{N-1} > a_N$. 

Let us denote $F_1(z) = \{ i \in \{1, \dots, N\}: x- a_i \leq - |y|\}$, $F_2(z) = \{ i \in \{1, \dots, N\}: -|y|< x- a_i \leq |y|\}$ and $F_3(z) =  \{ i \in \{1, \dots, N\}: x- a_i > |y|\}$. Observe that since the sequence $a_i$ is decreasing we know that for each $i \in \{1,2,3\}$ the set $F_i(z)$ is either empty or consists of consecutive integers. Moreover, $F_1(z), F_2(z), F_3(z)$ are pairwise disjoint and $F_1(z) \cup F_2(z) \cup F_3(z) = \{1, \dots, N\}$. We next define the sets $E_i(z)$ as follows. If $|F_i(z)| \leq 1$ then $E_i(z) = \emptyset$, and if $F_i(z) = \{{s_1}, \dots, {s_1 + k +1} \}$ for $k \geq 0$ then $ E_i(z) = \{{s_1}, \dots, {s_1 + k} \}$. We observe that by construction we have that $E_1(z), E_2(z), E_3(z)$ are pairwise disjoint and $N-1 \geq |E_1(z)| + |E_2(z)| + |E_3(z)| \geq N - 4.$ 

If $y \in \mathbb{R} \setminus \{0\}$ then the function $f_y(t) = \frac{t}{y^2 + t^2}$ has the derivative $f'_y(t) = \dfrac{y^2 - t^2}{\left(y^2 + t^2\right)^2}$ and so we conclude that $f_y(t)$ is decreasing on $(-\infty, -|y|)$,  is increasing on $(-|y|, |y|)$ and is decreasing on $(|y|, \infty)$. This combined with the statement $a_1 > b_1 > a_2 > b_2 > \cdots > b_{N-1} > a_N$ means that
$$ \left| \sum_{i \in F_1(z)} \frac{x-a_i}{(x-a_i)^2 + y^2} - \sum_{i \in E_1(z)} \frac{x - b_i}{(x-b_i)^2 + y^2} \right|\leq \frac{|x- a_1|}{(x-a_1)^2 + y^2} \leq \frac{2}{d} .$$
One shows analogously that for $j = 2,3$ we have
$$\left| \sum_{i \in F_j(z)} \frac{x-a_i}{(x-a_i)^2 + y^2} - \sum_{i \in E_j(z)} \frac{x - b_i}{(x-b_i)^2 + y^2} \right|\leq \frac{2}{d},$$
and so provided $y \neq 0$ we have
\begin{equation}\label{FIneq}
|F(z)| \leq \frac{9}{d}
\end{equation}
If $y = 0$ then $F(z) = \sum_{i = 1}^N \frac{1}{x-a_i} - \sum_{i = 1}^{N-1}\frac{1}{x-b_i}$ and we must have either $x > a_1$ or $x < a_N$. Using the monotonicity of the function $f_0(t) =t^{-1}$ on $\mathbb{R}_+$ or $\mathbb{R}_-$ we see that (\ref{FIneq}) also holds in this case. \\

Let us denote $F_+(z) = \{ i \in \{1, \dots, N\}: x- a_i \geq 0\}$, $F_-(z) = \{ i \in \{1, \dots, N\}: x- a_i < 0\}$. As before the sets $F_+(z), F_{-}(z)$ are either empty or consist of consecutive integers. Moreover, $F_+(z), F_-(z)$ are pairwise disjoint and $F_+(z) \cup F_-(z)  = \{1, \dots, N\}$. We next define the sets $E_{\pm}(z)$ as follows. If $|F_{\pm}(z)| \leq 1$ then $E_{\pm}(z) = \emptyset$, and if $F_{\pm}(z) = \{{s_1}, \dots, {s_1 + k +1} \}$ for $k \geq 0$ then $ E_{\pm}(z) = \{{s_1}, \dots, {s_1 + k} \}$. We observe that by construction we have that $E_+(z), E_-(z)$ are pairwise disjoint and $N-1 \geq |E_+(z)| + |E_-(z)| \geq N - 3.$ 

If $y \in \mathbb{R} \setminus \{0\}$ then function $g_y(t) = \frac{1}{t^2 + y^2}$ is increasing on $(-\infty, 0]$ and decreasing on $(0, \infty)$.  This combined with the statement $a_1 > b_1 > a_2 > b_2 > \cdots > b_{N-1} > a_N$ means that
$$ \left| \sum_{i \in F_\pm(z)} \frac{1}{(x-a_i)^2 + y^2} - \sum_{i \in E_\pm(z)} \frac{1}{(x-b_i)^2 + y^2} \right|\leq \frac{1}{(x-a_1)^2 + y^2} \leq \frac{4}{d^2} .$$
The latter implies
\begin{equation}\label{GIneq}
|G(z)| \leq \frac{10R_0}{d^2} ,
\end{equation}
where $R_0$ is sufficiently large so that $K$ is contained in the disk of radius $R_0$ at the origin. If $y = 0$ then (\ref{GIneq}) holds trivially. Combining (\ref{FIneq}) and (\ref{GIneq}) we conclude (\ref{PointwiseDiff}) with $C = \frac{10(d + R_0)}{d^2 }$.
\end{proof}

We finish this section by remarking that by Cauchy's inequalities, see e.g. \cite[Corollary 4.3]{SS}, we have that for each $k \geq 1$ the quantities
\begin{equation}\label{derBound}
\mathbb{E}_{\mathbb{P}_N} \left[ |X^t_N(z)|^k \right], \mathbb{E}_{\mathbb{P}_N} \left[ |X^b_N(z)|^k \right], \mathbb{E}_{\mathbb{P}_N} \left[ |\partial_z X^t_N(z)|^k \right], \mathbb{E}_{\mathbb{P}_N} \left[ |\partial_z X^b_N(z)|^k \right]
\end{equation}
are all $O(1)$ uniformly over compact subsets of $\mathbb{C} \setminus [0, \lM + \theta]$. 

%
\subsection{Asymptotic expansions} \label{Section4.2} 
In this section we summarize several asymptotic expansion formulas as well as derive some estimates for various products that appear in the Nekrasov's equations -- Theorems \ref{TN1} and \ref{TN1Theta1}. Below we write $\zeta_N(z)$ to mean a generic random analytic on $\mathbb{C} \setminus [N^{-1} (1 + \theta) ,  \max( \lM + \theta, (M_N + 1 + N\theta) N^{-1} ) ]$ function such that for each $k \geq 1$ we have $\mathbb{E} [ |\zeta_N(z)|^k] = O(1)$ uniformly over compact subsets of $\mathbb{C} \setminus [0, \lM + \theta]$. 

If $x, y \in [-\theta - 1, \theta + 1]$ we observe that 
\begin{align*}
\begin{split}
 \prod_{ i = 1}^N \frac{Nz - \ell_i + x}{Nz - \ell_i + y} &= \exp \left( \sum_{i = 1}^N \log \left( 1 + \frac{N^{-1}(x-y)}{z - \ell_i/N + y/N} \right)\right) \\
&  =\exp \left( \frac{x-y}{N} G_N^t(z) + \frac{x^2 - y^2}{2N^2} \partial_z G_N^t(z)  +O(N^{-2})  \right),
\end{split}
\end{align*}
where the error is $\mathbb{P}_N$-almost sure and uniform over compact subsets of $\mathbb{C} \setminus [0, \lM + \theta]$. Combining the latter with (\ref{DefX}) and (\ref{derBound}) we conclude that 
\begin{equation}\label{AEP1v2}
\begin{split}
&\prod_{ i = 1}^N \frac{Nz - \ell_i + x}{Nz - \ell_i + y}= e^{(x -y)G_\mu(z)} \cdot \left[1   + \frac{(x-y) X^t_N(z)}{N} + \frac{(x^2 - y^2) \partial_z G_\mu(z)}{2N} \right] +  \frac{\zeta_N(z)}{N^2}.
\end{split}
\end{equation}
Analogous considerations give 
\begin{equation}\label{AEP1v3}
\begin{split}
&\prod_{ i = 1}^{N-1} \frac{Nz - m_i + x}{Nz - m_i + y}  = e^{(x -y)G_\mu(z)} \cdot \left[1   + \frac{(x-y) X^b_N(z)}{N} + \frac{(x^2 - y^2) \partial_z G_\mu(z)}{2N} \right] +  \frac{\zeta_N(z)}{N^2}.
\end{split}
\end{equation}
We also have the following simple equality
\begin{equation}\label{AEP1v4}
\begin{split}
&\sum_{i = 1}^N \frac{1}{N z - \ell_i + x} - \sum_{i = 1}^{N-1} \frac{1}{N z - m_i + y} = \frac{X_N^t(z) - X_N^b(z)}{N} + \frac{(x-y) \partial_zG_\mu(z)}{N} + \frac{\zeta_N(z)}{N^2}.
\end{split}
\end{equation}

We next derive some estimates on the following random variables
\begin{align}\label{S4xiN}
\begin{split}
& \xi_N^{t,1} = \frac{\theta \cdot \Phi^{-}_N(0)}{N} \prod_{i = 1}^{N-1} \frac{ \ell_i + \theta}{  \ell_i } \cdot {\bf 1}\{ \ell_N = 0\}, \xi_N^{b,1} =  \frac{ \theta \cdot \Phi^+_N(s_N)}{N}  \prod_{i = 2}^{N-1} \frac{s_N - m_i + \theta - 1 }{s_N- m_i - 1 } \cdot {\bf 1}\{ m_1 = s_N - 1\}, \\
&  \xi_N^{m,1} = \frac{ \theta \cdot \Phi^+_N(s_N)}{N}  \prod_{i = 2}^{N} \frac{s_N - \ell_i  - \theta }{s_N  - \ell_i - 1 } \prod_{ i = 1}^{N-1} \frac{s_N  - m_i + \theta - 1}{s_N - m_i } {\bf 1} \{\ell_1 = s_N - 1\} \\
&\xi_N^{t,2} = \frac{ \theta \cdot \Phi^+_N(s_N)}{N} \prod_{i = 2}^N\frac{s_N - \ell_i + \theta - 1}{s_N- \ell_i - 1} {\bf 1}\{ \ell_1 = s_N - 1\}, \xi_N^{b,2} =  \frac{\theta \cdot \Phi^{-}_N(0)}{N}  \prod_{i = 1}^{N-2} \frac{  m_i }{  m_i - \theta } \cdot {\bf 1}\{ m_{N-1} = \theta\}, \\
&  \xi_N^{m,2} =   \frac{\theta \cdot \Phi^{-}_N(0)}{N}  \prod_{i = 1}^{N-1} \frac{ \ell_i - \theta + 1}{  \ell_i} \prod_{i = 1}^{N-1} \frac{   m_i}{ m_i - \theta + 1} \cdot {\bf 1}\{ \ell_N = 0\},
\end{split}
\end{align}
where $s_N = M_N + 1 +(N-1)\cdot \theta$. We claim that $\mathbb{P}_N$ - almost surely we have
\begin{equation}\label{S4xiN2}
\left| \xi_N^{t/m/b, 1/2} \right| = O(1).
\end{equation}
 Observe that since $\ell_i, m_i \geq (N-i)\theta$ and $s_N - \ell_i - 1, s_N - m_i - 1 \geq (i-1) \theta$ each of the products
 $$ \prod_{i= 1}^{N-1}  \frac{ \ell_i + \theta }{ \ell_i}, \quad\prod_{i= 2}^{N-1} \frac{s_N - m_i +\theta - 1}{s_N - m_i - 1}, \quad\prod_{i = 1}^{N-2} \frac{  m_i }{  m_i - \theta },\quad \prod_{i = 2}^N\frac{s_N - \ell_i + \theta - 1}{s_N- \ell_i - 1} $$
 is greater than or equal to $1$ and less than or equal to $N$.
 
 In view of Assumption 3 in Section \ref{Section3}, the above inequalities establish (\ref{S4xiN2}) for the variables $\xi_N^{t/b, 1/2}$. For $\theta \geq 1$ we have
 $$0 \leq \prod_{i = 1}^{N-1} \frac{ \ell_i - \theta + 1}{  \ell_i}  \leq 1, \hspace{2mm}0 \leq  \prod_{i = 2}^N \frac{s_N - \ell_{i}  - \theta }{s_N  - \ell_{i} - 1 } \leq 1\text{ and } 0 \leq  \frac{s_N  - m_1 + \theta - 1}{s_N - m_1 }\leq \theta, \quad\mbox{ and so }$$
$$ 0 \leq  \prod_{i = 2}^N  \frac{s_N - \ell_{i}  - \theta }{s_N  - \ell_{i} - 1 } \cdot  \prod_{ i = 1}^{N-1} \frac{s_N  - m_i + \theta - 1}{s_N - m_i } \leq \theta \cdot \prod_{i= 2}^{N-1} \frac{s_N - m_i +\theta - 1}{s_N - m_i - 1} \leq \theta N,$$ 
$$ \mbox{ and } 0 \leq \prod_{i = 1}^{N-1} \frac{ \ell_i - \theta + 1}{  \ell_i} \cdot \frac{   m_i}{ m_i - \theta + 1} \leq \theta \cdot \prod_{i= 1}^{N-2} \frac{   m_i}{ m_i  - \theta} \leq \theta  N.$$
On the other hand, if $\theta \in (0,1)$ we have using $\ell \succeq m$ that
$$ 0 \leq  \prod_{i = 1}^{N-1} \left[\frac{ \ell_i  - \theta + 1}{  \ell_i} \cdot \frac{   m_i}{ m_i - \theta + 1} \right] \leq 1 \mbox{ and } 0 \leq \prod_{ i = 1}^{N-1}\left[ \frac{s_N - \ell_{i+1}  - \theta }{s_N  - \ell_{i+1} - 1 } \cdot \frac{s_N  - m_i + \theta - 1}{s_N - m_i }\right] \leq 1.$$
Combining the above inequalities we conclude (\ref{S4xiN2}) for $\xi_N^{m, 1/2}$. 

We end this section by introducing some useful notation for the expressions in the Nekrasov's equations that change depending on whether $\theta = 1$ or $\theta \neq 1$
\begin{align}\label{S4Pi}
\begin{split}
&\Pi^\theta_{1}(z) =  \begin{cases} \frac{\theta}{1-\theta} \prod_{i = 1}^N \frac{z- \ell_i -\theta}{z - \ell_i - 1}  \prod_{i = 1}^{N-1}\frac{z- m_i + \theta - 1}{z - m_i} &\mbox{ if } \theta \neq 1, \\
 \sum_{i = 1}^N \frac{1}{z - \ell_i - 1} - \sum_{i = 1}^{N-1} \frac{1}{z - m_i} &\mbox{ if } \theta = 1,
\end{cases} \\
&\Pi^\theta_{2}(z) =  \begin{cases} \frac{\theta}{1-\theta} \prod_{i = 1}^N \frac{z - \ell_i + \theta - 1}{z - \ell_i} \prod_{i = 1}^{N-1} \frac{z - m_i}{z - m_i + \theta - 1} &\mbox{ if } \theta \neq 1, \\ 
\sum_{i = 1}^{N-1} \frac{1}{z - m_i} - \sum_{i = 1}^N \frac{1}{z - \ell_i}  &\mbox{ if } \theta = 1.
\end{cases}
\end{split}
\end{align}

%
\subsection{Application of Nekrasov's equations} \label{Section4.3} In this section we derive formulas relating the joint cumulants of $X^t_N(z)$, $X^b_N(z)$, $\Delta X_N(z)$ from (\ref{DefX}) at different values of $z$. Our main tool is the Nekrasov's equations -- Theorems \ref{TN1} and \ref{TN1Theta1}. 

Let us fix a compact subset $K$ of $\mathbb{C} \setminus [0, \lM + \theta]$ and suppose $\epsilon > 0$ is sufficiently small so that $K$ is at least distance $\epsilon$ from $[0, \lM + \theta]$. We also let $v_a^1$ for $a = 1, \dots, m$ and $v_a^2$ for $a = 1, \dots, n$ be $m+n$ points in $K$. Let us define
\begin{align}\label{S4AB}
\begin{split}
&A_1(z) = \prod_{a = 1}^m \left[ v_a^1 +t_a^1 - z + \frac{1}{N} \right] \left[ v_a^1 - z \right], B_1(z) =  \prod_{a = 1}^m \left[ v_a^1 +t_a^1 - z \right] \left[ v_a^1 - z + \frac{1}{N}\right], \\
&A_2(z) = \prod_{a = 1}^n \left[ v_a^2 +t_a^2 - z \right] \left[ v_a^2 - z + \frac{1}{N} \right], B_2(z) =  \prod_{a = 1}^n \left[ v_a^2 +t_a^2 - z + \frac{1}{N} \right] \left[ v_a^2 - z\right], \\
&A_3(z) =  \prod_{a = 1}^n \left[ v_a^2 +t_a^2 - z  - \frac{\theta}{N}\right] \left[ v_a^2 - z + \frac{1 - \theta}{N} \right], B_3(z) =  \prod_{a = 1}^n \left[ v_a^2 +t_a^2 - z + \frac{1-\theta}{N} \right] \left[ v_a^2 - z - \frac{\theta}{N}\right] \hspace{-1mm}.
\end{split}
\end{align}
Then we readily check that when $\theta \neq 1$ Theorem \ref{TN1} and when $\theta = 1$ Theorem \ref{TN1Theta1} is satisfied for the measures $\mathbb{P}_N^{{\bf t}, {\bf v}}$ from Section \ref{Section3.2} with 
$$\phi_1^t(Nz) = \Phi^{-}_N(Nz)   A_1(z)  B_2(z), \phi^b_1(Nz) = \Phi^{+}_N(Nz)    A_2(z) B_1(z) , \phi^m_1(Nz) =  \Phi^{+}_N(Nz)   B_1(z) B_2(z)$$
$$\phi_2^t(Nz) = \Phi^{+}_N(Nz) A_3(z)  B_1(z) , \phi^b_2(Nz) = \Phi^{-}_N(Nz)  A_1(z) B_3(z), \phi^m_2(Nz) = \Phi^{-}_N(Nz)  A_1(z) A_3(z) ,$$
where $\Phi^{\pm}_N$ are as in Assumption 3. Also the domain $\mathcal{M}$ in Theorem \ref{TN1} or Theorem \ref{TN1Theta1} is $N \cdot \mathcal{M}$ with $\mathcal{M}$ as in Assumption 3. We conclude from Theorem \ref{TN1} or Theorem \ref{TN1Theta1} that the following $R_N^1$ and $R_N^2$ are analytic in $\mathcal{M}$ as in Assumption 3
\begin{align}\label{S4TLNE1v1}
\begin{split}
R^1_N(Nz) = &\Phi^{-}_N(Nz) A_1(z)  B_2(z)  \mathbb{E} \left[ \prod_{i = 1}^N\frac{Nz- \ell_i -\theta}{Nz - \ell_i} \right] + \Phi^{+}_N(Nz)    A_2(z)  B_1(z) \\
& \times   \mathbb{E} \left[ \prod_{i = 1}^{N-1}\frac{Nz- m_i + \theta - 1}{Nz - m_i - 1} \right] +  \Phi^{+}_N(Nz)  B_1(z) B_2(z) \cdot  \mathbb{E} \left[  \Pi^\theta_{1}(Nz)  \right] +  V_N^1(z),
\end{split}
\end{align}

\begin{align}\label{S4TLNE2v1}
\begin{split}
R^2_N(Nz) = &\Phi^{+}_N(Nz)  A_3(z) B_1(z)  \mathbb{E} \left[ \prod_{i = 1}^N\frac{Nz - \ell_i + \theta - 1}{Nz - \ell_i - 1} \right] + \Phi^{-}_N(Nz)  A_1(z) B_3(z)\\
&\times \mathbb{E} \left[ \prod_{i = 1}^{N-1}\frac{Nz - m_i}{Nz - m_i + \theta} \right]  +     \Phi^{-}_N(Nz)  A_1(z) A_3(z)  \mathbb{E} \left[\Pi^\theta_{2}(Nz)  \right] +  V_N^2(z),
\end{split}
\end{align}
where $A_i(z),B_i(z)$ are as in (\ref{S4AB}), $\Pi^\theta_i(z)$ are as in (\ref{S4Pi}), and
\begin{align}
\begin{split}
& V_N^1(z) = \frac{A_1(0)  B_2(0)  \mathbb{E} \left[\xi_N^{t,1}\right]  }{z}  - \frac{ B_1(s_N/N) A_2(s_N/N) \mathbb{E} \left[ \xi_N^{b,1} \right] + B_1(s_N/N)B_2(s_N/N)\mathbb{E} \left[ \xi_N^{m,1} \right]}{z - s_N/N}, \\
& V_N^2(z) = \frac{A_1(0) B_3(0)   \mathbb{E} \left[\xi_N^{b,2} \right] + A_1(0) A_3(0) \mathbb{E} \left[\xi_N^{m,2} \right]  }{z}  - \frac{ B_1(s_N/N) A_3(s_N/N) \mathbb{E}\left[\xi_N^{t,2}\right]}{z - s_N/N},
\end{split}
\end{align}
with $\Phi_N^{\pm}$ as in Assumption 3, $s_N = M_N + 1 +(N-1)\cdot \theta$ and $\xi_N^{t/m/b, 1/2}$ as in (\ref{S4xiN}). All the expectations above are with respect to the measure $\mathbb{P}_N^{{\bf t}, {\bf v}}$, and we have assumed $\max_{i,j} |t^i_j|$ is small enough, depending on $K, n, m$ alone, so that $\mathbb{P}_N^{{\bf t}, {\bf v}}$ is well-defined and we also assume $\max_{i,j} |t^i_j| < \epsilon/2$ (the latter ensures the deformed weights are non-vanishing, which is required for the application of the results of Section \ref{Section2}).

\begin{definition}\label{CumDef}
We summarize some notation in this definition. Let $K$ be a compact subset of $\mathbb{C} \setminus [0, \lM + \theta]$. In addition, we fix integers $r,s,t \geq 0$ and let $m = r+ s$ and $n = r + t$. We fix points $\{v_a\}_{a = 1}^r, \{u_b \}_{b = 1}^s, \{w_c \}_{c = 1}^t \subseteq K$ and set $v_a^1 = v_a^2 = v_a$ for $a = 1, \dots, r$, $v_{b+r}^1 = u_b$ for $b = 1, \dots, s$ and $v_{c+r}^2 = w_c$ for $c = 1, \dots, t$.
In addition, we fix $v \in K$ and let $\Gamma$ be a positively oriented contour, which encloses the segment $[0 , \lM + \theta]$, is contained in $\mathcal{M}$ as in Assumption 3 and avoids $K$.

For integers $p \leq  q$, we will denote by $\llbracket p, q \rrbracket$ the set of integers $\{p, p+1, \dots ,q\}$.

For a bounded random variable $\xi$ and sets $A,B,C$ we let  $M(\xi; A,B,C)$ be the joint cumulant of the random variables $\xi$, $\Delta X_N(v_a)$, $X_N^t(u_b)$, $X_N^b(w_c)$ from (\ref{DefX}) for $a \in A$,  for $b \in B$ and $c \in C$. If $A = B= C = \emptyset$ then $M(\xi; A,B,C) = \mathbb{E}[\xi]$. 
\end{definition}
In the remainder of the section we use the notation in Definition \ref{CumDef}. Our goal is to use (\ref{S4TLNE1v1}), (\ref{S4TLNE2v1}) and our work from Sections \ref{Section4.1} and \ref{Section4.2} to derive the following result
\begin{align}\label{NekrasovOutput}
\begin{split} 
& M\left( \Delta X_N(v) ; \llbracket 1,r \rrbracket, \llbracket 1, s \rrbracket, \llbracket 1, t \rrbracket \right) =   \frac{1}{2\pi \i}\int_{\Gamma} dz W_N(z)  \\
&+ \sum_{ B \subseteq \llbracket 1, s \rrbracket} \sum_{C \subseteq \llbracket 1 ,t \rrbracket}  \frac{ \Phi^+(z) e^{\theta G_\mu(z)}  M  \left( X^b_N(z); \llbracket 1, r \rrbracket, B, C  \right) {\bf 1}\{ |B|+|C| < s + t\} }{S(z) N^{-1/2}N^{s - |B|} N^{t - |C|} }  \prod_{b \in B^c} \hspace{-1mm} \frac{1 }{(u_b-z)^2 } \hspace{-1mm} \prod_{c \in C^c} \hspace{-1mm} \frac{1}{(w_c - z)^2} \\
&- \sum_{A \subseteq \llbracket 1,r \rrbracket} \sum_{B \subseteq \llbracket 1, s \rrbracket}  \sum_{C \subseteq \llbracket 1, t \rrbracket}   \frac{ \Phi^+(z)e^{\theta G_\mu(z)} M \left(X_N^t(z)   ; A, B, C \right) {\bf 1}\{ |A| + |B|+|C| < r+ s + t\}   }{S(z) N^{r/2 - |A|/2-1/2} N^{r - |A|} N^{s - |B|} N^{t - |C|}}      \\
&\times  \prod_{a \in A^c} \left[ \frac{-2\theta }{(v_a -z)^3}\right] \prod_{b \in B^c} \frac{1}{ (u_b -z)^2} \prod_{c \in C^c}  \frac{1}{ (w_c -z)^2} \\
&+ \sum_{A \subseteq \llbracket 1,r \rrbracket}  \sum_{C \subseteq \llbracket 1, t \rrbracket}  \frac{ \Phi^-(z) M\left( \Delta X_N(z) ; A, \llbracket 1, s \rrbracket , C\right) {\bf 1}\{ |A| +|C| < r + t\}  }{S(z) N^{r/2 - |A|/2} N^{t - |C|}} \prod_{a \in A^c} \left[ \frac{-1}{  (v_a -z)^2}\right]   \prod_{c \in C^c} \frac{1}{ (w_c -z)^2} \\
&+   \sum_{A \subseteq \llbracket 1, r \rrbracket, B \subseteq \llbracket 1, s \rrbracket}\frac{  \Phi^+(z)  M\left( \Delta X_N(z) ; A, B, \llbracket 1, t \rrbracket  \right) {\bf 1}\{ |A| + |B| < r+ s \}   }{ S(z)  N^{r/2 - |A|/2} N^{s - |B|}}  \prod_{a \in A^c}   \frac{1}{ (v_a -z)^2}\prod_{b \in B^c}  \frac{ 1}{ (u_b -z)^2} \\
& + \sum_{A \subseteq \llbracket 1,r \rrbracket}\sum_{B \subseteq \llbracket 1,s \rrbracket}\sum_{C \subseteq \llbracket 1,t \rrbracket}   \frac{ M \left(  \zeta^{\Gamma}_N(z); A, B,C\right)}{N^{r/2 - |A|/2 + 1/2}} = 0.
\end{split}
\end{align}
where  $S(z) = (z-v) \cdot (\Phi^+(z) + \Phi^-(z) - R_\mu(z))$ and $\zeta^{\Gamma}_N(z)$ stands for a generic random analytic function such that for each $k \geq 1$ we have $\mathbb{E} [ |\zeta^{\Gamma}_N(z)|^k] = O(1)$ uniformly over $z \in \Gamma$. In addition,  $A^c = \llbracket 1,r \rrbracket \setminus A$, $B^c = \llbracket 1, s \rrbracket \setminus B$ and $C^c = \llbracket 1,t \rrbracket \setminus C$, and $W_N(z)$ is given by
\begin{align}\label{WNDef}
\begin{split}
&N^{-(r+3)/2} W_N(z) =  \frac{\theta \Phi_N^-(Nz) e^{-\theta G_\mu(z)}  \partial_z G_\mu(z) {\bf 1} \{ r + s + t = 0\} }{S(z) N  }    +   \frac{\Phi_N^+(Nz) e^{\theta G_\mu(z)} {\bf 1} \{ r = 0\} }{S(z) N^{s+t} } \\
&  \times \prod_{b = 1}^s \left[ \frac{1 }{(u_b-z)(u_b - z^-) } \right] \prod_{c  = 1}^t  \left[ \frac{1}{(w_c - z)(w_c - z^-)}\right] \cdot \left[\frac{1}{\theta} + \frac{[\theta - 2] \partial_z G_\mu(z) }{2N}\right]  \\
&-  \frac{\theta \Phi_N^+(Nz)  \partial_z G_\mu(z) {\bf 1} \{ t = 0\} }{ S(z) N^{r + s + 1} }  \prod_{a = 1}^r  \left[ \frac{1}{ (v_a -z)(v_a - z^- )}\right] \prod_{b = 1}^s  \left[ \frac{1}{ (u_b -z)(u_b - z^-)}\right]    \\
&- \frac{\Phi_N^+(Nz)e^{\theta G_\mu(z)} }{S(z) N^{2r + s+t}}  \prod_{a = 1}^r \left[ \frac{-\theta  (2 v_a - z_{\theta} - z^-)}{(v_a -z)(v_a -z^-)(v_a - z_{\theta})(v_a -z_{\theta}^-) }\right]    \\
&\times  \prod_{b = 1}^s \left[ \frac{1}{ (u_b -z)(u_b - z^-)}\right]   \prod_{c = 1}^t \left[ \frac{1}{ (w_c -z_\theta)(w_c - z^-_\theta)}\right]  \left[\frac{1}{\theta} + \frac{[\theta - 2] \partial_z G_\mu(z)}{2N} \right]   \\
\end{split}
\end{align}
with $z^{\pm} = z \pm N^{-1}$, $z^{\pm}_\theta = z^{\pm} + \theta/N$ and $z_\theta = z + \theta/N$. Equation (\ref{NekrasovOutput}) is the main output of the Nekrasov's equations that we will need in this paper, and the rest of the section is devoted to establishing it. The overall approach is as follows. We will take each of the two equations (\ref{S4TLNE1v1}) and (\ref{S4TLNE2v1}), divide them by a suitable factor and integrate over the contour $\Gamma$ in Definition \ref{CumDef}. The resulting expression will then be differentiated with respect to the $t$ variables and the latter will be set to $0$. Afterwards we will be able to rewrite the resulting expressions using the results in Section \ref{Section4.2}, Assumption 4 and (\ref{S4xiN2}). Ultimately the first Nekrasov's equation will lead us to (\ref{expandedNE1v2}) and the second one will lead us to (\ref{expandedNE2v2}). Equation (\ref{NekrasovOutput}) is then derived from the difference of the resulting two equations. We supply the details below. \\

Dividing both sides of (\ref{S4TLNE1v1}) by $2\pi\i \cdot S(z) \cdot A_1(z) \cdot B_2(z)$ and integrating over $\Gamma$ gives
\begin{align*}
\begin{split}
&\frac{1}{2\pi \i }\int_{\Gamma} \frac{dz R^1_N(Nz) }{ S(z) \cdot A_1(z) \cdot B_2(z)}=  \frac{1}{2\pi \i }\int_{\Gamma} \frac{dz \Phi^{-}_N(Nz) }{ S(z)}  \cdot \mathbb{E} \left[ \prod_{i = 1}^N\frac{Nz- \ell_i -\theta}{Nz - \ell_i} \right] \\
&+ \frac{1}{2\pi \i }\int_{\Gamma} \frac{dz \Phi^{+}_N(Nz) \cdot A_2(z) \cdot B_1(z)}{S(z) \cdot A_1(z)\cdot B_2(z) } \cdot   \mathbb{E} \left[ \prod_{i = 1}^{N-1}\frac{Nz- m_i + \theta - 1}{Nz - m_i - 1} \right]  \\
& + \frac{1}{2\pi \i }\int_{\Gamma} \frac{dz \Phi^{+}_N(Nz) \cdot B_1(z)}{S(z) \cdot A_1(z)} \cdot \mathbb{E} \left[ \Pi_1^{\theta}(Nz) \right] + \frac{1}{2\pi \i }\int_{\Gamma} \frac{dz V^1_N(z) }{ S(z) \cdot A_1(z) \cdot B_2(z)}  \times \\
\end{split}
\end{align*}
Notice that by Lemmas \ref{S3AnalRQ} and \ref{S3NonVanish} we have that $\Phi^+(z) + \Phi^-(z) - R_\mu(z) \neq 0$ and is analytic in a neighborhood of the region enclosed by $\Gamma$ and so by Cauchy's theorem the left side of the above expression vanishes. Furthermore, the integrand in the last term on the right has two simple poles in the interior of $\Gamma$ at $z = 0$ and $z = s_N N^{-1}$. The last two observations show
\begin{align}\label{S4ExpBRQ}
\begin{split}
0= &\frac{1}{2\pi \i }\int_{\Gamma} \frac{dz \Phi^{-}_N(Nz) }{ S(z)}  \cdot \mathbb{E} \left[ \prod_{i = 1}^N\frac{Nz- \ell_i -\theta}{Nz - \ell_i} \right]  + \frac{1}{2\pi \i }\int_{\Gamma} \frac{dz \Phi^{+}_N(Nz) \cdot A_2(z) \cdot B_1(z) }{S(z) \cdot A_1(z)\cdot B_2(z) }      \\
&\times   \mathbb{E} \left[ \prod_{i = 1}^{N-1}\frac{Nz- m_i + \theta - 1}{Nz - m_i - 1} \right]  + \frac{1}{2\pi \i }\int_{\Gamma} \frac{dz \Phi^{+}_N(Nz) \cdot B_1(z)}{S(z) \cdot A_1(z)} \cdot \mathbb{E} \left[ \Pi_1^{\theta}(Nz) \right] \\
&+   \frac{ \mathbb{E} \left[\xi_N^{1,t} \right]}{S(0)}   - \frac{ B_1(s_N/N)A_2(s_N/N)  \mathbb{E} \left[ \xi_N^{b,1}\right]  }{ S(s_N/N) A_1(s_N/N)B_2(s_N/N) }  - \frac{B_1(s_N/N)\mathbb{E} \left[ \xi_N^{m,1}\right]}{  S(s_N/N) A_1(s_N/N) }.
\end{split}
\end{align}

We next apply the operator 
$$\mathcal{D} : = [\partial_{t^1_1} - \partial_{t_1^2}] \cdots [\partial_{t_r^1} - \partial_{t_r^2}] \cdot \partial_{t_{r+1}^{1}} \cdots  \partial_{t_{r+s}^{1}} \cdot \partial_{t_{r+1}^{2}} \cdots  \partial_{t_{r+t}^{2}}  $$
 to both sides and set $t_a^1 = 0$ for $a = 1, \dots, m$ and $t_a^2 = 0$ for $a = 1, \dots ,n$. Notice that when we perform the differentiation to the right some of the derivatives could land on the products and some on the measure in the expectations. We will split the result of the differentiation based on subsets $A,B,C$. The set $A$ consists of indices $a$ in $\{1, \dots, r\}$ such that $[\partial_{t_a^1} - \partial_{t_a^2}] $ differentiates the expectation. Similarly, $B$ denotes the set of indices $b$ in $\{1, \dots, s\}$ such that $\partial_{t_{r+b}^1}$ differentiates the expectation and $C$ the set of indices in $\{1, \dots, t\}$ such that $\partial_{t^2_{r+c}}$ differentiates the expectation. The result of applying $\mathcal{D}$ is then
\begin{align}\label{S4ExpB}
\begin{split} 
& \frac{1}{2\pi \i}\int_{\Gamma} dz  \frac{\Phi_N^-(Nz) }{S(z)  N^{r/2}}   M \left(\prod_{i = 1}^N\frac{Nz- \ell_i -\theta}{Nz - \ell_i} ; \llbracket 1, r \rrbracket, \llbracket 1 , s \rrbracket , \llbracket 1, t \rrbracket \right) + \sum_{ B \subseteq \llbracket 1, s \rrbracket} \sum_{C \subseteq \llbracket 1 ,t \rrbracket}  \frac{\Phi_N^+(Nz) }{S(z) N^{r/2}}  \\
& \times \prod_{b \in B^c} \hspace{-1mm} \left[ \frac{N^{-1} }{(u_b-z)(u_b - z^-) } \right] \hspace{-1mm} \prod_{c \in C^c} \hspace{-1mm} \left[ \frac{N^{-1}}{(w_c - z)(w_c - z^-)}\right] \hspace{-1mm} M  \hspace{-1mm}\left( \prod_{i = 1}^{N-1}\frac{Nz- m_i +\theta - 1}{Nz - m_i - 1} ; \llbracket 1, r \rrbracket, B, C \hspace{-1mm} \right) \\
&+    \sum_{A \subseteq \llbracket 1, r \rrbracket, B \subseteq \llbracket 1, s \rrbracket}\frac{ \Phi_N^+(Nz)}{ S(z) N^{|A|/2}}  \prod_{a \in A^c}  \left[ \frac{N^{-1}}{ (v_a -z)(v_a - z^- )}\right] \prod_{b \in B^c}  \left[ \frac{ N^{-1}}{ (u_b -z)(u_b - z^-)}\right] \\
&\times  M\left( \Pi_1^{\theta}(Nz); A, B, \llbracket 1, t \rrbracket  \right) = - \tilde{V}^1_N,
\end{split}
\end{align}
where 
\begin{align*}
\begin{split} 
&\tilde{V}_N^1 =  \frac{M \left(\xi_N^{1,t} ; \llbracket 1, r \rrbracket ,  \llbracket 1, s \rrbracket, \llbracket 1, t \rrbracket \right)}{S(0)  N^{r/2}}   - \sum_{ B \subseteq \llbracket 1, s \rrbracket} \sum_{C \subseteq \llbracket 1 ,t \rrbracket} \prod_{b \in B^c} \hspace{-1mm} \left[ \frac{N^{-1} }{(u_b-s_N/N)(u_b - s_N/N + 1/N) } \right]  \\
& \times  \prod_{c \in C^c} \hspace{-1mm} \left[ \frac{N^{-1}}{(w_c - s_N/N)(w_c - s_N/N + 1/N)}\right] \cdot \frac{ M\left(\xi_N^{b,1} ; \llbracket 1, r\rrbracket, B, C\right) }{ S(s_N/N) N^{r/2}} \\
&-  \sum_{A \subseteq \llbracket 1, r \rrbracket, B \subseteq \llbracket 1, s \rrbracket}\prod_{a \in A^c}  \left[ \frac{N^{-1}}{ (v_a -s_N/N)(v_a - s_N/N +1/N )}\right] \\
& \times \prod_{b \in B^c}  \left[ \frac{ N^{-1}}{ (u_b -s_N/N)(u_b -s_N/N + 1/N)}\right]\frac{ M\left(\xi_N^{b,1} ; A, B, \llbracket 1, t \rrbracket \right) }{ S(s_N/N) N^{|A|/2}}.
\end{split}
\end{align*}
We mention that the term $\tilde{V}_N^1$ comes from the action of $\mathcal{D}$ on the last line of (\ref{S4ExpBRQ}).

Recall from (\ref{S4xiN2}) that $\xi^{b/m/t,1}_N= O(1)$ $\mathbb{P}_N$-almost surely. Combining the latter with the fact that $X_N^t(v_a)$ are $O(N)$ almost surely,  and Assumption 4, we see that $\tilde{V}_N^1 = O(N^{r+s+t} \exp(-cN^a))$. Combining the latter with  (\ref{AEP1v2}, \ref{AEP1v3}, \ref{AEP1v4}) we may rewrite (\ref{S4ExpB}) as
\begin{equation*}
\begin{split} 
&0 = \frac{1}{2\pi \i}\int_{\Gamma} dz  \frac{\Phi_N^-(Nz) e^{-\theta G_\mu(z)} }{S(z)  N^{r/2}}   M \left(1 - \frac{\theta X_N^t(z)}{N} + \frac{\theta^2 \partial_z G_\mu(z)}{2N} ; \llbracket 1, r \rrbracket, \llbracket 1 , s \rrbracket , \llbracket 1, t \rrbracket \right)  \\
& + \sum_{ B \subseteq \llbracket 1, s \rrbracket} \sum_{C \subseteq \llbracket 1 ,t \rrbracket}  \frac{\Phi_N^+(Nz) e^{\theta G_\mu(z)} }{S(z) N^{r/2}}  \prod_{b \in B^c} \hspace{-1mm} \left[ \frac{N^{-1} }{(u_b-z)(u_b - z^-) } \right] \hspace{-1mm} \prod_{c \in C^c} \hspace{-1mm} \left[ \frac{N^{-1}}{(w_c - z)(w_c - z^-)}\right] \\
&\times   M  \left(1 + \frac{\theta X^b_N(z)}{N} + \frac{[\theta^2 - 2\theta] \partial_z G_\mu(z) }{2N}; \llbracket 1, r \rrbracket, B, C \hspace{-1mm} \right) \\
&+    \sum_{A \subseteq \llbracket 1, r \rrbracket, B \subseteq \llbracket 1, s \rrbracket}\frac{\theta \Phi_N^+(Nz)  }{ S(z) N^{|A|/2}}  \prod_{a \in A^c}  \left[ \frac{N^{-1}}{ (v_a -z)(v_a - z^- )}\right] \prod_{b \in B^c}  \left[ \frac{ N^{-1}}{ (u_b -z)(u_b - z^-)}\right] \\
&\times  M\left( \frac{{\bf 1 } \{ \theta \neq 1\}}{1-\theta} + \frac{\Delta X_N(z)}{N^{3/2}} - \frac{\theta \partial_z G_\mu(z)}{N}; A, B, \llbracket 1, t \rrbracket  \right)  + \sum_{A \subseteq \llbracket 1,r \rrbracket}\sum_{B \subseteq \llbracket 1,s \rrbracket}\sum_{C \subseteq \llbracket 1,t \rrbracket}   \frac{ M \left(  \zeta^{\Gamma}_N(z); A, B,C\right)}{N^{r - |A|/2 + 2}},
\end{split}
\end{equation*}
where we recall that $\zeta^{\Gamma}_N(z)$ stands for a generic random analytic function such that for each $k \geq 1$ we have $\mathbb{E} [ |\zeta^{\Gamma}_N(z)|^k] = O(1)$ uniformly over $z \in \Gamma$ and $v^i_j \in K$. 

We can now use the linearity of cumulants together with the fact that the joint cumulant of any non-empty collection of  bounded random variables and a constant is zero. For example this allows us to remove the term ${\bf 1}\{\theta \neq 1\}$ above if $|A| + |B| + t > 0$ and otherwise we see that we can remove it as it integrates to $0$ by Cauchy's theorem. In addition, we know from Assumption 3 that uniformly as $z$ varies over $\Gamma$ and $v \in K$ we have
$$\frac{1}{ (v -z)(v - z +N^{-1})} = \frac{1}{ (v -z)^2} + O(N^{-1}) \mbox{ and } \Phi^{\pm}_N(Nz) = \Phi^{\pm}(z) + O(N^{-1}).$$
Applying the last few statements and (\ref{derBound}) we get
\begin{align}\label{expandedNE1v2}
\begin{split} 
& \frac{1}{2\pi \i}\int_{\Gamma} dz  \frac{\Phi_N^-(Nz) e^{-\theta G_\mu(z)} {\bf 1} \{ r + s + t = 0\} }{S(z)  }      \left[1 + \frac{\theta^2 \partial_z G_\mu(z)}{2N} \right] +   \frac{\Phi_N^+(Nz) e^{\theta G_\mu(z)} {\bf 1} \{ r = 0\} }{S(z) } \\
&  \times \prod_{b = 1}^s \left[ \frac{N^{-1} }{(u_b-z)(u_b - z^-) } \right] \prod_{c  = 1}^t  \left[ \frac{N^{-1}}{(w_c - z)(w_c - z^-)}\right] \cdot \left[1 + \frac{[\theta^2 - 2\theta] \partial_z G_\mu(z) }{2N}\right]  \\
&+   \frac{\theta \Phi_N^+(Nz)  {\bf 1} \{ t = 0\} }{ S(z)}  \prod_{a = 1}^r  \left[ \frac{N^{-1}}{ (v_a -z)(v_a - z^- )}\right] \prod_{b = 1}^s  \left[ \frac{ N^{-1}}{ (u_b -z)(u_b - z^-)}\right]   \left[- \frac{\theta \partial_z G_\mu(z)}{N}\right] \\
& - \frac{\theta \Phi^-(z) e^{-\theta G_\mu(z)} M \left( X_N^t(z) ; \llbracket 1, r \rrbracket, \llbracket 1 , s \rrbracket , \llbracket 1, t \rrbracket \right)  }{S(z)  N^{r/2 + 1}}   \\
& + \sum_{ B \subseteq \llbracket 1, s \rrbracket} \sum_{C \subseteq \llbracket 1 ,t \rrbracket}  \frac{\theta \Phi^+(z) e^{\theta G_\mu(z)}  M  \left( X^b_N(z); \llbracket 1, r \rrbracket, B, C  \right) }{S(z) N^{r/2 + 1}N^{s - |B|} N^{t - |C|} }  \prod_{b \in B^c} \hspace{-1mm} \frac{1 }{(u_b-z)^2 } \hspace{-1mm} \prod_{c \in C^c} \hspace{-1mm} \frac{1}{(w_c - z)^2} \\
&+    \sum_{A \subseteq \llbracket 1, r \rrbracket, B \subseteq \llbracket 1, s \rrbracket}\frac{ \theta \Phi^+(z)  M\left( \Delta X_N(z) ; A, B, \llbracket 1, t \rrbracket  \right)   }{ S(z)  N^{r - |A|/2 + 3/2} N^{s - |B|}}  \prod_{a \in A^c}   \frac{1}{ (v_a -z)^2}\prod_{b \in B^c}  \frac{ 1}{ (u_b -z)^2} \\
&  + \sum_{A \subseteq \llbracket 1,r \rrbracket}\sum_{B \subseteq \llbracket 1,s \rrbracket}\sum_{C \subseteq \llbracket 1,t \rrbracket}   \frac{ M \left(  \zeta^{\Gamma}_N(z); A, B,C\right)}{N^{r - |A|/2 + 2}} = 0.
\end{split}
\end{align}
We mention that the way we use (\ref{derBound}) is when replacing $z^-$ with $z$ in the products above, and $\Phi_N^{\pm}(Nz)$ with $\Phi^{\pm}(z)$. Indeed, this replacement produces an error, which is 
$$\frac{O(N^{-1}) }{N^{r/2 + 1}}  M \left(  X_N^t(z) ;  \llbracket 1, r \rrbracket, \llbracket 1 , s \rrbracket , \llbracket 1, t \rrbracket \right)  \mbox{, } \frac{O(N^{-1}) }{N^{r/2 + 1} N^{s - |B|} N^{t - |C|}}   M\left(  X^b_N(z) ;  \llbracket 1,r \rrbracket,B, C\right) \mbox{ or }$$
$$  \frac{O(N^{-1}) }{N^{r -|A|/2 +3/2} N^{s - |B|}}   M\left( \Delta X_N(z) ;  A,B, \llbracket 1, t \rrbracket \right) $$
in the fourth, fifth and sixth lines of (\ref{expandedNE1v2}), and the latter can be absorbed into the sum on the last line of (\ref{expandedNE1v2}). Equation (\ref{expandedNE1v2}) is the expression we need from the first Nekrasov's equation.\\

We next divide both sides of (\ref{S4TLNE2v1} by $2\pi \i \cdot S(z) \cdot A_1(z) \cdot B_3(z)$ and integrate over $\Gamma$ to get
\begin{align*}
\begin{split}
&\frac{1}{2\pi \i }\int_{\Gamma} \frac{dz R^2_N(Nz) }{ S(z) \cdot A_1(z) \cdot B_3(z)}= \frac{1}{2\pi \i }\int_{\Gamma} \frac{dz \Phi^{+}_N(Nz) \cdot A_3(z)  \cdot B_1(z) }{S(z) \cdot A_1(z) \cdot B_3(z) }  \cdot \mathbb{E} \left[ \prod_{i = 1}^N\frac{Nz - \ell_i + \theta - 1}{Nz - \ell_i - 1} \right] \\
&+\frac{1}{2\pi \i }\int_{\Gamma} \frac{dz \Phi^{-}_N(Nz)}{ S(z)} \cdot   \mathbb{E} \left[\prod_{i = 1}^{N-1}\frac{Nz - m_i}{Nz - m_i + \theta} \right] + \frac{1}{2\pi \i }\int_{\Gamma} \frac{dz \Phi^{-}_N(Nz) \cdot A_3(z) }{S(z) \cdot B_3(z)  }   \mathbb{E} \left[ \Pi^\theta_2(Nz) \right]  \\
&+  \frac{1}{2\pi \i }\int_{\Gamma} \frac{dz V^2_N(z)}{ S(z) \cdot A_1(z) \cdot B_3(z)}.
\end{split}
\end{align*}

As before the left side of the above equation vanishes by Cauchy's theorem and the last term on the right has simple poles at $z =0$ and $z = s_N \cdot N^{-1}$. Thus we can rewrite the above as
\begin{align*}
\begin{split}
& 0 = \frac{1}{2\pi \i }\int_{\Gamma} \frac{dz \Phi^{+}_N(Nz) \cdot A_3(z) \cdot B_1(z)  }{S(z) \cdot A_1(z) \cdot B_3(z) }  \cdot \mathbb{E} \left[ \prod_{i = 1}^N\frac{Nz - \ell_i + \theta - 1}{Nz - \ell_i - 1} \right] \\
&+\frac{1}{2\pi \i }\int_{\Gamma} \frac{dz \Phi^{-}_N(Nz) }{ S(z)} \cdot   \mathbb{E} \left[\prod_{i = 1}^{N-1}\frac{Nz - m_i}{Nz - m_i + \theta} \right] + \frac{1}{2\pi \i }\int_{\Gamma} \frac{dz \Phi^{-}_N(Nz) \cdot A_3(z) }{S(z) \cdot B_3(z) }   \mathbb{E} \left[ \Pi^\theta_2(Nz) \right]  \\
&+  \frac{\mathbb{E} \left[\xi_N^{b,2} \right]}{S(0)} + \frac{A_3(0)\mathbb{E} \left[\xi_N^{m,2} \right] }{ S(0)B_3(0) } - \frac{ B_1(s_N/N) A_3(s_N/N) \mathbb{E}\left[\xi_N^{t,2}\right]}{S(s_N/N)   A_1(s_N/N) B_3(s_N/N) }.
\end{split}
\end{align*}
Then we can apply $\mathcal{D}$ to both sides and set $t_a^1 = 0$ for $a = 1, \dots, m$ and $t_a^2 = 0$ for $a = 1, \dots ,n$. This gives
\begin{align}\label{S4ExpB2}
\begin{split} 
& \frac{1}{2\pi \i}\int_{\Gamma} dz  \sum_{A \subseteq \llbracket 1,r \rrbracket} \sum_{B \subseteq \llbracket 1, s \rrbracket}  \sum_{C \subseteq \llbracket 1, t \rrbracket}   \frac{\Phi_N^+(Nz) }{S(z)  N^{|A|/2}}  \prod_{a \in A^c} \left[ \frac{-\theta N^{-2} (2 v_a - z_{\theta} - z^-)}{(v_a -z)(v_a -z^-)(v_a - z_{\theta})(v_a -z_{\theta}^-) }\right]    \\
&\times  \prod_{b \in B^c} \left[ \frac{N^{-1}}{ (u_b -z)(u_b - z^-)}\right]   \prod_{c \in C^c} \left[ \frac{N^{-1}}{ (w_c -z_\theta)(w_c - z^-_\theta)}\right]  M \left(\prod_{i = 1}^N\frac{Nz - \ell_i + \theta - 1}{Nz - \ell_i - 1} ; A, B, C \right)   \\
&  + \frac{\Phi^{-}_N(Nz) }{ S(z)N^{r/2}}    M \left(\prod_{i = 1}^{N-1}\frac{Nz - m_i}{Nz - m_i + \theta}; \llbracket 1, r \rrbracket, \llbracket 1, s \rrbracket, \llbracket 1, t \rrbracket \right) \\
&+ \sum_{A \subseteq \llbracket 1,r \rrbracket}  \sum_{C \subseteq \llbracket 1, t \rrbracket}  \frac{\Phi_N^-(Nz) }{S(z) N^{|A|/2}} \prod_{a \in A^c} \left[ \frac{-N^{-1}}{  (v_a -z_\theta)(v_a - z^-_\theta)}\right]  \prod_{c \in C^c} \left[ \frac{N^{-1}}{ (w_c -z_\theta) (w_c - z^-_\theta)}\right] \\
&  \times  M\left( \Pi^{\theta}_2(Nz) ; A, \llbracket 1, s \rrbracket , C\right) =  - \tilde{V}_N^2,
\end{split}
\end{align}
where 
\begin{align*}
\begin{split} 
& \tilde{V}_N^2  =  \frac{M (\xi_N^{b,2}; \llbracket 1, r \rrbracket, \llbracket 1, s \rrbracket, \llbracket 1, t \rrbracket )}{S(0)N^{r/2}}+ \sum_{A \subseteq \llbracket 1,r \rrbracket}  \sum_{C \subseteq \llbracket 1, t \rrbracket}  \prod_{a \in A^c} \left[ \frac{-N^{-1}}{  (v_a -\theta/N)(v_a - \theta/N + 1/N )}\right]  \\
&  \times \prod_{c \in C^c} \left[ \frac{N^{-1}}{ (w_c -\theta/N) (w_c - \theta/N + 1/N)}\right]  \frac{M\left( \xi_N^{m,2} ; A, \llbracket 1, s \rrbracket , C\right) }{S(0) N^{|A|/2}}  - \sum_{A \subseteq \llbracket 1,r \rrbracket}\sum_{B \subseteq \llbracket 1, s \rrbracket}  \sum_{C \subseteq \llbracket 1, t \rrbracket} \frac{M(\xi_N^{t,2}; A,B,C) }{S(s_N/N)  N^{|A|/2}}   \\
&  \times   \prod_{a \in A^c} \left[ \frac{-\theta N^{-2} (2 v_a - 2s_N/N  - (\theta - 1)/N)}{(v_a -s_N/N)(v_a -s_N/N + 1/N)(v_a - s_N/N - \theta/N)(v_a -s_N/N - (\theta- 1)/N) }\right]    \\
&\times  \prod_{b \in B^c} \left[ \frac{N^{-1}}{ (u_b -s_N/N)(u_b - s_N/N + 1/N)}\right]   \prod_{c \in C^c} \left[ \frac{N^{-1}}{ (w_c - s_N/N - \theta/N)(w_c - s_N/N - (\theta - 1)/N)}\right].
\end{split}
\end{align*}
Arguing as before, we have from (\ref{S4xiN2}) and Assumption 4 that $\tilde{V}_N^2 =  O(N^{r+s+t} \exp(-cN^a))$. Combining the latter with  (\ref{AEP1v2}, \ref{AEP1v3}, \ref{AEP1v4}) we can rewrite (\ref{S4ExpB2}) as 
\begin{align*}
\begin{split} 
& \frac{1}{2\pi \i}\int_{\Gamma} dz  \sum_{A \subseteq \llbracket 1,r \rrbracket} \sum_{B \subseteq \llbracket 1, s \rrbracket}  \sum_{C \subseteq \llbracket 1, t \rrbracket}   \frac{\Phi_N^+(Nz) e^{\theta G_\mu(z)} }{S(z)  N^{|A|/2}}  \prod_{a \in A^c} \left[ \frac{-\theta N^{-2} (2 v_a - z_{\theta} - z^-)}{(v_a -z)(v_a -z^-)(v_a - z_{\theta})(v_a -z_{\theta}^-) }\right]    \\
&\times  \prod_{b \in B^c} \left[ \frac{N^{-1}}{ (u_b -z)(u_b - z^-)}\right]   \prod_{c \in C^c} \left[ \frac{N^{-1}}{ (w_c -z_\theta)(w_c - z^-_\theta)}\right]  M \left(1 + \frac{\theta X_N^t(z)}{N} + \frac{[\theta^2 - 2\theta] \partial_z G_\mu(z)}{2N}  ; A, B, C \right)   \\
& + \frac{\Phi^{-}_N(Nz)e^{-\theta G_\mu(z)}}{ S(z)N^{r/2}}    M \left(1 - \frac{\theta X_N^b(z)}{N} - \frac{\theta^2 \partial_z G_\mu(z)}{2N}; \llbracket 1, r \rrbracket, \llbracket 1, s \rrbracket, \llbracket 1, t \rrbracket \right) \\
&+ \sum_{A \subseteq \llbracket 1,r \rrbracket}  \sum_{C \subseteq \llbracket 1, t \rrbracket}  \frac{\theta \Phi_N^-(Nz) }{S(z) N^{|A|/2}} \prod_{a \in A^c} \left[ \frac{-N^{-1}}{  (v_a -z_\theta)(v_a - z^-_\theta)}\right]  \prod_{c \in C^c} \left[ \frac{N^{-1}}{ (w_c -z_\theta) (w_c - z^-_\theta)}\right] \\
&  \times  M\left( \frac{{\bf 1}\{\theta \neq 1\}}{1-\theta} - \frac{ \Delta X_N(z)}{N^{3/2}} ; A, \llbracket 1, s \rrbracket , C\right)  +  \sum_{A \subseteq \llbracket 1,r \rrbracket}\sum_{B \subseteq \llbracket 1,s \rrbracket}\sum_{C \subseteq \llbracket 1,t \rrbracket}   \frac{ M \left(  \zeta^{\Gamma}_N(z); A, B,C\right)}{N^{r - |A|/2 + 2}}= 0.
\end{split}
\end{align*}
As before, we may remove constant terms from second and higher order cumulants, the ${\bf 1}\{\theta \neq 1\}$ term, and also simplify the above expression to get
\begin{align}\label{expandedNE2v2}
\begin{split} 
& \frac{1}{2\pi \i}\int_{\Gamma} dz     \frac{\Phi_N^+(Nz)e^{\theta G_\mu(z)} }{S(z) }  \prod_{a = 1}^r \left[ \frac{-\theta N^{-2} (2 v_a - z_{\theta} - z^-)}{(v_a -z)(v_a -z^-)(v_a - z_{\theta})(v_a -z_{\theta}^-) }\right]    \\
&\times  \prod_{b = 1}^s \left[ \frac{N^{-1}}{ (u_b -z)(u_b - z^-)}\right]   \prod_{c = 1}^t \left[ \frac{N^{-1}}{ (w_c -z_\theta)(w_c - z^-_\theta)}\right]  \left[1 + \frac{[\theta^2 - 2\theta] \partial_z G_\mu(z)}{2N} \right]   \\
& + \frac{\Phi_N^{-}(Nz)  e^{-\theta G_\mu(z)} {\bf 1}\{ r + s + t = 0\}}{ S(z)}    \left[1  - \frac{\theta^2 \partial_z G_\mu(z)}{2N}\right]  \\
&+ \sum_{A \subseteq \llbracket 1,r \rrbracket} \sum_{B \subseteq \llbracket 1, s \rrbracket}  \sum_{C \subseteq \llbracket 1, t \rrbracket}   \frac{\theta \Phi^+(z)e^{\theta G_\mu(z)} M \left(X_N^t(z)   ; A, B, C \right)   }{S(z)  N^{(|A|+2)/2} N^{2r - 2|A|} N^{s - |B|} N^{t - |C|}}  \prod_{a \in A^c} \left[ \frac{-2\theta }{(v_a -z)^3}\right]    \\
&\times  \prod_{b \in B^c} \frac{1}{ (u_b -z)^2} \prod_{c \in C^c}  \frac{1}{ (w_c -z)^2}  - \frac{\theta \Phi^{-}(z)  e^{-\theta G_\mu(z)} M \left(  X_N^b(z) ; \llbracket 1, r \rrbracket, \llbracket 1, s \rrbracket, \llbracket 1, t \rrbracket \right)}{ S(z)N^{r/2 + 1} }     \\
&- \sum_{A \subseteq \llbracket 1,r \rrbracket}  \sum_{C \subseteq \llbracket 1, t \rrbracket}  \frac{\theta \Phi^-(z) M\left( \Delta X_N(z) ; A, \llbracket 1, s \rrbracket , C\right)  }{S(z) N^{|A|/2 + 3/2} N^{r - |A|} N^{t - |C|}} \prod_{a \in A^c} \left[ \frac{-1}{  (v_a -z)^2}\right]   \prod_{c \in C^c} \frac{1}{ (w_c -z)^2} \\
&+  \sum_{A \subseteq \llbracket 1,r \rrbracket}\sum_{B \subseteq \llbracket 1,s \rrbracket}\sum_{C \subseteq \llbracket 1,t \rrbracket}   \frac{ M \left(  \zeta^{\Gamma}_N(z); A, B,C\right)}{N^{r - |A|/2 + 2}}= 0.
\end{split}
\end{align}
Equation (\ref{expandedNE2v2}) is the expression we need from the second Nekrasov's equation.\\

We next subtract (\ref{expandedNE2v2}) from (\ref{expandedNE1v2}) to get
\begin{align}\label{expandedDiffNE1}
\begin{split} 
&  \frac{1}{2\pi \i}\int_{\Gamma} dz \theta N^{-(r+3)/2} W_N(z)  \\
&+ \sum_{ B \subseteq \llbracket 1, s \rrbracket} \sum_{C \subseteq \llbracket 1 ,t \rrbracket}  \frac{\theta \Phi^+(z) e^{\theta G_\mu(z)}  M  \left( X^b_N(z); \llbracket 1, r \rrbracket, B, C  \right) }{S(z) N^{r/2 + 1}N^{s - |B|} N^{t - |C|} }  \prod_{b \in B^c} \hspace{-1mm} \frac{1 }{(u_b-z)^2 } \hspace{-1mm} \prod_{c \in C^c} \hspace{-1mm} \frac{1}{(w_c - z)^2} \\
&- \sum_{A \subseteq \llbracket 1,r \rrbracket} \sum_{B \subseteq \llbracket 1, s \rrbracket}  \sum_{C \subseteq \llbracket 1, t \rrbracket}   \frac{\theta \Phi^+(z)e^{\theta G_\mu(z)} M \left(X_N^t(z)   ; A, B, C \right)   }{S(z)  N^{(|A|+2)/2} N^{2r - 2|A|} N^{s - |B|} N^{t - |C|}}  \prod_{a \in A^c} \left[ \frac{-2\theta }{(v_a -z)^3}\right]    \\
&\times  \prod_{b \in B^c} \frac{1}{ (u_b -z)^2} \prod_{c \in C^c}  \frac{1}{ (w_c -z)^2} \\
&+ \sum_{A \subseteq \llbracket 1,r \rrbracket}  \sum_{C \subseteq \llbracket 1, t \rrbracket}  \frac{\theta \Phi^-(z) M\left( \Delta X_N(z) ; A, \llbracket 1, s \rrbracket , C\right)  }{S(z) N^{|A|/2 + 3/2} N^{r - |A|} N^{t - |C|}} \prod_{a \in A^c} \left[ \frac{-1}{  (v_a -z)^2}\right]   \prod_{c \in C^c} \frac{1}{ (w_c -z)^2} \\
&+   \sum_{A \subseteq \llbracket 1, r \rrbracket, B \subseteq \llbracket 1, s \rrbracket}\frac{ \theta \Phi^+(z)  M\left( \Delta X_N(z) ; A, B, \llbracket 1, t \rrbracket  \right)   }{ S(z)  N^{r - |A|/2 + 3/2} N^{s - |B|}}  \prod_{a \in A^c}   \frac{1}{ (v_a -z)^2}\prod_{b \in B^c}  \frac{ 1}{ (u_b -z)^2} \\
& - \frac{\theta \Phi^-(z) e^{-\theta G_\mu(z)} M \left( \Delta X_N(z) ; \llbracket 1, r \rrbracket, \llbracket 1 , s \rrbracket , \llbracket 1, t \rrbracket \right)  }{S(z)  N^{r/2 + 3/2}}  \\
& + \sum_{A \subseteq \llbracket 1,r \rrbracket}\sum_{B \subseteq \llbracket 1,s \rrbracket}\sum_{C \subseteq \llbracket 1,t \rrbracket}   \frac{ M \left(  \zeta^{\Gamma}_N(z); A, B,C\right)}{N^{r - |A|/2 + 2}} = 0.
\end{split}
\end{align}
Finally, we can extract the terms corresponding to $B = \llbracket 1, s\rrbracket$, $C = \llbracket 1, t \rrbracket$ from the first double sum in (\ref{expandedDiffNE1}), to $A = \llbracket 1,r \rrbracket$, $B = \llbracket 1, s\rrbracket$, $C = \llbracket 1, t \rrbracket$ from the first triple sum in (\ref{expandedDiffNE1}), to $A = \llbracket 1,r \rrbracket$, $C = \llbracket 1, t \rrbracket$ from the second double sum in (\ref{expandedDiffNE1}), to $A = \llbracket 1,r \rrbracket$, $B = \llbracket 1, s\rrbracket$ from the third double sum in (\ref{expandedDiffNE1}), and combine them with the next to last line of (\ref{expandedDiffNE1}) to form
$$( \Phi^-(z) + \Phi^+(z) - R_\mu(z))\frac{\theta M \left( \Delta X_N(z) ; \llbracket 1, r \rrbracket, \llbracket 1 , s \rrbracket , \llbracket 1, t \rrbracket \right)  }{S(z)  N^{r/2 + 3/2}}  = \frac{\theta M \left( \Delta X_N(z) ; \llbracket 1, r \rrbracket, \llbracket 1 , s \rrbracket , \llbracket 1, t \rrbracket \right)  }{ (z-v)N^{r/2 + 3/2}},$$
where we used that $R_\mu(z) = \Phi^-(z) e^{-\theta G_\mu(z)} + \Phi^+(z) e^{\theta G_\mu(z)}$ from (\ref{QRmu}) and $S(z) = (z-v) \cdot (\Phi^+(z) + \Phi^-(z) - R_\mu(z))$. The latter expression is analytic outside of $\Gamma$ and decays at least like $|z|^{-2}$ as $|z| \rightarrow \infty$, which means that we can compute that integral as (minus) the residue at $z = v$ as there is no residue at $\infty$. After performing this computation and multiplying everything by $\theta^{-1} N^{(r+3)/2}$ we arrive at (\ref{NekrasovOutput}).

%

\section{Central limit theorem} \label{Section5}
In this section we formulate the main technical result of the paper as Theorem \ref{TGField} and prove it in Section \ref{Section5.3} after establishing some necessary moment bounds in Section \ref{Section5.2}. We continue with the notation from Sections 3 and 4.

%
\subsection{Main technical result} \label{Section5.1} In this section we isolate the main technical result of the paper as Theorem \ref{TGField} and deduce Theorem \ref{CLTfun_main}, recalled as Corollary \ref{CLTfun} below, from it.

\begin{theorem} \label{TGField}
Suppose Assumptions 1-5 hold and let $U : = \mathbb{C} \setminus [0, \lM + \theta]$. For $z \in U$ we define the random field $(Y^t_N(z), Y^b_N(z), \Delta Y_N(z))$ through 
\begin{equation}\label{DefDiscreteField}
Y^t_N(z) = G^t_N(z) - \mathbb{E} \left[G^t_N(z)  \right], Y^b_N(z) = G^b_N(z) - \mathbb{E} \left[ G^b_N(z)\right], \Delta Y_N(z) = N^{1/2} [Y^t_N(z) - Y^b_N(z) ].
\end{equation}
Then as $N \rightarrow \infty$ the random field $(Y^t_N(z), Y^b_N(z), \Delta Y_N(z)), z \in U$, converges  (in the sense of joint moments, uniformly in $z$ in compact subsets of $U$) to a centered complex Gaussian random field, whose covariance structure is given by
\begin{align}\label{eq:GField}
\begin{split}
&\lim_{N \rightarrow \infty} Cov(Y_N^t(z_1), \Delta Y_N(z_2) ) =\lim_{N \rightarrow \infty} Cov(Y_N^b(z_1), \Delta Y_N(z_2) ) = 0, \hspace{2mm} \\
& \mathcal C_{\theta, \mu}(z_1, z_2)  = \lim_{N\rightarrow \infty} Cov(Y_N^{\zeta_1}(z_1), Y^{\zeta_2}_N(z_2))  \mbox{ for $\zeta_1, \zeta_2 \in \{t , b\}$}  \mbox{ and }\\
\end{split}
\end{align}
\begin{equation}\label{eq:GFieldv2}
\begin{split}
&\lim_{N \rightarrow \infty} Cov(\Delta Y_N(z_1), \Delta Y_N(z_2) ) = \Delta \mathcal{C}_{\theta, \mu} (z_1, z_2), \mbox{ where }
\end{split}
\end{equation}
\begin{align}\label{eq:var}
\begin{split}
& \mathcal{C}_{\theta, \mu}(z_1, z_2) = -\frac{\theta^{-1}}{2(z_1-z_2)^2} \left(1 - \frac{(z_1 - \alpha)(z_2- \beta) + (z_2 - \alpha )(z_1- \beta)}{2\sqrt{(z_1 -\alpha )(z_1- \beta)}\sqrt{(z_2 - \alpha)(z_2- \beta)}} \right) \mbox{ and }\\
& \Delta \mathcal{C}_{\theta, \mu} (z_1, z_2) = \frac{1}{2\pi \i}\int_{\Gamma}  \frac{dz}{e^{\theta G_\mu(z)} - 1} \cdot \left[ - \frac{1}{(z-z_2)^2(z-z_1)^2}\right],
\end{split}
\end{align}
where $\alpha$ and $\beta$ are as in Assumption 5, $G_\mu(z)$ is as in (\ref{GmuDef}) and $\Gamma$ is a positively oriented contour that contains the segment $[0, \lM + \theta]$, $ \Gamma \subset \mathcal{M}$ as in Assumption 3 and excludes the points $z_1, z_2$.
\end{theorem}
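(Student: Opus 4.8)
The plan is to run the standard loop-equation bootstrap of \cite{BGG}, but now in the two-level setting, relying on the master identities \eqref{NekrasovOutput} and \eqref{NekrasovOutputTheta1} derived in Section \ref{Section4} from the two-level Nekrasov equations. The key point is that these identities express a joint cumulant $M(\Delta X_N(v); \llbracket 1,r\rrbracket, \llbracket 1,s\rrbracket, \llbracket 1,t\rrbracket)$ of order $r+s+t+1$ in terms of a contour integral of $W_N(z)$ (an explicit deterministic function) plus sums over cumulants whose total order is at most $r+s+t$, divided by a strictly positive power of $N$ whenever some variables are ``differentiated into'' the expectation. Together with the moment bounds $\mathbb{E}|X^t_N(z)|^k = O(1)$, $\mathbb{E}|X^b_N(z)|^k = O(1)$ and the pointwise almost-sure bound $|X^t_N(z) - X^b_N(z)| \le C$ from Section \ref{Section4.1}, this will let us prove by induction on the total order $r+s+t$ the following two claims simultaneously: (i) all joint cumulants of $X^t_N, X^b_N$ of order $\ge 3$ (without the $N^{1/2}$ rescaling) are $o(1)$, and the first and second cumulants converge; (ii) all joint cumulants involving at least one $\Delta X_N$ of order $\ge 3$ are $o(1)$, the first cumulants (expectations) of $\Delta X_N$ converge, and the second cumulants $M(\Delta X_N(z_1),\Delta X_N(z_2))$ converge to $\Delta\mathcal{C}_{\theta,\mu}$, while the mixed second cumulants $M(X^{t/b}_N(z_1),\Delta X_N(z_2))$ converge to $0$.

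\textbf{Main steps.} First I would record the base cases. For $r=s=t=0$, \eqref{NekrasovOutput} (resp.\ \eqref{NekrasovOutputTheta1}) gives $\mathbb{E}[\Delta X_N(v)] = \frac{1}{2\pi i}\int_\Gamma W_N(z) + o(1)$; expanding $W_N(z)$ via \eqref{WNDef}, using $\Phi^\pm_N(Nz) = \Phi^\pm(z) + O(N^{-1})$, $R_\mu = \Phi^- e^{-\theta G_\mu} + \Phi^+ e^{\theta G_\mu}$ and the identity $S(z) = (z-v)(\Phi^+ + \Phi^- - R_\mu)$, one isolates the leading contour integral, whose value is extracted as a residue at $z=v$ (there being no residue at $\infty$ since the integrand decays like $|z|^{-2}$). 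This computation, which is where the $\frac{1}{e^{\theta G_\mu(z)}-1}$ kernel first appears, should give $\lim_N \mathbb{E}[\Delta X_N(v)]$ in closed form. Next, the inductive step: for total order $n+1 := r+s+t+1 \ge 2$, apply \eqref{NekrasovOutput}; every term on the right-hand side other than the designated leading contour integral either (a) comes with an explicit negative power of $N$ together with a cumulant of order $\le n$ — controlled by the inductive hypothesis and the uniform moment bounds, hence $o(1)$ — or (b) is the $W_N$ integral, which for $r+s+t\ge 1$ is a lower-order object multiplied by negative powers of $N$. The surviving leading term is $\frac{1}{2\pi i}\int_\Gamma \frac{1}{z-v}\cdot\frac{\text{(stuff)}}{(\Phi^+ + \Phi^- - R_\mu)(z)} M(\cdots;z,\dots)\,dz$, and one uses the self-improvement trick: the integrand is analytic outside $\Gamma$, decays at infinity, so the contour integral equals minus the residue at $v$, producing a closed recursion for $M(\Delta X_N(v);\dots)$. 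For second order ($n+1=2$, e.g.\ $r=1$, $s=t=0$) this recursion, combined with the already-established convergence of $M(X^{t/b}_N,X^{t/b}_N)$ from Theorem \ref{CLT} and the new input from \eqref{NekrasovOutput}, yields the covariance formulas \eqref{eq:var}; for $n+1\ge 3$ the recursion forces the cumulant to be $o(1)$. Once all cumulants of order $\ge 3$ vanish in the limit and the first two converge, the method of moments gives joint Gaussian convergence of $(Y^t_N, Y^b_N, \Delta Y_N)$ — note $Y^{t/b}_N = X^{t/b}_N - \mathbb{E}[X^{t/b}_N]$ and $\Delta Y_N = \Delta X_N - \mathbb{E}[\Delta X_N]$, so centered cumulants of order $\ge 2$ agree with those of the $X$'s. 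The $\theta = 1$ case is identical, using \eqref{NekrasovOutputTheta1} and \eqref{WNDefTheta1} in place of \eqref{NekrasovOutput}, \eqref{WNDef}; one checks the two limiting kernels agree (the $\theta=1$ formula is the $\theta\to 1$ degeneration).

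\textbf{Technical points and the main obstacle.} Several subsidiary estimates are needed and should be stated as lemmas before the induction: (1) a priori bounds $\mathbb{E}|\Delta X_N(z)|^k = O(N^{k/2})$ (immediate from $|X^t_N - X^b_N|\le C$ pointwise) and, crucially, the bootstrap-improved bound $\mathbb{E}|\Delta X_N(z)|^k = O(1)$ which must come out of the order-$\le 2$ analysis before it can be fed back in — this circularity is handled exactly as in \cite{BGG} by first getting $o(N^{1/2})$ control on $\mathbb{E}[\Delta X_N]$ and on the variance, then upgrading; (2) verifying that the generic error symbols $\zeta^\Gamma_N(z)$ satisfy $\mathbb{E}|\zeta^\Gamma_N(z)|^k = O(1)$ uniformly on $\Gamma$ so that the $N^{-r+|A|/2-2}$ prefactors genuinely kill them; (3) checking analyticity of the relevant integrands inside $\Gamma$ — this rests on Lemmas \ref{S3AnalRQ} and \ref{S3NonVanish} (that $\Phi^+ + \Phi^- - R_\mu$ is nonvanishing and analytic in $\mathcal{M}$), which is why $S(z)$ in the denominator is harmless. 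The main obstacle is the bookkeeping of the induction: the right-hand sides of \eqref{NekrasovOutput} and \eqref{NekrasovOutputTheta1} are triple sums over subsets $A,B,C$ with signs $(-1)^{\cdot}$ and indicator functions $\mathbf{1}\{|A|+|B|<r+s\}$ etc., and one must carefully track which terms are genuinely lower-order, which vanish by the indicators, and — for the order-$2$ base case — which combine to produce precisely the $\frac{1}{e^{\theta G_\mu(z)}-1}$ weight rather than some other function of $G_\mu$. Getting that kernel to come out correctly (in particular the sign, and that it depends on the full Stieltjes transform and not merely on $\alpha,\beta$) is the delicate computational heart of the argument; everything else is a disciplined but routine induction in the style already carried out for the single level in \cite{BGG}.
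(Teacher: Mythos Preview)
Your overall architecture is right and matches the paper: reduce to cumulants, use \eqref{NekrasovOutput}/\eqref{NekrasovOutputTheta1} as the recursion engine, feed in the moment bounds of Section~\ref{Section4.1}, and establish a separate self-improving bootstrap (the paper's Theorem~\ref{S5S1T1}) to get $\mathbb{E}|\Delta X_N - \mathbb{E}\Delta X_N|^k = O(1)$ before the main induction. Your handling of the order-$\ge 3$ cumulants with $r=0$ or $r\ge 2$ and of the cross-covariances $\operatorname{Cov}(Y^{t/b}_N,\Delta Y_N)$ is essentially what the paper does in Section~\ref{Section5.3}.

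There is, however, a genuine gap at the $r=1$ cases, and it is exactly the place you flagged as ``the delicate computational heart.'' The identity \eqref{NekrasovOutput} alone is \emph{not} sufficient either to show the third-order cumulants $M(\Delta X_N(v_0),\Delta X_N(v_1),X^{t/b}_N(u))$ vanish or to extract the covariance $\Delta\mathcal{C}_{\theta,\mu}$. When $r=1$ and $s+t=1$, the $W_N$ integrand in \eqref{WNDef} carries a non-analytic $Q_\mu(z)/S(z)$ term (see \eqref{WNCum}), so the contour integral does not die by Cauchy's theorem; and when $r=1$, $s=t=0$, the subleading pieces of $W_N$ that survive after the $R_\mu$-cancellations do not by themselves assemble into the kernel $\frac{1}{e^{\theta G_\mu}-1}$. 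The paper resolves both issues with a second pair of expansions, \eqref{SpecialExpand1}--\eqref{SpecialExpand2} (Section~\ref{Section4.4}), obtained by dividing the raw Nekrasov equations by $A_1 B_2$ and $A_1 B_3$ rather than $B_1 B_2$ and $A_1 A_3$. These produce a structurally different recursion in which the offending $Q_\mu$ terms are absent; subtracting the two and computing the residue at $v$ handles the $r=1$, $s+t=1$ cumulant directly (equations \eqref{Spec3rdCum}--\eqref{Spec3rdCum2}). For the covariance (Section~\ref{Section5.4}), one applies \eqref{SpecialExpand1}--\eqref{SpecialExpand2} at $r=1$, $s=t=0$, subtracts, then substitutes a separate formula for $\mathbb{E}[\Delta X_N(z)]$ extracted from the \emph{undifferentiated} Nekrasov equations \eqref{S4TLNE1v1}--\eqref{S4TLNE2v1}; an integration-by-parts using $\partial_z\bigl(\frac{1}{e^{\theta G_\mu}-1}\bigr) = -\frac{\theta\,\partial_z G_\mu}{(e^{\theta G_\mu/2}-e^{-\theta G_\mu/2})^2}$ and a further analytic correction finally produce the $-\frac{1}{(z-z_1)^2(z-z_2)^2}\cdot\frac{1}{e^{\theta G_\mu}-1}$ integrand. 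You should plan for these auxiliary expansions from the outset rather than expecting \eqref{NekrasovOutput} to close on its own.
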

\begin{remark}
Since $\overline{G^{t/b}_N(z)} = G^{t/b}_N(\overline{z})$, we can use (\ref{eq:var}) to completely characterize the asymptotic covariance of the field $(Y^t_N(z), Y^b_N(z), \Delta Y_N(z))$.  In particular, convergence of the joint moments in Theorem \ref{TGField} implies finite-dimensional convergence.
\end{remark}

\begin{remark}
It is worth pointing out that $\mathcal{C}_{\theta, \mu}$ depends on the equilibrium measure $\mu$ only through the quantities $\alpha, \beta$ , while $\Delta \mathcal{C}_{\theta, \mu}$ depends on its Stieltjes transform.
\end{remark}

\begin{corollary}\label{CLTfun}
Assume the same notation as in Theorem \ref{TGField}. For $n \geq 1$ let $f_1, \dots, f_n$ be analytic functions in a complex neighborhood $\mathcal{M}_1$ containing $[0, \lM + \theta]$, whose restriction to $\mathbb{R} \cap \mathcal{M}_1$ is real-valued. Define 
$$\mathcal L^t_{f_k}= \sum_{i = 1}^N f_k(\ell_i/N) - \mathbb{E}  \left[ \sum_{i = 1}^Nf_k(\ell_i/N)  \right] \mbox{, } \mathcal L^b_{f_k}= \sum_{i = 1}^{N-1} f_k(m_i/N) - \mathbb{E}  \left[ \sum_{i = 1}^{N-1} f_k(m_i/N)  \right] \mbox{ and }$$
$$\mathcal L^m_{f_k}=N^{1/2} \cdot \left[\mathcal L^t_{f_k} -  \mathcal L^b_{f_k}\right] \mbox{ for $k = 1, \dots, n$} .$$
Then the random variables $\{\mathcal{L}^m_{f_i} \}_{i = 1}^n$, $\{\mathcal{L}^t_{f_i} \}_{i = 1}^n$, $\{\mathcal{L}^b_{f_i} \}_{i = 1}^n$ converge jointly in the sense of moments to a $3n$-dimensional centered Gaussian vector $\xi = (\xi^m_1,\dots, \xi^m_n, \xi^t_1, \dots, \xi^t_n, \xi^b_n, \dots, \xi^b_n)$ with covariance
\begin{equation*}
\begin{split}
& Cov(\xi^t_i, \xi^m_j )  =  Cov(\xi^b_i, \xi^m_j )   = 0, \hspace{2mm} \\
&Cov(\xi^{t}_i, \xi^{t}_j) =Cov(\xi^{b}_i, \xi^{b}_j)   = Cov(\xi^{t}_i, \xi^{b}_j)  = \frac{1}{(2\pi \i )^2} \oint_{\Gamma} \oint_{\Gamma} f_i(s) f_j(t) \mathcal{C}_{\theta, \mu}(s,t)dsdt,  \\
&Cov(\xi^{m}_i, \xi^m_j) = \frac{1}{(2\pi \i )^2} \oint_{\Gamma} \oint_{\Gamma} f_i(s) f_j(t) \Delta \mathcal{C}_{\theta, \mu}(s,t)dsdt, \\
\end{split}
\end{equation*}
for all $i,j = 1, \dots , n$, where $\mathcal{C}_{\theta, \mu}$ and $ \Delta \mathcal{C}_{\theta, \mu}$ are as in (\ref{eq:var}) and $\Gamma$ is a positively oriented contour that is contained in $\mathcal{M} \cap \mathcal{M}_1$ and encloses $[0, \lM + \theta]$.
\end{corollary}
\begin{remark}\label{RemPrefactor}
In Corollary \ref{CLTfun} we chose to rescale our particle positions by $N$ and one could instead rescale them by $N \theta$ -- we briefly explain here how this affects the statements above. Let $x_i = \frac{\ell_i}{N \theta}$ for $i = 1,\dots, N$ and $y_i = \frac{m_i}{N\theta}$ then by Proposition \ref{LLN} we have that $\mu_N^{\theta} := \frac{1}{N} \sum_{i = 1}^N \delta (x_i)$ converges weakly in probability to $\mu^{\theta}$, which satisfies $\mu^{\theta}(x) = \theta \mu(x \theta)$. In particular, we have that 
$$G_\mu(z) = \theta^{-1} \cdot G_{\mu^{\theta}}(\theta^{-1}z).$$

Suppose that $f_1, \dots, f_n$ are analytic functions in a complex neighborhood of $[0, \lM \theta^{-1} + 1]$, whose restriction to $\mathbb{R}$ is real-valued, and define
$$\mathcal L^{t, \theta}_{f_k}= \sum_{i = 1}^N f_k(x_i) - \mathbb{E}  \left[ \sum_{i = 1}^Nf_k(x_i)  \right] \mbox{, } \mathcal L^{b,\theta}_{f_k}= \sum_{i = 1}^{N-1} f_k(y_i) - \mathbb{E}  \left[ \sum_{i = 1}^{N-1} f_k(y_i)  \right] \mbox{ and }$$
$$\mathcal L^{m, \theta}_{f_k}=N^{1/2} \cdot \left[\mathcal L^{t,\theta}_{f_k} -  \mathcal L^{b,\theta}_{f_k}\right] \mbox{ for $k = 1, \dots, n$} .$$
Then the random variables $\{\mathcal{L}^{m,\theta}_{f_i} \}_{i = 1}^n$, $\{\mathcal{L}^{t,\theta}_{f_i} \}_{i = 1}^n$, $\{\mathcal{L}^{b,\theta}_{f_i} \}_{i = 1}^n$ converge jointly in the sense of moments to a $3n$-dimensional centered Gaussian vector $\xi = (\xi^{m, \theta}_1,\dots, \xi^{m,\theta}_n, \xi^{t,\theta}_1, \dots, \xi^{t,\theta}_n, \xi^{b,\theta}_n, \dots, \xi^{b,\theta}_n)$ with covariance
\begin{align}\label{eq:covV2}
\begin{split}
& Cov(\xi^{t,\theta}_i, \xi^{m,\theta}_j )  =  Cov(\xi^{b,\theta}_i, \xi^{m,\theta}_j )   = 0, \\
&Cov(\xi^{t, \theta}_i, \xi^{t, \theta}_j) = Cov(\xi^{b, \theta}_i, \xi^{b, \theta}_j) = Cov(\xi^{t, \theta}_i, \xi^{b, \theta}_j) = \frac{1}{(2\pi \i )^2} \oint_{\Gamma^{\theta}_1} \oint_{\Gamma^{\theta}_1} f_i(s) f_j(t) \mathcal{C}^{\theta}_{\theta, \mu}(s,t)dsdt,  \\
&Cov(\xi^{m,\theta}_i, \xi^{m,\theta}_j) = \frac{1}{(2\pi \i )^2} \oint_{\Gamma^{\theta}_1} \oint_{\Gamma^{\theta}_1} f_i(s) f_j(t) \Delta \mathcal{C}^{\theta}_{\theta, \mu}(s,t)dsdt, \\
\end{split}
\end{align}
for all $i,j = 1, \dots , n$, where
\begin{align}\label{eq:var_mainV2}
\begin{split}
& \mathcal{C}^{\theta}_{\theta, \mu}(z_1, z_2) = \frac{-\theta^{-1}}{2(z_1-z_2)^2} \left(1 - \frac{(z_1 - \theta^{-1}\alpha)(z_2- \theta^{-1}\beta) + (z_2 -  \theta^{-1}\alpha )(z_1- \theta^{-1}\beta)}{2\sqrt{(z_1 -\theta^{-1}\alpha )(z_1- \theta^{-1}\beta)}\sqrt{(z_2 - \theta^{-1}\alpha)(z_2- \theta^{-1}\beta)}} \right) \mbox{ and }\\
& \Delta \mathcal{C}^{\theta}_{\theta, \mu} (z_1, z_2) = \frac{\theta^{-1}}{2\pi \i}\int_{\Gamma^{\theta}_2}  \frac{dz}{e^{ G_{\mu^{\theta}}(z)} - 1} \cdot \left[ - \frac{1}{(z-z_2)^2(z-z_1)^2}\right],
\end{split}
\end{align}
and $\Gamma^{\theta}_i = \theta^{-1} \Gamma_i$ with $\Gamma_i$ and $\alpha, \beta$ as in Theorem \ref{CLTfun_main}. The main point here is that if we do this alternative scaling, then in the formulas for the covariances $ \mathcal{C}^{\theta}_{\theta, \mu}$ and $ \Delta \mathcal{C}^{\theta}_{\theta, \mu}$, the parameter $\theta$ enters simply as a linear prefactor, as opposed to (\ref{eq:var}) where in $\Delta \mathcal{C}_{\theta, \mu}$ the $\theta$ appears inside the integral in a non-trivial way. This linearity of the covariances in $\theta^{-1}$ is in strong agreement with the results in \cite{GZ} for the $\beta$-Jacobi corners process and in \cite{Hu} for Jack processes.
\end{remark}

\begin{proof}
Observe that when $f$ is analytic in $\mathcal{M}_1$, and real-valued on $\mathbb{R} \cap \mathcal{M}_1$ we have for all large $N$
$$\mathcal{L}^{t/b/m}_f = \frac{1}{2\pi \i} \oint_\Gamma f(z) \cdot Y^{t/b/m}_N(z) dz \mbox{ where we write $Y^m_N(z) := \Delta Y_N(z)$ for convenience}  ,$$
and $\Gamma$ is as in the statement of the theorem. Therefore, for any $r,s,t \geq 0$ with $r + s + t \geq 1$ and $i_1, \dots, i_r, j_1, \dots, j_s,$ $k_1, \dots, k_t \in \{1, \dots, n\}$ we have
\begin{align}\label{QRCLTeq1}
\begin{split}
 \mathbb{E} \left[ \prod_{a = 1}^r\mathcal{L}^m_{ f_{i_a} }\prod_{b = 1}^s\mathcal{L}^t_{ f_{j_b} } \prod_{c = 1}^t \mathcal{L}^b_{ f_{k_c} } \right] = &\frac{1}{(2\pi \i )^{r+s+t}} \oint_\Gamma \cdots \oint_\Gamma   \mathbb{E} \left[\prod_{a = 1}^rY^m_N(x_a)\prod_{b = 1}^sY^t_N(y_b) \prod_{c = 1}^t Y^b_N(z_c) \right] \times \\
& \prod_{a = 1}^rf_{i_a}(x_a) dx_a \prod_{b = 1}^sf_{j_b}(y_b) dy_b  \prod_{c = 1}^t f_{k_c}(z_c)dz_c. 
\end{split}
\end{align}
Since cumulants of centered random variables are linear combinations of moments and vice versa, we conclude that all third and higher order cumulants of $\{\mathcal{L}^m_{f_i} \}_{i = 1}^n$, $\{\mathcal{L}^t_{f_i} \}_{i = 1}^n$, $\{\mathcal{L}^b_{f_i} \}_{i = 1}^n$ vanish as $N \rightarrow \infty$ (here we used Theorem \ref{TGField}, which implies the third and higher order joint cumulants of $Y^{t/b/m}_N(z_i)$ vanish uniformly when $z_i \in \Gamma$). This proves the Gaussianity of the limiting vector $\xi$. Since $\mathcal{L}^{t/b/m}_{f_i}$ are centered for each $N$ the same is true for $\xi$. To get the covariance, we can set $r+s +t = 2$ in (\ref{QRCLTeq1}) and use (\ref{eq:GField}) and (\ref{eq:GFieldv2}).
\end{proof}

%
\subsection{Moment bounds} \label{Section5.2}
We establish the following moment estimates for $\Delta X_N(z) -\mathbb{E}[\Delta X_N(z)]$.
\begin{proposition}\label{S5S1T1}
Suppose that Assumptions 1-5 hold. Then for each $k \geq 1$ we have
\begin{equation}\label{S5S1E1}
  \mathbb{E} \left[ | Y_N(z) |^k \right] = O(1), \mbox{ with $Y_N(z) = \Delta X_N(z) -\mathbb{E}[\Delta X_N(z)]$},
\end{equation}
where the constants in the big $O$ notation depend on $k$ but not on $N$ (provided it is sufficiently large) and are uniform as $z$ varies over compact subsets of $\mathbb{C} \setminus [0, \lM + \theta]$. Moreover, if $G_\mu, R_\mu, \Phi^+(z), \Phi^-(z)$ are as in Section \ref{Section3.1} then
\begin{equation}\label{expectationDiff}
\mathbb{E}[\Delta X_N(v) ]= \frac{N^{1/2}}{2\pi \i}\int_{\Gamma}\frac{ \theta \cdot [  \Phi^-(z) e^{-\theta G_\mu(z)} - \Phi^+(z) ] \partial_z G_\mu(z) }{(z-v) [ \Phi^+(z) + \Phi^-(z) - R_\mu(z)] }dz + O(N^{-1/2}),
\end{equation}
where $\Gamma$ is a positively oriented contour that encloses the segment $[0, \lM + \theta]$, is contained in $\mathcal{M}$ as in Assumption 3 and excludes $v$; the constant in the big $O$ notation is uniform as $v$ varies over compact subsets in $\mathbb{C} \setminus [0, \lM + \theta]$.
\end{proposition}
\begin{remark}
The proof we present below is an adaptation of the one in \cite[Section 6.2]{DK2}, which in turn relies on ideas from \cite{BGG} that go back to \cite{BoGu}.
\end{remark}

For the sake of clarity we split the proof into several steps.\\

{\bf \raggedleft Step 1.} In this step we prove (\ref{expectationDiff}). From (\ref{NekrasovOutput}) for the case $r = s = t = 0$ we get

\begin{equation}\label{S5NekrasovOutput}
\begin{split} 
 \mathbb{E}\left[ \Delta X_N(v) \right] = \frac{1}{2\pi \i}\int_{\Gamma}W_N(z)dz +  N^{-1/2} \mathbb{E} \left[  \zeta^{\Gamma}_N(z) \right], \mbox{ where }
\end{split}
\end{equation}
\begin{equation}\label{S5WNDef}
\begin{split}
&W_N(z)=    \frac{\theta [  \Phi_N^-(Nz) e^{-\theta G_\mu(z)} - \Phi_N^+(Nz) ] \partial_z G_\mu(z) }{ S(z)N^{-1/2} }.
\end{split}
\end{equation}
Equations (\ref{S5NekrasovOutput}) and (\ref{S5WNDef}) give (\ref{expectationDiff}) once we use $\Phi^{\pm}_N(Nz) = \Phi^{\pm}(z) + O(N^{-1})$ from Assumption 3 and $S(z) = (z-v)[ \Phi^+(z) + \Phi^-(z) - R_\mu(z)]$ . Here we also implicitly used that $R_\mu(z)$ is continuous on $\Gamma$ as follows from Lemma \ref{S3AnalRQ} and that $ \Phi^+(z) + \Phi^-(z) - R_\mu(z) \neq 0$ on $\Gamma$ as follows from Lemma \ref{S3NonVanish}. \\

{\bf \raggedleft Step 2.} In this step we reduce the proof of (\ref{S5S1E1}) to the establishment of the following self-improvement estimate claim.\\

{\bf \raggedleft Claim:} Suppose that for some $n, H \in \mathbb{N}$ we have that 
\begin{equation}\label{S5S1E2}
\mathbb{E} \left[ \prod_{a = 1}^h |Y_N(v_a)| \right]= O(1) + O(N^{h/2 + 1 - H/2}) \mbox{ for $h = 1, \dots, 4n + 4$,}
\end{equation}
where the constants in the big $O$ notation depend on $n$ and are uniform as $v_a$ vary over compacts in $\mathbb{C} \setminus [0, \lM + \theta]$. Then
\begin{equation}\label{S5S1E3}
\mathbb{E} \left[ \prod_{a = 1}^h |Y_N(v_a)| \right]= O(1) + O(N^{h/2 + 1 - (H+1)/2}) \mbox{ for $h = 1, \dots, 4n $.}
\end{equation}
We prove the above claim in the following steps. For now we assume its validity and finish the proof of (\ref{S5S1E1}). 

Notice that (\ref{derBound}) and (\ref{expectationDiff}) imply that (\ref{S5S1E2}) holds for the pair $n = 2k$ and $H = 1$. The conclusion is that (\ref{S5S1E2}) holds for the pair $n = 2k- 1$ and $H = 2$. Iterating the argument an additional $k$ times we conclude that (\ref{S5S1E2}) holds with $n = k - 1$ and $H = k+2$, which implies (\ref{S5S1E1}).\\

{\bf \raggedleft Step 3.} In this step we prove that 
\begin{equation}\label{S5S1E4}
M( Y_N(v_0), Y_N(v_1), \dots, Y_N(v_h)) = O(1) + O( N^{h/2 + 1 - H/2}) \mbox{ for $h = 1, \dots, 4n+2$}.
\end{equation}
The constants in the big $O$ notation are uniform over $v_0, \dots, v_h$ in compact subsets of $\mathbb{C} \setminus [0, \lM + \theta]$. 

We start by fixing $\mathcal{V}$ to be a compact subset of $\mathbb{C} \setminus [0, \lM + \theta]$, which is invariant under conjugation. We also fix $\Gamma$ to be a positively oriented contour, which encloses the segment $[0, \lM + \theta]$, is contained in $\mathcal{M}$ as in Assumption 3 and excludes the set $\mathcal{V}$. 

From (\ref{NekrasovOutput}) for $s = t = 0$, $r = h = 1, \dots, 4n+2$, $v_0, v_1 \dots, v_h \in \mathcal{V}$ and $v = v_0$ we have
\begin{align}\label{S5S1E5}
\begin{split} 
& M\left( Y_N(v_0), Y_N(v_1), \dots, Y_N(v_h)  \right) = \frac{1}{2\pi \i}\int_{\Gamma}dz W_N(z) + \sum_{A \subseteq \llbracket 1,h \rrbracket}  \frac{ M \left(  \zeta^{\Gamma}_N(z); A\right)}{N^{h/2 - |A|/2 + 1/2}}   \\
& - \sum_{A \subsetneqq \llbracket 1,h \rrbracket} \frac{\Phi^+(z) e^{\theta G_\mu(z)} M(X_N^t(z);A)}{S(z) N^{3h/2 - 3|A|/2 - 1/2} } \prod_{a \in A^c} \left[ \frac{ -2\theta}{ (v_a -z)^3}\right] \\
& +\sum_{A \subsetneqq \llbracket 1,h \rrbracket} \frac{[ \Phi^+(z) + (-1)^{h - |A|} \Phi^-(z)] \cdot M(\Delta X_N(z); A)}{S(z) N^{h/2 -|A|/2}}  \prod_{a \in A^c}\frac{ 1}{ (v_a -z)^2} .
\end{split}
\end{align}
In deriving the above we have suppressed the sets $B$ and $C$ from the notation, since $s = t = 0$.

We next use the fact that cumulants can be expressed as linear combinations of products of moments, see \cite[Chapter 3]{Taqqu}. This means that $M(\xi_1, \dots, \xi_r)$ can be controlled by the quantities $1$ and $\mathbb{E} \left[ |\xi_i|^r \right]$ for $ i = 1, \dots, r$. We use the latter and (\ref{S5S1E2}) to get
\begin{equation}\label{S5S1E6}
\sum_{A \subseteq \llbracket 1, h \rrbracket} N^{(-h + |A| - 1)/2} \cdot M (\zeta^{\Gamma}_N(z); A) = O(1) + O(N^{h/2 + 1 - H/2}).
\end{equation}
One can analogously show using (\ref{derBound}) that for each $A \subseteq \llbracket 1, h \rrbracket$ 
\begin{equation}\label{S5S1E7}
 M (X_N^t(z) ; A) = O(1) + O(N^{|A|/2 + 3/2 - H/2}) \mbox{, } M (X_N^b(z) ; A) = O(1) + O(N^{|A|/2 + 3/2 - H/2}) .
\end{equation}
Substituting (\ref{S5S1E6}) and (\ref{S5S1E7}) into (\ref{S5S1E5}), and using $\Delta X_N(z) = N^{1/2}(X_N^t(z) - X_N^b(z))$ we conclude
\begin{equation}\label{S5S1E8}
\begin{split} 
& M\left( Y_N(v_0), Y_N(v_1), \dots, Y_N(v_h)  \right) = \frac{1}{2\pi \i}\int_{\Gamma}W_N(z)dz + O(1) + O(N^{h/2 + 1 - H/2}).
\end{split}
\end{equation}
From (\ref{WNDef}) we have for $r \geq 1$, $s = t = 0$ that 
\begin{equation}\label{S5S1E9}
W_N(z) = O(1),
\end{equation}
where we also used that $\Phi_N^{\pm}(Nz) = \Phi^{\pm}(z) + O(N^{-1})$ from Assumption 3. Combining the latter with (\ref{S5S1E8}) gives (\ref{S5S1E4}).\\

{\bf \raggedleft Step 4.} In this step we will prove (\ref{S5S1E3}) except for a single case, which will be handled separately in the next step. We mention that the work in this step will rely mostly on the estimates from Step 3 and properties of moments of random variables and not the cumulant equations we obtained from the Nekrasov's equations.

 Notice that by H{\"o}lder's inequality we have
\begin{equation*}
\sup_{v_1, \dots, v_h \in \mathcal{V}} \mathbb{E} \left[ \prod_{a = 1}^h |Y_N(v_a)| \right]\leq \sup_{v \in \mathcal{V}} \mathbb{E} \left[ |Y_N(v)|^h \right],
\end{equation*}
and so to finish the proof it suffices to show that for $h = 1, \dots, 4n$ we have
\begin{equation}\label{S5S1E11}
 \mathbb{E} \left[ |Y_N(v)|^h \right] = O(1)  + O(N^{h/2 + 1 - (H+1)/2}).
\end{equation}
Using the fact that for centered random variables one can express joint moments as linear combinations of products of joint cumulants, see \cite[Section 3]{Taqqu}, we deduce from (\ref{S5S1E4}) that
\begin{equation}\label{S5S1E12}
\sup_{ v_0, v_1, \dots, v_{h-1} \in \mathcal{V}} \mathbb{E} \left[ \prod_{a = 0}^{h-1}Y_N(v_a) \right] = O(1) + O( N^{(h-1)/2 + 1 - H/2}) \mbox{ for $h = 1, \dots, 4n+2$}.
\end{equation}
If $h = 2m_1$ we set $v_0 = v_1 = \cdots = v_{m_1 - 1} = v$ and $v_{m_1} = \cdots = v_{2m_1 - 1} = \overline{v}$ in (\ref{S5S1E12}), which yields 
\begin{equation}\label{S5S1E13}
\sup_{ v \in \mathcal{V}} \mathbb{E} \left[ |Y_N(v)|^h \right] = O(1) + O( N^{h/2 + 1/2 - H/2}) \mbox{ for $h = 1, \dots, 4n+2$}.
\end{equation}
In deriving the above we used that $Y_N(\overline{v}) = \overline{Y_N(v)}$ and so $Y_N(v) \cdot Y_N(\overline{v}) = |Y_N(v)|^2$. 

We next let $h = 2m_1 + 1$ be odd and notice that by the Cauchy-Schwarz inequality and (\ref{S5S1E13}) 
\begin{align}\label{S5S1E14}
\begin{split}
&\sup_{ v \in \mathcal{V}} \mathbb{E} \left[ |Y_N(v)|^{2m_1 + 1} \right] \leq \sup_{ v \in \mathcal{V}} \mathbb{E} \left[ |Y_N(v)|^{2m_1 + 2} \right]^{1/2} \cdot \mathbb{E} \left[ |Y_N(v)|^{2m_1} \right]^{1/2} =  \\  
&O(1) + O( N^{h/2 + 1 - H/2}) + O(N^{m_1/2 + 3/4 - H/4}) .
\end{split}
\end{align}
We note that the bottom line of (\ref{S5S1E14}) is $O(1) + O(N^{m_1 + 1 - H/2})$ except when $H = 2m_1 + 2$, since
$$m_1/2 + 3/4 - H/4 \leq \begin{cases} m_1 + 1 - H/2 &\mbox{ when $H \leq 2m_1 + 1$,} \\ 0 &\mbox{ when $H \geq 2m_1 + 3$.} \end{cases}$$
Indeed, the first inequality implies that $O(N^{m_1/2 + 3/4 - H/4})$ can be absorbed into $O(N^{m_1 + 1 - H/2})$ and the second that it can be absorbed into $O(1)$. If $H = 2m_1 + 2$ then $O(N^{m_1/2 + 3/4 - H/4})$ cannot be absorbed into either of these terms.

From (\ref{S5S1E13}) and (\ref{S5S1E14}) we conclude (\ref{S5S1E11}), except when $H = 2m_1 +2$ and $h = 2m_1 + 1$. We will handle this case in the next step.\\

{\bf \raggedleft Step 5.} In this step we will show that (\ref{S5S1E11}) even when $H = 2m_1 + 2$ and $4n > h = 2m_1 + 1$. In the previous step we showed in (\ref{S5S1E11}) that
$\sup_{v \in \mathcal{V}} \mathbb{E} \left[ |Y_N(v)|^{2m_1 + 2} \right] = O(N^{1/2})$, and below we will improve this estimate to
\begin{equation}\label{S5S1E15}
\sup_{v \in \mathcal{V}} \mathbb{E} \left[ |Y_N(v)|^{2m_1 + 2} \right] = O(1).
\end{equation}
The trivial inequality $x^{2m_1 + 2} + 1 \geq |x|^{2m_1 + 1}$ together with (\ref{S5S1E15}) imply
$$\sup_{v \in \mathcal{V}} \mathbb{E} \left[ |Y_N(v)|^{2m_1 + 1} \right]  = O(1).$$
Consequently, we have reduced the proof of the claim to establishing (\ref{S5S1E15}). \\

Let us list the relevant estimates we will need
\begin{equation}\label{S5S1E16}
\begin{split}
& \mathbb{E} \left[ \prod_{a = 1}^{2m_1 + 2} |Y_N(v_a)| \right] = O(N^{1/2})\mbox{ and } \mathbb{E} \left[ \prod_{a = 1}^j |Y_N(v_a)| \right] = O(1) \mbox{ for $0 \leq j \leq 2m_1$.}
\end{split}
\end{equation}
The above identities follow from (\ref{S5S1E11}), which we showed to hold unless $h = 2m_1 + 1$ in the previous step. All constants are uniform over $v_a \in \mathcal{V}$. Below we feed the improved estimates of (\ref{S5S1E16}) into Steps 3 and 4, which ultimately yield (\ref{S5S1E15}).\\

In Step 3 we have the following improvement over (\ref{S5S1E6}) using the estimates of (\ref{S5S1E16})
\begin{equation}\label{S5S1E17}
\sum_{A \subseteq \llbracket 1, h \rrbracket} N^{(-1 + |A| - h)/2} \cdot M (\zeta^{\Gamma}_N(z); A) = O(1) \mbox{ for $h =1, 2, \dots, 2m_1+ 1$}.
\end{equation}
In addition we will need the following improvement over (\ref{S5S1E7}) 
\begin{equation}\label{S5S1E18}
 M (X_N^t(v) ; A) = O(1) \mbox{  and }  M ( X_N^b(v) ; A) = O(1) ,
\end{equation}
where $A \subsetneqq \llbracket 1, 2m_1 + 1\rrbracket$ and the constants in the big $O$ notation is uniform as $v, v_1, \dots, v_{2m_1 + 1}$ vary over compact subsets of $\mathbb{C} \setminus [0, \lM + \theta]$. We will prove (\ref{S5S1E18}) in the next step. For now we assume its validity and finish the proof of (\ref{S5S1E15}). 

Substituting (\ref{S5S1E17}), (\ref{S5S1E18}) into (\ref{S5S1E5}) we obtain the following improvement over (\ref{S5S1E4})
\begin{equation}\label{S5S1E19}
\begin{split} 
& M\left( Y_N(v_0), Y_N(v_1), \dots, Y_N(v_{2m_1 + 1})  \right) = \frac{1}{2\pi \i}\int_{\Gamma}dz W_N(z) + O(1) = O(1),
\end{split}
\end{equation}
where in the last line we used (\ref{S5S1E9}). We may now repeat the arguments in Step 4 and note that by using (\ref{S5S1E19}) in place of (\ref{S5S1E4}) we obtain the following improvement over (\ref{S5S1E12})
\begin{equation}\label{S5S1E20}
\sup_{ v_0, v_1, \dots, v_{2m_1 + 1} \in \mathcal{V}} \mathbb{E} \left[ \prod_{a = 0}^{2m_1 + 1}Y_N(v_a) \right] = O(1).
\end{equation}
Setting $v_0 = v_1 = \cdots = v_{m_1} = v$ and $v_{m_1+1} = \cdots = v_{2m_1 + 1} = \overline{v}$ in (\ref{S5S1E20}) we get (\ref{S5S1E15}).\\

{\bf \raggedleft Step 6.} In this step we establish (\ref{S5S1E18}). We have by (\ref{derBound}) and (\ref{S5S1E16}) that (\ref{S5S1E18}) holds for all subsets $A \subsetneqq \llbracket 1, 2m_1 + 1\rrbracket$ such that $|A| \leq 2m_1 - 1$ or if $A$ is empty. We thus only focus on the case when $|A| = 2m_1 \geq 2$.

We use (\ref{NekrasovOutput}) with $r = 2m_1-1$ and $s = 1$ and $t = 0$. This gives 
\begin{align*}
\begin{split} 
& M\left( \Delta X_N(v)  ; \llbracket 1,r \rrbracket, \{1\}, \varnothing \right) =\frac{1}{2\pi \i}\int_{\Gamma}\hspace{-1mm}  dz W_N(z) +  \frac{ \Phi^+(z) e^{\theta G_\mu(z)}  M  \left( X^b_N(z); \llbracket 1, r \rrbracket, \varnothing, \varnothing  \right)  }{S(z) N^{1/2} } \hspace{-1mm} \prod_{b \in B^c} \hspace{-1mm} \frac{1 }{(u_b-z)^2 }   \\
&- \sum_{A \subseteq \llbracket 1,r \rrbracket} \sum_{B \subseteq \{1\}}  \hspace{-2mm}   \frac{ \Phi^+(z)e^{\theta G_\mu(z)} M \left(X_N^t(z)   ; A, B, \varnothing \right) {\bf 1}\{ |A| + |B| < r+ 1 \}   }{S(z) N^{r/2 - |A|/2-1/2} N^{r - |A|} N^{1- |B|}}  \hspace{-1mm}  \prod_{a \in A^c}\hspace{-1mm} \left[ \frac{-2\theta }{(v_a -z)^3}\right]\hspace{-1mm} \prod_{b \in B^c} \frac{1}{ (u_b -z)^2} \\
&+ \sum_{A \subsetneqq \llbracket 1,r \rrbracket}  \frac{ \Phi^-(z) M\left( \Delta X_N(z) ; A, \{1\} , \varnothing \right)   }{S(z) N^{r/2 - |A|/2} } \prod_{a \in A^c} \left[ \frac{-1}{  (v_a -z)^2}\right]   \\
&+   \sum_{A \subseteq \llbracket 1, r \rrbracket, B \subseteq \{1\} }\frac{  \Phi^+(z)  M\left( \Delta X_N(z) ; A, B, \varnothing  \right) {\bf 1}\{ |A| + |B| < r+ 1 \}   }{ S(z)  N^{r/2 - |A|/2} N^{1 - |B|}}  \prod_{a \in A^c}   \frac{1}{ (v_a -z)^2}\prod_{b \in B^c}  \frac{ 1}{ (u_b -z)^2} \\
& + \sum_{A \subseteq \llbracket 1,r \rrbracket}\sum_{B \subseteq \{1\}}  \frac{ M \left(  \zeta^{\Gamma}_N(z); A, B, \varnothing\right)}{N^{r/2 - |A|/2 + 1/2}} = 0.
\end{split}
\end{align*}
Applying (\ref{S5S1E16}) and (\ref{derBound}) above we get
\begin{equation*}
M\left( \Delta X_N(v)  ; \llbracket 1,r \rrbracket, \{1\}, \varnothing \right) = \frac{1}{2\pi \i}\int_{\Gamma}W_N(z)dz + O(1).
\end{equation*}
In view of (\ref{WNDef}) we have $W_N(z) = O(N^{-1})$, which gives
\begin{equation}\label{S5TRQ1}
M\left( \Delta X_N(v)  ; \llbracket 1,r \rrbracket, \{1\}, \varnothing \right) = O(1),
\end{equation}
which is the first statement in (\ref{S5S1E18}).

Using (\ref{NekrasovOutput}) with $r = 2m_1-1$ and $s = 0$ and $t = 1$ instead, and repeating the same arguments we obtain
\begin{equation}\label{S5TRQ2}
M\left( \Delta X_N(v)  ; \llbracket 1,r \rrbracket, \varnothing, \{1\} \right) = O(1),
\end{equation}
which concludes the proof of (\ref{S5S1E18}) and hence the proposition.

%
\subsection{Proof of Theorem \ref{TGField} } \label{Section5.3}
In this section we prove Theorem \ref{TGField}. As we are dealing with centered random variables it suffices to show that second and higher order cumulants of $(Y^t_N(z), Y^b_N(z), \Delta Y_N(z))$ converge to those specified in the statement of the theorem. In the sequel we fix a compact set $K \subset U$ and a positively oriented contour $\Gamma$ that contains $[0, \lM + \theta]$, is contained in $\mathcal{M}$ as in Assumption 3 and excludes $K$. 

We begin by utilizing Proposition \ref{S5S1T1} and (\ref{derBound}) to rewrite (\ref{NekrasovOutput}) in a way that is convenient for us. Let us fix $r \geq 0, s \geq 0$ and $t \geq 0$ such that $r + s + t \geq 1$. In addition, we fix $v_0, \dots, v_r, u_1, \dots, u_s, w_1, \dots ,w_t \in K$. To ease our notation we write 
$$M^Y_N(v_0, \dots, v_r; u_1, \dots, u_s; w_1, \dots, w_t)$$
for the joint cumulant of $\Delta Y_N(v_0), \dots, \Delta Y_N (v_r), Y_N^t(u_1), \dots, Y_N^t(u_s), Y_N^b(w_1), \dots, Y_N^b(w_t)$.
We now apply (\ref{NekrasovOutput}) with $v = v_0$, replacing in all second and higher order cumulants on the right side of (\ref{NekrasovOutput}) $\Delta X_N(\cdot)$ with $\Delta Y_N (\cdot)$. Notice that since $\Delta Y_N (z) - \Delta X_N(z)$ is deterministic (see (\ref{DefX}) and (\ref{DefDiscreteField})) this does not affect the right side of (\ref{NekrasovOutput}). Furthermore, we replace on the left side of (\ref{NekrasovOutput}) $X_N^{t/b}(\cdot)$ with $Y_N^{t/b}(\cdot)$, which again does not affect the left side of (\ref{NekrasovOutput}) since $r + s + t \geq 1$. Finally, since cumulants can be expressed as linear combinations of products of moments, see \cite[Chapter 3]{Taqqu}, we can utilize Proposition \ref{S5S1T1} and (\ref{derBound}) to bound most of the cumulants on the right side of (\ref{NekrasovOutput}) by $O(N^{-1/2})$. Overall, we obtain the following form of (\ref{NekrasovOutput}) when $r + s + t \geq 1$
\begin{align}\label{S5HFE}
\begin{split}
&M^Y_N(v_0, \dots, v_r; u_1, \dots, u_s; w_1, \dots, w_t) =  \frac{1}{2\pi \i} \int_{\Gamma}dz W_N(z) ,\\
&+ {\bf 1}\{ r =1, s= t =  0\} \cdot \frac{[ \Phi^+(z) - \Phi^-(z)]\mathbb{E} \left[\Delta X_N(z) \right]}{N^{1/2} S(z) (v_1-z)^2} +  O(N^{-1/2}),
\end{split}
\end{align}
where
\begin{align}\label{S5HFE2}
\begin{split}
 W_N(z) = &{\bf 1} \{ r = 1, s = t = 0\} \cdot \left[ \frac{2 \Phi^+(z) e^{\theta G_\mu(z)} }{S(z)(v_1-z)^3} -\frac{\theta \Phi^+(z) \partial_zG_\mu(z) }{S(z)(v_1-z)^2}  \right] + O(N^{-1/2}), \\
\end{split}
\end{align}
and the constants in the big $O$ notations depend on $K,r,s,t$. 

We next compute the limits of the second and higher order cumulants in the statement of Theorem \ref{TGField} using Proposition \ref{CLT}, (\ref{S5HFE}) and (\ref{S5HFE2}).
\subsubsection{Second order cumulants} \label{Section5.3.1}
We fix $z_1, z_2 \in K$. We first have by Proposition \ref{CLT} that 
\begin{equation}\label{S5S3E1V}
\lim_{N\rightarrow \infty} Cov(Y_N^{t}(z_1), Y^{t}_N(z_2)) = \mathcal C_\theta(z_1, z_2).
\end{equation}
Next we have by (\ref{S5HFE}) applied to $r = 0, s = 1$, $t = 0$, $v_0= z_1$ and $u_1 = z_2$ that
\begin{equation}\label{S5S3E2V}
\begin{split} 
&Cov\left( \Delta Y_N(z_1) , Y^t_N(z_2) \right) = O(N^{-1/2}),
\end{split}
\end{equation}
Similarly, from (\ref{S5HFE}) applied to $r = 0, s = 0$, $t = 1$, $v_0= z_1$ and $w_1 = z_2$ we have
\begin{equation}\label{S5S3E3V}
\begin{split} 
&Cov\left( \Delta Y_N(z_1) , Y^b_N(z_2) \right) = O(N^{-1/2}).
\end{split}
\end{equation}
Equations (\ref{S5S3E2V}) and (\ref{S5S3E3V}) imply the first line in (\ref{eq:GField}). \\

Using that $\Delta Y_N(z_1)= N^{1/2} [Y_N^t(z) - Y_N^b(z)]$, (\ref{S5S3E1V}) and (\ref{S5S3E2V}) we have
\begin{align}\label{S5S3E4V}
\begin{split}
&\lim_{N\rightarrow \infty} Cov(Y_N^{b}(z_1), Y^{t}_N(z_2)) = \\
& \lim_{N\rightarrow \infty} Cov(Y_N^{t}(z_1), Y^{t}_N(z_2)) - \lim_{N\rightarrow \infty}N^{-1/2}Cov(\Delta Y_N(z_1), Y^{t}_N(z_2)) = \mathcal C_\theta(z_1, z_2).
\end{split}
\end{align}
Using that $\Delta Y_N(z_1)= N^{1/2} [Y_N^t(z) - Y_N^b(z)]$, (\ref{S5S3E4V}) and (\ref{S5S3E3V}) we have
\begin{align}\label{S5S3E5V}
\begin{split}
&\lim_{N\rightarrow \infty} Cov(Y_N^{b}(z_1), Y^{b}_N(z_2)) = \\
& \lim_{N\rightarrow \infty} Cov(Y_N^{b}(z_1), Y^{t}_N(z_2)) - \lim_{N\rightarrow \infty}N^{-1/2}Cov( Y^b_N(z_1), \Delta Y_N(z_2)) = \mathcal C_\theta(z_1, z_2).
\end{split}
\end{align}
Equations (\ref{S5S3E1V}), (\ref{S5S3E4V}) and (\ref{S5S3E5V}) imply the second line in (\ref{eq:GField}). \\

We next focus on establishing (\ref{eq:GFieldv2}). From (\ref{S5HFE}) applied to $r = 1$, $s=t = 0$, $v_0 = z_1$ and $v_1 = z_2$ and (\ref{expectationDiff}) we have
\begin{align}\label{S5S3E6V}
\begin{split}
&\lim_{N\rightarrow \infty} Cov(\Delta Y_N(z_1), \Delta Y_N(z_2)) = \frac{1}{2\pi \i} \int_{\Gamma}  dz \left[ \frac{2 \Phi^+(z) e^{\theta G_\mu(z)} }{S(z)(z_2-z)^3} -\frac{\theta \Phi^+(z) \partial_zG_\mu(z) }{S(z)(z_2-z)^2}  \right] + \\
&\frac{[ \Phi^+(z)- \Phi^-(z)]}{ S(z) (z_2-z)^2} \cdot \frac{1}{2 \pi \i}  \int_{\Gamma_1} \frac{ \theta \cdot [  \Phi^-(\zeta) e^{-\theta G_\mu(\zeta)} - \Phi^+(\zeta) ] \partial_z G_\mu(\zeta) }{(\zeta-z) [ \Phi^+(\zeta) + \Phi^-(\zeta) - R_\mu(\zeta)] }d\zeta,
\end{split}
\end{align}
where $\Gamma_1$ is a positively oriented contour inside $\Gamma$, which encloses the segment $[0, \lM + \theta]$.

By computing the integral over $\Gamma$ as a residue at the simple pole at $z = \zeta$ we get
\begin{align}\label{S5S3E7V}
\begin{split}
& \frac{1}{(2\pi \i)^2} \int_{\Gamma}    \int_{\Gamma_1}  \frac{[ \Phi^+(z) - \Phi^-(z)]}{ S(z) (z_2-z)^2}  \frac{ \theta \cdot [  \Phi^-(\zeta) e^{-\theta G_\mu(\zeta)} - \Phi^+(\zeta) ] \partial_z G_\mu(\zeta) }{(\zeta-z) [ \Phi^+(\zeta) + \Phi^-(\zeta) - R_\mu(\zeta)] }d\zeta dz  \\
& = \frac{-1}{2\pi \i}    \int_{\Gamma_1}   \frac{ \theta \cdot [ \Phi^+(\zeta) - \Phi^-(\zeta)] \cdot [  \Phi^-(\zeta) e^{-\theta G_\mu(\zeta)} - \Phi^+(\zeta) ] \partial_z G_\mu(\zeta) }{ S(\zeta) (z_2-z)^2 [ \Phi^+(\zeta) + \Phi^-(\zeta) - R_\mu(\zeta)] }d\zeta \\
& =\frac{-1}{2\pi \i}    \int_{\Gamma}   \frac{ \theta \cdot [ \Phi^+(z) - \Phi^-(z)] \cdot [  \Phi^-(z) e^{-\theta G_\mu(z)} - \Phi^+(z) ] \partial_z G_\mu(z) }{ S(z) (z_2 -z)^2 [ \Phi^+(z) + \Phi^-(z) - R_\mu(z)] }dz,
\end{split}
\end{align}
where in going from the second to the third line, we deformed $\Gamma_1$ to $\Gamma$ (by Cauchy's theorem this does not affect the value of the integral) and relabeled the integration variable from $\zeta$ to $z$. We mention that in going from the first to the second line in (\ref{S5S3E7V}) we used that $\Phi^{\pm}$ are analytic and $S(z) = (z-z_1)[ \Phi^+(z) + \Phi^-(z) - R_\mu(z)]$ is non-vanishing and analytic in the closure of the region enclosed by $\Gamma$ as follows from Lemma \ref{S3NonVanish}.

We next substitute (\ref{S5S3E7V}) into (\ref{S5S3E6V}) to get
\begin{align}\label{S5S3E8V}
\begin{split}
\lim_{N\rightarrow \infty} Cov(\Delta Y_N(z_1), \Delta Y_N(z_2)) = &\frac{1}{2\pi \i} \int_{\Gamma}  dz \frac{2 \Phi^+(z) e^{\theta G_\mu(z)} }{(z_2-z)^3(z - z_1)[ \Phi^+(z) + \Phi^-(z) - R_\mu(z)]} + \\
&\frac{\theta \partial_z G_\mu(z) }{(z_2-z)^2 (z - z_1)[ e^{\theta G_\mu(z)} - 2 + e^{-\theta G_\mu(z)}]}, \\
\end{split}
\end{align}
where we used that $S(z) = (z-z_1)[ \Phi^+(z) + \Phi^-(z) - R_\mu(z)]$, $R_\mu(z) = \Phi^+(z) e^{\theta G_\mu(z)}+ \Phi^-(z) e^{-\theta G_\mu(z)}$ which implies
\begin{align*}
&-\frac{\theta \Phi^+(z) \partial_zG_\mu(z) }{S(z)(z_2-z)^2}    -  \frac{ \theta \cdot [ \Phi^+(z) - \Phi^-(z)] \cdot [  \Phi^-(z) e^{-\theta G_\mu(z)} - \Phi^+(z) ] \partial_z G_\mu(z) }{ S(z) (z_2 -z)^2 [ \Phi^+(z) + \Phi^-(z) - R_\mu(z)] } \\
& = \frac{\theta \partial_z G_\mu(z) }{(z_2-z)^2 (z - z_1)[ e^{\theta G_\mu(z)} - 2 + e^{-\theta G_\mu(z)}]}.
\end{align*}
Notice that 
$$\frac{\theta \partial_z G_\mu(z)}{[e^{\theta G_\mu(z)} - 2 + e^{-\theta G_\mu(z)} ]} = - \partial_z \left[ \frac{1}{e^{\theta G_\mu(z)} - 1}\right],$$
and so using integration by parts for the second term on the right side of (\ref{S5S3E8V}) we arrive at
\begin{align*}
\begin{split} 
& \lim_{N\rightarrow \infty} Cov(\Delta Y_N(z_1), \Delta Y_N(z_2)) = \frac{1}{2\pi \i}\int_{\Gamma}   \frac{2\Phi^+(z)e^{\theta G_\mu(z)} }{  (z_2 -z)^3  (z - z_1) [ \Phi^{+}(z) + \Phi^-(z) - R_\mu(z)] } + \\
& \frac{1}{e^{\theta G_\mu(z)} - 1} \cdot \left[ - \frac{2}{(z-z_2)^3(z-z_1)} - \frac{1}{(z-z_1)^2(z-z_2)^2}\right].
\end{split}
\end{align*}
Next we can add $- \frac{2 [R_\mu(z) - \Phi^+(z)] }{ (z_2 - z)^3 (z - z_1) [\Phi^{+}(z) + \Phi^-(z) - R_\mu(z)]}$ to the above integrand without affecting the value of the integral by Cauchy's theorem (here we used Lemma \ref{S3NonVanish}). The benefit is that
$$\frac{2\Phi^+(z)e^{\theta G_\mu(z)} }{  (z_2 -z)^3 (z - z_1) [ \Phi^{+}(z) + \Phi^-(z) - R_\mu(z)] } - \frac{2 [R_\mu(z) - \Phi^+(z)] }{  (z_2 - z)^3 (z - z_1) [\Phi^{+}(z) + \Phi^-(z) - R_\mu(z)]} = $$
$$ =\frac{2}{(z_2 -z)^3 (z - z_1) [ 1 - e^{\theta G_\mu(z)}] }  .$$
Substituting this above yields
\begin{equation*}
\begin{split} 
&\lim_{N\rightarrow \infty} Cov(\Delta Y_N(z_1), \Delta Y_N(z_2)) =  \frac{1}{2\pi \i}\int_{\Gamma}  \frac{1}{e^{\theta G_\mu(z)} - 1} \cdot \left[ - \frac{1}{(z-z_1)^2(z-z_2)^2}\right],
\end{split}
\end{equation*}
which clearly implies (\ref{eq:GFieldv2}).

\subsubsection{Third and higher order cumulants} \label{Section5.3.2} Let us fix $r \geq -1, s \geq 0$ and $t \geq 0$ such that $r + s + t \geq 2$. Our goal is to show that
\begin{equation}\label{S5S3E3.5}
\lim_{ N \rightarrow \infty} M^Y_N(v_0, \dots, v_r; u_1, \dots, u_s; w_1, \dots, w_t) = 0,
\end{equation}
where the convergence is uniform over $K$. \\

We first have by Proposition \ref{CLT} that if $u_1, \dots, u_s \in K$ and $s \geq 3$ then
\begin{equation}\label{S5S3E4}
\lim_{N\rightarrow \infty} M^Y_N(\varnothing; u_1, \dots,  u_s; \varnothing) = 0.
\end{equation}
Furthermore for each $1 \leq i \leq s-1$ we have
\begin{align}\label{S5S3E5}
\begin{split}
&M^Y_N(\varnothing; u_1, \dots, u_i; u_{i+1}, \dots , u_s) - M^Y_N(\varnothing; u_1, \dots, u_{i-1}; u_i, u_{i+1}, \dots ,u_s) = \\
&N^{-1/2} M^Y_N(u_{i};  u_1, \dots, u_{i-1};  u_{i+1}, \dots ,u_s) = O(N^{-1/2}),
\end{split}
\end{align}
where in the second line we used Proposition \ref{S5S1T1} and (\ref{derBound}). Combining (\ref{S5S3E4}) and (\ref{S5S3E5}) we conclude (\ref{S5S3E3.5}) provided $r = -1$ and $s + t \geq 3$.

We next suppose that $r \geq 0$. From (\ref{S5HFE}) we have $M^Y_N(v_0, \dots, v_r; u_1, \dots, u_s; w_1, \dots, w_t) = O(N^{-1/2})$ when $r + s + t \geq 2$, which concludes the proof of (\ref{S5S3E3.5}) and hence the theorem.\\

%
\section{Multilevel extensions and examples}\label{Section7}
In this section we demonstrate how any discrete $\beta$-ensemble can be extended to a multilevel system of the type presented in Section \ref{Section1.1}. Using this connection we can construct many measures that fit into the general framework of Section \ref{Section3} and we discuss some of them in Section \ref{Section7.2}.

%
\subsection{Multilevel extension}\label{Section7.1} In this section we provide a method for extending any discrete $\beta$-ensemble to a multilevel system as in (\ref{eq:measure_k}). The construction uses Jack symmetric functions and mimics the construction of the ascending Macdonald processes of \cite{BorCor}.

Let us recall some notation from earlier sections of the text. Fix $\theta > 0$ and $N \in \mathbb{N}$. Then we define
\begin{align}\label{MultiState}
\begin{split}
&\mathbb{W}^\theta_{N,k} = \{ (\ell_1, \dots, \ell_k):  \ell_i = \lambda_i + (N - i)\cdot\theta, \mbox{ with } \lambda_1 \geq \lambda_2 \cdots \geq \lambda_k \mbox{ and } \lambda_i \in \mathbb{Z} \}.\\
&\mathfrak{X}^{\theta}_{N,N}:= \{ (\ell^1 , \cdots, \ell^N): \ell^k \in \mathbb{W}^\theta_{N,k} \mbox{ for $k = 1, \dots,N$ and } \ell^1 \preceq \ell^2 \preceq \cdots \preceq \ell^N \}
\end{split}
\end{align}
where we recall that $\ell^{k} \preceq \ell^{k+1}$ if $\lambda^{k+1}_{k+1} \leq \lambda_k^k \leq \lambda^{k+1}_{k} \leq \cdots \leq \lambda_1^k\leq \lambda_1^{k+1}$ with $\ell^k_i = \lambda_i^k + (N- i + 1)\theta$ for $i = 1, \dots, k$ and $\ell^{k+1}_i = \lambda_i^{k+1} + (N- i) \cdot \theta$ for $i = 1, \dots, k+ 1$.

\begin{proposition}\label{PropExtension} Suppose that $w(x;N)$ is a non-negative function on $\mathbb{R}$ such that 
$$Z: = \sum_{\ell \in \mathbb{W}^\theta_{N,N}} \frac{\Gamma(\ell_i - \ell_j + 1)\Gamma(\ell_i - \ell_j + \theta)}{\Gamma(\ell_i - \ell_j)\Gamma(\ell_i - \ell_j +1-\theta)}  \prod_{i = 1}^N w(\ell_i; N) \in (0, \infty).$$
For each $(\ell^1, \cdots, \ell^N) \in \mathfrak{X}^{\theta}_{N,N}$ we define
\begin{equation}\label{S7eq:measure_k}
\mathbb{P}^{\theta, N}_N(\ell^1, \dots, \ell^N) =\prod_{i = 1}^{N} \frac{\Gamma( i \theta)}{\Gamma(\theta)}  \cdot  \frac{1}{Z} \prod_{1 \leq i < j \leq N} \frac{\Gamma(\ell^N_i - \ell^N_j + 1)}{\Gamma(\ell^N_i - \ell^N_j + 1- \theta)}  \cdot \prod_{k = 1}^{N-1}  I(\ell^{k+1}, \ell^k) \cdot \prod\limits_{i=1}^{N}w(\ell_i^{N}; N), \mbox{  }
\end{equation}
\begin{align}\label{S7tm}
\begin{split}
I(\ell^s, \ell^{s-1}) = & \prod_{1 \leq i < j \leq s}\frac{\Gamma(\ell^s_i - \ell^s_j + 1 - \theta)}{\Gamma(\ell^s_i - \ell^s_j) } \cdot \prod_{1 \leq i < j \leq s-1} \frac{\Gamma(\ell^{s-1}_i - \ell^{s-1}_j + 1)}{\Gamma(\ell^{s-1}_i - \ell^{s-1}_j + \theta)} \times\\
&\prod_{1 \leq i < j \leq s} \frac{\Gamma(\ell^{s-1}_i - \ell^s_j)}{ \Gamma(\ell^{s-1}_i - \ell^s_j + 1 - \theta)}  \cdot \prod_{1 \leq i \leq j \leq s-1} \frac{\Gamma(\ell^{s}_i - \ell^{s-1}_j + \theta)}{\Gamma(\ell^s_i - \ell^{s-1}_j + 1)}.
\end{split}
\end{align}
Then $\mathbb{P}^{\theta, N}_N$ is a probability measure on $\mathfrak{X}^{\theta}_{N,N}$ as in (\ref{MultiState}). Moreover, the projection of $\mathbb{P}^{\theta, N}_N$ on $\ell^N$ is
\begin{equation}\label{S7SingleLevM}
\mathbb{P}_N^{\theta,N}(\ell_1^N, \cdots, \ell_N^N) = \frac{1}{Z} \cdot \frac{\Gamma(\ell^N_i - \ell^N_j + 1)\Gamma(\ell^N_i - \ell^N_j + \theta)}{\Gamma(\ell^N_i - \ell^N_j)\Gamma(\ell^N_i - \ell^N_j +1-\theta)}  \prod_{i = 1}^N w(\ell^N_i; N),
\end{equation}
and the projection of $\mathbb{P}_N^{\theta, N}$ on the top two levels $(\ell^N, \ell^{N-1})$ is given by
\begin{equation}\label{S7PDef}
\mathbb{P}_N^{\theta,N}(\ell^{N}, \ell^{N-1}) = \frac{\Gamma( N  \theta)}{\Gamma(\theta)} \cdot \frac{1}{Z} \cdot H^t(\ell^N) \cdot H^b(\ell^{N-1}) \cdot I(\ell^N, \ell^{N-1}), \mbox{ where }
\end{equation}
\begin{equation}\label{S7PDef2}
H^t(\ell) = \prod_{1 \leq i < j \leq N} \frac{\Gamma(\ell_i - \ell_j + 1)}{\Gamma(\ell_i - \ell_j + 1 - \theta)}  \prod_{i = 1}^N w(\ell_i;N), \hspace{2mm} H^b(m) = \prod_{1 \leq i < j \leq N-1} \frac{\Gamma(m_i - m_j + \theta)}{\Gamma(m_i - m_j)},
\end{equation}
\end{proposition}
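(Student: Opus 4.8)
The plan is to deduce Proposition~\ref{PropExtension} from the branching rule for Jack symmetric functions, following the Macdonald-process construction of \cite{BorCor}. First I would recall the combinatorial setup: for a partition $\lambda$ with at most $N$ parts, the Jack symmetric polynomial $P_\lambda(x_1,\dots,x_N;\theta^{-1})$ (in the $\alpha = \theta^{-1}$ convention) expands in the monomial basis, and the key identity is the branching relation
\begin{equation*}
P_\lambda(x_1,\dots,x_N;\theta^{-1}) = \sum_{\mu \preceq \lambda} \psi_{\lambda/\mu}(\theta^{-1}) \, x_N^{|\lambda| - |\mu|} \, P_\mu(x_1,\dots,x_{N-1};\theta^{-1}),
\end{equation*}
where the sum is over $\mu$ interlacing $\lambda$ and $\psi_{\lambda/\mu}$ is the explicit skew Jack branching coefficient (a ratio of Gamma factors in the arm/leg-length statistics). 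Iterating this relation down from level $N$ to level $1$ writes $P_\lambda(x_1,\dots,x_N;\theta^{-1})$ as a sum over Gelfand--Tsetlin patterns $\lambda^1 \preceq \cdots \preceq \lambda^N = \lambda$ of a product $\prod_{k=1}^{N-1}\psi_{\lambda^{k+1}/\lambda^k}(\theta^{-1})$ times monomials in the $x_i$. Specializing all $x_i = 1$ and using the known principal specialization formula $P_\lambda(1^N;\theta^{-1}) = \prod_{1\le i<j\le N} \frac{\Gamma(\ell_i^N - \ell_j^N + \theta)\,\Gamma(\ell_i^N - \ell_j^N + 1)}{\Gamma(\ell_i^N-\ell_j^N)\,\Gamma(\ell_i^N - \ell_j^N + 1-\theta)} \cdot \prod_{i=1}^N \frac{\Gamma(\theta)}{\Gamma(i\theta)}$ (or the appropriate variant) gives a combinatorial identity that is exactly the statement that the weights in \eqref{S7eq:measure_k} sum to $1$ once one checks that $\prod_{k=1}^{N-1} I(\ell^{k+1},\ell^k)$ matches $\prod_{k=1}^{N-1}\psi_{\lambda^{k+1}/\lambda^k}(\theta^{-1})$ up to the explicit $\Gamma$-prefactors collected at the top level.

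The core computational step, then, is to verify the algebraic identity
\begin{equation*}
\frac{\psi_{\lambda^{s}/\lambda^{s-1}}(\theta^{-1})}{(\text{normalizing }\Gamma\text{-factors})} = I(\ell^s,\ell^{s-1}),
\end{equation*}
i.e.\ that the skew Jack branching coefficient, written in the shifted coordinates $\ell^k_i = \lambda^k_i + (N-i)\theta$, equals the product of four Gamma-ratios in \eqref{S7tm}. This is the kind of identity proved in \cite{GS} (cited in the excerpt right after \eqref{eq:measure_k}), and I would either quote \cite[Section 2]{GS} directly or reprove it by converting Stanley's arm/leg formula for $\psi_{\lambda/\mu}$ into Gamma functions and reorganizing the product over boxes of the skew shape into products over pairs of indices. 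Once this is in hand, the claim that $\mathbb{P}^{\theta,N}_N$ is a probability measure follows: nonnegativity is clear because $w \ge 0$, the $I$-factors are nonnegative on the interlacing region (each $\psi_{\lambda/\mu}(\theta^{-1}) \ge 0$ for $\theta > 0$), and the total mass is $1$ by the branching identity together with the definition of $Z$.

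For the marginal statements, \eqref{S7SingleLevM} is immediate: summing \eqref{S7eq:measure_k} over all lower levels $\ell^1,\dots,\ell^{N-1}$ and using the full branching identity collapses the $\prod I$ and the $\Gamma(i\theta)/\Gamma(\theta)$ prefactors into exactly $P_{\lambda^N}(1^N;\theta^{-1})$ divided by its value, leaving the single-level discrete $\beta$-ensemble weight $\frac{\Gamma(\ell_i-\ell_j+1)\Gamma(\ell_i^N-\ell_j^N+\theta)}{\Gamma(\ell_i^N-\ell_j^N)\Gamma(\ell_i^N-\ell_j^N+1-\theta)}\prod_i w(\ell_i^N;N)/Z$. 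For \eqref{S7PDef}--\eqref{S7PDef2} I would instead stop the telescoping one level early: summing \eqref{S7eq:measure_k} over $\ell^1,\dots,\ell^{N-2}$ uses the branching identity applied to $P_{\lambda^{N-1}}(1^{N-1};\theta^{-1})$ on levels $1,\dots,N-1$, producing the factor $\prod_{i=1}^{N-1}\frac{\Gamma(i\theta)}{\Gamma(\theta)}/(\text{its }\Gamma\text{-ratio form})$; combining with the remaining $\Gamma$-prefactors leaves precisely the stated $\frac{\Gamma(N\theta)}{\Gamma(\theta)}$, and one then identifies $H^t(\ell^N) = \prod_{1\le i<j\le N}\frac{\Gamma(\ell_i-\ell_j+1)}{\Gamma(\ell_i-\ell_j+1-\theta)}\prod w$ and $H^b(\ell^{N-1}) = \prod_{1\le i<j\le N-1}\frac{\Gamma(m_i-m_j+\theta)}{\Gamma(m_i-m_j)}$ from the leftover pieces of $\prod_{k}\psi$ and the level-$(N-1)$ principal specialization value. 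The main obstacle is bookkeeping: carefully tracking the $\Gamma$-prefactors $\prod_i \Gamma(i\theta)/\Gamma(\theta)$ and the splitting of the skew-Jack coefficient into a ``top'' piece, a ``bottom'' piece, and a genuinely mixed piece $I(\ell^N,\ell^{N-1})$ under the coordinate shift, so that everything cancels to give exactly \eqref{S7SingleLevM} and \eqref{S7PDef}. I would handle this by first establishing the clean operator-level identity $P_\lambda = \sum_\mu \psi_{\lambda/\mu}\,x_N^{\cdot}\,P_\mu$ with $\psi$ in Gamma form, and only at the very end specializing to $x_i=1$ and matching constants, rather than carrying the $\Gamma(i\theta)$ factors through every intermediate step.
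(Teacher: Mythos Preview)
Your proposal is correct and follows essentially the same route as the paper's proof: both use the Jack branching rule, identify $I(\ell^s,\ell^{s-1})$ with the skew Jack evaluation $J_{\lambda^s/\lambda^{s-1}}(1)$ (the paper cites the Gamma-function formula from \cite{Mac} and \cite{GS} rather than rederiving it), and apply the principal specialization formula $J_\lambda(1^N)$ to collapse the sum over lower levels. The only cosmetic difference is that the paper works with the $J_\lambda$ normalization and the skew functions $J_{\lambda/\mu}$ directly, whereas you phrase it via $P_\lambda$ and the branching coefficients $\psi_{\lambda/\mu}$; the bookkeeping of the $\prod_i \Gamma(i\theta)/\Gamma(\theta)$ factors is handled identically in both.
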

We give the proof of Proposition \ref{PropExtension} at the end of this subsection.\\

We begin by introducing some useful notation for symmetric functions, using \cite{Mac} as a main reference. A {\em partition} is a sequence $\lambda =(\lambda_1, \lambda_2, \cdots)$ of non-negative integers such that $\lambda_1 \geq \lambda_2 \geq \cdots$ and all but finitely many elements are zero. We denote the set of all partitions by $\mathbb{Y}$ and by $\varnothing$ the empty partition $\lambda$ such that $\lambda_i = 0$ for all $i \in \mathbb{N}$. The {\em length} $\ell(\lambda)$ of a partition is the number of non-zero $\lambda_i$ and the {\em weight} of a partition $\lambda$ is given by $|\lambda| = \lambda_1 + \lambda_2 + \cdots$. An alternative representation is given by $\lambda = 1^{m_1} 2^{m_2} \cdots$, where $m_j(\lambda) = |\{i \in \mathbb{N}: \lambda_i = j \}$ is called the {\em multiplicity} of $j$ in the partition $\lambda$. There is a natural ordering on the space of partitions, called the {\em reverse lexicographic order}, given by
$$\lambda> \mu \iff \exists k \in \mathbb{N} \mbox{ such that $\lambda_i = \mu_i$ whenever $i < k$ and $\lambda_k > \mu_k$.}$$

A {Young diagram} is a graphical representation of a partition $\lambda$, with $\lambda_1$ left justified boxes in the top row, $\lambda_2$ in the second row and so on. In general, we do not distinguish between a partition $\lambda$ and the Young diagram representing it. The {\em conjugate} of a partition $\lambda$ is the partition $\lambda'$ whose Young diagram is the transpose of the diagram $\lambda$. In particular, we have the formula $\lambda_i' = |\{j \in \mathbb{N}: \lambda_j \geq i\}|$.

Given two diagrams $\lambda$ and $\mu$ such that $\mu \subseteq \lambda$ (as a collection of boxes), we call the difference $\kappa = \lambda - \mu$ a {\em skew Young diagram}. A skew Young diagram $\kappa$ is a {\em horizontal $m$-strip} if $\kappa$ contains $m$ boxes and no two lie in the same column. If $\lambda - \mu$ is a horizontal strip we write $\lambda \succeq \mu$. We observe that $\lambda \succeq \mu \iff \lambda_1 \geq \mu_1 \geq \lambda_2 \geq \mu_2 \geq \cdots$. Some of these concepts are illustrated in Figure \ref{S2_1}.
\begin{figure}[h]
\centering
\scalebox{0.45}{\includegraphics{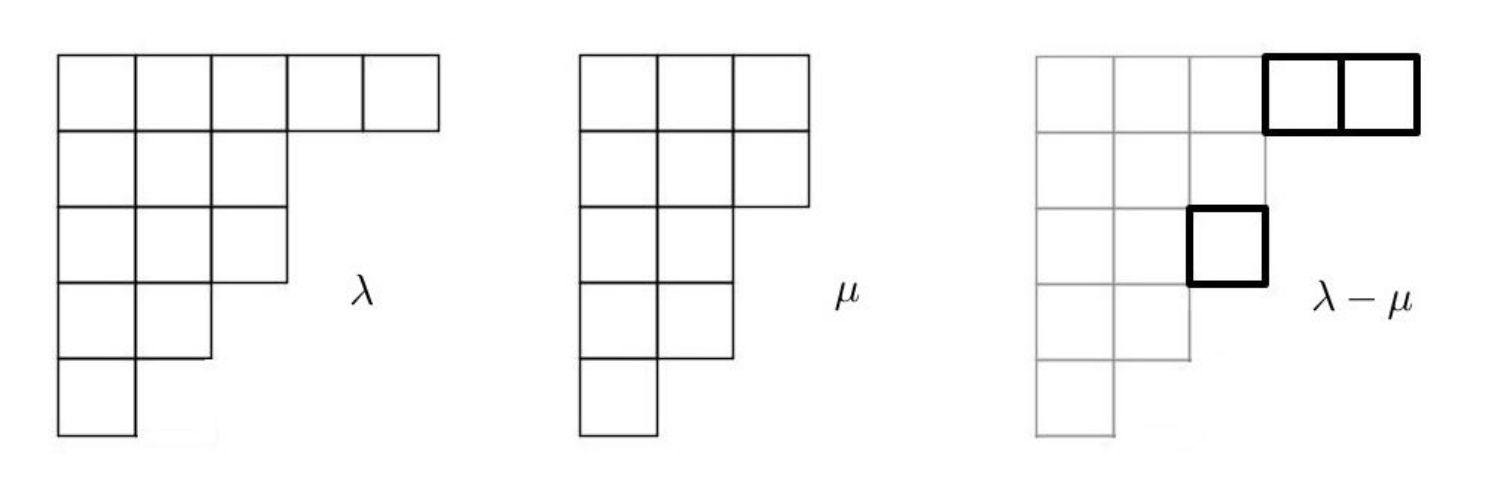}}
\caption{The Young diagram $\lambda = (5,3,3,2,1)$ and its transpose (not shown) $\lambda' = ( 5,4,3,1,1)$. The length $\ell(\lambda) = 5$ and weight $|\lambda| = 14$. The Young diagram $\mu = (3,3,2,2,1)$ is such that $\mu \subseteq \lambda$. The skew Young diagram $\lambda - \mu$ is shown in {\em black bold lines} and is a horizontal $3$-strip.}
\label{S2_1}
\end{figure}

For a box $\square=(i, j)$ of $\lambda$ (that is, a pair $(i, j)$ such that $\lambda_i\geq j$) we denote by $a(i, j)$ and $l(i,j)$ its {\em arm} and {\em leg lengths}: 
$$a(i,j)=\lambda_i-j, \quad l(i, j)=\lambda'_j-i.$$
Further, $a'(i,j)$  and $\ell'(i,j)$ denote the {\em co-arm} and {\em co-leg lengths}:
$$a'(i,j)=j-1, \quad l'(i, j)=i-1.$$

Let $\Lambda_X$ be the algebra of symmetric functions in variables $X=(x_1, x_2, \dots)$. An element of $\Lambda_X$ can be viewed as a formal symmetric power series of bounded degree in the variables $x_1, x_2, \dots .$  One way to view $\Lambda_X$ is as an algebra of polynomials in Newton power sums
$$ p_k(X) = \sum_{i = 1}^\infty x_i^k, \hspace{4mm} \mbox{ for $k \geq 1$}.$$
For any partition $\lambda$ we define
$$p_\lambda(X) = \prod_{i = 1}^{\ell(\lambda)} p_{\lambda_i} (X),$$
and note that $p_\lambda(X)$, $\lambda \in \mathbb{Y}$ form a basis in $\Lambda_X$. 

In what follows we fix a parameter $\theta$. Unless the dependence on $\theta$ is important we will suppress it from the notation, similarly for the variable set $X$. We consider the following scalar product $\langle \cdot, \cdot \rangle$ on $\Lambda \otimes \mathbb{Q}(\theta)$
\begin{equation}
\langle p_\lambda, p_\mu \rangle = \delta_{\lambda,\mu} \cdot \theta^{- \ell(\lambda)} \prod_{i = 1}^{\lambda_1} i ^{m_i(\lambda)} m_i(\lambda)!,
\end{equation}
where $\delta_{\lambda, \mu} = 1$ if $\lambda = \mu$ and is zero otherwise.

\begin{proposition}\label{ThmJack}\cite{Mac} There are unique symmetric functions $J_\lambda \in \Lambda\otimes \mathbb Q(\theta)$ for all $\lambda \in \mathbb Y$ such that
\begin{itemize}
\item  $\langle J_\lambda ,J_\mu \rangle =0$ unless $\lambda = \mu,$
\item the leading (with respect to reverse lexicographic order) monomial in $J_\lambda$ is $\prod_{i = 1}^{\ell(\lambda)} x_i^{\lambda_i}$.
\end{itemize}
\end{proposition}
The functions $J_\lambda$ in Proposition \ref{ThmJack} are called {\em Jack symmetric functions} and they form a homogeneous basis of $\Lambda$ that is different from the $p_\lambda$ above. Given $\lambda \in \mathbb{Y}$ we define the {\em dual Jack symmetric functions} $\tilde{J}_\lambda$ through
$$\tilde J_\lambda =J_\lambda \prod\limits_{\Box\in \lambda}\frac{a(\square)+\theta \cdot \ell (\square)+\theta}{a(\square)+\theta \cdot \ell(\square)+1}.$$
\begin{definition}
Take the two infinite sequences of variables $X=(x_1, x_2, \dots)$,  $Y=(y_1, y_2, \dots)$ and let $(X,Y)$ denote their concatenation. For two partitions $\lambda, \mu$ define the {\em skew Jack symmetric functions} $J_{\lambda / \mu}(X)$ and $\tilde{J}_{\lambda/ \mu}(X)$ as the coefficients in the expansion
\begin{equation}\label{S7Branch}
J_\lambda(X, Y)= \sum_{\mu \in \mathbb{Y}} J_\mu(Y) J_{\lambda/ \mu}(X) \mbox{ and }\tilde{J}_\lambda(X, Y)= \sum_{\mu \in \mathbb{Y}} \tilde{J}_\mu(Y) \tilde{J}_{\lambda/ \mu}(X) .
\end{equation}
\end{definition}
The equations in (\ref{S7Branch}) are called {\em branching relations} for the Jack symmetric functions.
\begin{definition}
A {\it specialization} $\rho$ is an algebra homomorphism from $\Lambda$ to the set of complex numbers.  A specialization is called Jack-positive if its values on all (skew) Jack polynomials with a fixed parameter $\theta> 0$ are real
and non-negative. 
\end{definition}

We will mostly work with simple specializations in this paper but point out the following important result.
\begin{proposition}\cite{KOO} For any fixed $\theta > 0$, Jack-positive specializations can be parameterized
by triplets $(\alpha, \beta, \gamma)$, where $\alpha, \beta$ are sequences of real numbers with
$$\alpha_1\geq \alpha_2\geq \dots \geq 0, \quad \beta_1\geq \beta_2\geq \dots \geq 0, \quad \sum\limits_i(\alpha_i+\beta_i)<\infty$$
and $\gamma$ is a non-negative real number. The specialization corresponding to a triplet $(\alpha, \beta, \gamma)$ is given by its values on $p_k$
$$p_1\rightarrow p_1(\alpha, \beta, \gamma)=\gamma+\sum\limits_i(\alpha_i+\beta_i),$$
$$p_k\rightarrow p_k(\alpha, \beta, \gamma)=\sum\limits_i \alpha_i^k+(-\theta)^{k-1}\sum\limits_i \beta_i^k, \hspace{3mm} k\geq 2.$$
\end{proposition}

We write $1^N$ for the specialization $\rho$ with $\alpha_1 = \cdots = \alpha_N = 1$ and all other parameters being set to $0$. For these specializations we have the following formula, which is \cite[Chapter VI, (10.20)]{Mac} 
\begin{equation}\label{jackpoly}
J_\lambda(1^{N})  = {\bf 1} \{ \ell(\lambda) \leq N\}  \cdot  \prod_{i = 1}^{N} \prod_{j =1}^{\lambda_i} \frac{N \theta + (j-1) - (i-1) \theta}{\lambda_i - j + \theta(\lambda_j' - i) + \theta }.
\end{equation}
We introduce the shifted coordinates $\ell_i = \lambda_i + (N - i ) \cdot \theta$ for $i = 1, \dots, N$. It will be more convenient to rewrite the formula in (\ref{jackpoly}) in terms of $\ell_i$'s.

The denominator in (\ref{jackpoly}) can be rewritten as 
$$\prod_{1 \leq i \leq k \leq N} \prod_{j =\lambda_{k+1} + 1}^{\lambda_k} \frac{1}{\lambda_i - j + \theta(k - i + 1) } = \prod_{1 \leq i \leq k \leq N}  \frac{\Gamma(\lambda_i + \theta(k - i + 1) - \lambda_k)}{\Gamma(\lambda_i + \theta(k-i+1) - \lambda_{k+1})} = $$
$$\prod_{1 \leq i < k \leq N}  \frac{\Gamma(\lambda_i - \lambda_k + \theta(k - i + 1) )}{\Gamma(\lambda_i - \lambda_{k}+ \theta(k-i) )} \cdot \prod_{i = 1}^{N} \frac{\Gamma(\theta)}{\Gamma(\lambda_i + \theta(N-i + 1))} = \hspace{-3mm} \prod_{1 \leq i < j \leq N} \frac{\Gamma(\ell_i - \ell_j + \theta)}{\Gamma(\ell_i - \ell_j)}  \prod_{i = 1}^{N}  \frac{ \Gamma(\theta)}{\Gamma( \ell_i + \theta)},$$
where $\lambda_{N+1} = 0$. The numerator in (\ref{jackpoly}) can be rewritten as
$$ \prod_{i = 1}^{N} \prod_{j =1}^{\lambda_i}[N \theta + (j-1) - (i-1) \theta] = \prod_{i = 1}^{N}\frac{\Gamma(N\theta + \lambda_i - (i-1)\theta)}{\Gamma((N-i + 1)\theta )} = \prod_{i = 1}^{N}\frac{\Gamma(\ell_i + \theta)}{\Gamma((N-i + 1)\theta )}.$$
Overall, we have
\begin{equation}\label{jackpoly2}
J_\lambda(1^{N}) = \prod_{i = 1}^{N} \frac{\Gamma(\theta)}{\Gamma( i \theta)} \times \prod_{1 \leq i < j \leq N} \frac{\Gamma(\ell_i - \ell_j + \theta)}{\Gamma(\ell_i - \ell_j)} .
\end{equation}

In addition, we have from \cite[Chapter VI, (7.14')]{Mac} (see also \cite[Section 2]{GS}) 
\begin{align}\label{SkewJackPoly}
\begin{split}
J_{\lambda/ \mu}(1) = {\bf 1} \{ \lambda \succeq \mu\} \cdot & \prod_{1 \leq i < j \leq N} \frac{\Gamma(\ell_i - \ell_j + 1 - \theta)}{\Gamma(\ell_i - \ell_j) } \cdot \prod_{1 \leq i < j \leq N-1} \frac{\Gamma(m_i - m_j + 1)}{\Gamma(m_i - m_j + \theta)} \times\\
&\prod_{1 \leq i < j \leq N} \frac{\Gamma(m_i - \ell_j)}{ \Gamma(m_i - \ell_j + 1 - \theta)}  \cdot \prod_{1 \leq i \leq j \leq N-1} \frac{\Gamma(\ell_i - m_j + \theta)}{\Gamma(\ell_i - m_j + 1)},  
\end{split}
\end{align}
where $\ell_i = \lambda_i + (N - i ) \cdot \theta$ and $m_i = \mu_i + (N - i)\cdot \theta$.

We remark that while the formulas (\ref{jackpoly2}) and (\ref{SkewJackPoly}) were initially defined for partitions $\lambda$ and $\mu$ they can be naturally extended to {\em signatures} of length $N$ and $N-1$ respectively (a signature of length $N$ is a sequence of integers $\lambda_1 \geq \lambda_2 \geq \cdots \geq \lambda_N$), since the expressions remain unchanged if we shift all the elements by the same integer. In particular, we have the following version of the branching relation for a given signature $\lambda$
\begin{equation}\label{S7Branchv2}
J_\lambda(1^N)= \sum_{\lambda^1 \preceq \lambda^2 \preceq \cdots \preceq \lambda^{N-1}\preceq \lambda} J_{\lambda/ \lambda^{N-1}}(1)  J_{\lambda^{N-1}/ \lambda^{N-2}}(1) \cdot J_{\lambda^{N-2}/ \lambda^{N-3}}(1) \cdots J_{\lambda^{2}/ \lambda^{1}}(1) ,
\end{equation}
where $\lambda^i$ are summed over signatures of length $i$.\\

We are finally ready to give the proof of Proposition \ref{PropExtension}.
\begin{proof} (Proposition \ref{PropExtension}) Let us write $\ell^k = (\ell^k_1, \dots, \ell^k_k)$ for $k = 1, \dots N$ and define $\lambda_i^j$ through $\ell_i^j = \lambda_i^j + (N- i)\cdot\theta$. We start by proving that (\ref{S7eq:measure_k}) defines a probability measure on $\mathfrak{X}^{\theta}_{N,N}$. Clearly, $\mathbb{P}^{\theta, N}_N(\ell^1, \dots, \ell^N) \geq 0$ by definition and so it suffices to show that 
\begin{equation}\label{S7E1}
\sum_{ (\ell^1, \dots, \ell^N) \in \mathfrak{X}^{\theta}_{N,N}}\mathbb{P}^{\theta, N}_N(\ell^1, \dots, \ell^N) = 1.
\end{equation}
Using the definition of $\mathbb{P}^{\theta, N}_N$ as well as (\ref{SkewJackPoly}) we see that
\begin{align*}
\begin{split}
& \sum_{ (\ell^1, \dots, \ell^N) \in \mathfrak{X}^{\theta}_{N,N}}\mathbb{P}^{\theta, N}_N(\ell^1, \dots, \ell^N)  = \prod_{i = 1}^{N} \frac{\Gamma( i \theta)}{\Gamma(\theta)} \cdot \frac{1}{Z}\cdot \sum_{\ell^N\in \mathbb{W}_{N,N}^\theta} \prod_{1 \leq i < j \leq N} \frac{\Gamma(\ell^N_i - \ell^N_j + 1)}{\Gamma(\ell^N_i - \ell^N_j + 1 - \theta)}   \prod_{i = 1}^N w(\ell^N_i; N) \\
&\times \sum_{\lambda^1 \preceq \lambda^2 \preceq \cdots \preceq \lambda^N} J_{\lambda^N/ \lambda^{N-1}}(1) \cdot J_{\lambda^{N-1}/ \lambda^{N-2}}(1) \cdots J_{\lambda^{2}/ \lambda^{1}}(1)  \\
&= \prod_{i = 1}^{N} \frac{\Gamma( i \theta)}{\Gamma(\theta)}\cdot \frac{1}{Z} \cdot \sum_{\ell^N\in \mathbb{W}_{N,N}^\theta} \prod_{1 \leq i < j \leq N} \frac{\Gamma(\ell^N_i - \ell^N_j + 1)}{\Gamma(\ell^N_i - \ell^N_j + 1 - \theta)}   \prod_{i = 1}^N w(\ell^N_i; N) J_{\lambda^N}(1^{N})   \\
& = \frac{1}{Z} \cdot \sum_{\ell^N\in \mathbb{W}_{N,N}^\theta} \prod_{1 \leq i < j \leq N} \frac{\Gamma(\ell^N_i - \ell^N_j + 1)}{\Gamma(\ell^N_i - \ell^N_j + 1 - \theta)} \frac{\Gamma(\ell^N_i - \ell^N_j + \theta)}{\Gamma(\ell^N_i - \ell^N_j)}   \prod_{i = 1}^N w(\ell^N_i; N)  = 1,
\end{split}
\end{align*}
where in the second equality we used the branching relations (\ref{S7Branchv2}), in the third equality we used (\ref{jackpoly2}) and in the last one we used the definition of $Z$. This proves (\ref{S7E1}). Furthermore, performing the same summation above but fixing $\ell^N$ shows (\ref{S7SingleLevM}). 

Finally, let us fix $\ell^N \in \mathbb{W}^\theta_{N,N}$ and $\ell^{N-1} \in \mathbb{W}^\theta_{N,N-1}$ such that $\ell^N \succeq \ell^{N-1}$. Using (\ref{SkewJackPoly}) we get
\begin{equation*}
\begin{split}
&\mathbb{P}_N^{\theta,N}(\ell^{N}, \ell^{N-1})  = \prod_{i = 1}^{N} \frac{\Gamma( i \theta)}{\Gamma(\theta)} \cdot Z^{-1} \cdot H^t(\ell^N) \cdot I(\ell^N, \ell^{N-1}) \\
&\times \sum_{\lambda^1 \preceq \lambda^2 \preceq \cdots \preceq \lambda^{N-1}} J_{\lambda^{N-1}/ \lambda^{N-2}}(1) \cdot J_{\lambda^{N-2}/ \lambda^{N-3}}(1) \cdots J_{\lambda^{2}/ \lambda^{1}}(1)  \\
& = \prod_{i = 1}^{N} \frac{\Gamma( i \theta)}{\Gamma(\theta)} \cdot Z^{-1} \cdot H^t(\ell^N) \cdot I(\ell^N, \ell^{N-1}) \cdot J_{\lambda^{N-1}}(1^{N-1})  \\
& = \frac{\Gamma( N  \theta)}{\Gamma(\theta)} \cdot Z^{-1}  \cdot H^t(\ell^N) \cdot I(\ell^N, \ell^{N-1})  \cdot  \prod_{1 \leq i < j \leq N - 1} \frac{\Gamma(\ell^{N-1}_i - \ell^{N-1}_j + \theta)}{\Gamma(\ell^{N-1}_i - \ell^{N-1}_j)} ,
\end{split}
\end{equation*}
where in the second equality we used the branching relations (\ref{S7Branchv2}) and in the third equality we used (\ref{jackpoly2}). This proves (\ref{S7PDef}).
\end{proof}

%
\subsection{Applications}\label{Section7.2}
In this section we discuss several applications of Theorem \ref{CLTfun}. In view of our work in Section \ref{Section7.1} we have that essentially all single-band $\beta$-ensembles that satisfy the assumptions in \cite{BGG} have a multi-level extension for which Theorem \ref{CLTfun} is applicable. We remark that the only difference between the assumptions in Section \ref{Section3} and those in \cite{BGG} is that in Assumption 3 we assume that $\Phi^{\pm}$ are positive on $(0, \lM + \theta)$, which will automatically be the case for all the models we consider. 

%
\subsubsection{Krawtchouk ensemble}\label{Section7.2.1}
In this section we study the Krawtchouk orthogonal polynomial ensemble with $\theta=1$ -- this is probably the simplest case that one can consider in our framework.

The Krawtchouk ensemble is a probability distribution that depends on two parameters $M, N$ with $M \in \mathbb{Z}_{\geq 0}$ and $N \in \mathbb{N}$. The state space of the model is the set of $N$-tuples of integers $(\ell_1, \dots, \ell_N)$ that satisfy $M+N-1 \geq \ell_1 > \ell_2 > \cdots \ell_N \geq 0$ and the measure is given by
\begin{equation}
\mathbb{P}_N \left(\ell_1, \dots, \ell_N\right) = \frac{1}{Z} \prod_{1 \leq i < j \leq N} (\ell_i - \ell_j)^2 \cdot \prod_{i = 1}^N \binom{M+N - 1}{\ell_i}.
\end{equation}
The two-level measure is obtained using Proposition \ref{PropExtension}. Since $\theta = 1$ the extension can be considered as first sampling $(\ell_1, \cdots, \ell_N)$ from the above measure, then sampling uniformly from the set of (half-strict) Gelfand Tsetlin patterns whose top level is given by $(\ell_1, \cdots, \ell_N)$ and forgetting the bottom $N-2$ levels. The resulting $2$-level distribution is given by
\begin{equation}\label{S7Krawtchouk}
\mathbb{P}(\ell, m) =\Gamma(N)\cdot  \frac{1}{Z} \cdot \prod_{1 \leq i < j \leq N} (\ell_i - \ell_j)  \cdot \prod_{1 \leq i < j \leq N-1} (m_i - m_j) \cdot \prod_{i = 1}^N \binom{M+N-1}{\ell_i}.
\end{equation}
We fix $\mathfrak m> 0 $ (independent on $N$), set $M=\left \lfloor{\mathfrak m N}\right \rfloor$ and discuss the limit of (\ref{S7Krawtchouk}) as $N \rightarrow \infty$.

In \cite{BGG} the authors showed that the above measure satisfies Assumptions 1-5 as we explain here. Assumptions 1 and 2 can be easily deduced using Stirling's formula.
Moreover for this example 
$$\frac{w(z-1;N )}{w(z;N)}=\frac{z}{M +N-z},$$
and so we can take
\begin{equation}
\Phi^-_N(z)=\frac{z}{N},\quad\Phi_N^+(z)=\frac{M+ N}{N}-\frac{z}{N};
\end{equation}
We conclude that Assumption 3 is satisfied with $\mathcal M=\mathbb C,$ $\Phi^-(z)=z$
and $\Phi^+(z)=\mathfrak m + 1-z$ and $\Phi^{\pm}_N$ as above. Moreover, we have $\Phi^-_N(0)=0$ and $
\Phi^+_N(M+ N)$ so Assumption 4 is also valid.

By a direct limit of the single level Nekrasov's equations, Proposition \ref{SingleLevelNekrasov}, the following formulas for $R_\mu$ and $Q_\mu$ were found in \cite{BGG}
$$R_{\mu}(z)=\mathfrak m-1, \quad Q_{\mu}(z)=2\sqrt{(z-(\mathfrak m+1)/2)^2-\mathfrak m},$$
so Assumption 5 is also verified. We remark that the square root is as in Section \ref{Section1.5}.

The conclusion is that the Krawtchouk ensemble satisfies all the
assumptions and then Theorem \ref{CLTfun} is valid with $\alpha=(\mathfrak m + 1)/2-\sqrt{\mathfrak m}$  and $\beta=(\mathfrak m + 1)/2+\sqrt{\mathfrak m}$. We also remark that in view of (\ref{QRmu}) we have that 
$$e^{G_\mu(z)} = \frac{R_\mu(x) - \sqrt{R^{2}_\mu(z) - 4 \Phi^-(z) \Phi^+(z)}}{2 \Phi^+(z)} = \frac{R_\mu(z) - Q_\mu(z)}{2\Phi^+(z)}$$

%
\subsubsection{Lozenge tilings of the hexagon}\label{Section7.2.2}
In this section we apply our result to the model of uniform random lozenge tilings of the $A \times B \times C$ hexagon. This is a well-studied model, with many results available, cf. \cite{CLP, P1, P2, JN, Go}. We explain below the definition of the model, how it fits into our framework and what our results say about it. Afterwards we compare our results with those in \cite{BuGo} and \cite{GZ}.

\vspace{-2mm}
\begin{figure}[h]
\centering
\scalebox{0.45}{\includegraphics{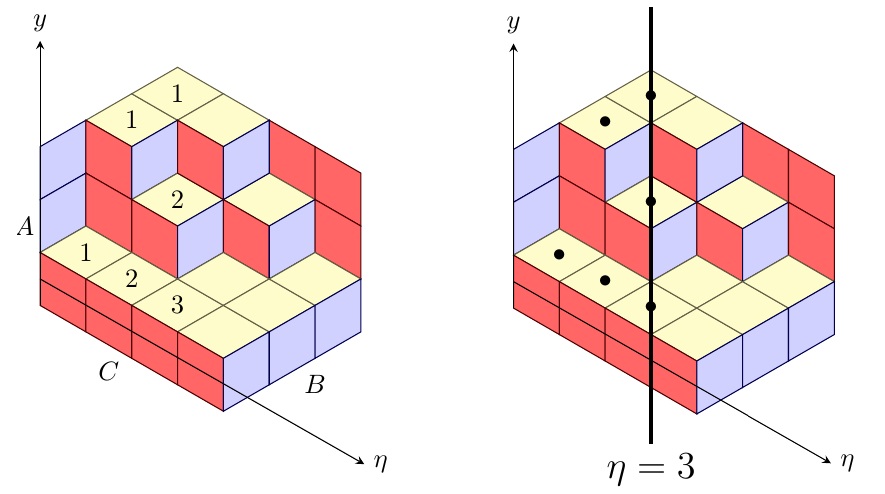}}
\caption{The left part shows a lozenge tiling of the $3 \times 3 \times 4$ hexagon and its height function. The right part shows the vertical line through $(0,3)$, which intersects $3$ horizontal lozenges in the tiling. The dots indicate the location of $\ell^i_j$ and for the picture we have $\ell^3_1 =  5, \ell^2_1 = 4, \ell^3_2 = 3, \ell_3^3 = \ell_2^2 = \ell_1^1 = 1$.} 
\label{S7_1}
\end{figure}
\vspace{-2mm}

Fix integers $A,B,C \geq 1$ and consider the $A \times B \times C$ hexagon drawn on the triangular lattice, see Figure \ref{S7_1}. We are interested in random tilings of such a hexagon by rhombi, also called lozenges (these are obtained by gluing two neighboring triangles together). There are three types of rhombi that arise in such a way: horizontal $\diam$, and two others \hspace{-3mm} $\ldiam$, \hspace{-4mm} $\rdiam$. We will work with the standard coordinate axes $(y, \eta)$, whose origin is located at the base of the left-most side of the hexagon, see Figure \ref{S7_1}. If we fix any $B+C-1 \geq \eta \geq 1$ and look at the vertical line through $(0, \eta)$ we see that this line intersects some fixed (depending on $A,B,C,\eta$) number $N$ of horizontal lozenges. In particular, if $\min (B,C) \geq \eta \geq 1$ then $\eta = N$ and if we let $\ell_1^N > \ell_2^N > \cdots > \ell_N^N$ denote the $y$-coordinate of these horizonal lozenges then their distribution is given by
\begin{equation}\label{S72E1}
\begin{split}
&\mathbb{P}_N (\ell^N_1, \dots, \ell^N_N) = \frac{1}{Z_N} \prod_{1 \leq i < j \leq N}(\ell^N_i - \ell^N_j)^2 \cdot \prod_{i = 1}^N w(\ell^N_i;N), \mbox{ where } \\
&w(y;N) = \frac{(y + C- N)!(A+B-y-1)!}{y!(A+N - y - 1)!},
\end{split}
\end{equation}
provided that $0 \leq \ell_N^N$ and $\ell_1^N \leq A+ N-1$ and is $0$ otherwise. In the above $Z_N$ is a normalization constant. The computation of $\mathbb{P}_N$ is possible by noticing that a tiling can be viewed as two {\em Gelfand-Tsetlin patterns} glued together and utilizing some well-known techniques of enumerating the latter. See for example \cite{CLP, Go,BuGo,BP}, in particular (\ref{S72E1}) can be found as \cite[Proposition 2.6]{BP}.

Let us denote by $\ell^i = (\ell^i_1 > \ell^i_2 > \cdots > \ell_i^i)$ the $y$-coordinates of the horizontal lozenges on the vertical line through $(0,i)$ for $i = 1, \dots, N$. Then the combinatorics of the model imply that $\ell^1 \preceq \ell^2 \preceq \cdots \preceq \ell^N$ in the notation of Section \ref{Section7.1}. Furthermore, as the tiling is unifomly distributed we conclude that the joint law of $(\ell^1, \dots, \ell^N)$ is given by (\ref{S7eq:measure_k}) with $\theta = 1$, namely
\begin{equation}\label{S7eq:measure_kV2}
\mathbb{P}_N(\ell^1, \dots, \ell^N) =   \frac{\prod_{i = 1}^{N} \Gamma( i ) }{Z_N} \prod_{1 \leq i < j \leq N} (\ell^N_i - \ell^N_j)  \prod\limits_{i=1}^{N}w(\ell_i^{N}; N), 
\end{equation}
where $w(\cdot;N)$ and $Z_N$ are as in (\ref{S72E1}). In particular, by Proposition \ref{PropExtension} we conclude that the joint law of $(\ell^N, \ell^{N-1})$ is given by (\ref{PDef}) with $\theta = 1$, $M =  A$ and $w(\cdot; N)$ as in (\ref{S72E1}). This shows that the measure $\mathbb{P}_N$ fits into the setup of Section \ref{Section3}.\\

We are interested in the scaling limit of a random lozenge tiling of the hexagon as $L \rightarrow \infty$ when $A = \lfloor a \cdot L \rfloor$, $B =   \lfloor b \cdot L \rfloor$, $C =  \lfloor c \cdot L \rfloor$ and $N =  \lfloor n \cdot L \rfloor$ where $a,b,c,n > 0$ and $n \leq \min (b,c)$. We begin by showing that the unduced measure on $(\ell^N, \ell^{N-1})$ satisfies Assumptions 1-5. For convenience we denote $a_1 = a/n$, $b_1 = b/n$ and $c_1 = c/n$.

Assumption 1 is immediate with $ \lM = a_1$ and Assumption 2 follows from Stirling's formula with 
$$V(s) = -\left(s + c_1 - 1 \right)\log \left(s+ c_1 - 1 \right) - \left(a_1 + b_1 - s\right)\log \left(a_1 + b_1- s\right)+ s \log s + \left(a_1 + 1- s\right)\log\left(a_1 + 1 - s\right).$$
One readily observes that 
$$ \frac{w(Nx;N)}{w(Nx- 1;N)} = \frac{(Nx + C - N) (A +N - Nx)}{Nx(A+B-Nx)},$$
which shows that Assumption 3 is satisfied with $\Phi_N^+(Nz) = (z + C/N - 1)(A/N + 1 - z)$ and $\Phi_N^-(Nz) = z (A/N + B/N - z)$, and in particular 
\begin{equation}\label{S72Phis}
\Phi^+(z) = (z + c_1 -1)(a_1 + 1 -z) \mbox{ and } \Phi^-(z) = z(a_1 + b_1 -z),
\end{equation}
which are clearly real analytic and positive on $(0, a/n + 1)$. Since $\Phi_N^-(0) = 0 = \Phi_N^+(A + N - 1) = 0$ we see that Assumption 4 holds as well.

We next show that Assumption 5 is also satisfied. If $\mu$ denotes the equilibrium measure as in Theorem \ref{LLN} then we have that 
\begin{equation}\label{S72QRmu}
\begin{split}
&R_{\mu}(z) = \Phi^-(z) \cdot e^{-  G_{\mu} (z) }+  \Phi^+(z) \cdot e^{  G_{\mu} (z) }, \mbox{ and } Q_{\mu}(z) = \Phi^-(z) \cdot e^{-  G_{\mu} (z) } -  \Phi^+(z) \cdot e^{  G_{\mu} (z) }.
\end{split}
\end{equation}
If we set $c_\mu = \int_{\mathbb{R}} x \mu(x) dx$, use that $G_\mu(z) = \frac{1}{z} + \frac{c_\mu}{z^2} + O(z^{-3})$ as $|z| \rightarrow \infty$ and (\ref{S72Phis}) we see that 
\begin{equation*}
\begin{split}
&R_\mu(z) = \Phi^-(z) \cdot e^{-  G_{\mu} (z) }+  \Phi^+(z) \cdot e^{  G_{\mu} (z) } =  z(a_1 + b_1 -z) \cdot \left[ 1 - \frac{1}{z} - \frac{c_\mu}{z^2} + \frac{1}{2z^2} + O(z^{-3}) \right] + \\
& + (z + c_1 -1)(a_1 + 1 -z)  \cdot \left[ 1 + \frac{1}{z} + \frac{c_\mu}{z^2} + \frac{1}{2z^2} + O(z^{-3})\right] = \\
& -2 z^2 + ( 2 + 2a_1 + b_1 - c_1) z - (a_1+ b_1 + c_1 - c_1(a_1 + 1)) + O(z^{-1}).
\end{split}
\end{equation*}
In \cite{BGG} the authors showed that $R_\mu(z)$ is a degree $2$ polynomial, which in view of the above implies that 
\begin{equation}\label{S72Rmu2}
\begin{split}
&R_{\mu}(z) = A_0z^2 + B_0z + C_0, \mbox{ with } A_0 = -2, B_0 = ( 2 + 2a_1 + b_1 - c_1) \mbox{ and } C_0 = - (a_1+ b_1 ) + a_1 c_1.
\end{split}
\end{equation}
Since $Q^2_\mu(z) = R^2_\mu(z) - 4\Phi^+(z) \Phi^-(z)$ we see that
\begin{equation}\label{S72Qmu2}
\begin{split}
&Q_{\mu}(z) = (b_1 + c_1) \cdot \sqrt{ (z - a^-) (z- a^+)} \mbox{ with } a^{\pm} = \frac{a_1b_1c_1 + a_1c_1^2 + a_1b_1 - a_1c_1 + b_1^2 + b_1 c_1 \pm 2\sqrt{D_1}}{(b_1+c_1)^2}, \\
& \mbox{ where } D_1 = a_1b_1c_1(b_1 + c_1 -1)(a_1 + b_1 + c_1).
\end{split}
\end{equation}
In particular, we see that Assumption 5 holds with $\alpha = a^-$ and $\beta = a^+$ as in (\ref{S72Qmu2}) and $H(z) = b_1 + c_1$. Overall, we see that Assumptions 1-5 in Section \ref{Section3} hold and so Theorem \ref{CLTfun} holds for these measures. Notice that (\ref{S72QRmu}) sets up a quadratic equation $e^{G_\mu(z)}$ from which we obtain
\begin{equation}\label{S72EGmu}
e^{G_\mu(z)} = \frac{R_\mu(z) + (b_1 + c_1) \sqrt{(z - a^-) (z- a^+)}}{2(z + c_1 -1)(a_1 + 1 -z) }.
\end{equation}

As mentioned before the above limit of random lozenge tilings has been considered before in \cite{CLP,BuGo,P1,P2}, where it has been shown that the object is asymptotically described by the pullback of the Gaussian free field (GFF) on $\mathbb{H}$ under a suitable map. Our goal in the remainder of this section is to explain how our result fits into that framework. We will follow the notation in \cite{BuGo} and explain the results there and afterwards we will connect our Theorem \ref{CLTfun} to them. For simplicity of the notation we will assume that $b + c = 1$ in the remainder.

A natural way to interpret a random lozenge tiling is through the so-called {\em height function}, which is an integer-valued function $H_L(y, \eta)$ and counts the number of horizontal lozenges  $\diam$ {\em above} the point $(Ly, L\eta)$, cf. Figure \ref{S7_1}. Notice that we have rescaled the coordinates now by $L$ so that $\eta \in [0,1]$. In \cite{BuGo} the authors established a certain Central limit theorem (CLT) for the random height functions $H_L(y, \eta)$, which involves a certain map to $\mathbb{H}$ that we describe now.

Given a compactly supported measure ${\bf m}$ on $\mathbb{R}$ we let $G_{\bf m}(z) = \int_{\mathbb{R}} \frac{d{\bf m}(x)}{z - x}$ denote its Stieltjes transform and define the map $\Omega_{\bf m}^{-1}:\mathbb{H} \rightarrow \mathbb{R} \times \mathbb{R}$  through $\Omega_{\bf m}^{-1}(z) = (y_{\bf m}(z), \eta_{\bf m}(z))$, where
\begin{equation}
\begin{split}
&y_{\bf m}(z) = z+ \frac{(z - \overline{z})( \exp (G_{\bf m}(\overline{z}) - 1) \exp( G_{\bf m}(z))}{\exp(G_{\bf m}(z)) - \exp ( G_{\bf m}(\overline{z}))} \\
&\eta_{\bf m}(z) = 1+ \frac{(z - \overline{z})( \exp (G_{\bf m}(\overline{z}) - 1)( \exp( G_{\bf m}(z)) - 1)}{\exp(G_{\bf m}(z)) - \exp ( G_{\bf m}(\overline{z}))}.
\end{split}
\end{equation}
We also let $D_{\bf m} \subset \mathbb{R}^2$ denote the image of this map. In \cite[Proposition 3.13]{BuGo} it was shown that $\Omega_{\bf m}^{-1} : \mathbb{H} \rightarrow D_{\bf m}$ is a diffeomorphism and we denote its inverse by $\Omega_{\bf m}:D_{\bf m} \rightarrow \mathbb{H}$.

We define the moments of the random height function as 
\begin{equation}\label{momentDisc}
M^L_{\eta, k} = \int_{\mathbb{R}} y^k \left[ H_L(y,\eta) - \mathbb{E} \left[ H_L(y,\eta)\right] \right]dy, \hspace{5mm} 0 < \eta \leq 1, k \in \mathbb{N}.
\end{equation}
We also define the moments of the pullback of the GFF under the map $\Omega_{\bf m}$ through 
\begin{equation}\label{momentCont}
\mathcal{M}^{\bf m}_{\eta, k} = \int_{z \in \mathbb{H}: \eta_{{\bf m }}(z)  = \eta} \hspace{-5mm} y^k_{\bf m}(z) \mathfrak{G}(z) \frac{dy_{\bf m}(z)}{dz} dz, \hspace{5mm} 0 < \eta \leq 1, k \in \mathbb{N}.
\end{equation}
In the above equation $\mathfrak{G}$ stands for the Gaussian free field on $\mathbb{H}$ -- see \cite[Section 3.3]{BuGo} and the references in there for a definition of this object and the random variables $\mathcal{M}^{\bf m}_{\eta, k} $.

With the above notation \cite[Theorem 3.14]{BuGo} implies the following statement.
\begin{theorem}\label{TBuGo}
Suppose that $a,b,c > 0$ and $b + c = 1$. Let $H_L$ denote the random height function of a uniform random lozenge tiling of the hexagon with sides $A = \lfloor a \cdot L \rfloor, B = \lfloor b \cdot L \rfloor$ and $C = \lfloor c \cdot L \rfloor$. Then as $L \rightarrow \infty$ the sequence of random height functions
$$ \sqrt{\pi} \left( H_L(y,\eta) - \mathbb{E} \left[ H_L(y,\eta) \right] \right)$$
converges to the pullback of the Gaussian free field on $\mathbb{H}$ with respect to the map $\Omega_{\bf m}$, where the measure ${\bf m}$ has density ${\bf 1}\{x \in [0, b]\} + {\bf 1}\{x \in [a+b, a+b+c]\}$ in the following sense. The collection of random variables $\{ \sqrt \pi M^L_{\eta, k}\}_{\eta > 0, k \in \mathbb{Z}_{\geq 0}}$ in (\ref{momentDisc}) converges jointly in the sense of moments to  $\{ \mathcal{M}^L_{\eta, k}\}_{\eta > 0, k \in \mathbb{Z}_{\geq 0}}$ in (\ref{momentCont}).
\end{theorem}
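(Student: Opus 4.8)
\textbf{Proof proposal for Theorem \ref{TBuGo}.}

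The plan is to deduce the statement from our Theorem \ref{CLTfun} (applied to the two-level measure $\mathbb{P}_N$ attached to the hexagon, which we have already verified satisfies Assumptions 1--5) by passing from the joint Gaussianity of the moments $\mathcal L^{t/b/m}_{f}$ of the two adjacent slices at levels $N$ and $N-1$ to the full collection $\{M^L_{\eta,k}\}$, and then by identifying the limiting covariance with the pullback of the GFF. First I would set up the correspondence between the lozenge-tiling coordinates and the notation of Section \ref{Section7.1}: for a vertical line through $(0,\eta L)$ the horizontal lozenges are located at the $\ell^i$'s, and for $\eta$ in the ``trapezoidal'' range $\min(b,c)\geq \eta$ the number of particles is $N_\eta=\lfloor \eta L\rfloor$ while for larger $\eta$ the particle count decreases; in all cases the conditional law of one slice given its neighbour is of the form analysed in Section \ref{Section3}. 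The height function $H_L(Ly,L\eta)$ is, up to an additive deterministic shift, the counting function of the particles at that level, so $M^L_{\eta,k}$ is a linear statistic $\sum_i (\ell^{N_\eta}_i/L)^{k+1}/(k+1)$ minus its mean, i.e. exactly a $\mathcal L^t_{f}$-type variable with $f(x)=x^{k+1}/(k+1)$ (rescaling $N$ versus $L$ contributes only an overall constant we track carefully). The contour-integral representation $\mathcal L^{t/b/m}_f=\frac{1}{2\pi i}\oint_\Gamma f(z) Y^{t/b/m}_N(z)\,dz$ from the proof of Theorem \ref{CLTfun} is the right tool: it reduces everything to the convergence of the field $(Y^t_N,Y^b_N,\Delta Y_N)$ from Theorem \ref{TGField}.

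The second step is to upgrade from ``two adjacent levels'' to ``all levels $\eta\in(0,1]$''. Two adjacent levels suffice to compute the covariance between $H_L(\cdot,\eta_1)$ and $H_L(\cdot,\eta_2)$ only when $|\eta_1 L-\eta_2 L|\leq 1$; for general pairs one needs the Gibbs property. Here I would invoke the multilevel measure $\mathbb{P}^{\theta,N}_N$ of Proposition \ref{PropExtension} together with the fact that for $\theta=1$ the conditional law of all intermediate levels given the top one is \emph{uniform} on Gelfand--Tsetlin patterns. One standard way (following \cite{BuGo} and the references there) is: the covariance of two linear statistics at levels $N_{\eta_1}>N_{\eta_2}$ is obtained by first conditioning on level $N_{\eta_1}$, using that the lower slice is a deterministic-plus-fluctuating functional of the upper via a further application of the same single-level CLT machinery; inductively this expresses $\mathrm{Cov}(H_L(\cdot,\eta_1),H_L(\cdot,\eta_2))$ through iterated contour integrals of $G_\mu$ and its relatives. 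Concretely, Theorem \ref{TGField} already gives us the three covariance kernels $\mathcal C_{\theta,\mu}$, $\Delta\mathcal C_{\theta,\mu}$ and the vanishing of the cross terms; writing $H_L(\cdot,\eta)-H_L(\cdot,\eta')$ as a telescoping sum over the $O(L|\eta-\eta'|)$ intermediate slices and using that consecutive differences have covariance $N^{-1}\Delta\mathcal C_{\theta,\mu}$ produces, in the limit, a double integral of $\Delta\mathcal C_{\theta,\mu}$ over $[\eta',\eta]$, while the diagonal term $\mathrm{Var}(H_L(\cdot,\eta))$ comes from $\mathcal C_{\theta,\mu}$. The Gaussianity of the whole family then follows, as in the proof of Theorem \ref{CLTfun}, from the vanishing of all third- and higher-order joint cumulants of $Y^{t/b/m}_N(z)$ uniformly on $\Gamma$, which Theorem \ref{TGField} supplies.

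The last step, and the one I expect to be the main obstacle, is the \emph{change of variables}: showing that the limiting covariance produced by the above iterated-integral bookkeeping coincides with $\int y^k_{\bf m} \mathfrak G \,\frac{dy_{\bf m}}{dz}\,dz$, i.e. with the pullback of the GFF under $\Omega_{\bf m}$. This is where the explicit formula \eqref{S72EGmu} for $e^{G_\mu(z)}$ and the formula
$$e^{G_\mu(z)} = \frac{R_\mu(z) - Q_\mu(z)}{2\Phi^+(z)}$$
become essential. One has to check that the frozen boundary and the complex structure encoded by $\Omega_{\bf m}^{-1}(z)=(y_{\bf m}(z),\eta_{\bf m}(z))$ agree with the ones dictated by the equilibrium measures $\mu^{(\eta)}$ of the successive slices; concretely, the map $z\mapsto(y_{\bf m}(z),\eta_{\bf m}(z))$ must be shown to send the universal covariance $\Delta\mathcal C_{\theta,\mu}$ and $\mathcal C_{\theta,\mu}$ at $\theta=1$, integrated over levels, onto the Green's function $-\frac{1}{2\pi}\log\left|\frac{z-w}{z-\bar w}\right|$ of the Laplacian on $\mathbb{H}$. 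The cleanest route is probably to match the generating functions of moments on both sides: on our side, expand $\oint f(z)Y^t_N(z)dz$ in powers and use the Nekrasov-equation output \eqref{expectationDiff}--type identities to get closed formulas for $\lim \mathrm{Cov}(M^L_{\eta_1,k_1},M^L_{\eta_2,k_2})$; on the GFF side, use the formulas \eqref{momentCont} and the known diffeomorphism property \cite[Proposition 3.13]{BuGo}. Verifying that $\Phi^-(z)=z(a_1+b_1-z)$, $\Phi^+(z)=(z+c_1-1)(a_1+1-z)$, and the quadratic $R_\mu$ of \eqref{S72Rmu2} reproduce exactly the Stieltjes transform of the measure ${\bf m}$ with density ${\bf 1}_{[0,b]}+{\bf 1}_{[a+b,a+b+c]}$ is a finite but delicate algebraic computation; once it is done, the identification of the kernels is forced by analyticity, since both sides are determined by $G_\mu$. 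I would carry this out by first treating the case $\eta=1$ (a single slice, where only $\mathcal C_{\theta,\mu}$ and the single-level results of \cite{BGG} enter) and then bootstrapping to general $\eta$ via the telescoping argument above.
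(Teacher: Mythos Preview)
Theorem~\ref{TBuGo} is not proved in this paper at all: it is quoted from \cite[Theorem~3.14]{BuGo} as background, and the paper's own contribution in this subsection is the subsequent Theorem~\ref{HexDerCLT}, which compares the two-level result of Theorem~\ref{CLTfun} to the derivative of the GFF field already established in \cite{BuGo}. So the premise of your proposal---deriving Theorem~\ref{TBuGo} from Theorem~\ref{CLTfun}---is not what the paper does.

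More importantly, your telescoping route has a genuine gap. Theorem~\ref{TGField} controls only the joint law of $(Y^t_N, Y^b_N, \Delta Y_N)$ for a \emph{single} pair of adjacent levels $(N,N-1)$. To compute $\mathrm{Cov}(M^L_{\eta_1,k_1}, M^L_{\eta_2,k_2})$ for macroscopically separated $\eta_1 \neq \eta_2$ via telescoping, you would write each height function as a sum of $O(L)$ consecutive differences and then need the full $O(L)\times O(L)$ covariance matrix of those differences. The two-level theorem gives you only the diagonal entries (variance of a single adjacent difference) and the asymptotic independence of one difference from its own endpoint slices; it says nothing about $\mathrm{Cov}(\Delta Y_{N}, \Delta Y_{N'})$ for $|N-N'|\sim L$, nor about how differences at distant levels correlate with each other. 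The Gibbs/uniform-GT conditioning you invoke is exactly what \cite{BuGo} exploits, but making that argument rigorous requires either a genuine multi-level CLT (which the paper explicitly flags as future work \cite{DK2}) or the Schur-process/determinantal machinery of \cite{BuGo}---neither of which is contained in the two-level Nekrasov equations developed here. The algebraic matching you outline in your last paragraph (identifying $e^{G_\mu}$ with the complex structure of $\Omega_{\bf m}$) is indeed carried out in the paper, but only at the level of a \emph{single} slice in the proof of Theorem~\ref{HexDerCLT}, Step~3, and that computation does not by itself yield the multi-$\eta$ covariance.
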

\begin{remark}
We mention that \cite[Theorem 3.14]{BuGo} is formulated for much more general domains than just the hexagon. In particular, to specialize the notation there to our setting one needs to replace $N$ with $L$ in the theorem, and set $\lambda(N)$ to equal the partition with $C$ parts equal to $A$ and all other parts equal to $0$. In addition, we mention that the formulation of  \cite[Theorem 3.14]{BuGo} goes through identifying the pushforward of $H_L$ under the map $\Omega^{-1}_{\bf m}$ with the free field $ \mathfrak{G}(z)$. Instead, in the above theorem we followed the notation from Section \cite[Section 4.5]{BorGor} and formulated the result as identifying $H_L$ with the pullback of the free field $\mathfrak{G}(z)$ under the map $\Omega_{\bf m}$. Of course, the two are equivalent.
\end{remark}

In the remainder of the section we explain how our Theorem \ref{CLTfun} relates to Theorem \ref{TBuGo} and we start by giving a different descritpion of $\Omega_{\bf m}$ and $D_{\bf m}$ above. Since ${\bf m}(x) = {\bf 1}\{x \in [0, b]\} + {\bf 1}\{x \in [a+b, a+b+c]\}$ we know that $\exp (- G_{\bf m}(z)) = \frac{(z-b)(z - a - b - c)}{z (z -a - b)}$. We then define the map
\begin{equation}\label{S72DefF}
\mathcal{F}_{\bf m; \eta} (z) := z + \frac{1 - \eta}{ \exp (- G_{\bf m}(z)) - 1} = z + \frac{(1- \eta)z (z -a - b)}{(z-b)(z - a - b - c) - z (z -a - b)}.
\end{equation}
It follows from \cite[Proposition 3.13]{BuGo} that the equation $\mathcal{F}_{\bf m; \eta} (z) = y$ has either $0$ or $1$ root in $\mathbb{H}$ and moreover there is a root in $\mathbb{H}$ if and only if $(y, \eta) \in D_{\bf m}$, and then $\Omega_{\bf m}(y ,\eta)$ is this root. The equation $\mathcal{F}_{\bf m; \eta} (z) = y$ is equivalent to the quadratic equation
\begin{equation}
\eta z^2 + (ac - a\eta + c\eta - \eta - y)z + (1-c)y (a+1) = 0,
\end{equation}
where we used that $b + c = 1$. In particular, we see that 
$$D_{\bf m} = \{ (y, \eta) \in \mathbb{R}^2: \tilde{A} \eta^2 + \tilde{B} y \eta + \tilde{C} y^2 + \tilde{D} \eta + \tilde{E} y + \tilde{F} < 0 \},$$
where 
\begin{equation}
\begin{split}
&\tilde{A} = (1 + a - c)^2, \tilde{B} = 4ac - 2a + 2c - 2, \tilde{C} = 1, \tilde{D} = -2ac(1 + a -c), \tilde{E} = -2ac \mbox{ and } \tilde{F} = a^2 c^2.
\end{split}
\end{equation}
In particular, $D_{\bf m}$ the region enclosed by an ellipse: one can actually show that this ellipse is inscribed in the rescaled hexagon $a \times b \times c$ and $D_{\bf m}$ is typically referred to as {\em the liquid region}, cf. \cite{P2}. If one looks at the vertical slice through $(0,\eta)$ for $\eta \in (0,1)$ then it will intersect the ellipse at two points given by
\begin{equation}\label{endpoints}
a^{\pm}(\eta) = -2 ac \eta + ac + a \eta - c \eta + \eta \pm 2 \sqrt{ac\eta (1- \eta)(1- c)(1 +a)}.
\end{equation}

\begin{definition}\label{DefK}
In the notation of \cite[Section 4.5]{BorGor} we let $\mathcal{K}(y, \eta)$ denote the pullback $\mathfrak{G} \circ \Omega_{\bf m}$ of the GFF $\mathfrak{G}$ on $\mathbb{H}$ under the map $\Omega_{\bf m}$. $\mathcal{K}(y, \eta)$ is a generalized Gaussian field on $D_{\bf m}$ with covariance 
$$\mathbb{E} \left[ \mathcal{K}(\eta_1, y_1) \mathcal{K}(\eta_2, y_2) \right] = - \frac{1}{2\pi} \log \left| \frac{\Omega_{\bf m}(\eta_1, y_1) -\Omega_{\bf m}(\eta_2, y_2)  }{\Omega_{\bf m}(\eta_1, y_1) -\overline{\Omega}_{\bf m}(\eta_2, y_2)} \right|.$$
We can also extend $\mathcal{K}$ to the whole of $\mathbb{R}_+^2$ by setting it to $0$ outside $D_{\bf m}$.
\end{definition}
With respect to the field $\mathcal{K}$ the variables $\mathcal{M}^{\bf m}_{\eta, k}$ in (\ref{momentCont}) can be re-expressed as 
\begin{equation}\label{momentCont2}
\mathcal{M}^{\bf m}_{\eta, k} = \int_{ a^-(\eta)}^{a^+(\eta)} y^k \mathcal{K}(y, \eta) dy, \hspace{5mm} 0 < \eta \leq 1, k \in \mathbb{N}.
\end{equation}
In this sense Theorem \ref{TBuGo} identifies the macroscopically separated $1$-d slices of the height function $ H_L$ with the $1$-d slices of $\mathcal{K}$. On the other hand, in Theorem \ref{CLTfun} we consider observables formed by two {\em adjacent} slices of the model. Let us introduce a height function formulation of these observables.
\begin{definition}\label{DefW}
For $(y, \eta) \in \mathbb{R}_+ \times [L^{-1},1]$ we define $W_L(y, \eta) = L^{1/2} \cdot \left[ H_L(y, \eta) - H_L(y, \eta - L^{-1}) \right]$. 
\end{definition}
Theorem \ref{CLTfun} then leads to the weak convergence of $W_L$ to a ``renormalized derivative" of the random field $\mathcal{K}$ in the following sense.
\begin{theorem}\label{HexDerCLT} Assume the same notation as in Theorem \ref{TBuGo} and fix $\eta \in (0,\min(b,c))$ and $h \in \mathbb{N}$. Then for any integers $k_1, \dots, k_h \geq 0$ the vector 
\begin{equation}\label{PLD1}
\left( \int_{\mathbb{R}_+} y^{k_i} \left( W_L(y, \eta) - \mathbb{E} \left[ W_L(y, \eta) \right] \right) dy \right)_{i = 1}^h 
\end{equation}
as $L\rightarrow \infty$ converges in distribution to a Gaussian vector, which is the same as the weak limit of 
\begin{equation}\label{PLD2}
\lim_{ \delta \rightarrow 0^+} \delta^{-1/2} \left( \int_{\mathbb{R}_+} y^{k_i} \left( \mathcal{K}(y, \eta + \delta) - \mathcal{K} (y, \eta) \right) dy \right)_{i = 1}^h.
\end{equation}
In addition, 
\begin{equation}\label{PLD3}
\left( \int_{\mathbb{R}_+} y^{k_i} \left( H_L(y, \eta) - \mathbb{E} \left[ H_L(y, \eta) \right] \right) dy \right)_{i = 1}^h 
\end{equation}
and (\ref{PLD1}) jointly converge (in distribution) as $L \rightarrow \infty$, while the limit vectors are independent.
\end{theorem}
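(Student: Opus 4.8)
The plan is to translate Theorem \ref{HexDerCLT} into the language of the linear statistics $\mathcal L^{t/b/m}_{f}$ appearing in Theorem \ref{CLTfun}, applied to the two-level measure on $(\ell^N, \ell^{N-1})$ induced by the tiling via Proposition \ref{PropExtension}. Recall that we verified above that this measure (with $\theta = 1$, $M = A$, and weight $w(\cdot;N)$ from (\ref{S72E1})) satisfies Assumptions 1--5, so Theorem \ref{CLTfun} is available. The first step is the standard height-function/particle dictionary: fixing $\eta = i/L$ (with $i = N$ when $\eta \le \min(b,c)$, which holds since we took $\eta < \min(b,c)$), the horizontal lozenges on the slice through $(0,\eta)$ have $y$-coordinates $\ell^N_1 > \cdots > \ell^N_N$, and $H_L(y,\eta) = \#\{j : \ell^N_j/L \ge y\}$, so that $\int_{\mathbb R_+} y^k (H_L(y,\eta) - \mathbb E[\cdots])\,dy$ is, up to the explicit constant $(k+1)^{-1}$ and a power of $L$, exactly $\mathcal L^t_{f_k}$ with $f_k(x) = x^{k+1}$ for the rescaling by $L$. (One must be slightly careful: Theorem \ref{CLTfun} rescales by $N$ while here we rescale by $L$, and $N = \lfloor nL\rfloor$; this only introduces the harmless factor $n$, or one applies Remark \ref{RemPrefactor} directly.) Likewise the slice through $(0,\eta - L^{-1})$ carries the particles $\ell^{N-1}$, so $H_L(y,\eta-L^{-1})$ corresponds to $\mathcal L^b_{f_k}$, and $W_L(y,\eta) = L^{1/2}[H_L(y,\eta) - H_L(y,\eta-L^{-1})]$ corresponds to $\mathcal L^m_{f_k}$ after the monomial-to-$\mathcal L$ identification.

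With this dictionary in hand, the convergence in distribution of (\ref{PLD1}), and its joint convergence with (\ref{PLD3}) together with the asserted independence, follow immediately from Theorem \ref{CLTfun}: the covariance $Cov(\xi^t_i,\xi^m_j) = Cov(\xi^b_i,\xi^m_j) = 0$ from (\ref{eq:cov}) is precisely the independence of the limits of (\ref{PLD3}) and (\ref{PLD1}), and joint Gaussianity/joint convergence in the sense of moments is part of the conclusion of Theorem \ref{CLTfun}. So the only genuinely new content of Theorem \ref{HexDerCLT} is the identification of the limit of (\ref{PLD1}) with the limit of (\ref{PLD2}), i.e.\ showing that the Gaussian vector with covariance built from $\Delta\mathcal C_{\theta,\mu}$ (here $\theta = 1$) coincides with the $\delta \to 0^+$, $\delta^{-1/2}$-renormalized increment of the pulled-back GFF $\mathcal K$. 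To do this I would: (i) write the right-hand side of (\ref{PLD2}) using the covariance of $\mathcal K$ from Definition \ref{DefK}, expanding $\log|\cdots|$ for nearby slices $\eta$ and $\eta + \delta$ and extracting the $\delta$-linear term — the diagonal $\eta_1 = \eta_2$ piece of the log-kernel is singular, but after taking the difference in one argument and dividing by $\delta^{1/2}$ one checks the limit exists and is finite; (ii) express everything through the map $\Omega_{\bf m}$, equivalently through $\mathcal F_{{\bf m};\eta}$ in (\ref{S72DefF}), whose $\eta$-derivative is elementary; (iii) change variables from $\mathbb H$ to the $(y,\eta)$-slice using $dy_{\bf m}(z)/dz$, as in (\ref{momentCont}).

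The main obstacle, and the step I expect to be the bulk of the work, is matching this $\partial_\eta$-of-GFF covariance with the contour-integral formula for $\Delta\mathcal C_{1,\mu}$ from (\ref{eq:var}), namely $\Delta\mathcal C_{1,\mu}(z_1,z_2) = \frac{1}{2\pi i}\int_\Gamma \frac{dz}{e^{G_\mu(z)}-1}\cdot\big[-\frac{1}{(z-z_1)^2(z-z_2)^2}\big]$. The plan here is to use the explicit formula (\ref{S72EGmu}) for $e^{G_\mu(z)}$ in the hexagon, together with the quadratic relations (\ref{S72Rmu2})--(\ref{S72Qmu2}) and the explicit endpoints (\ref{endpoints}), to evaluate both sides and see they agree; alternatively — and more conceptually — I would relate the kernel $\frac{1}{e^{G_\mu(z)}-1}$ to $\partial_\eta$ acting on the single-slice GFF kernel via the one-parameter family of equilibrium measures $\mu_\eta$ for the slices $\eta \in (0,\min(b,c))$, noting that $G_{\mu_\eta}$ and the map $\Omega_{\bf m}$ are linked through $\mathcal F_{{\bf m};\eta}$, so that differentiating the $\log$-kernel in $\eta$ reproduces exactly the $\frac{1}{e^{G_{\mu_\eta}(z)}-1}$ weight. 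I would present the computation via this second route, reducing the claim to a residue identity: the $\delta^{-1/2}$-renormalized increment of a log-correlated field restricted to a moving curve has, in the $\delta \to 0$ limit, covariance given by the residue of $\partial_\eta$ of the Green's function, which after the change of variables to the spectral plane is precisely the stated contour integral. Finally I would record that, since all limits are Gaussian and convergence is in the sense of moments, ``converges in distribution to the same limit'' is unambiguous, and the joint statement with (\ref{PLD3}) plus independence is read off directly from the vanishing cross-covariances in Theorem \ref{CLTfun}.
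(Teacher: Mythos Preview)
Your overall strategy matches the paper's proof: translate to $\mathcal L^{t/b/m}_{f_k}$ via the height-function/particle dictionary, invoke Theorem \ref{CLTfun} for joint Gaussianity and independence, and then match the covariance coming from $\Delta\mathcal C_{1,\mu}$ with the $\delta\to 0^+$ limit of the GFF increment. The paper executes exactly this plan, including the scaling bookkeeping you flag ($N=\lfloor \eta L\rfloor$, producing the factor $\eta^{k_i+k_j+1}$).

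The one place you diverge is in how to compute the covariance of (\ref{PLD2}). You propose starting from the log-kernel of $\mathcal K$ in Definition \ref{DefK} and extracting the $\delta$-linear piece near the diagonal, or alternatively a ``conceptual'' $\partial_\eta$-of-Green's-function argument. The paper instead uses the ready-made double contour integral (\ref{NTL1}) for $Cov(\mathcal M^{\bf m}_{r,k_r},\mathcal M^{\bf m}_{t,k_t})$ from \cite{BuGo}: since $\eta$ enters (\ref{NTL1}) only through the factor $(1-\eta)$ inside $\mathcal F_{{\bf m};\eta}$, the $\delta^{-1}$-scaled second difference becomes a single $\partial_\eta$, and one residue in the double integral gives the clean formula (\ref{NTL3}). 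This bypasses any delicate diagonal analysis of the log-kernel. Your route would work in principle, but the log-kernel expansion near the diagonal is where the singular behaviour lives and would need genuine care; the paper's route is both shorter and avoids that issue entirely. The final matching step --- change variables by $z=\mathcal F_{{\bf m};\eta}(w)$ in (\ref{NTL3}), rationalize the denominator in the $\Delta\mathcal C_{1,\mu}$ integral, and compare after $w=\eta z$ --- is a concrete residue/algebra check rather than the general $\partial_\eta$ principle you sketch, and uses $b+c=1$ and the explicit endpoints $a^\pm(\eta)$ to close.
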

\begin{remark}\label{S72R1}
Note that (\ref{PLD2}) is defined as a weak limit and may not exist in the probability space of $\mathcal{K}$.
\end{remark}
\begin{remark}\label{S72R2}
An analogue of Theorem \ref{HexDerCLT} has been established in \cite[Theorem 3.13]{GZ} for a special continuous $\beta$-corners process of the form (\ref{eq:gen_cont_beta}) called the $\beta$-Jacobi corners process. We remark that in \cite{GZ} the authors were successful in identifying the {\em joint distribution} of $1$-d slices of the height function $W_L$ with the ``renormalized derivative" of a certain Gaussian field $\mathcal{K}$ {\em on several levels}. Theorem \ref{HexDerCLT} is weaker since we can only access {\em single} $1$-d slices; however, we remark that it is the first of its kind for discrete corners processes.
\end{remark}
\begin{proof}
We split the proof of the theorem into several steps for clarity. \\

{\raggedleft \bf Step 1.} In this step we compute the covariance of the vectors in (\ref{PLD2}). From \cite[Section 9.1]{BuGo} we have that $\mathcal{M}^{\bf m}_{\eta, k}$ as in (\ref{momentCont2}) are jointly zero-centered Gaussian random variables and for $r \leq t$ and $k_r, k_t \in \mathbb{Z}_{\geq 0}$ we have
\begin{equation}\label{NTL1}
\begin{split}
Cov( \mathcal{M}^{\bf m}_{r, k_r},  \mathcal{M}^{\bf m}_{t, k_t}) =& \frac{1}{(2 \pi i)^2 (k_r + 1)(k_t + 1)} \oint_{|z| = 2C} \oint_{|w| = C} \frac{dz dw}{(z - w)^2} \times \\
& \left ( z + \frac{1 - r}{\exp (-G_{\bf m}(z)) - 1}\right)^{k_r + 1} \left (w + \frac{1 - t}{\exp (-G_{\bf m}(w)) - 1}\right)^{k_t + 1},
\end{split}
\end{equation}
where $C$ is a large enough constant so that the circle of radius $C$ contains all the singularities of the integrand. In particular, using (\ref{S72DefF}) we see that $C > a + 1$ will suffice in our case.

From the above it follows that 
$$\delta^{-1/2} \left( \int_{\mathbb{R}_+} y^{k_i} \left( \mathcal{K}(y, \eta + \delta) - \mathcal{K} (y, \eta) \right) dy \right)_{i = 1}^h = \delta^{-1/2} \left(\mathcal{M}^{\bf m}_{\eta + \delta, k_i} - \mathcal{M}^{\bf m}_{\eta, k_i} \right)_{i = 1}^h$$
is a centered Gaussian vector and the covariance is given by
\begin{equation}\label{NTL2}
\begin{split}
&Cov \left( \delta^{-1/2} \left(\mathcal{M}^{\bf m}_{\eta + \delta, k_i} - \mathcal{M}^{\bf m}_{\eta, k_i} \right) ,\delta^{-1/2} \left(\mathcal{M}^{\bf m}_{\eta + \delta, k_j} - \mathcal{M}^{\bf m}_{\eta, k_j} \right)  \right) =  \frac{I_1(\delta) + I_2(\delta)}{(2 \pi i)^2 (k_i + 1)(k_j + 1)} \mbox{, where } \\
&I_1(\delta) = \delta^{-1} \oint_{|z| = 2C} \oint_{|w| = C} \frac{dz dw}{(z - w)^2} \left ( z + \frac{1 - \eta - \delta}{\exp (-G_{\bf m}(z)) - 1}\right)^{k_i + 1} \left (w + \frac{1 - \eta - \delta }{\exp (-G_{\bf m}(w)) - 1}\right)^{k_j + 1} -  \\
& \delta^{-1} \oint_{|z| = 2C} \oint_{|w| = C} \frac{dz dw}{(z - w)^2} \left ( z + \frac{1 - \eta }{\exp (-G_{\bf m}(z)) - 1}\right)^{k_i + 1} \left (w + \frac{1 - \eta - \delta }{\exp (-G_{\bf m}(w)) - 1}\right)^{k_j + 1};\\
&I_2(\delta) = \delta^{-1} \oint_{|z| = 2C} \oint_{|w| = C} \frac{dz dw}{(z - w)^2} \left ( z + \frac{1 - \eta }{\exp (-G_{\bf m}(z)) - 1}\right)^{k_j + 1} \left (w + \frac{1 - \eta  }{\exp (-G_{\bf m}(w)) - 1}\right)^{k_i + 1} -  \\
& \delta^{-1} \oint_{|z| = 2C} \oint_{|w| = C} \frac{dz dw}{(z - w)^2} \left ( z + \frac{1 - \eta  }{\exp (-G_{\bf m}(z)) - 1}\right)^{k_j + 1} \left (w + \frac{1 - \eta - \delta }{\exp (-G_{\bf m}(w)) - 1}\right)^{k_i + 1}.
\end{split}
\end{equation}
In particular, we see that 
\begin{equation*}
\begin{split}
\lim_{\delta \rightarrow 0^+} I_1(\delta) =  & \oint_{|z| = 2C} \oint_{|w| = C} \left ( z + \frac{1 - \eta }{\exp (-G_{\bf m}(z)) - 1}\right)^{k_i } \left (w + \frac{1 - \eta  }{\exp (-G_{\bf m}(w)) - 1}\right)^{k_j+ 1} \times \\
&  \frac{-(k_i + 1)dz dw}{(z - w)^2(\exp (-G_{\bf m}(z)) - 1)}\\
\lim_{\delta \rightarrow 0^+} I_2(\delta) =  & \oint_{|z| = 2C} \oint_{|w| = C} \left ( z + \frac{1 - \eta }{\exp (-G_{\bf m}(z)) - 1}\right)^{k_j + 1 } \left (w + \frac{1 - \eta  }{\exp (-G_{\bf m}(w)) - 1}\right)^{k_i} \times \\
&  \frac{(k_i + 1)dz dw}{(z - w)^2(\exp (-G_{\bf m}(z)) - 1)}.
\end{split}
\end{equation*}
By the Residue theorem we conclude that
\begin{equation}\label{NTL3}
\begin{split}
&\lim_{\delta \rightarrow 0^+}Cov \left( \delta^{-1/2} \left(\mathcal{M}^{\bf m}_{\eta + \delta, k_i} - \mathcal{M}^{\bf m}_{\eta, k_i} \right) ,\delta^{-1/2} \left(\mathcal{M}^{\bf m}_{\eta + \delta, k_j} - \mathcal{M}^{\bf m}_{\eta, k_j} \right)  \right) = \\
&\frac{1}{2\pi i}  \oint_{|w| = C}[\mathcal{F}_{\bf m; \eta} (w)]^{k_i + k_j} \cdot  \frac{ \partial_w \mathcal{F}_{\bf m; \eta} (w)dw}{(\exp (-G_{\bf m}(w)) - 1)},
\end{split}
\end{equation}
where we recall that $\mathcal{F}_{\bf m; \eta} (w)$ was defined in (\ref{S72DefF}). Since the vectors $ \delta^{-1/2} \left(\mathcal{M}^{\bf m}_{\eta + \delta, k_i} - \mathcal{M}^{\bf m}_{\eta, k_i} \right)_{i = 1}^h$ are zero-centered and Gaussian and their covariances converge we conclude that the weak limit in (\ref{PLD2}) exists and is a zero-centered Gaussian vector with covariance given in (\ref{NTL3}).\\

{\bf \raggedleft Step 2.} In this step we prove the joint convergence of (\ref{PLD1}) and (\ref{PLD3}) by appealing to Theorem \ref{CLTfun}. Denote $N = \lfloor \eta L \rfloor$ and observe that
$$\int_{\mathbb{R}_+} y^{k}  H_L(y, \eta) dy = \sum_{i = 1}^N \int_{\ell_{i}}^{\ell_{i + 1}}\frac{ (N-i + 1) y^k}{L^{k+1}} dy = \sum_{i = 1}^N \frac{(N - i + 1)(\ell^{k+1}_{i} - \ell^{k+1}_{i+1}) }{L^{k+1}(k+1)}= \sum_{i = 1}^N \frac{\ell^{k+1}_{i}}{L^{k+1}(k+1)},$$
where $\ell_1 > \ell_2 > \cdots > \ell_N$ are the locations of the horizontal lozenges on the vertical slice through $(0,N)$ and $\ell_{N+1} = 0$. If we furthermore denote by $m_i$ for $i = 1, \dots, N-1$ the locations of the horizontal lozenges on the vertical line through $(0,N-1)$ then
$$\int_{\mathbb{R}_+} y^{k}  W_L(y, \eta) = L^{1/2} \cdot \left[ \sum_{ i = 1}^N\frac{\ell^{k+1}_{i}}{k+1} -\sum_{ i = 1}^{N-1}\frac{m^{k+1}_{i}}{k+1}  \right].$$
In particular, if we denote $f_i(x) = \frac{x^{k_i+1}}{k_i + 1}$ for $i = 1, \dots, h$ then we see that the random variables in (\ref{PLD1}) and (\ref{PLD3}) have the joint distribution of 
\begin{equation*}
\left( (N/L)^{k_i + 1/2}\mathcal{L}^m_{f_i} \right)_{i = 1}^h \mbox{ and }\left( (N/L)^{k_i + 1}\mathcal{L}^t_{f_i} \right)_{i = 1}^h,
\end{equation*}
where $\mathcal{L}^m_{f}$ and $\mathcal{L}^t_{f}$ are as in the statement of Theorem \ref{CLTfun}. It follows from Theorem \ref{CLTfun} that the above vectors converge jointly and in the sense of moments to a $2h$-dimensional centered Gaussian vector $\xi = (\xi_1^m, \dots, \xi_h^m, \xi_1^t, \dots, \xi_h^t)$ such that $Cov(\xi_i^m, \xi_j^t) = 0$ for all $1 \leq i, j \leq h$ and 
\begin{equation}\label{S72Cov1}
Cov(\xi_i^m, \xi_j^m) =  \frac{\eta^{k_i + k_j + 1}}{(2\pi i )^3} \oint_{\Gamma_1} \oint_{\Gamma_1}\int_{\Gamma}  \frac{ f_i(s) f_j(t)  }{e^{ G_\mu(z)} - 1} \cdot \left[ - \frac{1}{(z-s)^2(z-t)^2}\right]dzdsdt,
\end{equation}
where $e^{G_\mu(z)}$ is as in (\ref{S72EGmu}), $\Gamma_1$ and $\Gamma$ are positively oriented contours such that $\Gamma_1$ contains $\Gamma$ in its interior and $\Gamma$ encloses the segment $[0, a+ 1]$. Using the Residue theorem, the formula for $f_i, f_j$ and (\ref{S72EGmu}) we can rewrite (\ref{S72Cov1}) as 
\begin{equation}\label{S72Cov2}
Cov(\xi_i^m, \xi_j^m) = \frac{1}{2\pi i }\int_{\Gamma}  \frac{\eta^{k_i + k_j + 1} \cdot 2(z + c_1 -1)(a_1 + 1 -z) z^{k_i + k_j}dz }{(2 - b_1 - c_1)z + a_1c_1 - a_1+ b_1 + 2c_1 -2  - (b_1 + c_1) \sqrt{(z - a^-) (z- a^+)} },
\end{equation}
where $a^{\pm}$ are as in (\ref{S72Qmu2}). \\

{\bf \raggedleft Step 3.} In this final step we show that the covariance  (\ref{S72Cov2}) agrees with the one in (\ref{NTL3}). We first observe by (\ref{S72DefF}) that $\mathcal{F}_{\bf m; \eta} (w) $ is invertible for $|w|$ large enough and we have
\begin{equation}\label{S72FInv}
\mathcal{F}^{-1}_{\bf m; \eta} (z) := \frac{z + \eta(1+a - c) - ac + \sqrt{(z - a^-(\eta))(z - a^+(\eta))}}{-2\eta },
\end{equation}
where $a^{\pm}(\eta)$ are as in (\ref{endpoints}). Using the above and (\ref{S72DefF}) we can do a change of variables $z = \mathcal{F}_{\bf m; \eta} (w)$ in the right side of (\ref{NTL3}) and rewrite it as
\begin{equation}\label{NTL4}
\begin{split}
&\frac{1}{2\pi i}  \oint_{\eta \cdot \Gamma}  \left[z + \frac{z + \eta(1+a - c) - ac + \sqrt{(z - a^-(\eta))(z - a^+(\eta))}}{2\eta }  \right]  \frac{z^{k_i + k_j} dz}{1 - \eta} = \\
& \frac{1}{2\pi i}  \oint_{\eta \cdot\Gamma} \frac{z^{k_i + k_j}\sqrt{(z - a^-(\eta))(z - a^+(\eta))}  dz}{2 \eta(1 - \eta)}.
\end{split}
\end{equation}
where we applied Cauchy's theorem to deform the contour to $\eta \cdot\Gamma$ and evaluate the analytic part of the integrand to $0$. On the other hand, starting from (\ref{S72Cov2}) we can rationalize the denominator and obtain
\begin{equation*}
 \frac{1}{2\pi i }\int_{\Gamma}  \frac{\eta^{k_i + k_j + 1} \cdot [(2 - b_1 - c_1)z + a_1c_1 - a_1+ b_1 + 2c_1 -2 + (b_1 + c_1) \sqrt{(z - a^-) (z- a^+)} ] z^{k_i + k_j}dz }{2(b_1+c_1-1)}.
\end{equation*}
Using Cauchy's theorem to integrate the analytic part of the above expression and performing the change of variables $w = \eta z$ we get that  (\ref{S72Cov2}) equals
\begin{equation}\label{S72Cov3}
 \frac{1}{2\pi i }\int_{\eta \cdot \Gamma}  \frac{ (b_1 + c_1) \sqrt{( w - \eta a^-) (w-  \eta a^+)} ] w^{k_i + k_j}dw}{2 \eta (b_1+c_1-1)}.
\end{equation}
Finally, (\ref{S72Cov3}) equals (\ref{NTL4}) since $\eta a^{\pm} = a^{\pm}(\eta)$ and $b_1 + c_1 = \eta^{-1}$ (recall $b+c = 1$).
\end{proof}

%
\subsubsection{Quadratic potential}\label{Section7.2.3} In this section we consider the case when $\mathbb{P}_N$ is the probability measure on $\mathfrak{X}^{\theta}_{N,N} $ as in Proposition \ref{PropExtension} with $w(\ell;N) = \exp \left( - \theta  \ell^2/2N\right)$. The quadratic decay of the weight ensures that $Z$ in Proposition \ref{PropExtension} is indeed finite and we conclude that the projection on the top two levels, which we will denote by $(\ell, m)$, is given by
 \begin{equation}\label{S723E1}
\mathbb{P}^\theta_N (\ell, m) = \frac{\Gamma( N  \theta)}{\Gamma(\theta)} \cdot \frac{1}{Z} \cdot H^t(\ell) \cdot H^b(m) \cdot I(\ell, m), \mbox{ where } 
\end{equation}
\begin{align}\label{S723E2}
\begin{split}
H^t(\ell) = &\prod_{1 \leq i < j \leq N} \frac{\Gamma(\ell_i - \ell_j + 1)}{\Gamma(\ell_i - \ell_j + 1 - \theta)}  \prod_{i = 1}^N e^{-\theta \ell_i^2/2N}, \hspace{2mm} H^b(m) = \prod_{1 \leq i < j \leq N-1} \frac{\Gamma(m_i - m_j + \theta)}{\Gamma(m_i - m_j)}, \\
&I(\ell, m) = \prod_{1 \leq i < j \leq N} \frac{\Gamma(\ell_i - \ell_j + 1 - \theta)}{\Gamma(\ell_i - \ell_j) } \cdot \prod_{1 \leq i < j \leq N-1} \frac{\Gamma(m_i - m_j + 1)}{\Gamma(m_i - m_j + \theta)} \\
&\times \prod_{1 \leq i < j \leq N} \frac{\Gamma(m_i - \ell_j)}{ \Gamma(m_i - \ell_j + 1 - \theta)}  \cdot \prod_{1 \leq i \leq j \leq N-1} \frac{\Gamma(\ell_i - m_j + \theta)}{\Gamma(\ell_i - m_j + 1)}.
\end{split}
\end{align}
The measure in (\ref{S723E1}) can be thought of as a discrete analogue of the measure on $\mathcal{G} = \{(x,y) \in \mathbb{R}^{2N-1}:  x_1 < y_1 < x_2 < y_2 < \cdots < y_{N-1} < x_N \}$ with density
\begin{equation}\label{S7twologgas}
f_\beta(x, y) = \frac{1}{Z_\beta^c} \prod_{1 \leq i < j \leq N} (x_j - x_i) \prod_{1 \leq i < j \leq N-1} (y_j - y_i) \prod_{i = 1}^{N-1} \prod_{j = 1}^N |y_i - x_j|^{\beta/2 - 1} \cdot \prod_{i = 1}^N e^{-\beta x_i^2/4N },
\end{equation}
where $Z_\beta^c$ is a normalization constant such that the integral of $f_\beta(x,y)$ over $\mathcal{G}$ is $1$ (as usual $\beta = 2\theta$). Combining the results in \cite[Section 2.5]{AGZ} and \cite[Proposition 1.1]{Ne} one observes that when $\beta = 1$ and $\beta = 2$ the measure in (\ref{S7twologgas}) precisely describes the joint distribution of the eigenvalues of an $N \times N$ random Hermitian matrix sampled from the GOE and GUE respectively (these are the $x$'s) together with the eigenvalues of its $(N-1) \times (N-1)$ corner (these are the $y$'s). Let us elaborate the latter point a bit. Let $\{ \xi_{i,j}, \eta_{i,j}\}_{i,j = 1}^\infty$ be an i.i.d. family of real mean $0$ and variance $1$ Gaussian random variables. When $\beta = 1$ we define the random $N\times N$ matrix $H$, whose entries are given by
$$H_{i,i} = \sqrt{2N} \xi_{i,i} \mbox{ for $i = 1, \dots, N$ and } H_{i,j} = H_{j,i} = \sqrt{N} \xi_{i,j} \mbox{ for $1 \leq i < j \leq N$}.$$
This gives a random symmetric matrix. Since $H$ and its $(N-1) \times (N-1)$ top left corner are both symmetric real matrices their spectra are real and one can show that their law is given by (\ref{S7twologgas}) with $\beta = 1$. When $\beta = 2$ the entries of $H$ are instead given by
$$H_{i,i} =  \xi_{i,i} \mbox{ for $i = 1, \dots, N$ and } H_{i,j} = \overline{H_{j,i}} = \sqrt{N} \cdot \frac{\xi_{i,j} + i \eta_{i,j}}{\sqrt{2}} \mbox{ for $1 \leq i < j \leq N$}.$$
This gives a random Hermitian matrix. Since $H$ and its $(N-1) \times (N-1)$ top left corner are both Hermitian matrices their spectra are real and one can show that their law is given by (\ref{S7twologgas}) with $\beta = 2$. The measures in (\ref{S7twologgas}) for $\beta = 1$ and $\beta = 2$ were studied in \cite{ES} where the authors established the following result.
\begin{proposition}\label{ThmES} Let $(X_1, \dots, X_N, Y_1, \dots, Y_{N-1})$ be a random vector in $\mathcal{G}$ with density given by $f_\beta$ as in (\ref{S7twologgas}). Then we can find $C> 2$ such that the following holds. For a real polynomial $f$ let
$$\mathcal L^{m,C}_{f}= N^{1/2} \cdot \left[\sum_{i = 1}^N \left( \tilde{f}(X_i/N) - \mathbb{E} \left[\tilde{f}(X_i/N) \right] \right)   -   \sum_{i = 1}^{N-1}  \left( \tilde{f}(Y_i/N) - \mathbb{E} \left[\tilde{f}(Y_i/N)  \right] \right) \right]  ,$$
where $\tilde{f}(x) = {\bf 1}\{x \in [-C,C]\} \cdot f(x)$. Then  if $\beta =1$ or $\beta = 2$, as $N \rightarrow \infty$ the random variables $\mathcal L^{m,C}_{f}$ converge in the sense of moments to a real Gaussian variable $\xi^{\beta}$, with 
\begin{equation}\label{ESE1}
\mathbb{E} \left[ \xi^{\beta} \right] = 0 \mbox{ and } \mathbb{E}\left[(\xi^{\beta})^2 \right] = \frac{2}{\beta}  \cdot \int_{-2}^2 f'(x)^2\rho(x) dx,
\end{equation}
where $\rho(x):= \frac{1}{2\pi} \sqrt{4 -x^2}$ is the density of the semicircle law.
\end{proposition}
\begin{remark}
Proposition \ref{ThmES} is a very special case of \cite[Theorem 2.1]{ES}, which considers much more general Wigner matrices and not just the GOE and GUE. Furthermore, we remark that one can take $C = 10$ above and the function $f$ that we took to be polynomial could be taken in a more general Sobolev space. In Proposition \ref{ThmESUs} below we will see that the variance in (\ref{ESE1}) is different for the discrete measures in (\ref{S723E1}).
\end{remark}

As a discrete analogue to Proposition \ref{ThmES} we prove the following result for the measures (\ref{S723E1}).
\begin{proposition}\label{ThmESUs} Fix $\theta \in (0, \pi)$ and let $\mathbb{P}^\theta_N$ be as in (\ref{S723E1}). Then we can find $D> 2$, depending on $\theta$, such that the following holds. For any real polynomials $f_1, \dots, f_n$ and $k = 1, \dots, n$ define
$$\mathcal L^{m,D}_{f_k}=N^{1/2} \cdot \left[\sum_{i = 1}^N \left( \tilde{f}_k(\ell_i/N) - \mathbb{E} \left[\tilde{f}_k(\ell_i/N) \right] \right)  -  \sum_{i = 1}^{N-1}  \left(\tilde{f}_k(m_i/N) - \mathbb{E} \left[ \tilde{f}_k(m_i/N)  \right]\right) \right],$$
where $\tilde{f}(x) = {\bf 1}\{x \in [-D,D]\} \cdot f(x)$. Then as $N \rightarrow \infty$ the random variables $\mathcal L^{m,D}_{f_k}$ converge jointly in the sense of moments to a mean $0$ Gaussian vector $(\xi^\theta_1, \dots, \xi^\theta_n)$, whose covariance is given by
\begin{equation}\label{ESUsE1}
Cov(\xi^{\theta}_i, \xi^{\theta}_j) = \theta^{-1}  \cdot \int_{-2}^2 f'_i(x) f'_j(x) \rho^{\theta}(x) dx,
\end{equation}
where 
$$\rho^{\theta}(x):=  \frac{\theta}{\pi} \cdot  \frac{ e^{\theta x /2} \sin \left((\theta/2) \sqrt{4-x^2} \right)}{e^{\theta x} + 1 - 2 e^{\theta x/2} \cos\left((\theta/2) \sqrt{4-x^2}\right)}.$$
\end{proposition}
\begin{remark} We remark that even when $\theta = \beta/2 = 1$ or $1/2$ we have $\rho^{\theta}(x) \neq \rho(x)$ from Proposition \ref{ThmES}. The latter might seem surprising since by \cite[Corollary 9.4]{BGG} we have that the asymptotic fluctuations of $\sum_{i = 1}^N \tilde{f}(\ell_i/N)  $ are the same as those of $\sum_{i = 1}^N \tilde{f}(X_i/N) $. In \cite[Theorem A.1]{ES} the authors showed that the variable $\xi^{\beta}$ is given by a pairing of $f$ with a suitably normalized derivative of the Gaussian field that describes all Wigner matrices \cite{BorW}. We believe that the same is true for $\{\xi^{\theta}_i\}_{k= 1}^n$, but that the limiting Gaussian field is different. Thus when restricted to the top level, the two fields are the same, but the full 2D structure is different depending on whether one is dealing with a continuous or a discrete multi-level log gas. 
\end{remark}
\begin{remark} As pointed out by one of the referees, one has $\lim_{\theta \rightarrow 0+} \rho^{\theta}(x) = \rho(x)$. At this time, we do not have a good explanation as to why this limit transition recovers the continuous covariance from the discrete one, and it would be interesting to see if it holds for more general models.
\end{remark}
\begin{remark} One can readily check that when $\theta = \beta/2 \in[0,1]$ we have that $\rho^{\theta}(x) \leq \rho(x)$ for all $x \in[-2, 2]$. The fact that $\rho^{\theta}(x) \leq \rho(x)$ in particular shows that the variance of $\xi^{\theta}_i$ from Proposition \ref{ThmESUs} is strictly smaller than that of $\xi^{\beta}$ from Proposition \ref{ThmES}. It would be nice to get a good physical explanation of why the variance in the discrete model is smaller than that of the continuous one.
\end{remark}
\begin{proof}
We split the proof of the proposition into several steps for clarity. \\

{\raggedleft \bf Step 1.} In this step we reformulate the problem so that it fits into the setup of Section \ref{Section3}.

 By \cite[Theorem 10.1]{BGG} we know that there exists $D_1 > 2$ and $C_1 > 0$, depending on $\theta$ alone, such that for each $N \geq 1$ we have
\begin{equation}\label{S7LE1}
\mathbb{P}_N^{\theta}  \left( - N \cdot D_1  \leq \ell_N \leq \ell_1 \leq N \cdot D_1  \right) > 1 - C_1^{-1} \cdot \exp( - N \cdot C_1).
\end{equation}
Let $D_N = N \cdot \lfloor D_1 + 1  \rfloor$  and $M_N = 2 D_N$.  We also take $D = 2 \lfloor D_1 + 1  \rfloor + \theta^{-1}$ and $f_1, \dots, f_n$ to be any real polynomials. If $E_N = \{ - D_N \leq \ell_N \leq \ell_1 \leq D_N\}$ we see from (\ref{S7LE1}) that
\begin{equation}\label{S7TailP}
\mathbb{P}_N^{\theta} (E^c_N) \leq C_1^{-1} \cdot \exp( - N \cdot C_1).
\end{equation}
In particular, we see that for any fixed $A_1, \dots, A_n \in \mathbb{Z}_{\geq 0}$ we have
\begin{equation}\label{S7Cond1}
\mathbb{E}_N^{\theta} \left[  \prod_{k = 1}^n \left(\mathcal L^{m,D}_{f_k} \right)^{A_k} \right] = \mathbb{E}_N^{\theta} \left[  \prod_{k = 1}^n \left(\mathcal L^{m}_{f_k} \right)^{A_k} {\Big \vert} E_N\right] + O \left( \exp\left( - N \cdot C_1/2 \right) \right),
\end{equation}
where $\mathcal L^{m}_{\hat{f}_k}$ are as in Theorem \ref{CLTfun}. Notice that on the right side of (\ref{S7Cond1}) we no longer cut off the functions $f_k$ since conditional on $E_N$ we have $\mathcal L^{m}_{f_k} = \mathcal L^{m,D}_{f_k}$. In addition, we mention that the $O\left( \exp\left( - N \cdot C_1/2 \right) \right) $ was obtained from the tail estimate (\ref{S7TailP}) and the fact that $\mathcal L^{m,D}_{f_k}$ are almost surely polynomially large in $N$. From (\ref{S7Cond1}) we only need to show
\begin{equation}\label{S7Cond2}
\lim_{N \rightarrow \infty} \mathbb{E}_N^{\theta} \left[  \prod_{k = 1}^n \left(\mathcal L^{m}_{f_k} \right)^{A_k} {\Big \vert} E_N\right] = \mathbb{E} \left[ \prod_{k = 1}^n (\xi^{\theta}_k)^{A_k} \right].
\end{equation}

We subsequently consider the measure
\begin{equation}\label{S7HatP}
\hat{\mathbb{P}}_N^{\theta}(\ell, m) = \frac{1}{\hat{Z}_N}\prod_{1 \leq i < j \leq N} \frac{\Gamma(\ell_i - \ell_j + 1)}{\Gamma(\ell_i - \ell_j + 1 - \theta)} \cdot  \prod_{1 \leq i < j \leq N-1} \frac{\Gamma(m_i - m_j + \theta)}{\Gamma(m_i - m_j)} \cdot  \prod_{i = 1}^N e^{-\theta (\ell_i - D_N)^2/2N},
\end{equation}
which is supported on $\mathfrak{X}^\theta_N$ as in (\ref{GenState}) with $M_N$ as above. Note that we have recentered the measure so that $\ell_N \geq 0$ as required from (\ref{GenState}). We also define $\hat{f}_k$ for $k = 1,\dots, k$ through
$$\hat{f}_k(x) = f_k(x - \lfloor D_1 + 1  \rfloor),$$
and observe that 
$$
 \mathbb{E}_N^{\theta} \left[  \prod_{k = 1}^n \left(\mathcal L^{m}_{f_k} \right)^{A_k} {\Big \vert} E_N\right]= \hat{\mathbb{E}}_N^{\theta} \left[  \prod_{k = 1}^n \left(\mathcal L^{m}_{\hat{f}_k} \right)^{A_k}\right],
$$
where $\hat{\mathbb{E}}_N^{\theta}$ is the expectation with respect to $\hat{\mathbb{P}}_N^{\theta}$. Consequently, we reduced the problem to showing
\begin{equation}\label{S7Cond3}
\lim_{N \rightarrow \infty} \hat{\mathbb{E}}_N^{\theta} \left[  \prod_{k = 1}^n \left(\mathcal L^{m}_{\hat{f}_k} \right)^{A_k}\right] = \mathbb{E} \left[ \prod_{k = 1}^n (\xi^{\theta}_k)^{A_k} \right].
\end{equation}

{\raggedleft \bf Step 2.} In this step we show that the measures in (\ref{S7HatP}) satisfy Assumptions 1-5 in Section \ref{Section3}. Assumption 1 holds trivially with $\lM = 2\lfloor D_1 + 1 \rfloor$, and Assumption 2 holds with $V_N(x) = V(x) = \theta (x - \lfloor D_1 + 1  \rfloor)^2/2$. Next notice that by \cite[Chapter 2]{AGZ} we know that when $V(x) =   \theta (x - \lfloor D_1 + 1  \rfloor)^2/2$ the maximizer of the unconstrained variational problem (\ref{energy}) is given by 
$$\mu(x) = {\bf 1} \{x \in [\lfloor D_1 + 1  \rfloor - 2, \lfloor D_1 + 1  \rfloor + 2] \cdot \frac{1}{2\pi}\sqrt{4-(x- \lfloor D_1 + 1  \rfloor)^2},$$
which is the semicircle law centered at $\lfloor D_1 + 1  \rfloor$. Since $\theta <  \pi $ and $\lM +\theta >\lM > 2+ \lfloor D_1 + 1  \rfloor $ we see that $\mu(x) $ also satisfies the constraints that it is supported in $[0, \lM + \theta]$ and $0 \leq \mu(x) \leq \theta^{-1}$ and so it is also the constrained maximizer of (\ref{energy}) as in Proposition \ref{LLN}. 

We next have that 
$$\frac{w(Nx ;N)}{w(Nx - 1;N)} =\exp \left( -\theta (2Nx - 2D_N - 1)/2N \right) = \frac{\Phi^+_N(Nx)}{\Phi^-_N(Nx)},$$
where 
$$\Phi^+_N(Nx) =  \exp \left( -\theta (2Nx - 2D_N - 1)/2N \right) \mbox{ and } \Phi_N^-(Nx) = 1.$$
In particular, we see that Assumption 3 also holds with the above choice of $\Phi_N^{\pm}$ and then one readily observes that $\Phi^+(x) = \exp \left( - (x- \lfloor D_1 + 1 \rfloor )\right)$ and $\Phi^-(x) = 1$. From (\ref{S7TailP}) we also have that 
$$\hat{\mathbb{P}}_N^{\theta}(\ell_N = 0) = O\left(\exp ( - C_1 \cdot N)\right) = \hat{\mathbb{P}}_N^{\theta}(\ell_1 = M_N + (N-1)\cdot \theta),$$
so that Assumption 4 holds as well. Finally, Assumption 5 was shown to hold in the proof of \cite[Lemma 9.4]{BGG}. \\

 {\raggedleft \bf Step 3.} From Step 2 we know that Assumptions 1-5 hold for the measure $\hat{\mathbb{P}}_N^{\theta}$ and so we conclude from Theorem \ref{CLTfun} that $\left(\mathcal L^{m}_{f_k} \right)_{k = 1}^n$ converge jointly in the sense of moments to a centered Gaussian vector $(\hat{\xi}_1, \dots, \hat{\xi}_n)$ with covariance given by
\begin{align}\label{S7CovUs}
\begin{split}
&Cov(\hat{\xi}_i, \hat{\xi}_j) = \frac{1}{(2\pi \i )^2} \oint_{\Gamma} \oint_{\Gamma} \hat{f}_i(s) \hat{f}_j(t) \Delta \mathcal{C}_{\theta, \mu}(s,t)dsdt, \mbox{ for $1 \leq i, j \leq n$} \mbox{, where }\\
&\Delta \mathcal{C}_{\theta, \mu}(z_1,z_2) = \frac{1}{2\pi \i}\int_{\Gamma_1}  \frac{dz}{e^{\theta G_\mu(z)} - 1} \cdot \left[ - \frac{1}{(z-z_2)^2(z-z_1)^2}\right], 
\end{split}
\end{align}
and $\Gamma_1, \Gamma$ are positively oriented contours such that $\Gamma$ encloses $\Gamma_1$, and $\Gamma_1$ encloses the interval $[0, \lM + \theta]$. What remains is to show that the covariances in (\ref{S7CovUs}) and (\ref{ESUsE1}) agree. By Cauchy's theorem we can evaluate the $\Gamma$ integrals as the residue at $s= z$ and $t = z$, which gives
$$Cov(\hat{\xi}_i, \hat{\xi}_j)  = \frac{-1}{2\pi \i}\int_{\Gamma_1}  \frac{ \hat{f}'_i(z) \hat{f}'_j(z)dz}{e^{\theta G_\mu(z)} - 1}.$$
We next perform the change of variables $w = z -  \lfloor D_1 + 1 \rfloor$ and use that
$$G_\mu(w + \lfloor D_1 + 1 \rfloor) = \int_{-2 +  \lfloor D_1 + 1\rfloor}^{2+  \lfloor D_1 + 1\rfloor} \frac{\rho(x - \lfloor D_1 + 1 \rfloor) dx}{w - (x -  \lfloor D_1 + 1 \rfloor)} = \int_{-2}^2 \frac{\rho(x) dx}{w - x} = G_\rho(w),$$
where $\rho(x)= \frac{1}{2\pi} \sqrt{4 -x^2}$ is the density of the usual semicircle law to get
$$Cov(\hat{\xi}_i, \hat{\xi}_j)  = \frac{-1}{2\pi \i}\int_{\Gamma_2}  \frac{ f'_i(w) f'_j(w)dw}{e^{\theta G_\rho(w)} - 1},$$
where $\Gamma_2$ encloses the interval $[- \lfloor D_1 + 1 \rfloor, \lM + \theta -  \lfloor D_1 + 1 \rfloor]$. From \cite[(2.4.7)]{AGZ} we know that 
$$G_\rho(w) = \frac{w - \sqrt{w^2 - 4}}{2},$$
with the square root as in Section \ref{Section1.5}, and so for each $x \in [-2,2]$ we have
$$\lim_{ \epsilon \rightarrow 0^{\pm}} \frac{1}{e^{\theta G_\rho(x\pm \i \epsilon )} - 1} = \frac{1}{e^{\theta x/2} \cdot  \cos\left((\theta/2) \sqrt{4-x^2}\right) \mp \i e^{\theta x/2} \sin \left((\theta/2) \sqrt{4-x^2} \right) - 1 },$$
while for $|x| > 2$ we have
$$\lim_{ \epsilon \rightarrow 0^{\pm}} \frac{1}{e^{\theta G_\rho(x\pm \i \epsilon )} - 1} = \frac{1}{e^{\theta x/2 - \theta\sqrt{x^2- 4}/2}- 1 }.$$
We may then deform $\Gamma_2$ to a thin rectangle that encloses $[-2,2]$, without affecting the value of the integral by Cauchy's theorem. Shrinking the width of the rectangle to $0$ we traverse the interval $[-2,2]$ once in each direction and we see that the real part is taken with the same sign, and so cancels, while the imaginary part has opposite sign in the two directions that we traverse the interval. Consequently, we obtain
$$Cov(\hat{\xi}_i, \hat{\xi}_j) =  \frac{1}{\pi}  \cdot \int_{-2}^2 \frac{  e^{\theta x/2} \sin \left((\theta/2) \sqrt{4-x^2} \right) f'_i(x) f'_j(x)dx }{\left[e^{\theta x/2} \cos\left((\theta/2) \sqrt{4-x^2}\right) - 1\right]^2 + e^{\theta x} \sin^2 \left((\theta/2) \sqrt{4-x^2} \right)  },$$
which we identify as (\ref{ESUsE1}) once we expand the square in the denominator.

\end{proof}

%
\section{Continuous limit} \label{Section6} The purpose of this section is to derive certain two-level analogues of the loop equations in \cite{BoGu} for natural two-level extensions of the measures considered in that paper. In Section \ref{Section6.1} we formulate the two-level measures we consider and explain how it generalizes the usual $\beta$-log gas. In Section \ref{Section6.2} we derive the continuous measures from Section \ref{Section6.1} as diffuse limits of the measures in Section \ref{Section1.1}. In Section \ref{Section6.3} we derive the loop equations in \cite{BoGu} from the single level Nekrasov's equations -- Proposition \ref{SingleLevelNekrasov}. In Section \ref{Section6.4} we derive the continuous limits of our Nekrasov's equations -- Theorems \ref{TN1} and \ref{TN1Theta1}.

%
\subsection{Two-level log gas}\label{Section6.1}
Let us fix $N \geq 2$, $a_-,a_+ \in \mathbb{R}$ with $a_- < a_+$ and $\theta > 0$. In addition, we let $V^t(z)$ and $V^b(z)$ be two analytic function in a neighborhood $\mathcal{M}$ of $[a_-,a_+]$, which are real-valued on $\mathcal{M} \cap \mathbb{R}$. With this data we define the following probability density function
\begin{equation}\label{twologgas}
f(x, y) = \frac{1}{Z^c} \prod_{1 \leq i < j \leq N} (x_j - x_i) \prod_{1 \leq i < j \leq N-1} (y_j - y_i) \prod_{i = 1}^{N-1} \prod_{j = 1}^N |y_i - x_j|^{\theta - 1} \times\prod_{i = 1}^{N-1} e^{-N\theta V^b(y_i)} \prod_{i = 1}^N e^{-N\theta V^t(x_i)},
\end{equation}
where the density $f(x,y)$ is supported on the set $\mathcal{G} = \{(x,y) \in \mathbb{R}^{2N-1}: a_- < x_1 < y_1 < x_2 < y_2 < \cdots < y_{N-1} < x_N < a_+\}$ and $Z^c$ is a normalization constant such that the integral of $f(x,y)$ over $\mathcal{G}$ is $1$. As mentioned in the introduction, $x_i$, $y_j$ are labeled in increasing order (unlike the $\ell_i$, $m_j$ in Section \ref{Section3}) as is typical in the random matrix literature.

 Observe that, the above density is well defined since by a version of the Dixon-Anderson identity \cite{Dixon, Anderson} (see \cite[Equation (2.2)]{FW08}) we have
\begin{equation}\label{Dix}
\int_{x_1}^{x_2} \cdots \int _{x_{N-1}}^{x_N}  \hspace{-4mm} dy_1 \cdots dy_{N-1} \hspace{-4mm}\prod_{1 \leq i < j \leq N-1} (y_j - y_i) \prod_{i = 1}^{N-1} \prod_{j = 1}^N |y_i - x_j|^{\theta - 1} = \frac{\Gamma(\theta)^{N}}{\Gamma(N \theta)} \cdot \prod_{1 \leq i < j \leq N}(x_j - x_i)^{2\theta - 1},
\end{equation}
which implies that $Z^c < \infty$. The formula (\ref{Dix}) implies further that if $V^b(z) = 0$ then the projection of the measures (\ref{twologgas}) to the top level $(x_1, \dots, x_N)$ has density
\begin{equation}\label{S6Singleloggas}
f(x) ={\bf 1}\{ a_- < x_1 < \cdots < x_N < a_+\} \cdot (Z^t_N)^{-1}\prod_{1 \leq i < j \leq N} (x_j - x_i)^{2\theta} \prod_{i = 1}^N e^{-N\theta V^t(x_i)}.
\end{equation}
The measures in (\ref{S6Singleloggas}) are the same as those studied in \cite{BoGu} once one sets $\theta = \beta/2$ and so the ones in (\ref{twologgas}) can be thought of as their natural generalizations. 

Let $(X_1, \dots, X_N, Y_1, \dots, Y_{N-1})$ be a random $2N-1$ dimensional vector with density given by (\ref{twologgas}). For $z \in \mathbb{C} \setminus [a_-, a_+]$ we denote 
\begin{equation}\label{S6GcontDef}
G^t_c(z) =  \sum_{ i = 1}^N \frac{1 }{z - X_i} \mbox{ and } G^b_c(z) =  \sum_{ i = 1}^{N-1} \frac{1 }{z - Y_i} .
\end{equation}

We recall \cite[Theorem 3.1 and Theorem 3.2]{BoGu} below as Proposition \ref{SingleLevelLoppThm}. The identification is made once we set $\theta = \beta/2$. Below we write $\llbracket p, q \rrbracket$ to mean the set $\{p, p+1, \dots ,q\}$ for integers $p \leq q$.
\begin{proposition}\label{SingleLevelLoppThm}
Fix $\theta > 0$. Let $(X_1, \dots, X_N, Y_1, \dots, Y_{N-1})$ be a random $2N-1$ dimensional vector with density given by (\ref{twologgas}) with $V_b \equiv 0$ so that $(X_1, \dots, X_N)$ has density (\ref{S6Singleloggas}). Given $ v_1, \dots, v_m \in \mathbb{C} \setminus [a_-, a_+]$ we define
\begin{equation}\label{S6ContCumSL}
 \M( v_1, \dots, v_m) = M(G^t_c(v_1), \dots, G^t_c(v_m)) ,
\end{equation}
where we recall that for $m$ bounded random variables $\xi_1, \dots, \xi_m$, $M(\xi_1, \dots, \xi_m)$ stands for their joint cumulant if $m \geq 2$ and $\mathbb{E}[\xi_1]$ if $m = 1$. Then for any $v \in \mathbb{C} \setminus [a_-, a_+]$ the following rank $1$ loop equation holds
\begin{align}\label{S6rank1}
\begin{split}
& 0 = \frac{ N[1 -\theta^{-1}] - N^2}{(v-a_-)(v -a_+)} + \M(v,v) + \M(v)^2 +  [ 1- \theta^{-1}]  \partial_{v} \M(v)- \\
& - \frac{N}{2\pi i} \int_\Gamma dz  \frac{   (z-a_-)(z -a_+)  \M(z) \partial_zV^t(z) }{(v-a_-)(v -a_+) (v- z) }.
\end{split}
\end{align}
Also for $m \geq 1$ and $v, v_1, \dots, v_m \in \mathbb{C} \setminus [a_-, a_+]$ the following rank $(m+1)$ loop equation holds
\begin{align}\label{S6rankm}
\begin{split} 
&0 = \M(v, v,\llbracket 1, m \rrbracket) + \sum_{J \subseteq \llbracket 1, m \rrbracket} \M(v, J) \cdot \M(v, \llbracket 1,m \rrbracket \setminus J) +  [1-  \theta^{-1}] \partial_{v}\M(   v ,\llbracket 1, m \rrbracket ) - \\
&  - \frac{ N}{2\pi i}\int_{\Gamma} dz \frac{(z - a_-)(z-a_+)  \partial_zV^t(z) \M \left(  z,  \llbracket 1, m \rrbracket \right) }{(v - z)(v - a_-)(v - a_+)} + \\
&+ \theta^{-1} \sum_{a = 1}^m  \partial_{v_a}  \left[   \frac{\M\left(  v,  \llbracket 1,m \rrbracket \setminus \{a \} \right)}{v-v_a}  - \frac{ ( v_a-a_-)(v_a - a_+) \M\left( v_a, \llbracket 1,m \rrbracket \setminus \{a \} \right) }{(v - v_a)(v - a_-)(v - a_+) } \right],
\end{split}
\end{align}
where $\Gamma$ is a positively oriented contour, which encloses the segment $[a_-, a_+]$, is contained in $\mathcal{M}$ as in the beginning of Section \ref{Section6.1} and excludes the points $v, v_1, \dots, v_m$.
\end{proposition}
In Section \ref{Section6.3} we deduce Proposition \ref{SingleLevelLoppThm} from a limit of the single level Nekrasov's equations -- Proposition  \ref{SingleLevelNekrasov}. We remark that Proposition \ref{SingleLevelLoppThm} was proved in \cite{BoGu} using different techniques. Nevertheless we present our proof using single level Nekrasov's equations as it is new and in our opinion of sufficient conceptual importance. \\

We next state the main result in this section, which is a certain two-level analogue of the above loop equations for the measures in (\ref{twologgas}). 
\begin{theorem}\label{TwoLevelLoopThm}
Fix $\theta > 0$. Let $(X_1, \dots, X_N, Y_1, \dots, Y_{N-1})$ be a random $2N-1$ dimensional vector with density given by (\ref{twologgas}). Given $m,n \geq 0$ such that $m+n \geq 1$ points $v^1_1, \dots, v^1_m ,v^2_1,\dots, v^2_n \in \mathbb{C} \setminus [a_-, a_+]$ we define
\begin{equation}\label{S6CumTwo}
\begin{split}
&\M(v^1_1, \dots, v^1_m;v^2_1, \dots, v^2_n ) = M(G^t_c(v^1_1), \dots, G^t_c(v^1_m), G^b_c(v^2_1), \dots, G^b_c(v^2_n)).
\end{split}
\end{equation}
For any $v \in \mathbb{C} \setminus [a_-, a_+]$ the following rank $(0,0)$ loop equation holds
\begin{align}\label{TLRank00}
\begin{split} 
& 0 = \frac{N\theta}{2\pi \i}\int_{\Gamma}dz \frac{ (z - a_-)(z - a_+) }{(z-v)(v - a_-) (v-a_+)} \left[  \M (z; \varnothing)  \partial_z V^t(z) +   \M (\varnothing; z)  \partial_z V^b(z) \right]     \\
& -\frac{N^2 - (1-\theta)N(N-1)}{(v - a_-) (v-a_+)}  +  \frac{\M ( v,v; \varnothing ) + \M( v; \varnothing)^2}{2} +  \frac{ \M (\varnothing; v,v) + \M(\varnothing; v)^2}{2} \\
&   - \frac{\partial_v \M (\varnothing; v)}{2}  - \frac{\partial_v \M (v;\varnothing)}{2} -  (1-\theta) [\M(v;v) + \M(v;\varnothing)\M(\varnothing;v)].
\end{split}
\end{align}
Also for $v, v^1_1, \dots, v^1_m ,v^2_1,\dots, v^2_n \in \mathbb{C} \setminus [a_-, a_+]$ the following rank $(m,n)$-loop equation holds
\begin{align}\label{TLRankmn}
\begin{split} 
& 0  =\frac{N\theta }{2\pi \i}\int_{\Gamma} dz \frac{ (z - a_-)(z - a_+) [\partial_z V^t(z) \M \left(z,\llbracket 1,m \rrbracket ; \llbracket 1,n \rrbracket\right) +\partial_z V^b(z) \M \left(\llbracket 1,m \rrbracket ;z, \llbracket 1,n \rrbracket\right)] }{(z-v)(v - a_-)(v - a_+)}   \\
& - \frac{\partial_v \M(v, \llbracket 1,m \rrbracket; \llbracket 1,n \rrbracket) + \partial_v \M( \llbracket 1,m \rrbracket; v, \llbracket 1,n \rrbracket)}{2} + \frac{\M(v,v, \llbracket 1,m \rrbracket; \llbracket 1,n \rrbracket) + \M( \llbracket 1,m \rrbracket; v,v, \llbracket 1,n \rrbracket)}{2}  \\
& -(1-\theta) \cdot \M(v, \llbracket 1,m \rrbracket; v, \llbracket 1,n \rrbracket) +  \frac{1}{2} \sum_{J^t \subseteq \llbracket 1,m \rrbracket} \sum_{J^b\subseteq \llbracket 1,n \rrbracket} \M\left(v, J^t; J^b \right) \cdot \M \left(v, \llbracket 1,m \rrbracket \setminus J^t;  \llbracket 1,m \rrbracket \setminus J^b \right) \\
&  +  \M\left( J^t; v, J^b \right) \cdot \M \left( \llbracket 1,m \rrbracket \setminus J^t; v,  \llbracket 1,m \rrbracket \setminus J^b \right) - 2 (1-\theta) \M\left( v, J^t; J^b \right) \cdot \M \left( \llbracket 1,m \rrbracket \setminus J^t; v,  \llbracket 1,m \rrbracket \setminus J^b \right)  \\
& + \sum_{a = 1}^m  \partial_{v^1_a}  \left[   \frac{\M\left( v, \llbracket 1,m \rrbracket \setminus \{a \}; \llbracket 1, n \rrbracket \right)}{v-v_a^1}  + \frac{ ( v_a^1-a_-)(v^1_a - a_+) \M\left(    \llbracket 1,m \rrbracket ; \llbracket 1, n \rrbracket \right) }{( v^1_a - v)(v - a_-)(v - a_+) } \right] \\
& + \sum_{b = 1}^n  \partial_{v^2_b}  \left[   \frac{\M\left( \llbracket 1,m \rrbracket; v, \llbracket 1, n \rrbracket \setminus \{b \} \right)}{v-v_b^2}  + \frac{ ( v_b^2-a_-)(v_b^2 - a_+) \M\left(    \llbracket 1,m \rrbracket , \llbracket 1, n \rrbracket  \right) }{( v_b^2 - v)(v - a_-)(v - a_+) } \right].
\end{split}
\end{align}
where $\Gamma$ is a positively oriented contour, which encloses the segment $[a_-, a_+]$, is contained in $\mathcal{M}$ as in the beginning of Section \ref{Section6.1} and excludes the points $v, v^1_1, \dots, v^1_m ,v^2_1,\dots, v^2_n $. In (\ref{TLRankmn}) a set $A$ that appears before the semi-colon in $\M$ should be replaced with $\{v_a^1\}_{a \in A}$ and, similarly, a set $B$ that appears after the semi-colon should be replaced with $\{v_b^2\}_{b \in B}$.
\end{theorem}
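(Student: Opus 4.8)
The plan is to derive Theorem~\ref{TwoLevelLoopThm} as a diffuse ($T\to\infty$) limit of the two-level Nekrasov equations, Theorems~\ref{TN1} and~\ref{TN1Theta1}, following the template of Section~\ref{Section6.3} (where the single-level loop equations of \cite{BoGu} are obtained from Theorem~\ref{SingleLevelNekrasov}). First I would set up the discrete-to-continuous correspondence: rescale the shifted coordinates by a large parameter $T$, i.e.\ put $\ell^i_j = T\, y^i_{i+1-j}$ (and similarly for the $m$-particles), choose the discrete weight $w$ so that $\tfrac{1}{T}\log w(Tx) \to V^t(x)$ and, using the observation from the introduction built on \cite{TE}, verify that the discrete measure $\mathbb{P}^\theta_N$ of \eqref{S1PDef} converges to the continuous density \eqref{twologgas} with $V^b$ encoded through $\tau$; this is done carefully in Section~\ref{Section6.2}. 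Under this scaling, the products $\prod_i \frac{z-\ell_i-\theta}{z-\ell_i}$ that appear in $R_1(z)$ and $R_2(z)$ become, after the substitution $z = Tv$ and a Taylor expansion in $T^{-1}$ exactly as in \eqref{exp_intro}, generating functions for $G^t_f(v)$, $G^b_f(v)$ and their $v$-derivatives. The functions $\phi^{t/b/m}_{1,2}$, whose ratios are fixed by \eqref{ratioNE} to be $w(z-1)/w(z)$ etc., degenerate to $e^{\mp V^{t}{}'}$-type factors, so the three-term structure of $R_1$ collapses to a single analytic relation among $G^t_f$, $G^b_f$.

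The key steps, in order, would be: (i) confirm the limit of the measures (citing Section~\ref{Section6.2}); (ii) substitute $z=Tv$ into the equations defining $R_1(z)$ and $R_2(z)$, expand each of the three products to the required order in $T^{-1}$ using \eqref{exp_intro}-type expansions (this is where the $\partial_v$ terms and the quadratic $G^2$ terms are born, and where the $\tfrac{\theta}{1-\theta}$ prefactor combines with the cross-product to produce the $(1-\theta)G^t G^b$ mixed term); (iii) take cumulants by applying the deformed-measure/cumulant machinery of Lemma~\ref{LemCum1} adapted to the continuous setting — i.e.\ introduce a continuous analogue $\mathbb{P}^{\t,\vm}_N$ and differentiate in the $t$-parameters to convert expectations of products into the joint cumulants $\M(\cdots;\cdots)$ of \eqref{S6CumTwo}; (iv) divide by the appropriate product of $(v-a_\pm)$ and $(v-v^i_j)$ factors and integrate over a contour $\Gamma$ enclosing $[a_-,a_+]$, using analyticity of $R_1,R_2$ (equivalently of $R_\mu$ and the non-vanishing condition) so that the $R$-integrals vanish by Cauchy's theorem, leaving an identity in which the only surviving contributions are the $\partial_z V^t \M(z;\cdots)$ and $\partial_z V^b \M(\cdots;z)$ integral terms and the local residue terms; (v) compute the residues at $v^i_j$ and at $a_\pm$ to produce the $\partial_{v^1_a}$ and $\partial_{v^2_b}$ sums and the $\frac{N^2+(1-\theta)N(N-1)}{(v-a_-)(v-a_+)}$ term; (vi) for the rank-$(0,0)$ case specialize $m=n=0$, and for the general case organize the cumulant splittings into the double sum over $J^t\subset\llbracket 1,m\rrbracket$, $J^b\subset\llbracket 1,n\rrbracket$; (vii) handle $\theta=1$ separately by starting from Theorem~\ref{TN1Theta1} (where the middle term is a sum of simple poles rather than a product), checking that the same limiting equation \eqref{TLRankmn} emerges — the $(1-\theta)$ coefficients simply vanish.

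I expect the main obstacle to be step (ii)–(iii): bookkeeping the order-$T^{-1}$ (and in some places $T^{-2}$) terms in the expansion of the \emph{products} of two ratios in the $\phi_1^m$ and $\phi_2^m$ lines of \eqref{S2mN}–\eqref{S2mNv2}, and then correctly tracking which pieces survive after dividing by the normalizing factors and integrating. In particular the factor $\frac{\theta}{1-\theta}$ multiplying a product whose logarithm is $O(T^{-1})$ means one must expand to second order to see a finite contribution, and the cross terms between the $\ell$-product and the $m$-product in that single line are precisely what generate the mixed cumulant $\M(v,\llbracket1,m\rrbracket;v,\llbracket1,n\rrbracket)$ and the $2(1-\theta)\M(v,J^t;J^b)\M(\cdots;v,\cdots)$ term in \eqref{TLRankmn}; getting the combinatorial factors ($\tfrac12$'s, signs) right here is the delicate part. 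A secondary technical point is justifying the interchange of the $T\to\infty$ limit with the contour integral and with the $t$-derivatives, which should follow from uniform moment bounds on $G^{t/b}_f$ analogous to those established in Sections~\ref{Section4.1} and~\ref{Section5.2}, together with the fact that everything in sight is analytic in $v$ away from $[a_-,a_+]$. Once the rank-$(0,0)$ identity is pinned down, the higher-rank equations follow by the same differentiation-in-$\vm$ argument with only notational overhead, exactly as in the passage from \eqref{S6rank1} to \eqref{S6rankm}.
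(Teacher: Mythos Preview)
Your proposal is correct and follows essentially the same route as the paper's proof in Section~\ref{Section6.4}: multiply the first two-level Nekrasov equation by $P_L(z)=(Lz-\lceil a_-L\rceil)(Lz-\lfloor a_+L\rfloor-(N-1)\theta)$ to absorb the boundary residues, divide by $(z-v)B_1B_2$, integrate over $\Gamma$, apply $\mathcal{D}$, expand the products via the formulas in Section~\ref{Section6.3.2}, let $L\to\infty$ using Proposition~\ref{prop_cont_limit}, evaluate the remaining integrals by residues, and invoke Malyshev's formula \eqref{RemainingCLE2} for the cumulant splittings. Two small simplifications relative to your outline: only $R_1$ is needed (the paper remarks that $R_2$ yields the same limiting identity), and the interchange of limit with integral/derivatives is elementary here because $N$ is \emph{fixed} while $L\to\infty$, so $G^{t/b}_L$ are deterministically bounded on compacts of $\mathbb{C}\setminus[a_-,a_+]$ and weak convergence suffices---no analogue of the $N\to\infty$ moment bounds of Sections~\ref{Section4.1}--\ref{Section5.2} is required.
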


%
\subsection{Diffuse limits}\label{Section6.2}

In this section we derive the measures in (\ref{twologgas}) as diffuse limits of the measures from Section \ref{Section1.1}. We start by introducing some notation.
Let $L \in \mathbb{N}$ be sufficiently large so that $(a_+ - a_-) \cdot L > 1$. For all such $L$ we define the measures
\begin{equation}\label{PLDef}
\mathbb{P}_{N,L}(\ell, m) = (Z^d_L)^{-1}\cdot H^t(\ell) \cdot H^b(m) \cdot I(\ell, m), \mbox{ where }
\end{equation}
\begin{align}\label{PLDef2}
\begin{split}
H^t(\ell) = &\prod_{1 \leq i < j \leq N} \frac{\Gamma(\ell_i - \ell_j + 1)}{\Gamma(\ell_i - \ell_j + 1 - \theta)}  \prod_{i = 1}^Nw(\ell_i;L) \mbox{ with } w(\ell_i;L)  = e^{-N \theta V^t(\ell_i/L)}, \\
 H^b(m) = &  \prod_{1 \leq i < j \leq N-1} \frac{\Gamma(m_i - m_j + \theta)}{\Gamma(m_i - m_j)} \prod_{i = 1}^{N-1} \tau(m_i;L ) \mbox{ with }\tau(m_i;L )  = e^{-N \theta V^b(m_i/L)}, \\
I(\ell, m) = &\prod_{1 \leq i < j \leq N} \frac{\Gamma(\ell_i - \ell_j + 1 - \theta)}{\Gamma(\ell_i - \ell_j) } \cdot \prod_{1 \leq i < j \leq N-1} \frac{\Gamma(m_i - m_j + 1)}{\Gamma(m_i - m_j + \theta)} \times\\
&\prod_{1 \leq i < j \leq N} \frac{\Gamma(m_i - \ell_j)}{ \Gamma(m_i - \ell_j + 1 - \theta)}  \cdot \prod_{1 \leq i \leq j \leq N-1} \frac{\Gamma(\ell_i - m_j + \theta)}{\Gamma(\ell_i - m_j + 1)}.
\end{split}
\end{align}
In the above formula $\ell_i = \lambda_i + (N  - i) \theta$ for $i = 1, \dots, N$ and $m_i = \mu_i + (N - i) \theta$ for $i = 1, \dots, N-1$, with $\lambda_i, \mu_j \in \mathbb{Z}$ and the measure is supported on $(2N - 1)$-tuples such that $\lceil a_- L \rceil \leq \lambda_N \leq \mu_{N-1} \leq \lambda_{N-2} \leq \cdots \leq \mu_1 \leq \lambda_1 \leq \lfloor a_+ L \rfloor$.
If $(\ell, m)$ satisfy the above inequalities we write $\ell \succeq m$ and denote the set of such tuples by $\mathfrak{X}_{N,L}$. Throughout this section we will frequently switch from $\ell_i$'s to $\lambda_i$'s and from $m_i$'s to $\mu_i$'s without mention using the formulas 
\begin{equation}\label{S6Coord}
\ell_i = \lambda_i + (N- i)\cdot\theta \mbox{ and } m_i = \mu_i + (N  - i) \cdot \theta. 
\end{equation}

We turn to the main result of the section.
\begin{proposition} \label{prop_cont_limit} Fix $\theta > 0$ and $N \geq 2$. Let $\left( \ell^L_1, \cdots, \ell^L_N ,  m_1^L, \cdots, m^L_{N-1} \right)$ be a sequence of random $2N-1$ dimensional vectors, whose probability distribution is $\mathbb{P}_{N,L}$ as in (\ref{PLDef}). Then the sequence 
$$\left(L^{-1} \cdot \ell^L_1, \cdots, L^{-1} \cdot \ell^L_N , L^{-1} \cdot m_1^L, \cdots, L^{-1} \cdot m^L_{N-1} \right)$$
 converges weakly as $L \rightarrow \infty$ to $(X_N, \cdots, X_1, Y_{N-1} \cdots, Y_N)$ where $(X_1, \dots, X_N, Y_1, \dots, Y_{N-1})$ is a random $(2N-1)$-dimensional vector with density given by (\ref{twologgas}).
\end{proposition}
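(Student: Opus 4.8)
## Proof plan for Proposition \ref{prop_cont_limit}

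The plan is to prove weak convergence by showing that the (discrete) probability mass function $\mathbb{P}_L$, suitably rescaled, converges pointwise to the density $f(x,y)$ in \eqref{twologgas}, and then upgrade this to weak convergence of the associated measures on $\mathbb{R}^{2N-1}$. The key tool is a Riemann-sum argument combined with asymptotics of the Gamma function.

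First I would set up the rescaled empirical picture. Write $\lambda^L_i = L \cdot u_i + o(L)$ and $\mu^L_i = L \cdot w_i + o(L)$ informally, i.e. fix a target point $(x,y)\in\mathcal{G}$ with $a_-<x_1<y_1<\cdots<y_{N-1}<x_N<a_+$ and consider the event that $\ell^L_i / L$ lies near $x_i$ and $m^L_i/L$ near $y_i$. Because $\theta,N$ are \emph{fixed} and only $L\to\infty$, all the shifts $(N-i)\theta$ in \eqref{S6Coord} are $O(1)$ and hence negligible after dividing by $L$; so $\ell^L_i/L$ and $\lambda^L_i/L$ have the same limit, likewise for $m^L_i$. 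The main step is then to compute the limit of $H^t(\ell)\cdot H^b(m)\cdot I(\ell,m)$ under this scaling. For each pair $i<j$ the differences $\ell_i-\ell_j$ and $m_i-m_j$ grow linearly in $L$, so I apply the standard asymptotics $\Gamma(z+a)/\Gamma(z+b) \sim z^{a-b}$ as $z\to\infty$. Concretely:
\begin{itemize}
\item[] From $H^t$: $\dfrac{\Gamma(\ell_i-\ell_j+1)}{\Gamma(\ell_i-\ell_j+1-\theta)} \sim (\ell_i-\ell_j)^{\theta} = L^\theta (x_i-x_j)^\theta \cdot(1+o(1))$, and analogously $\dfrac{\Gamma(m_i-m_j+\theta)}{\Gamma(m_i-m_j)}\sim L^\theta (y_i-y_j)^\theta$.
\item[] From $I$: the first two products contribute $\dfrac{\Gamma(\ell_i-\ell_j+1-\theta)}{\Gamma(\ell_i-\ell_j)}\sim L^{1-\theta}(x_i-x_j)^{1-\theta}$ and $\dfrac{\Gamma(m_i-m_j+1)}{\Gamma(m_i-m_j+\theta)}\sim L^{1-\theta}(y_i-y_j)^{1-\theta}$, while the two mixed products give $|m_i-\ell_j|^{\theta-1}$-type factors, namely $\dfrac{\Gamma(m_i-\ell_j)}{\Gamma(m_i-\ell_j+1-\theta)}\sim |m_i-\ell_j|^{\theta-1}$ and $\dfrac{\Gamma(\ell_i-m_j+\theta)}{\Gamma(\ell_i-m_j+1)}\sim |\ell_i-m_j|^{\theta-1}$, which combine into $\prod_{i=1}^{N-1}\prod_{j=1}^N |y_i-x_j|^{\theta-1}\cdot L^{\,(\theta-1)N(N-1)}$ up to the $o(1)$ corrections.
\end{itemize}
Multiplying $H^t\cdot H^b\cdot I$, the $(x_i-x_j)^\theta\cdot (x_i-x_j)^{1-\theta}=(x_i-x_j)$ factors collapse to the Vandermonde $\prod_{i<j}(x_j-x_i)$, similarly $\prod_{i<j}(y_j-y_i)$, and the mixed term is exactly $\prod_i\prod_j|y_i-x_j|^{\theta-1}$. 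Meanwhile $\prod_i w(\ell_i;L)\prod_i\tau(m_i;L)=\exp(-N\theta\sum V^t(\ell_i/L)-N\theta\sum V^b(m_i/L))\to \prod_i e^{-N\theta V^t(x_i)}\prod_i e^{-N\theta V^b(y_i)}$ by continuity of $V^t,V^b$. All the explicit powers of $L$ collect into a single prefactor $L^{\kappa}$ with $\kappa$ depending only on $N,\theta$; combined with the lattice spacing (each coordinate ranges over a lattice of mesh $1$, so summing over $\sim L^{2N-1}$ points approximates $L^{2N-1}\int\!\!\int_{\mathcal{G}}$), this shows that $L^{2N-1}\cdot L^{\kappa}\cdot(Z^d_L)^{-1}H^tH^bI$ is a Riemann sum for $(Z^c)^{-1}$ times the product in \eqref{twologgas}; in particular the normalizations match, $L^{2N-1+\kappa}/Z^d_L \to 1/Z^c$ (using the Dixon–Anderson identity \eqref{Dix} to know $Z^c<\infty$ and hence that the limiting object is a genuine density).

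Finally I would convert this local/pointwise convergence of densities into weak convergence of the rescaled laws. The cleanest route: for any bounded continuous test function $g$ on $\mathbb{R}^{2N-1}$, write $\mathbb{E}_{\mathbb{P}_L}[g(L^{-1}\ell, L^{-1}m)]$ as a sum over $\mathfrak{X}_N$, recognize it (by the estimates above) as a Riemann sum converging to $\int_{\mathcal{G}} g(x,y) f(x,y)\,dx\,dy$, and justify the interchange of limit and sum by a dominated-convergence/uniform-integrability argument — the $V^t,V^b$ terms are uniformly bounded on the compact $[a_-,a_+]$, so the only danger is near the boundary of $\mathcal{G}$ where the Vandermonde-type factors degenerate, but those are integrable (again via \eqref{Dix}) and the discrete sums are dominated by a fixed constant times the integral near the diagonal. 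I expect the \textbf{main obstacle} to be precisely this last point: making the Gamma-asymptotics uniform enough near the walls $x_i=y_i$ and $x_i=x_{i+1}$ (where consecutive differences are $O(1)$ rather than $O(L)$) so that the Riemann-sum convergence and the domination are both legitimate. This is handled by splitting $\mathfrak{X}_N$ into a "bulk" region where all gaps are $\geq \epsilon L$ (where the asymptotics are uniform) and a "boundary layer" whose total mass is controlled directly by the one-sided bound $\Gamma(z+1)/\Gamma(z+1-\theta)\leq C z^\theta$ valid for all $z\geq 0$, together with the finiteness of the limiting integral near the diagonal; letting $\epsilon\to 0$ after $L\to\infty$ closes the argument.
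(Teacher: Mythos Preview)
Your plan is essentially the same as the paper's: use the asymptotic $\Gamma(x+\theta)/\Gamma(x)\sim x^\theta$ to obtain pointwise convergence of the rescaled weights to $Z^c f(x,y)$, recognize the discrete sum as a Riemann sum, and pass to the limit by a domination argument. The paper does exactly this, writing $f_L(x,y)$ for the piecewise-constant rescaled weight and proving $f_L\to Z^c f$ a.e.\ together with an integrable majorant, so that both $Z^d_L$ (properly rescaled) and $\mathbb{E}_{\mathbb{P}_L}[h(L^{-1}\ell,L^{-1}m)]$ converge by dominated convergence. For $\theta\ge 1$ the majorant is just a constant, as you anticipate.

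The point where your sketch is thin is the domination for $\theta\in(0,1)$. Here the mixed factors $|y_i-x_j|^{\theta-1}$ blow up along every wall $y_i=x_j$ (not only nearest neighbors), and your proposed ``one-sided bound $\Gamma(z+1)/\Gamma(z+1-\theta)\le Cz^\theta$'' does not by itself control these. The paper's actual majorant is
\[
g(x,y)=\prod_{i=1}^{N-1}\bigl(|y_i-x_{i+1}|^{\theta-1}+|x_i-y_i|^{\theta-1}\bigr),
\]
which is $L^1$ by Fubini, and the work is to show $H(\ell,m)\cdot L^{-\kappa}\le C\,g$ \emph{uniformly} in $L$ and in $(\ell,m)$. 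This is done by induction on $N$: one factors $H_{N+1}(\ell,m)=H_N(\tilde\ell,\tilde m)\cdot(\text{boundary row})$ and uses the interlacing inequality $\frac{(\ell_1-\ell_{i+1})(m_1-m_i)}{(m_1-\ell_{i+1}+1-\theta)(\ell_1-m_i+1)}\le 1$ to collapse all non-nearest-neighbor singularities into the nearest-neighbor ones. Your bulk/boundary-layer scheme would ultimately need the same estimate to show the boundary layer has uniformly small mass, so this inductive step is the substantive gap to fill.
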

\begin{proof} 
Throughout the proof we use that for $x \geq \min(\theta, 1)$ 
\begin{equation}\label{Sandwich}
\frac{\Gamma(x+ \theta)}{\Gamma(x)} = x^{\theta} \cdot \exp(O(x^{-1})),
\end{equation}
where the constant in the big $O$ depends on $\theta$ alone, see \cite{TE}. For clarity we split the proof into several steps.\\

{\bf \raggedleft Step 1.} In this step we show that we can find a constant $\hat{C}$ and depending on $(a_+ - a_-), N , \theta$ such that if $L (a_+ - a_-) > 1$ and $(\ell, m) \in \mathfrak{X}_{N,L}$ we have
\begin{align}\label{S6HBound}
\begin{split}
&H(\ell, m) \cdot L^{-[(N-1)^2  + (\theta - 1) \cdot N(N-1)]} \leq \hat{C} \mbox{ if } \theta \geq 1 \mbox{ and }H(\ell, m) \cdot L^{-[(N-1)^2  + (\theta - 1) \cdot N(N-1)]} \leq  \\
& \leq \hat{C}   \prod_{i = 1}^{N-1} \left[ \left(\frac{m_i - \ell_{i+1} + 1 - \theta}{L} \right)^{\theta - 1} + \left( \frac{\ell_i - m_i+1}{L} \right)^{\theta -1} \right] \mbox{ if } \theta \in (0,1), \mbox{ where }
\end{split}
\end{align}
\begin{equation}\label{HDef}
H(\ell, m) =  \prod_{1 \leq i < j \leq N}\frac{ (\ell_i - \ell_j)  \Gamma(m_i - \ell_j)}{ \Gamma(m_i - \ell_j + 1 - \theta)} \prod_{1 \leq i < j \leq N-1} (m_i - m_j)   \prod_{1 \leq i \leq j \leq N-1} \frac{\Gamma(\ell_i - m_j + \theta)}{\Gamma(\ell_i - m_j + 1)}.
\end{equation}

 Using $\Gamma(z+1) = z \Gamma(z)$ and (\ref{Sandwich}) we conclude that for $1 \leq i < j \leq N$ we have
\begin{equation}\label{mixed1}
 \frac{\Gamma(m_i - \ell_j)}{ \Gamma(m_i - \ell_j + 1 - \theta)}  = \frac{1}{m_i - \ell_j }   \frac{\Gamma(m_i - \ell_j + 1)}{ \Gamma(m_i - \ell_j + 1 - \theta)} = \frac{(m_i - \ell_j + 1 - \theta)^{\theta}}{m_i - \ell_j}  \exp (O(|m_i - \ell_j|^{-1} ) .
\end{equation}
Analogous considerations show that if $1 \leq i \leq j \leq N-1$ we have
\begin{equation}\label{mixed2}
  \frac{\Gamma(\ell_i - m_j + \theta)}{\Gamma(\ell_i - m_j + 1)} = \frac{1}{\ell_i - m_j + \theta} \cdot    \frac{\Gamma(\ell_i - m_j + \theta + 1)}{\Gamma(\ell_i - m_j + 1)} = \frac{(\ell_i - m_j + 1)^{\theta }}{\ell_i - m_j+ \theta} \cdot \exp (O(|\ell_i - m_j  + 1|^{-1} ) .
\end{equation}

If $\theta \geq 1$ we have from (\ref{HDef}), (\ref{mixed1}) and (\ref{mixed2}) that for some $\hat{C}_1 > 0$ depending on $\theta$ and $N$
\begin{equation}\label{HUB1}
H(\ell, m) \leq \hat{C}_1 \cdot  \prod_{1 \leq i < j \leq N} (\ell_i - \ell_j)  (m_i - \ell_j)^{\theta - 1}  \prod_{1 \leq i < j \leq N-1} (m_i - m_j)   \prod_{1 \leq i \leq j \leq N-1} (\ell_i - m_j + 1)^{\theta -1 }.
\end{equation}
Now each of the factors on the right side of (\ref{HUB1}) is upper bounded by $(a_+ - a_-)L + N\theta + 1$ and so (\ref{HUB1}) implies (\ref{S6HBound}) when $\theta \geq 1$.\\

We next suppose that $\theta \in (0,1)$. Observe that in this case we have
$$\frac{m_i - \ell_j + 1 - \theta}{m_i - \ell_j} \leq 1 + \frac{1 - \theta}{\theta} \mbox{ for $1 \leq i < j \leq N$ and } \frac{\ell_i - m_j + 1}{\ell_i - m_j+ \theta} \leq 1 + \frac{1 - \theta}{\theta}  \mbox{ for $1 \leq i \leq j \leq N-1$}.$$ 
Combining the latter with (\ref{HDef}), (\ref{mixed1}) and (\ref{mixed2}) we see that we can find a constant $\hat{C}_2 > 0$ depending on $\theta$ and $N$ such that 
\begin{align}\label{HUB2}
\begin{split}
&H(\ell, m) \leq \hat{C}_2 \cdot H_N(\ell,m) \mbox{ where }H_N(\ell,m) =  \prod_{1 \leq i < j \leq N} (\ell_i - \ell_j) (m_i - \ell_j + 1 - \theta)^{\theta - 1} \\
&  \times \prod_{1 \leq i < j \leq N-1} (m_i - m_j)  \prod_{1 \leq i \leq j \leq N-1} (\ell_i - m_j + 1)^{\theta -1 }.
\end{split}
\end{align}
We will prove that for each $N \geq 2$ we can find a constant $C_N > 0$ depending on $(a_+ - a_-), N , \theta$ such that if $(a_+ - a_-) \cdot L > 1$ and $(\ell, m) \in \mathfrak{X}_{N,L}$ we have
\begin{equation}\label{HUB3}
\begin{split}
 L^{-[(N-1)^2  + (\theta - 1) \cdot N(N-1)]}  H_N(\ell,m)  \leq C_N  \prod_{i = 1}^{N-1} \left[ \left(\frac{m_i - \ell_{i+1} + 1 - \theta}{L} \right)^{\theta - 1} \hspace{-5mm}+ \left( \frac{\ell_i - m_i+1}{L} \right)^{\theta -1} \right].
\end{split}
\end{equation}
The $\theta \in (0,1)$ case in (\ref{S6HBound}) is then a consequence of (\ref{HUB2}) and (\ref{HUB3}) and in the remainder we establish (\ref{HUB3}).\\

We prove (\ref{HUB3}) by induction on $N \geq 2$. If $N = 2$ we have by definition
\vspace{-1mm}
$$ H_2(\ell, m) \leq (\ell_1 - \ell_2 + 2 - \theta) \cdot (m_1 - \ell_2 + 1 - \theta)^{\theta - 1} \cdot (\ell_1 - m_1+1)^{\theta - 1} = $$
$$(m_1 - \ell_2 + 1 - \theta)^{\theta } \cdot (\ell_1 - m_1+1)^{\theta - 1} + (m_1 - \ell_2 + 1 - \theta)^{\theta - 1} \cdot (\ell_1 - m_1+1)^{\theta }.$$
We note that we can find a constant $C > 0$ depending on $\theta, (a_+ - a_-)$ such that for all $L \geq 1$
$$\left( \frac{m_1 - \ell_2 + 1 - \theta}{L} \right)^{\theta } \leq C \mbox{ and }\left( \frac{\ell_1 - m_1+1}{L} \right)^{\theta } \leq C.$$
Combining the last two statements then gives (\ref{HUB3}) for $N = 2$ with $C_2 = 2 C$.

Suppose we have proved (\ref{HUB3}) for $N$ and wish to establish it for $N+1$. Let us fix $(\ell, m) \in \mathfrak{X}_{N+1,L}$ and write $\ell = ( \ell_1, \tilde{\ell})$, $m = (m_1, \tilde{m})$ with $\tilde{\ell} = (\ell_2, \dots, \ell_{N+1})$ and $\tilde{m} = (m_2 , \dots, m_{N})$. Observe that $(\tilde{\ell}, \tilde{m}) \in \mathfrak{X}_{N,L}$ and also 
\vspace{-1mm}
\begin{align}\label{S6recursion}
\begin{split}
&H_{N+1}(\ell, m) = H_{N} (\tilde{\ell}, \tilde{m}) \cdot (\ell_1 - \ell_2) \cdot(m_1 - \ell_2 + 1 - \theta)^{\theta - 1} \cdot (\ell_1 - m_1 + 1)^{\theta - 1} \times \\
& \prod_{i = 2}^{N} (\ell_1 - \ell_{i+1})(m_1 - m_i)(m_1 - \ell_{i+1} + 1 - \theta)^{\theta - 1}(\ell_1 - m_i + 1)^{\theta - 1}  .
\end{split}
\end{align}
We now observe by the interlacing condition $\ell \succeq m$ we have
$$0 \leq (\ell_1 - m_1)(m_i - \ell_{i+1}) + (m_i - \ell_{i+1} + 1 - \theta) + (1-\theta)(\ell_1 - m_i ),$$
which implies
\begin{equation}\label{S6Interlace}
\frac{(\ell_1 - \ell_{i+1})(m_1 - m_i)}{(m_1 - \ell_{i+1} + 1 - \theta)(\ell_1 - m_i + 1)} \leq 1.
\end{equation}
Combining (\ref{S6recursion}) and (\ref{S6Interlace}) with the induction hypothesis for $H_{N}(\tilde{\ell}, \tilde{m})$ we conclude that 
\vspace{-1mm}
\begin{align}\label{HUB4}
\begin{split}
 &L^{-[N^2  + (\theta - 1) \cdot (N+1)N]}  H_{N+1}(\ell,m)  \leq C_N  \prod_{i = 2}^{N} \left[ \left(\frac{m_i - \ell_{i+1} + 1 - \theta}{L} \right)^{\theta - 1} \hspace{-5mm} + \left( \frac{\ell_i - m_i+1}{L} \right)^{\theta -1} \right] \times \\
&\left(\frac{\ell_1 - \ell_2}{L} \right)  \left(\frac{m_1 - \ell_{2} + 1 - \theta}{L} \cdot \frac{\ell_1 - m_1+1}{L} \right)^{\theta -1} \prod_{i = 2}^{N}\left(\frac{m_1 - \ell_{i+1} + 1 - \theta}{L}\cdot \frac{\ell_1 - m_i + 1}{L}\right)^{\theta}.
\end{split}
\end{align}
Also, we have
\vspace{-2mm}
$$\left(\frac{\ell_1 - \ell_2}{L} \right)  \left(\frac{m_1 - \ell_{2} + 1 - \theta}{L} \right)^{\theta - 1} \left( \frac{\ell_1 - m_1+1}{L} \right)^{\theta -1} \leq $$
$$  \left(\frac{m_1 - \ell_{2} + 1 - \theta}{L} \right)^{\theta -1} \left( \frac{\ell_1 - m_1+1}{L} \right)^{\theta} + \left(\frac{m_1 - \ell_{2} + 1 - \theta}{L} \right)^{\theta } \left( \frac{\ell_1 - m_1+1}{L} \right)^{\theta -1}.$$
The above inequality shows that we can find a constant $C > 0$ depending on $(a_+ - a_-), N , \theta$ such that the second line in (\ref{HUB4}) is upper bounded by
$$C \cdot \left[\left(\frac{m_1 - \ell_{2} + 1 - \theta}{L} \right)^{\theta -1} +\left(  \frac{\ell_1 - m_1+1}{L} \right)^{\theta -1}  \right],$$
which together with (\ref{HUB4}) proves (\ref{HUB3}) for $N+1$ with $C_{N+1} = C_N \cdot C$. The general result now proceeds by induction.\\

{\bf \raggedleft Step 2.} In this step we show that 
\begin{equation}\label{PFConv}
\lim_{L \rightarrow \infty} L^{-[2N - 1 + (N-1)^2  + (\theta - 1) \cdot N(N-1)]}  \cdot   Z_L^d = Z^c.
\end{equation}

Using the functional equation for the Gamma function $\Gamma(z + 1) = z\Gamma(z)$ we can write
\vspace{-1mm}
\begin{equation}\label{PLDefv2}
\begin{split}
&\mathbb{P}_{N,L}(\ell, m) = (Z^d_L)^{-1}\cdot H(\ell, m) \cdot \prod_{i = 1}^N e^{-N \theta V^t(\ell_i/L)} \cdot  \prod_{i = 1}^{N-1} e^{-N \theta V^b(m_i/L)},
\end{split}
\end{equation}
with $H(\ell,m)$ as in (\ref{HDef}). Equations (\ref{mixed1}) and (\ref{mixed2}) imply that for each fixed $(x,y) \in \mathcal{G}$  
we have
\begin{equation}\label{S6probAss}
\lim_{L \rightarrow \infty} L^{-[ (N-1)^2  + (\theta - 1) \cdot N(N-1)]} H(\ell^L,m^L) =  \hspace{-2mm} \prod_{1 \leq i < j \leq N} \hspace{-2mm}(x_j - x_i)  \hspace{-4mm}\prod_{1 \leq i < j \leq N-1}  \hspace{-4mm}(y_j - y_i) \prod_{i = 1}^{N-1} \prod_{j = 1}^N |y_i - x_j|^{\theta - 1}, 
\end{equation}
where $(\ell^L, m^L)$ is a sequence of elements such that $\lambda_{N - i+ 1}^L = \lfloor x_i L \rfloor$ for $i = 1, \dots ,N$ and $\mu_{N - i}^L = \lfloor y_i L \rfloor$ for $i = 1, \dots, N-1$.

Using (\ref{PLDef}) and (\ref{S6Coord}) we have
\begin{equation}\label{discretePF}
\begin{split}
&L^{-[2N - 1 + (N-1)^2  + (\theta - 1) \cdot N(N-1)]}  \cdot Z_L^d  =  \int_{ \mathbb{R}^{2N - 1}} f_L(x,y) dx dy,
\end{split}
\end{equation}
where $dxdy$ stands for $dx_1dx_2\cdots dx_N dy_1\cdots dy_{N-1}$ and is the Lebesgue measure on $\mathbb{R}^{2N - 1}$, and also
\begin{equation}\label{IntegrandCont}
\begin{split}
&f_L(x,y) =  L^{-[(N-1)^2  + (\theta - 1) \cdot N(N-1)]}H(\ell,m) \cdot \prod_{i = 1}^N e^{-N \theta V^t(\ell_i/L)} \cdot  \prod_{i = 1}^{N-1} e^{-N \theta V^b(m_i/L)} \\
\end{split}
\end{equation}
 if there exist $\lambda_1, \dots, \lambda_N$, $\mu_1, \dots, \mu_{N-1} \in \mathbb{Z}$ with  $\lceil a_- L \rceil \leq \lambda_N \leq \mu_{N-1} \leq \lambda_{N-2} \leq \cdots \leq \mu_1 \leq \lambda_1 \leq \lfloor a_+ L \rfloor$  and $\lambda_i \leq Lx_{N- i +1} < \lambda_i + 1$ for $i = 1,\dots, N$ and $\mu_i \leq Ly_{N-i} < \mu_i + 1$ for $i = 1,\dots, N-1$. If they do not exist we set $f_L(x,y) = 0$. It follows from (\ref{S6probAss}) that $f_L(x,y)$ converge pointwise to $Z_c \cdot f(x,y)$ almost everywhere on $\mathbb{R}^{2N - 1}$. 

If $\theta \geq 1$ then from (\ref{S6HBound}) and the boundedness of $V^t, V^b$ we see that there is $C > 0$ such that
\begin{equation}\label{S6GUBv1}
C   \geq f_L(x,y).
\end{equation}
So (\ref{PFConv}) follows from (\ref{discretePF}) and the a.e. pointwise convergence of $f_L(x,y)$ to $Z_c \cdot f(x,y)$ after an application of the Bounded convergence theorem.\\

Suppose next that $\theta \in (0,1)$. Let $g(x,y)$ be compactly supported on $ \in [a_- - 1, a_+ + 1]^{2N - 1}$, where it is given by
\begin{equation}\label{S6Defg}
g(x,y) = \prod_{i = 1}^{N-1} \left[ |y_i - x_{i+1} |^{\theta - 1} + |x_i - y_i|^{\theta -1} \right] .
\end{equation}
An application of Fubini's theorem and the integrability of $|x|^{\theta - 1}$ near $0$ implies that $g(x,y) \in L^1(\mathbb{R}^{2N-1}).$
We claim that there is a $C > 0$ depending on $N, \theta, (a_+ - a_-)$ such that for all $(x,y) \in \mathbb{R}^{2N-1}$ and $L$ such that $L (a_+ - a_-) > 1$ we have
\begin{equation}\label{S6GUB}
C \cdot g(x,y)  \geq f_L(x,y).
\end{equation}
We prove (\ref{S6GUB}) in the next step. For now we assume its validity and conclude the proof of (\ref{PFConv}). 

As before, $f_L(x,y)$ converges pointwise a.e. to $Z_c \cdot f(x,y)$ and by (\ref{S6GUB}) it is bounded by the integrable function $C \cdot g(x,y)$. Consequently, (\ref{PFConv}) follows after an application of the Dominated convergence theorem.\\

{\bf \raggedleft Step 3.} In this step we prove (\ref{S6GUB}). Let us fix $\lambda$ and $\mu$ such that $\lceil a_- L \rceil \leq \lambda_N \leq \mu_{N-1} \leq \lambda_{N-2} \leq \cdots \leq \mu_1 \leq \lambda_1 \leq \lfloor a_+ L \rfloor$. We also let $\ell$ and $m$ be as in (\ref{S6Coord}) for this choice of $\lambda, \mu$. Sicne $V^t$ an $V^b$ are bounded we see that to show (\ref{S6GUB}) it suffices to prove that for some $\tilde{C} > 0$ we have
\begin{align}\label{S6probBoundP1}
\begin{split}
& L^{-[(N-1)^2  + (\theta - 1) \cdot N(N-1)]} \cdot H(\ell,m)  \leq \tilde{C} \cdot g(x,y), \mbox{ provided $(x,y) \in Q_L = Q_L(\lambda, \mu)$ where }\\
&L \cdot Q_L = [\lambda_N , \lambda_{N} + 1] \times \cdots \times [\lambda_1 , \lambda_{1} + 1]   \times   [\mu_{N-1} , \mu_{N-1}+1] \times \cdots \times [\mu_1, \mu_{1}+1].
\end{split}
\end{align}
Indeed, (\ref{S6probBoundP1}) implies (\ref{S6GUB}) whenever $(x,y) \in Q_L$ and for $(x,y)$ not belonging to such a cube $f_L(x,y) = 0$ by definition so that (\ref{S6GUB}) holds trivially.\\

By (\ref{S6HBound}) we have for some $\hat{C} > 0$ that
\begin{equation}\label{S6HBoundP2}
\begin{split}
H(\ell, m) \cdot L^{-[(N-1)^2  + (\theta - 1) \cdot N(N-1)]} \leq  \hat{C}   \prod_{i = 1}^{N-1} \left[ \left(\frac{\mu_i - \lambda_{i+1} + 1}{L} \right)^{\theta - 1} + \left( \frac{\lambda_i - \mu_i+1}{L} \right)^{\theta -1} \right].
\end{split}
\end{equation}
Notice that for each $i = 1, \dots, N-1$
\begin{equation*}
\begin{split}
&\inf_{(x,y) \in Q_L} |y_i - x_{i+1}|^{\theta - 1} = \left(\frac{  \lambda_{N  - i} - \mu_{N-i} + 1}{L} \right)^{\theta - 1} \hspace{-3mm} \mbox{, } \inf_{(x,y) \in Q_L} |x_i - y_i|^{\theta - 1} = \left(\frac{\mu_{N-i} - \lambda_{N  - i + 1} + 1}{L} \right)^{\theta - 1}.\\
\end{split}
\end{equation*}
The above and (\ref{S6HBoundP2}) imply  (\ref{S6probBoundP1}) with $\tilde{C} = \hat{C}$.\\

{\bf \raggedleft Step 4.} Let $h(x,y)$ be a bounded continuous function on $\mathcal{G}$ and define $h(x,y) = 0$ outside of $\mathcal{G}$. We claim
\begin{equation}\label{LimitRectangle}
\lim_{ L \rightarrow \infty} \mathbb{E}_{N,L} \left[ h(L^{-1} \cdot \ell, L^{-1} \cdot m) \right] = \int_{\mathbb{R}^{2N-1}} f(x,y) h(x,y) dx dy.
\end{equation}
The weak convergence follows from (\ref{LimitRectangle}).

By definition we have 
\begin{equation}\label{LimitRectangle2}
\mathbb{E}_{N,L} \left[ h(L^{-1} \cdot \ell, L^{-1} \cdot m) \right]  =\frac{1}{Z_L^d \cdot L^{-[2N - 1 + (N-1)^2  + (\theta - 1) \cdot N(N-1)]} }  \cdot \int_{\mathbb{R}^{2N - 1}} f_L(x,y) \cdot h_L(x, y) ,
\end{equation}
where $h_L(x,y)$ is defined through $h_L(x,y) = h(L^{-1} \cdot \ell, L^{-1} \cdot m),$  $(\ell, m)$ are as in (\ref{S6Coord}) for $\lambda_i = \lfloor x_{N+1 - i} L \rfloor$ for $i = 1,\dots, N$ and and $\mu_i = \lfloor y_{N - i} L \rfloor$ for $i = 1, \dots, N-1$. Using (\ref{PFConv}) and (\ref{LimitRectangle2}) we see that (\ref{LimitRectangle}) would follow if we can show that 
\begin{equation}\label{LimitRectangle3}
\lim_{ L \rightarrow \infty} \int_{\mathbb{R}^{2N - 1}} f_L(x,y) \cdot h_L(x, y)  = \int_{\mathbb{R}^{2N-1}} f(x,y) h(x,y) dx dy.
\end{equation}
It follows from (\ref{S6probAss}) that $f_L(x,y) \cdot h_L(x,y)$ converge pointwise to $Z_c \cdot f(x,y) \cdot h(x,y)$ almost everywhere on $\mathbb{R}^{2N - 1}$, while by (\ref{S6GUBv1}) and (\ref{S6GUB}) we know that $|f_L(x,y) \cdot h_L(x,y)|$ is upper bounded by $C \cdot (1 + g(x,y))$ for a sufficiently large $C > 0$, where $g(x,y)$ is as in (\ref{S6Defg}). We can thus conclude (\ref{LimitRectangle3}) from the Dominated convergence theorem. 
\end{proof}

%

\subsection{Single level loop equations} \label{Section6.3}
In this section we deduce Proposition \ref{SingleLevelLoppThm} from a limit of the single level Nekrasov's equations -- Proposition  \ref{SingleLevelNekrasov}. We start with some notation that will be useful also in the next section.

%
\subsubsection{Deformed measures}\label{Section6.3.1}
We introduce a similar construction to the one from Section \ref{Section3.2}. The essential difference is that here we rescale the particle locations by $L$, which is now decoupled from $N$ (the number of particles on the top level). 

Take $2m+2n$ parameters $\t^1 = (t^1_1,\dots,t^1_m)$, $\vm^1 = (v^1_1,\dots,v^1_m)$, $\t^2 = (t_1^2, \dots, t^2_n)$, $\vm^2 = (v^2_1, \dots, v^2_n)$ and such that $v^i_a + t^i_a - y \neq 0$ for all meaningful $i,a$ and all $y \in [a_- ,  a_+ + N \theta L^{-1} ]$, and let the deformed distribution $\mathbb{P}_{N,L}^{\t, \vm}$  be defined as
\begin{equation} \label{eq:distrgen_deformed}
\begin{split}
\mathbb{P}_{N,L}^{\t, \vm}(\ell, m)=Z(\t, \vm)^{-1} \mathbb{P}_{N,L}(\ell, m)\prod_{i =1}^{N} \prod^m_{a=1}  \left(  \hspace{-1mm} 1+ \frac{t^1_a}{v^1_a-\ell_i/L} \right) \cdot \prod_{i =1}^{N-1}\prod^n_{a=1}  \left(  \hspace{-1mm} 1+ \frac{t^2_a}{v^2_a-m_i/L} \right),
\end{split}
\end{equation}
where $\mathbb{P}_{N,L}$ is as in (\ref{PLDef}). If $m = n = 0$ we have $\mathbb P^{\t, \vm}_{N,L} = \mathbb{P}_{N,L}$. In general, $\mathbb P^{\t, \vm}_{N,L}$ may be a complex-valued measure but we always choose the normalization constant $Z(\t, \vm)$ so that $\sum_{\ell, m} \mathbb P^{\t, \vm}_{N,L}(\ell, m) = 1$. In addition, we require that the numbers $t^i_a$ are sufficiently close to zero so that $Z(\t, \vm) \neq 0$. 

If $(\ell, m)$ is distributed according to \ref{eq:distrgen_deformed} we denote 
\begin{equation}\label{GLDef}
 G^{t}_L(z)= \sum_{i = 1}^N \frac{1}{z- \ell_i/ L } \mbox{ and } G^b_L(z) = \sum_{i = 1}^{N-1} \frac{1}{z - m_i/L}. 
\end{equation}

The definition of the deformed measure $\mathbb P_{N,L}^{\t, \vm}$ is motivated by the following observation. 
\begin{lemma}\label{S6LemCum1} Let $\xi$ be a bounded random variable. For any $m, n\geq 0$ we have
\begin{equation}\label{S6eq:derivative_k}
\frac{\partial^{m+n}}{\partial t^1_1 \cdots \partial t^1_m \partial t^2_1 \cdots \partial t^2_n}\mathbb E_{\mathbb P_{N,L}^{\t, \vm}}\left[\xi\right]\bigg\rvert_{t^i_a = 0} = M( \xi, G^t_L(v^1_1),\dots ,G^t_L(v^1_m),G^b_L(v^2_1), \dots, G^b_L(v^2_n)),
\end{equation}
where the right side is the joint cumulant of the given random variables with respect to $\mathbb P_{N,L}$.
\end{lemma}
The proof is the same as that of Lemma \ref{LemCum1} so we omit it.

%
\subsubsection{Asymptotic expansions}\label{Section6.3.2}
In this section we derive asymptotic expansions that are analogues of those in Section \ref{Section4.2}. Below we will write $\xi_L(z)$ to mean a generic random analytic function on $\mathbb{C} \setminus [a_- ,  a_+ ]$, which is almost surely $O(1)$ over compact subsets of $\mathbb{C} \setminus [a_- ,  a_+ ]$. For $x, y \in [-\theta - 1, \theta + 1]$ by a direct Taylor series expansion we have
\begin{equation}\label{S6AEP1v1}
\begin{split}
 \prod_{ i = 1}^N \frac{Lz - \ell_i + x}{Lz - \ell_i + y} = 1 + \frac{(x-y) G^t_L(z) }{L} + \frac{(x^2 - y^2) \partial_z G^t_L(z)}{2L^2} + \frac{(x-y)^2[G^t_L]^2(z)}{2L^2}  + \frac{\xi_L(z)}{L^3}; 
\end{split}
\end{equation}
\begin{equation}\label{S6AEP1v2}
\begin{split}
 \prod_{ i = 1}^{N-1} \frac{Lz - m_i + x}{Lz - m_i + y} = 1 + \frac{(x-y) G^b_L(z) }{L} + \frac{(x^2 - y^2) \partial_z G^b_L(z)}{2L^2} + \frac{(x-y)^2[G^b_L]^2(z)}{2L^2}  + \frac{\xi_L(z)}{L^3}; 
\end{split}
\end{equation}
\begin{equation}\label{S6AEP1v3}
\begin{split}
\sum_{i = 1}^N \frac{1}{Lz - \ell_i + x} - \sum_{i = 1}^{N-1} \frac{1}{Lz - m_i + y}  = \frac{G_L^t(z)}{L} + \frac{x \partial_z G^t_L(z)}{L^2} - \frac{G_L^b(z)}{L} - \frac{y \partial_z G^b_L(z)}{L^2} + \frac{\xi_L(z)}{L^3}.
\end{split}
\end{equation}

%
\subsubsection{Single level Nekrasov's equation}\label{Section6.3.3} Let us define
\begin{align}\label{S6SLNE}
\begin{split}
&R_L(Lz) = P_L(z)  \Phi_L^-(Lz) A^{\t,\vm}_1(z)  \mathbb{E}_{\mathbb P^{\t,\vm}_{N,L}}\left[ \prod_{ i = 1}^N \frac{Lz - \ell_i - \theta}{Lz - \ell_i} \right]  \\
&+P_L(z) \Phi_L^+(Lz)  B^{\t,\vm}_1(z)  \mathbb{E}_{\mathbb P^{\t,\vm}_{N,L}}\left[ \prod_{ i = 1}^N \frac{Lz - \ell_i - 1 + \theta} {Lz - \ell_i - 1} \right],  
\end{split}
\end{align}
where $P_L(z) =  (Lz - \lceil a_-L \rceil )(Lz - \lfloor a_+ L \rfloor - (N-1) \theta)$, the expectations are with respect to the deformed measures in Section \ref{Section6.3.1} with $V^b \equiv 0$, $n = 0$, $v^1_a = v_a$ for $a = 1, \dots, m$ and
$$A^{\t,\vm}_1(z) = \prod_{a = 1}^m \left[ v_a +t_a - z + \frac{1}{L} \right] \left[ v_a - z \right], 
B^{\t,\vm}_1(z) =  \prod_{a = 1}^m \left[ v_a +t_a - z \right] \left[ v_a - z + \frac{1}{L}\right].$$
Moreover the functions $\Phi_L^{+}$ and $\Phi_L^{-}$ are given by
\begin{equation}\label{S6DefPhi}
\Phi_L^{-}(z) = 1 \mbox{ and }\Phi_L^{+}(z) = \exp \left[ - N\theta (V^t(z/L) - V^t(z/L - 1/L)) \right].
\end{equation}

We claim that $R_L(Lz)$ is analytic in $\mathcal{M}$. Observe that the above choice ensures
\begin{equation*}
\begin{split}
&\frac{w(Lz;L)}{w(Lz-1;L)}=\frac{\Phi_L^+(Lz)}{\Phi_L^-(Lz)},
\end{split}
\end{equation*} 
and so we are in the setup of Proposition \ref{SingleLevelNekrasov} (upto a trivial shift). We conclude that
$$\frac{R_L(Lz) }{P_L(z)} = \tilde{R}_L(Lz) + \frac{r^-}{Lz - \lceil a_-L \rceil } + \frac{r^+}{Lz - \lfloor a_+ L \rfloor - (N-1) \theta},$$
with $\tilde{R}_L(Lz)$ analytic in $\mathcal{M}$ and $r^{\pm}$ are as in the statement of the proposition. But now multiplying the above by $P_L(z)$ cancels the possible poles at $\lceil a_-L \rceil$ and $\lfloor a_+ L \rfloor + (N-1) \theta$ and so $R_L(Lz)$ is also analytic in $\mathcal{M}$.

%
\subsubsection{Proof of Proposition \ref{SingleLevelLoppThm}} We continue with the same notation as in Sections \ref{Section6.3.1} -\ref{Section6.3.3} and in Proposition \ref{SingleLevelLoppThm}. We fix $m \geq 0$ and points $v_0, \dots, v_m \in \mathbb{C} \setminus [a_-, a_+]$. For a set $A \subseteq \llbracket 1, m\rrbracket$ and a bounded random variable $\xi$ we write $M(\xi; A)$ for the joint cumulant of $\xi$ and $G^t_L(v_a)$ for $a \in {A}$. If $A = \varnothing$ the latter notation stands for $\mathbb{E}[\xi]$. 

We start by dividing both sides of (\ref{S6SLNE}) by $2\pi \i  \cdot (z - v_0)\cdot B^{\t,\vm}_1 $ and integrating over $\Gamma$. This gives
\begin{align*}
\begin{split}
&\frac{1}{2\pi \i }\int_{\Gamma} \frac{dz R_L(Lz) }{ (z - v_0)\cdot B_1 }= \frac{1}{2\pi \i }\int_{\Gamma} \frac{dz P_L(z) \Phi^{-}_L(Lz) A_1}{(z - v_0) \cdot B_1}  \cdot \mathbb{E} \left[ \prod_{i = 1}^N\frac{Lz- \ell_i -\theta}{Lz - \ell_i} \right]  \\
&+ \frac{1}{2\pi \i }\int_{\Gamma} \frac{dz P_L(z) \Phi^{+}_L(Lz)  }{ z - v_0} \cdot   \mathbb{E} \left[ \prod_{i = 1}^{N}\frac{Lz- \ell_i + \theta - 1}{Lz - \ell_i - 1} \right],
\end{split}
\end{align*}
where the expectation is with respect to $\mathbb P^{\t,\vm}_{N,L}$ and we have suppressed the dependence on $A_1$ and $B_1$ on $\t, \vm$. By Cauchy's theorem the left side above is zero. We next apply the operator $\mathcal{D} : = \partial_{t_1} \cdots \partial_{t_m}$ to both sides and set $t_a = 0$ for $a = 1, \dots, m$. Notice that when we perform the differentiation to the right side some of the derivatives could land on $\frac{A_1}{B_1}$ and some on the (measure inside of the) expectation. We will split the result of the differentiation based on subests $A$, where $A$ consists of indices $a$ in $\{1, \dots, m\}$ such that $\partial_{t_a} $ differentiates the expectation. The result of this procedure is as follows
\begin{align*}
\begin{split} 
& 0 =  \frac{1}{2\pi \i }\int_{\Gamma} dz  \sum_{A \subseteq \llbracket 1,m \rrbracket} \prod_{a \in A^c}  \left[ \frac{- L^{-1}}{ (v_a -z)(v_a - z +L^{-1})}\right]   \frac{P_L(z)  \Phi_L^-(Lz) }{z - v_0}M \left(\prod_{i = 1}^N\frac{Lz- \ell_i -\theta}{Lz - \ell_i} ; A\right)  \\
&+  \frac{P_L(z)  \Phi_L^+(Lz) }{ z - v_0 }  M\left( \prod_{i = 1}^{N}\frac{Lz- \ell_i +\theta - 1}{Lz - \ell_i - 1} ; \llbracket 1, m \rrbracket \right),
\end{split}
\end{align*}
where $A^c = \llbracket 1,m \rrbracket \setminus A$. We may now use (\ref{S6AEP1v1}) to rewrite the above as
\begin{align*}
\begin{split} 
& O(L^{-1}) = \frac{1}{2\pi \i}\int_{\Gamma} dz \frac{\Phi_L^+(Lz)  P_L(z) }{ z - v_0 } M\left( \frac{\theta G^t_L(z)}{L}  + \frac{[\theta^2 -2 \theta] \partial_z G^t_L(z)}{2L^2} + \frac{\theta^2 [G^t_L]^2(z)}{2L^2} ; \llbracket 1, m \rrbracket \right)  \\
& + \hspace{-3mm} \sum_{A \subseteq \llbracket 1,m \rrbracket} \prod_{a \in A^c}\hspace{-1mm}   \left[ \frac{- L^{-1}}{ (v_a -z)(v_a - z +L^{-1})}\right]  \hspace{-1mm}  \frac{\Phi_L^-(Lz) P_L(z) }{ z - v_0}M \left( \hspace{-1mm}  -\frac{\theta G^t_L(z) }{L} + \frac{\theta^2 \partial_z G^t_L(z)}{2L^2} + \frac{\theta^2[G^t_L]^2(z)}{2L^2} ; A\right)\hspace{-1mm} ,
\end{split}
\end{align*}
where we have removed the constants $1$ from the cumulants using the following rationale. If the joint cumulant is of two or more variables, we can remove the $1$ as joint cumulants remain unchanged by shifts by constants. If the joint cumulant is of one variable then the term involving $1$ integrates to $0$ by Cauchy's theorem.

Notice that the integral of the terms on the second line above are $O(L^{-1})$ unless $|A^c| = 0$ or $|A^c| = 1$. Consequently we can simplify the above as
\begin{align*}
\begin{split} 
& O(L^{-1}) = \frac{1}{2\pi \i}\int_{\Gamma}dz  \frac{\Phi_L^+(Lz)  P_L(z) }{ z - v_0 } M\left( \frac{\theta G^t_L(z)}{L}  + \frac{[\theta^2 -2 \theta] \partial_z G^t_L(z)}{2L^2} + \frac{\theta^2 [G^t_L]^2(z)}{2L^2} ; \llbracket 1, m \rrbracket \right) \\
& + \frac{\Phi_L^-(Lz) P_L(z) }{ z - v_0}M \left(  -\frac{\theta G^t_L(z) }{L} + \frac{\theta^2 \partial_z G^t_L(z)}{2L^2} + \frac{\theta^2[G^t_L]^2(z)}{2L^2} ; \llbracket 1, m \rrbracket \right)  \\
&  - \sum_{a = 1}^m      \hspace{-1mm} \frac{\Phi_L^-(Lz) P_L(z) }{ L(v_a -z)(v_a - z +L^{-1}) (z - v_0)}M \left(   \hspace{-1mm}-\frac{\theta G^t_L(z) }{L} + \frac{\theta^2 \partial_z G^t_L(z)}{2L^2} + \frac{\theta^2[G^t_L]^2(z)}{2L^2} ; \llbracket 1, m \rrbracket \setminus \{a\} \right).
\end{split}
\end{align*}
In view of (\ref{S6DefPhi}) we have $\Phi_L^{-}(z) = 1$,
\begin{equation*}
\Phi_L^{+}(Lz)  = 1 -  \frac{N \theta \partial_z V^t(z)}{L} + O(L^{-2}) \mbox{ and } \frac{- L^{-1}}{ (v_a -z)(v_a - z +L^{-1})} = \frac{1}{L(v_a -z)^2} + O(L^{-2}),
\end{equation*}
which allows us to simplify the above to
\begin{align*}
\begin{split} 
 O(L^{-1}) = &\frac{1}{2\pi \i}\int_{\Gamma}dz  \frac{ \theta^2  P_L(z)  }{ (z - v_0)\cdot L^2 } M\left(  - N \partial_z V^t(z) + [1- \theta^{-1}]\partial_z G^t_L(z) + [G^t_L]^2(z) ; \llbracket 1, m \rrbracket \right)  \\
&  + \sum_{a = 1}^m       \frac{ \theta  P_L(z) M \left(   G^t_L(z)  ; \llbracket 1, m \rrbracket \setminus \{a\} \right) }{(v_a -z)^2 (z - v_0) \cdot  L^2}.
\end{split}
\end{align*}
After taking the limit $L \rightarrow \infty$ above (and applying Proposition \ref{prop_cont_limit}) we arrive at
\begin{align}\label{S6LNE}
\begin{split} 
&0 = \frac{- N\theta^2 }{2\pi \i}\int_{\Gamma}dz  \frac{(z - a_-)(z-a_+) \partial_zV^t(z)  }{z - v_0}  \cdot \M \left(  z, \llbracket 1, m \rrbracket \right)  + \\
&  \frac{\theta^2}{2\pi \i}\int_{\Gamma}dz  \frac{(z - a_-)(z-a_+)}{z - v_0}\left[ [1 - \theta^{-1}]\partial_z \M \left(  z,\llbracket 1, m \rrbracket \right) + \M \left( [ G_c^t(z)]^2; \llbracket 1, m \rrbracket \right)  \right] + \\
&  \frac{\theta}{2\pi \i}\int_{\Gamma}dz  \sum_{a = 1}^m    \frac{(z - a_-)(z-a_+) }{ (z - v_0)(v_a -z)^2} \cdot \M \left(  z , \llbracket 1, m \rrbracket \setminus \{a\} \right),
\end{split}
\end{align}
where we write $\M (\xi; A)$ for the joint cumulant of $\xi$ and the variables $G^t_c(v_a)$ for $a \in A$. As usual if $A = \varnothing$ this stands for $\mathbb{E}[\xi]$. \\

If $m = 0$ then the third line in (\ref{S6LNE}) is zero and we can evaluate the second line as minus the residues at $z = v_0$ and $z = \infty$, using that 
$$\frac{(z - a_-)(z-a_+)}{z - v_0} \partial_z \M \left(  z \right)  \sim - N z^{-1} \mbox{ and }\frac{(z - a_-)(z-a_+)}{z - v_0} \M \left( [ G_c^t(z)]^2\right) \sim N^2 z^{-1} \mbox{ as } |z| \rightarrow \infty,$$
to get
\begin{align}\label{S6AFR}
\begin{split}
0 = &\frac{- N\theta^2 }{2\pi \i}\int_{\Gamma}dz  \frac{(z - a_-)(z-a_+) \partial_zV^t(z)  }{z - v_0}  \cdot \M \left(  z, \llbracket 1, m \rrbracket \right) \\
& - \theta^2 (v_0 - a_-)(v_0-a_+) \left( [1- \theta^{-1}]\partial_z \M \left(  v_0 \right) +  \M \left( [ G_c^t(v_0)]^2\right) \right) + \theta^2 N^2 - N[\theta^2- 1], 
\end{split}
\end{align}
which is the same as (\ref{S6rank1}) once we divide by $-\theta^2(v_0 - a_-)(v_0-a_+)$. \\

In the remainder we assume $m \geq 1$. We may now compute the integrals on the second and third lines of (\ref{S6LNE}) as minus the residues at $z = v_0$ and $z = v_a$ -- notice there are no residues at infinity. Further we can divide both sides by $- \theta^2 (v_0 - a_-)(v_0 - a_+)$. The result is
\begin{align}\label{S6LNEv2}
\begin{split} 
&0 = \frac{- N}{2\pi \i}\int_{\Gamma}  \frac{(z - a_-)(z-a_+)}{(v_0 - z)(v_0 - a_-)(v_0 - a_+)}  \M \left( \partial_zV^t(z) G^t_c(z); \llbracket 1, m \rrbracket \right) + \\
&  + \M(G_c^t(v_0)^2;\llbracket 1, m \rrbracket)  +  [1-  \theta^{-1}] \M(  \partial_z G(v_0)  ;\llbracket 1, m \rrbracket ) - \\
& + \theta^{-1} \sum_{a = 1}^m  \partial_{v_a}  \left[   \frac{\M\left(  v_0,  \llbracket 1,m \rrbracket \setminus \{a \} \right)}{v_0-v_a}  - \frac{ ( v_a-a_-)(v_a - a_+) \M\left(  v_a, \llbracket 1,m \rrbracket \setminus \{a \} \right) }{(v_0 - v_a)(v_0 - a_+)(v_0 - a_+) } \right].
\end{split}
\end{align}
We see that (\ref{S6LNEv2}) is the same as (\ref{S6rankm}) once we use
$$ \M(G_c^t(v_0)^2;\llbracket 1, m \rrbracket) = \M(v_0, v_0,\llbracket 1, m \rrbracket) + \sum_{J \subseteq \llbracket 1, m \rrbracket} \M(v_0, J) \cdot \M(v_0, \llbracket 1,m \rrbracket \setminus J) $$
which follows from the more general statement
\begin{equation}\label{RemainingCLE2}
M(XY, X_1, \cdots, X_m) =  M(X,Y, X_1, \cdots, X_m) + \sum_{J \subseteq \llbracket 1, m \rrbracket} M(X; J) \cdot M(Y; \llbracket 1,m \rrbracket \setminus J),
\end{equation}
which in turn is a special case of Malyshev's formula, see e.g. \cite[equation (3.2.8)]{Taqqu} (one needs to set $b = \{1,2\}, \{3\}, \dots, \{m+2\}$ in that formula). This suffices for the proof.

%

\subsection{Two level loop equations}\label{Section6.4} We go back to the setup of Section \ref{Section6.1}, i.e. $V^b(z)$ is again an arbitrary analytic function in $\mathcal{M}$.

Let us as before set $P_L(z) = (Lz - \lceil a_-L \rceil )(Lz - \lfloor a_+ L \rfloor - (N-1) \theta)$. For $m, n \geq 0$ and $v_1^1, \dots, v_m^1, v^2_1, \dots, v_n^2 \in \mathbb{C} \setminus [a_-, a_+]$ and $\theta > 0$ we define
\begin{align}\label{TLNE1}
\begin{split}
& R^\theta_{L}(Lz) = P_L(z) \phi^{t}_L(Lz) \cdot A^{\t, \vm}_1(z) \cdot B^{\t, \vm}_2(z) \cdot \mathbb{E}_{\mathbb P_{N,L}^{\t,\vm}} \left[ \prod_{i = 1}^N\frac{Lz- \ell_i -\theta}{Lz - \ell_i} \right] + \\
& P_L(z)  \phi^{b}_L(Lz) \cdot B^{\t, \vm}_1(z) \cdot A^{\t, \vm}_2(z) \cdot  \mathbb{E}_{\mathbb P^{\t,\vm}_{N,L}} \left[ \prod_{i = 1}^{N-1}\frac{Lz- m_i + \theta - 1}{Lz - m_i - 1} \right] + \\
& +  P_L(z)  \phi_L^{m}(Lz) \cdot B^{\t, \vm}_1 (z) \cdot B^{\t, \vm}_2(z) \cdot \mathbb{E}_{\mathbb P_{N,L}^{\t,\vm}} \left[  \Pi^\theta_1(Lz)  \right],
\end{split}
\end{align}
where the expectations are with respect to the deformed measures in Section \ref{Section6.3.1} and 
\begin{align}
\begin{split}
&A^{\t, \vm}_1(z) = \prod_{a = 1}^m \left[ v_a^1 +t_a^1 - z + \frac{1}{L} \right] \left[ v_a^1 - z \right], B^{\t, \vm}_1(z) =  \prod_{a = 1}^m \left[ v_a^1 +t_a^1 - z \right] \left[ v_a^1 - z + \frac{1}{L}\right] \\
&A^{\t, \vm}_2(z) = \prod_{a = 1}^n \left[ v_a^2 +t_a^2 - z \right] \left[ v_a^2 - z + \frac{1}{L} \right], B^{\t, \vm}_2(z) =  \prod_{a = 1}^n \left[ v_a^2 +t_a^2 - z + \frac{1}{L} \right] \left[ v_a^2 - z\right].
\end{split}
\end{align}
Moreover, $\phi_L^t$, $\phi_L^b$ and $\phi_L^m$ are given by 
\begin{align}\label{S6Phi2L}
\begin{split}
&\phi_L^{t}(z) =  \exp \left[ N\theta (V^t(z/L) - V^t(z/L- L^{-1}))  \right], \phi_L^{m}(z) = 1,\\
& \phi_L^{b}(z) =  \exp \left[ N\theta (V^b(z/L -  L^{-1}) - V^b(z) )  \right],
\end{split}
\end{align}
and 
\begin{equation}\label{S6Pi}
\begin{split}
\Pi^\theta_1(z) =  \begin{cases} \frac{\theta}{1-\theta} \prod_{i = 1}^N \frac{z- \ell_i -\theta}{z - \ell_i - 1}  \prod_{i = 1}^{N-1}\frac{z- m_i + \theta - 1}{z - m_i} &\mbox{ if } \theta \neq 1, \\
 \sum_{i = 1}^N \frac{1}{z - \ell_i - 1} - \sum_{i = 1}^{N-1} \frac{1}{z - m_i} &\mbox{ if } \theta = 1.
\end{cases} \\
\end{split}
\end{equation}

Observe that the above choice ensures
\begin{equation*}
\begin{split}
&\frac{\phi_L^t(Lz)}{ \phi_L^m(Lz)} =   \frac{w(Lz-1;L)}{w(Lz;L)}, \hspace{2mm}  \frac{\phi_L^b({Lz})}{\phi_L^m({Lz})} =  \frac{\tau({Lz};L)}{\tau({Lz}-1;L)},
\end{split}
\end{equation*} 
and so we conclude by Theorems \ref{TN1} and \ref{TN1Theta1} that $R^\theta_L(Lz)$ is analytic in $\mathcal{M}$ from the beginning of Section \ref{Section6.1}. To be more specific (\ref{TLNE1}) is obtained from (\ref{S2mN}) (when $\theta \neq 1$) and (\ref{S2mNTheta1}) (when $\theta = 1$) by multiplying both sides by $P_L(z)$, and moving the terms 
$$\frac{r^-_1 P_L(z)}{Lz - \lceil a_-L \rceil } + \frac{r^+_1 P_L(z)}{Lz - \lfloor a_+ L \rfloor -( N-1) \theta},$$
that are both analytic by the definition of $P_L$ to the right side of the equation.

The goal of this section is to use the $L \rightarrow \infty$ limit of the above equations and prove Theorem \ref{TwoLevelLoopThm}. We remark that we will only take the limit of one of our two-level Nekrasov's equations. It turns out that if one takes the analogous limit of the other Nekrasov's equation, the same limit is obtained. As the second limit does not lead to any new results we will omit it.\\

\begin{definition}\label{S6CumDef}\vspace{-2mm}
We summarize some notation in this definition. Let $K$ be a compact subset of $\mathbb{C} \setminus [a_-, a_+]$. In addition, we fix integers $m, n \geq 0$ and points $\{v^1_a\}_{a = 1}^m, \{v^2_b \}_{b = 1}^n \subseteq K$. In addition, we fix $v \in K$ and let $\Gamma$ be a positively oriented contour, which encloses the segment $[a_-, a_+]$, is contained in $\mathcal{M}$ as in the beginning of Section \ref{Section6.1} and avoids $K$.

For a bounded random variable $\xi$ and sets $A,B,C$ we let  $M(\xi; A,B)$ be the joint cumulant of the random variables $\xi$, $G^t_L(v^1_a)$, $G_L^b(v^1_b)$ for $a \in A$,  for $b \in B$, where we recall that $G_L^t$ and $G_L^b$ were defined in (\ref{GLDef}). If $A = B = \varnothing$ then $M(\xi; A,B) = \mathbb{E}[\xi]$. 
\end{definition}

We will ease our notation by dropping the $\t,\vm$ dependence from the notation. We start by dividing both sides of (\ref{TLNE1}) by $2\pi \i \cdot (z - v) \cdot B_1 \cdot B_2$ and inetgrating over $\Gamma$. This gives
\begin{align*}
\begin{split}
&\frac{1}{2\pi \i }\int_{\Gamma} \frac{ R^\theta_L(Lz) dz}{ (z - v) \cdot B_1 \cdot B_2}= \frac{1}{2\pi \i }\int_{\Gamma}dz  \frac{ P_L(z)\phi^{m}_L(Lz)}{z - v}  \cdot  \mathbb{E} \left[  \Pi_1^{\theta}(Lz) \right]   \\
& + \frac{ P_L(z) \phi^{t}_L(Lz)A_1}{ B_1 \cdot (z - v)}  \cdot \mathbb{E} \left[ \prod_{i = 1}^N\frac{Lz- \ell_i -\theta}{Lz - \ell_i} \right] + \frac{P_L(z)\phi^{b}_L(Lz) A_2 }{ B_2 \cdot (z - v) } \cdot   \mathbb{E} \left[ \prod_{i = 1}^{N-1}\frac{Lz- m_i + \theta - 1}{Lz - m_i - 1} \right].
\end{split}
\end{align*}
By Cauchy's theorem the left side of the above expression vanishes. We next apply the operator 
$$\mathcal{D} : =  \partial_{t_{1}^{1}} \cdots  \partial_{t_{m}^{1}} \cdot \partial_{t_{1}^{2}} \cdots  \partial_{t_{n}^{2}}  $$
to both sides and set $t_a^1 = 0$ for $a = 1, \dots, m$ and $t_a^2 = 0$ for $a = 1, \dots ,n$. Notice that when we perform the differentiation to the second line above some of the derivatives could land on the products and some on the expectation. We will split the result of the differentiation based on subsets $A,B$. The set $A$ consists of indices $a$ in $\{1, \dots, m\}$ such that $\partial_{t_a^1} $ differentiates the expectation. Similarly, $B$ denotes the set of indices $b$ in $\{1, \dots, n\}$ such that $\partial_{t^2_{b}}$ differentiates the expectation. The result of this procedure is as follows 
\begin{align*}
\begin{split} 
& 0 = \frac{1}{2\pi \i}\int_{\Gamma}   \frac{ P_L(z)\phi^{m}_L(Lz)}{z - v}   M\left( \Pi_1^{\theta}(Lz) ; \llbracket 1,m \rrbracket, \llbracket 1,n \rrbracket \right) \\
& +\sum_{A \subseteq \llbracket 1,m \rrbracket} \prod_{b \in A^c}  \left[ \frac{- L^{-1}}{ (v_a^1 -z)(v_a^1- z^-)}\right]   \frac{ P_L(z) \phi^{t}_L(Lz)}{ z - v}M \left(\prod_{i = 1}^N\frac{Lz- \ell_i -\theta}{Lz - \ell_i} ; A,  \llbracket 1, n \rrbracket \right) \\
&+ \sum_{B \subseteq \llbracket 1,n \rrbracket} \prod_{b \in B^c} \left[ \frac{ L^{-1}}{ (v^2_b - z) (v^2_b -z^-)}\right]  \frac{P_L(z)\phi^{b}_L(Lz) }{z - v}M\left( \prod_{i = 1}^{N-1}\frac{Lz- m_i +\theta - 1}{Lz - m_i - 1} ;  \llbracket 1,m \rrbracket, B\right),
\end{split}
\end{align*}
where $z^- = z - L^{-1}$. We may now use (\ref{S6AEP1v1}, \ref{S6AEP1v2}, \ref{S6AEP1v3}) to rewrite the above as
\begin{align}\label{SExpandTLNE}
\begin{split} 
& O(L^{-1}) = \frac{1}{2\pi \i}\int_{\Gamma}dz  \frac{\theta P_L(z)\phi^{m}_L(Lz)}{z - v}   M\Bigg{(} \frac{\bf{1}\{\theta \neq 1\}}{1- \theta} +  \frac{G^t_L(z) -G^b_L(z)  }{L} \\
&-\frac { (1 +\theta) \partial_z G^t_L(z)}{2L^2}+ \frac{(1-\theta)(G^t_L(z) - G_L^b(z))^2}{2L^2}+ \frac{(1 - \theta) \partial_z G^b_L(z)}{2L^2}   ; \llbracket 1,m \rrbracket, \llbracket 1,n \rrbracket \Bigg{)} \\
& +\sum_{A \subseteq \llbracket 1,m \rrbracket} \hspace{-0.5mm} \prod_{a \in A^c}  \left[ \frac{- L^{-1}}{ (v_a^1 -z)(v_a^1- z^-)}\right] \hspace{-1mm}  \frac{ \theta P_L(z) \phi^{t}_L(Lz)}{ z - v} M \left( \hspace{-1mm} -\frac{ G^t_L(z) }{L} \hspace{-0.5mm} +  \hspace{-0.5mm} \frac{\theta [\partial_z G^t_L(z) + [G^t_L]^2(z)]}{2L^2}  ; A,  \llbracket 1, n \rrbracket \hspace{-1mm} \right) \\
&+ \sum_{B \subseteq \llbracket 1,n \rrbracket}  \hspace{-0.5mm} \prod_{b \in B^c}  \hspace{-1mm}\left[ \frac{ L^{-1}}{ (v^2_b - z) (v^2_b -z^-)}\right] \hspace{-1.5mm}  \frac{\theta P_L(z)\phi^{b}_L(Lz) }{z - v}M\left(  \hspace{-1mm} \frac{ G^b_L(z)}{L} \hspace{-1mm} +\hspace{-1mm} \frac{[\theta - 2] \partial_z G^b_L(z)  \hspace{-1mm}+ \hspace{-1mm} \theta [G^b_L]^2(z)}{2L^2}  ; \hspace{-0.5mm} \llbracket 1,m \rrbracket, \hspace{-0.5mm} B  \hspace{-1mm}\right).
\end{split}
\end{align}
We can remove the term ${\bf 1}\{\theta \neq 1\}$ in (\ref{SExpandTLNE}) using the following rationale. If the joint cumulant is of two or more variables, we can remove it as joint cumulants remain unchanged by shifts by constants. If the joint cumulant is of one variable then this term integrates to $0$ by Cauchy's theorem.

Setting $m = n = 0$ in (\ref{SExpandTLNE}) we obtain
\begin{align*}
\begin{split} 
& O(L^{-1}) = \frac{1}{2\pi \i}\int_{\Gamma}dz \frac{\theta P_L(z)\phi^{m}_L(Lz)}{z - v}   \mathbb{E}\Bigg{[} \frac{G^t_L(z) -G^b_L(z) }{L} + \frac{ (1-\theta)( G^t_L(z)- G^b_L(z))^2}{2L^2} \\
&- \frac {(1+\theta) \partial_z G^t_L(z)}{2L^2} + \frac{(1-\theta) \partial_z G^b_L(z)}{2L^2}   \Bigg{]}  +  \frac{ \theta P_L(z) \phi^{t}_L(Lz)}{ z - v} \mathbb{E} \left[  -\frac{ G^t_L(z) }{L} + \frac{\theta \partial_z G^t_L(z)}{2L^2} + \frac{\theta[G^t_L]^2(z)}{2L^2} \right] \\
&+  \frac{\theta P_L(z)\phi^{b}_L(Lz) }{z - v} \mathbb{E} \left[ \frac{ G^b_L(z)}{L} + \frac{[\theta - 2] \partial_z G^b_L(z)}{2L^2} + \frac{\theta [G^b_L]^2(z)}{2L^2} \right].
\end{split}
\end{align*}
From (\ref{S6Phi2L}) we get 
\begin{equation}\label{S6Phi2Lv2}
\phi_L^{m}(Lz) = 1, \phi_L^{t}(Lz) = 1 + N \theta\frac{\partial_z V^t(z)}{L} + O(L^{-2}) \mbox{ and } \phi^b_L(Lz) = 1 - N\theta \frac{\partial_z V^b(z)}{L} + O(L^{-2}).
\end{equation}
Substituting (\ref{S6Phi2Lv2}) above we obtain
\begin{align*}
\begin{split} 
& O(L^{-1}) = \frac{1}{2\pi \i}\int_{\Gamma}dz \frac{N\theta^2 \cdot P_L(z)}{L^2 \cdot (z - v)}   \mathbb{E} \left[  - G_L^t(z) \cdot \partial_z V^t(z) - G_L^b(z) \cdot \partial_z V^b(z)\right]  \\
& +  \frac{ \theta P_L(z) }{2L^2(z - v)} \mathbb{E} \left[  - \partial_z G^t_L(z)-  \partial_z G^b_L(z) +[G^t_L]^2(z)  +  [G^b_L]^2(z) -  2(1-\theta) G^t_L(z)G^b_L(z) \right],
\end{split}
\end{align*}
We may now send $L \rightarrow \infty$ above and apply Proposition \ref{prop_cont_limit} to get
\begin{align*}
\begin{split} 
& 0 = \frac{1}{2\pi \i}\int_{\Gamma} \frac{N\theta^2 \cdot (z - a_-)(z - a_+) }{z - v}   \mathbb{E} \left[  - G_c^t(z) \cdot \partial_z V^t(z) - G_c^b(z) \cdot \partial_z V^b(z)\right]    \\
& +  \frac{ \theta (z - a_-)(z - a_+) }{2(z - v)} \mathbb{E} \left[  - \partial_z G^t_c(z)-  \partial_z G^b_c(z) + [G^t_c]^2(z) +  [G^b_c]^2(z) - 2(1-\theta) G^t_c(z)G^b_c(z) \right],
\end{split}
\end{align*}
Finally, we can compute the integral of the terms on the second line as minus the residues at $v$ and infinity, using that $G^t_c(z) = N/z + O(z^{-2})$ and $G^b_c(z) = (N-1)/z +O(z^{-2})$ as $|z| \rightarrow \infty$, and divide the whole expression by $ - \theta \cdot (v - a_-)(v - a_+)$ to get
\begin{align*}\label{2LoopRank1}
\begin{split} 
& 0 = \frac{1}{2\pi \i}\int_{\Gamma} dz \frac{N\theta \cdot (z - a_-)(z - a_+) }{(z-v)(v - a_-) (v-a_+)}   \mathbb{E} \left[  G_c^t(z) \cdot \partial_z V^t(z) + G_c^b(z) \cdot \partial_z V^b(z)\right] - \frac{N^2 - (1-\theta)N(N-1)}{(v - a_-) (v-a_+)}   \\
& + (1/2) \cdot \mathbb{E} \left[  - \partial_z G^t_c(v) -  \partial_z G^b_c(v)  + [G^t_c]^2(v) +  [G^b_c]^2(v) -  2(1-\theta) G^t_c(v)G^b_c(v) \right].
\end{split}
\end{align*}
This proves (\ref{TLRank00}) and next we focus on (\ref{TLRankmn}). \\

We next suppose that $m, n \geq 0$ are such that $m + n \geq 1$. Notice that we can restrict the sums in (\ref{SExpandTLNE}) to be over sets such that $|A^c| \leq 1$ and $|B^c|\leq 1$ as all other terms can be absorbed into the $O(L^{-1})$ part of the equation. These simplifications combined with (\ref{S6Phi2Lv2}) yield
\begin{align*}
\begin{split} 
& O(L^{-1}) =\frac{1}{2\pi \i}\int_{\Gamma}dz  \frac{ \theta P_L(z)}{L^2 \cdot (z - v)}   M \Bigg{(} -N\theta G_L^t(z) \cdot \partial_z V^t(z)  -  N\theta G_L^b(z) \cdot \partial_z V^b(z)    \\
& -  \frac{ \partial_z G^t_L(z)}{2} - \frac{ \partial_z G^b_L(z)}{2} + \frac{[G^t_L]^2(z)}{2}  + \frac{ [G^b_L]^2(z)}{2} -  (1-\theta) G^t_L(z)G^b_L(z);\llbracket 1,m \rrbracket, \llbracket 1,n \rrbracket  \Bigg{)}  \\
& +\sum_{a = 1}^m    \frac{\theta P_L(z)M \left( G^t_L(z) ; \llbracket 1, m \rrbracket \setminus \{a\},  \llbracket 1, n \rrbracket \right)}{( z - v)(v_a^1 -z)^2 \cdot L^2}  + \sum_{b = 1}^n   \frac{\theta P_L(z)M\left(   G^b_L(z) ;  \llbracket 1,m \rrbracket, \llbracket 1, n \rrbracket \setminus \{b\}\right)}{(z - v) (v^2_b - z)^2 \cdot L^2}  .
\end{split}
\end{align*}
We may now send $L \rightarrow \infty$ above and apply Proposition \ref{prop_cont_limit} to get
\begin{align*}
\begin{split} 
& 0  =\frac{1}{2\pi \i}\int_{\Gamma}  \frac{ \theta (z - a_-)(z - a_+)}{z - v}   \M \Bigg{(} - N\theta G_c^t(z)  \partial_z V^t(z)  - N\theta   G_c^b(z)  \partial_z V^b(z)    \\
&  -\frac{ \partial_z G^t_c(z)}{2} - \frac{ \partial_z G^b_c(z)}{2} + \frac{[G^t_c]^2(z)}{2} + \frac{ [G^b_c]^2(z)}{2} -  (1-\theta) G^t_c(z)G^b_c(z);\llbracket 1,m \rrbracket, \llbracket 1,n \rrbracket  \Bigg{)}    \\
& + \frac{ \theta (z - a_-)(z - a_+)}{z- v}\left[ \sum_{a = 1}^m    \frac{\M \left( G^t_c(z) ; \llbracket 1, m \rrbracket \setminus \{a\},  \llbracket 1, n \rrbracket \right)}{(v_a^1 -z)^2 }  + \sum_{b = 1}^n   \frac{ \M\left(  G^b_c(z) ;  \llbracket 1,m \rrbracket, \llbracket 1, n \rrbracket \setminus \{b\}\right)}{ (v^2_b - z)^2 } \right],
\end{split}
\end{align*}
where we write $\M(\xi; A, B)$ to mean the joint cumulant of $\xi$ and the variables $G^t_c(v_a^1)$ for $a \in A$ and $G^b_c(v_b^2)$ for $b \in B$. We may now evaluate the integrals of the terms on the second and third lines above as minus the residue at $z =v$ (there is no residue at infinity). After doing this we divide both sides by $- \theta \cdot (v - a_-)(v-a_+)$ and obtain
\begin{align*}
\begin{split} 
& 0  =\frac{N\theta}{2\pi \i}\int_{\Gamma} dz  \frac{ (z - a_-)(z - a_+)\M \left( G_c^t(z) \cdot \partial_z V^t(z) +  G_c^b(z) \cdot \partial_z V^b(z) ;\llbracket 1,m \rrbracket, \llbracket 1,n \rrbracket \right)}{(z-v)(v - a_-)(v - a_+)} +  \\
& +\M\left(  \frac{- \partial_z G^t_c(v)}{2} + \frac{-  \partial_z G^b_c(v)}{2} + \frac{[G^t_c]^2(v)}{2} + \frac{ [G^b_c]^2(v)}{2} -  (1-\theta) G^t_c(v)G^b_c(v) ;\llbracket 1,m \rrbracket, \llbracket 1,n \rrbracket \right)  \\
& + \sum_{a = 1}^m  \partial_{v^1_a}  \left[   \frac{\M\left(  G^t_c(v);  \llbracket 1,m \rrbracket \setminus \{a \}, \llbracket 1, n \rrbracket \right)}{v-v_a^1}  + \frac{ ( v_a^1-a_-)(v^1_a - a_+) \M\left(  G^t_c(v^1_a);  \llbracket 1,m \rrbracket \setminus \{a \}, \llbracket 1, n \rrbracket \right) }{(v^1_a- v)(v - a_-)(v - a_+) } \right] \\
& + \sum_{b = 1}^n  \partial_{v^2_b}  \left[   \frac{\M\left(  G^b_c(v);\llbracket 1,m \rrbracket , \llbracket 1, n \rrbracket \setminus \{b \} \right)}{v-v_b^2}  + \frac{ ( v_b^2-a_-)(v_b^2 - a_+) \M\left(  G^t_c(v_b^2);  \llbracket 1,m \rrbracket , \llbracket 1, n \rrbracket \setminus \{b \} \right) }{( v_b^2 - v)(v - a_-)(v - a_+) } \right].
\end{split}
\end{align*}
The latter equation is the same as  (\ref{TLRankmn}) once we invoke Malyshev's formula (\ref{RemainingCLE2}).

%
\section{Appendix A}\label{Section9}
In this section we give the proof of Lemmas \ref{S3AnalRQ}, \ref{S3Lsupp} and \ref{S3NonVanish}, recalled here as  Lemmas \ref{AnalRQ}, \ref{Lsupp} and \ref{NonVanish}, respectively. In what follows we assume the same notation as in Section \ref{Section3.1} and our work in this section will rely solely on Proposition \ref{LLN}, which as we mentioned earlier is \cite[Theorem 5.3]{BGG}, and Proposition \ref{SingleLevelNekrasov}, whose proof is given in Section \ref{Section10}. We begin by recalling a certain large deviation estimate for the measures $\mu_N$ in (\ref{EmpMeas}).\\

Take any two compactly supported absolutely continuous probability measures with uniformly bounded densities $\nu(x)dx$ and $\rho(x)dx$ and define $\mathcal{D}(\nu(x), \rho(x))$ through
\begin{equation}\label{B2}
\mathcal{D}^2(\nu(x), \rho(x)) = - \int_{\mathbb{R}} \int_{\mathbb{R}} \log|x-y| (\nu(x) - \rho(x))(\nu(y) - \rho(y)) dx dy.
\end{equation}
There is an alternative formula for $\mathcal{D}(\nu(x), \rho(x))$ in terms of Fourier transforms, cf. \cite{BeGu}:
\begin{equation}\label{B3}
\mathcal{D}(\nu(x), \rho(x)) = \sqrt{\int_{0}^\infty \frac{dt}{t} \left| \int_{\mathbb{R}} e^{-{\i} tx} (\nu(x) - \rho(x)) dx\right|^2 }.
\end{equation}

Fix a parameter $p > 2$ and let $\tilde{\mu}_N$ denote the convolution of the empirical measure $\mu_N$, see (\ref{EmpMeas}), with the uniform measure on the interval $[0, N^{-p}]$.
With the above notation we have the following result.

\begin{proposition}\label{BP1}
Suppose that Assumptions 1 and 2 hold and let $\mu$ be as in Proposition \ref{LLN}. Then there exists a constant $C > 0$ such that for all $x > 0$ and $N \geq 2$ 
$$\mathbb{P}_N \left( \mathcal{D}(\tilde{\mu}_N, \mu ) \geq x \right) \leq \exp\left( CN \log(N)^2 - \theta \cdot x^2 N^2\right).$$
The constant $C$ depends on the constants $A_1,A_2, A_3, A_4$ in Assumptions 1 and 2  as well as $ \lM, \theta$.
\end{proposition}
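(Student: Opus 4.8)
The plan is to follow the standard concentration-of-measure strategy for $\beta$-log-gases, adapting the approach of \cite{BGG} (and its precursors \cite{BeGu, MMS}) to our single-level measure (\ref{SingleLevelMeasure}), which is all that is relevant since $\tilde\mu_N$ depends only on the $\ell_i$'s. First I would write the density of $\mathbb{P}_N$ (restricted to the $\ell$-variables, using (\ref{SingleLevelMeasure})) in exponential form: up to the normalization $Z_N$ and lower-order corrections coming from replacing the ratios of Gamma functions by powers (via an estimate like (\ref{Sandwich})/Stirling, giving an error of size $O(N\log N)$ in the exponent, which is where the $N\log(N)^2$ term will ultimately come from after accumulating over $\binom N2$ pairs), the measure looks like $\exp\big( \theta N^2 \sum_{i\neq j}\log|\ell_i/N - \ell_j/N| - N^2\int V\,d\mu_N \big)$. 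Combining this with the variational characterization of $\mu$ as the unique minimizer of $I_V$ in Theorem \ref{LLN}, and the fact that the Euler–Lagrange (equilibrium) conditions for $\mu$ hold on its support, one rewrites the exponent as $-\theta N^2 \mathcal{D}^2(\mu_N,\mu) + (\text{linear functional that is} \leq 0 \text{ on the support of }\mu_N) + O(N\log(N)^2)$; here the key algebraic identity is that $I_V[\mu_N] - I_V[\mu] = \mathcal D^2(\mu_N,\mu)$ plus a term controlled by the effective potential, which is nonpositive by the equilibrium inequalities. The smoothing $\mu_N \rightsquigarrow \tilde\mu_N$ is needed precisely to handle the diagonal singularity $x=y$ in $\mathcal D^2$: one shows $\big|\mathcal D^2(\mu_N,\mu) - \mathcal D^2(\tilde\mu_N,\mu)\big| = O(N^{-1}\log N)$ using $p>2$ and the Fourier representation (\ref{B3}), and separately $\sum_{i}\log(\text{self-spacing})$-type terms are handled by the same regularization — this comparison is worked out in detail in \cite[Lemma 4.4 and around]{BGG}.

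The next step is the actual deviation bound. Having established
$$\mathbb{P}_N\big(\mathcal D(\tilde\mu_N,\mu)\geq x\big) \leq \exp\big(CN\log(N)^2\big)\cdot Z_N^{-1}\cdot \exp(-\theta N^2 x^2)\cdot \tilde Z_N,$$
where $\tilde Z_N$ is the partition function of the "tilted" measure (with the equilibrium measure subtracted), I would bound $\tilde Z_N / Z_N$ from above by $\exp(CN\log(N)^2)$. This is the standard lower bound on the partition function $Z_N$: one restricts the sum defining $Z_N$ to configurations $\ell_i$ that are $N^{-1}$-approximations of a discretization of $\mu$, uses that $\mu$ has density bounded by $\theta^{-1}$ (property (2) in Theorem \ref{LLN}) so that such a discrete configuration genuinely exists inside the state space, and evaluates the weight of this single configuration — it is $\exp(N^2 I_V[\mu] + O(N\log(N)^2))$. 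Combining with the bound on the tilted numerator and the regularization comparison gives the claimed inequality, with the constant $C$ depending only on $A_1,\dots,A_4,\lM,\theta$ as asserted.

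The main obstacle is the bookkeeping of error terms: one must carefully track that every replacement (Gamma ratios $\to$ powers via (\ref{Sandwich}); $\mu_N\to\tilde\mu_N$; discretization of $\mu$ for the lower bound on $Z_N$; the continuity correction relating $\int V\,d\mu_N$ to $\sum V(\ell_i/N)$ using (\ref{GenPot})) contributes an error in the exponent that is at worst $O(N\log(N)^2)$ uniformly, and that the "bad" linear functional really is $\leq 0$ on the entire discrete state space — this last point uses that the support of $\mu$ is $[0,\lM+\theta]$, which contains the rescaled state space $I_N$ up to $O(N^{-1})$, together with the regularity (\ref{DerPot}) of $V'$ near the endpoints to control the effective potential there. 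All of these ingredients are present in \cite[Section 4]{BGG}, so the proof is essentially a citation plus a check that our hypotheses (Assumptions 1, 2) match theirs and that the $\tau\equiv 1$ projection (\ref{SingleLevelMeasure}) is exactly their discrete $\beta$-ensemble; I would phrase it that way, reproducing only the partition-function lower bound and the $\mathcal D^2$ identity in enough detail to make the constant dependence transparent.
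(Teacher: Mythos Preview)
Your proposal is correct and takes essentially the same approach as the paper: the paper does not give an independent proof but simply records (in the remark immediately following the statement) that this is \cite[Proposition 5.6]{BGG}, together with a note that the constant $C$ can be taken to depend only on $A_1,\dots,A_4,\lM,\theta$ and that a factor of $\theta$ is missing in the original. Your outline is a faithful sketch of exactly that argument---energy expansion, smoothing to handle the diagonal, partition-function lower bound via discretizing $\mu$, error bookkeeping---and you correctly conclude that the proof is ``essentially a citation plus a check.'' The only cosmetic discrepancy is that you point to Section~4/Lemma~4.4 of \cite{BGG} while the paper cites Proposition~5.6; make sure your pointer lands on the right statement in whatever version of \cite{BGG} you are using.
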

\begin{remark}\label{R10.1}
Proposition \ref{BP1} is essentially \cite[Proposition 5.6]{BGG}. A careful analysis of the proof of that proposition shows that the constant $C$ can be taken sufficiently large depending on $A_1,A_2, A_3, A_4$ as stated. We remark that \cite[Proposition 5.6]{BGG} has a missing $\theta$ in front of $\gamma^2 N^2$, which comes from the fact that when $\theta \neq 1$ equation (40) in \cite{BGG} should have $\theta$ in front of the $-\mathcal{D}^2$. See also \cite[Proposition 3.1.3]{DK1}.
\end{remark}

\begin{corollary}\label{BC1}
Assume the same notation as in Proposition \ref{BP1}. For a compactly supported Lipschitz function $g(x)$  define 
$$ \|g \|_{ 1/2} = \left( \int_{-\infty}^\infty |s| \left| \int_{-\infty}^\infty e^{-{\i}sx} g(x)dx \right|^2 ds \right)^{1/2}, \hspace{3mm} \|g\|_{\mbox{\em \Small Lip}} = \sup_{ x \neq y} \left|\frac{g(x) - g(y)}{x- y} \right|.$$
Fix any $p > 2$. Then for all $a > 0$, $N \geq 2$ and $g$ we have
\begin{equation}\label{B16}
\mathbb{P}_N \left( \left| \int_{\mathbb{R}} \hspace{-2mm} g(x) \mu_N(dx) -\hspace{-1mm}  \int_{\mathbb{R}}  \hspace{-2mm}  g(x) \mu(dx) \right| \geq a\|g  \|_{ 1/2}\hspace{-0.5mm} +\hspace{-0.5mm} \frac{ \|g\|_{\mbox{\em \Small Lip}}}{N^p} \right) \leq \exp\left( CN \log(N)^2 - 2\pi^2 \theta    a^2N^2\right) \hspace{-1mm},
\end{equation}
where $C$ is as in Proposition \ref{BP1}.
\end{corollary}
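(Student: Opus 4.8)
The plan is to deduce Corollary \ref{BC1} from Proposition \ref{BP1} by the standard route of bounding a linear statistic against the distance $\mathcal{D}(\cdot,\cdot)$ and then transferring the tail bound. First I would reduce the statement about $\mu_N$ to one about its regularization $\tilde{\mu}_N$. Since $\tilde{\mu}_N$ is the convolution of $\mu_N$ with the uniform measure on $[0,N^{-p}]$, for any Lipschitz $g$ we have
$$\left| \int_{\mathbb{R}} g(x)\mu_N(dx) - \int_{\mathbb{R}} g(x)\tilde{\mu}_N(dx) \right| \leq \|g\|_{\mbox{\Small Lip}} \cdot N^{-p},$$
because each point mass is smeared by at most $N^{-p}$. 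Hence it suffices to control $\left| \int g\, d\tilde{\mu}_N - \int g\, d\mu \right|$ by $a\|g\|_{1/2}$ on the complementary event.

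The key step is the deterministic Cauchy--Schwarz bound: writing $\hat g(t) = \int_{\mathbb{R}} e^{-\iu tx} g(x)\,dx$ and using Parseval/Fourier inversion,
$$\left| \int_{\mathbb{R}} g(x)(\tilde{\mu}_N - \mu)(dx) \right| = \left| \frac{1}{2\pi}\int_{-\infty}^\infty \overline{\hat g(t)} \left( \int_{\mathbb{R}} e^{-\iu tx}(\tilde{\mu}_N-\mu)(dx)\right) dt \right|.$$
Inserting $1 = \sqrt{|t|}\cdot \sqrt{|t|}^{-1}$ and applying Cauchy--Schwarz splits this as
$$\leq \frac{1}{2\pi}\left(\int |t|\,|\hat g(t)|^2 dt\right)^{1/2}\left(\int \frac{1}{|t|}\left|\int e^{-\iu tx}(\tilde{\mu}_N - \mu)(dx)\right|^2 dt\right)^{1/2} = \frac{1}{2\pi}\,\|g\|_{1/2}\cdot \sqrt{2}\,\mathcal{D}(\tilde{\mu}_N,\mu),$$
using the formula (\ref{B3}) for $\mathcal{D}$ (the integral over $(-\infty,\infty)$ being twice the integral over $(0,\infty)$ by evenness of the integrand). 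Thus on the event $\{\mathcal{D}(\tilde{\mu}_N,\mu) < x\}$ we get $\left|\int g\,d\tilde{\mu}_N - \int g\,d\mu\right| < \frac{\sqrt{2}}{2\pi}\,x\,\|g\|_{1/2}$.

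Finally I would choose $x$ so that $\frac{\sqrt 2}{2\pi}x = a$, i.e. $x = \sqrt{2}\pi a$, giving $\theta x^2 N^2 = 2\pi^2\theta a^2 N^2$; combining with the reduction to $\tilde\mu_N$ above, the event in (\ref{B16}) is contained in $\{\mathcal{D}(\tilde{\mu}_N,\mu)\geq \sqrt2\pi a\}$, and Proposition \ref{BP1} yields the bound $\exp(CN\log(N)^2 - 2\pi^2\theta a^2 N^2)$ with the same constant $C$. The main obstacle, such as it is, is purely bookkeeping: getting the Fourier normalization constants and the factor of $2$ between $\int_0^\infty$ and $\int_{-\infty}^\infty$ to land exactly on the stated constant $2\pi^2\theta$, and making sure the Cauchy--Schwarz step is justified (finiteness of $\|g\|_{1/2}$ for compactly supported Lipschitz $g$, which holds since $\hat g(t) = O(|t|^{-1})$ at infinity and $g$ has compact support so $\hat g$ is smooth near $0$). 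No genuinely hard analysis is needed beyond quoting Proposition \ref{BP1}.
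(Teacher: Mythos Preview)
Your proposal is correct and follows exactly the standard argument that the paper defers to \cite[Corollary 5.7]{BGG} and \cite[Corollary 3.1.4]{DK1}: reduce from $\mu_N$ to $\tilde\mu_N$ via the Lipschitz bound, then apply Cauchy--Schwarz in Fourier space to bound the linear statistic by $\|g\|_{1/2}\cdot\mathcal D(\tilde\mu_N,\mu)$, and finally invoke Proposition~\ref{BP1}. You have also correctly tracked the factor of $\tfrac{1}{2\pi}$ from Fourier inversion and the factor of $\sqrt 2$ from passing between $\int_0^\infty$ and $\int_{-\infty}^\infty$, which is precisely the bookkeeping the paper flags after the corollary as having been mishandled in \cite{BGG}.
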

The above lemma is proved in \cite[Corollary 5.7]{BGG}, see also \cite[Corollary 3.1.4]{DK1}. We remark that (\ref{B16}) differs from (94) in \cite{BGG} in that there is an extra $\theta$, whose origin is described in Remark \ref{R10.1} and a $2\pi$, which comes from misapplication of Parseval's identity in the proof of Corollary 2.17 in \cite{BGG}.\\

We turn to the first main result of the section.
\begin{lemma}\label{AnalRQ}
Suppose that Assumptions 1-4 from Section \ref{Section3.1} hold. Then the functions $R_\mu$ and $Q_\mu^2$ from (\ref{QRmu}) are analytic on $\mathcal{M}$ and real-valued on $\mathcal{M} \cap \mathbb{R}$. 
\end{lemma}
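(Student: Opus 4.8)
I would prove this by exhibiting $R_\mu$ as a locally uniform limit of the holomorphic functions produced by the single-level Nekrasov equation, and then using a normal-families argument to continue $R_\mu$ across the support of $\mu$. Since $\mu$ is supported on $[0,\lM+\theta]$, its Stieltjes transform $G_\mu$ is a priori holomorphic only on $\mathcal{M}\setminus[0,\lM+\theta]$, so the same is true of $R_\mu$ and $Q_\mu$; the substance of the lemma is that $R_\mu$ (and $Q_\mu^2$) extend holomorphically to all of $\mathcal{M}$.

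First I would apply Theorem \ref{SingleLevelNekrasov}: under Assumptions 1--3 the function $R_N(z)$ of (\ref{SingleLevelEquation}) is holomorphic on $N\cdot\mathcal{M}$, so $z\mapsto R_N(Nz)$ is holomorphic on $\mathcal{M}$ for all large $N$. Next I would check that $R_N(Nz)\to R_\mu(z)$ uniformly on compact subsets $K\subset\mathcal{M}\setminus[0,\lM+\theta]$. The key point is that the particles satisfy $\ell_i/N\in[0,\lM+\theta+A_1/N]$ \emph{deterministically} (from the state space), so for $z\in K$ with $\mathrm{dist}(K,[0,\lM+\theta])\geq\delta>0$ and $N$ large we have $|Nz-\ell_i|\geq \delta N/2$; writing $\prod_{i=1}^N\frac{Nz-\ell_i-\theta}{Nz-\ell_i}=\exp\bigl(-\tfrac{\theta}{N}\sum_i\tfrac{1}{z-\ell_i/N}+O(N^{-1})\bigr)$ and using that $\tfrac1N\sum_i\tfrac{1}{z-\ell_i/N}=\int\tfrac{\mu_N(dx)}{z-x}\to G_\mu(z)$ in probability, uniformly for $z\in K$ (Theorem \ref{LLN}, applied to the real and imaginary parts of the Lipschitz test function $x\mapsto(z-x)^{-1}$), together with the deterministic boundedness of these products, bounded convergence gives $\mathbb{E}_{\mathbb{P}_N}\bigl[\prod_{i=1}^N\frac{Nz-\ell_i-\theta}{Nz-\ell_i}\bigr]\to e^{-\theta G_\mu(z)}$ and, similarly, $\mathbb{E}_{\mathbb{P}_N}\bigl[\prod_{i=1}^N\frac{Nz-\ell_i-1+\theta}{Nz-\ell_i-1}\bigr]\to e^{\theta G_\mu(z)}$, uniformly on $K$. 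Combined with $\Phi^{\pm}_N(Nz)\to\Phi^{\pm}(z)$ from Assumption 3 and $r^{\pm}(N)\to 0$ (Assumption 4 and the remark following it, which force $r^{\pm}(N)=O(N^{c}e^{-cN^{a}})$), this yields $R_N(Nz)\to R_\mu(z)$ uniformly on compacts of $\mathcal{M}\setminus[0,\lM+\theta]$.

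To push the holomorphy across the support I would use the maximum principle. Fix a positively oriented contour $\Gamma\subset\mathcal{M}$ enclosing $[0,\lM+\theta]$; each $R_N(N\cdot)$ is holomorphic on a neighbourhood of the region bounded by $\Gamma$, so $\sup_{z\ \mathrm{inside}\ \Gamma}|R_N(Nz)|\leq\sup_{z\in\Gamma}|R_N(Nz)|$, and the right side is bounded uniformly in $N$ because $R_N(N\cdot)\to R_\mu$ uniformly on $\Gamma\subset\mathcal{M}\setminus[0,\lM+\theta]$. Hence $\{R_N(N\cdot)\}$ is locally uniformly bounded on $\mathcal{M}$, so by Montel's theorem a subsequence converges locally uniformly on $\mathcal{M}$ to a holomorphic function $\widetilde R$ which, by the previous step, equals $R_\mu$ on $\mathcal{M}\setminus[0,\lM+\theta]$; thus $R_\mu$ extends holomorphically to $\mathcal{M}$. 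Shrinking $\mathcal{M}$ if necessary so that it is symmetric under complex conjugation (it still contains $[0,\lM+\theta]$), the extension is real on a real subinterval and hence, by Schwarz reflection ($\overline{\widetilde R(\bar z)}=\widetilde R(z)$), real on all of $\mathcal{M}\cap\mathbb{R}$, i.e. $R_\mu$ is real analytic on $\mathcal{M}$. Finally, since $\bigl(\Phi^-e^{-\theta G_\mu}+\Phi^+e^{\theta G_\mu}\bigr)^2-\bigl(\Phi^-e^{-\theta G_\mu}-\Phi^+e^{\theta G_\mu}\bigr)^2=4\Phi^-\Phi^+$, one has $Q_\mu^2=R_\mu^2-4\Phi^-\Phi^+$ on $\mathcal{M}\setminus[0,\lM+\theta]$; the right-hand side is real analytic on $\mathcal{M}$ by what was just proved together with Assumption 3, so $Q_\mu^2$ is real analytic on $\mathcal{M}$.

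The main obstacle is precisely the crossing of the support $[0,\lM+\theta]$: off the support everything follows from a routine pairing of the law of large numbers with the Nekrasov equation, but on the support the products in $R_N$ are not bounded, so one genuinely needs the holomorphy input from Theorem \ref{SingleLevelNekrasov} and the maximum-principle/Montel step to transfer control from the contour $\Gamma$ to its interior; verifying the decay of the Nekrasov remainders $r^{\pm}(N)$ and the convergence of the prefactors $\Phi^{\pm}_N$ is comparatively routine.
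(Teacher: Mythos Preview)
Your proof is correct and follows essentially the same strategy as the paper: use the single-level Nekrasov equation to produce holomorphic functions $R_N(N\cdot)$ on $\mathcal{M}$, show they converge to $R_\mu$ off the support of $\mu$, and then transport holomorphy across $[0,\lM+\theta]$. The only noteworthy difference is the mechanism for the last step: you invoke the maximum principle on a contour $\Gamma$ to get local uniform boundedness and then apply Montel's theorem to extract a holomorphic limit, whereas the paper writes down the explicit Cauchy integral $\tilde R_\mu(v)=\tfrac{1}{2\pi i}\oint_\gamma R_\mu(w)\,(v-w)^{-1}\,dw$ and shows directly that $R_N(Nv)\to\tilde R_\mu(v)$ inside $\gamma$. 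The two arguments are essentially interchangeable. One minor point: for the uniform convergence of $\mathbb{E}\bigl[\prod_i\tfrac{Nz-\ell_i-\theta}{Nz-\ell_i}\bigr]\to e^{-\theta G_\mu(z)}$ on compacts $K$, the paper appeals to the quantitative large-deviation bound of Corollary~\ref{BC1} rather than to Theorem~\ref{LLN} alone; your bounded-convergence argument is fine pointwise, but the uniformity in $z\in K$ deserves a word (e.g., an equicontinuity remark or the large-deviation input), which is routine to supply.
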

\begin{remark}
In \cite[Section 5]{BGG} the authors prove the above lemma under Assumptions 1-3 and $\Phi_N^-(0) = 0$ and $\Phi^+_N(M_N + 1 +(N-1) \cdot \theta) = 0$. Below we give the proof when the last assumption above is replaced by the weaker Assumption 4 we have, and remark that this statement is implicit in \cite[Section 8]{BGG}. 
\end{remark}
\begin{proof}

For clarity we split the proof into several steps.

{\bf \raggedleft Step 1.} By Proposition \ref{SingleLevelNekrasov} we know that for all large $N$ the following function is analytic in $\mathcal{M}$ 
\begin{align}\label{SingleLevelEquationS10}
\begin{split}
R_N(Nz) = &\Phi^-_N(Nz)  \mathbb{E}_{\mathbb{P}_N} \left[ \prod_{i = 1}^N  \frac{Nz - \ell_i - \theta}{Nz - \ell_i}\right] \\
&+ \Phi^+_N(Nz)  \mathbb{E}_{\mathbb{P}_N}  \left[ \prod_{i = 1}^N  \frac{Nz - \ell_i  + \theta - 1}{Nz - \ell_i - 1} \right] - \frac{r^-(N)}{Nz} - \frac{r^+(N)}{Nz - s_N}, 
\end{split}
\end{align}
where $s_N = M_N+ 1 + (N-1) \cdot \theta$ and 
\begin{equation}\label{RemSLS10}
\begin{split}
&r^-(N) = \Phi^-_N(0) \cdot (-\theta) \cdot \mathbb{P}_N(\ell_N = 0) \cdot \mathbb{E}_{\mathbb{P}_N} \left[ \prod_{i= 1}^{N-1}  \frac{ \ell_i + \theta }{ \ell_i}\Big{\vert}  \ell_N = 0 \right] ;\\
&r^+(N) = \Phi^+_N(s_N) \cdot \theta \cdot \mathbb{P}_N(\ell_1 = s_N -1 ) \cdot \mathbb{E}_{\mathbb{P}_N} \left[\prod_{i = 2}^N  \frac{s_N - \ell_i + \theta - 1}{s_N - \ell_i - 1} \Big{\vert}  \ell_1 = s_N - 1 \right].
\end{split}
\end{equation}
In this step we show that for some $\tilde{C} > 0$ and $c,a > 0$ as in Assumption 4 and all large $N$
\begin{equation}\label{ResidueDecay}
|r^{\pm}(N)| \leq \tilde{C}N e^{-cN^a}.
\end{equation} 
Since $\ell_{i} \geq (N-i) \cdot \theta$ for $i = 1, \dots, N-1$ , we know that \vspace{-2mm}
$$0 \leq \prod_{i= 1}^{N-1}  \frac{ \ell_i + \theta }{ \ell_i} \leq \prod_{i= 1}^{N-1}  \frac{ (N- i + 1) \cdot \theta }{ (N-i) \cdot \theta} = N.$$
Similarly, if $\ell_1 = s_N - 1 =  M_N + (N-1) \cdot \theta$ then $s -\ell_i - 1 \geq (i-1) \cdot \theta$ and so \vspace{-2mm}
$$0 \leq \prod_{i = 2}^N  \frac{s_N - \ell_i + \theta - 1}{s_N - \ell_i - 1} \leq \prod_{i = 2}^N \frac{ i \cdot \theta}{(i -1) \cdot \theta} = N.$$
\vspace{-3mm}

{\raggedleft The above} inequalities together with (\ref{RemSLS10}) and Assumptions 3 and 4 imply (\ref{ResidueDecay}).\\

{\bf \raggedleft Step 2.} Let us fix a compact set $K \subset \mathcal{M} \setminus [0, \lM + \theta]$. We claim that 
\vspace{-2mm}
\begin{align}\label{convST}
\begin{split}
&\limsup_{N \rightarrow \infty} \sup_{v \in K} \left| \mathbb{E}_{\mathbb{P}_N} \left[  \prod_{i = 1}^N  \frac{Nv - \ell_i - \theta}{Nv - \ell_i}\right]  - e^{-\theta G_{\mu}(v)} \right| = 0,\\
&\limsup_{N \rightarrow \infty} \sup_{v \in K} \left| \mathbb{E}_{\mathbb{P}_N} \left[  \prod_{i = 1}^N  \frac{Nv - \ell_i + \theta - 1}{Nv - \ell_i - 1} \right]  - e^{\theta G_{\mu}(v)} \right| = 0.
\end{split}
\end{align}
\vspace{-2mm}

{\raggedleft We defer} the proof of (\ref{convST}) to Step 4. For now we assume it and finish the proof of the lemma.\\

In view of (\ref{convST}), (\ref{ResidueDecay}), (\ref{SingleLevelEquationS10}) and Assumption 3, we conclude that for any compact $K \subset \mathcal{M} \setminus [0, \lM + \theta]$
\begin{equation}\label{convAway}
\limsup_{N \rightarrow \infty} \sup_{v \in K}  \left| R_N(Nv) -R_\mu(v) \right| = 0.
\end{equation}
Let $\gamma$ be a fixed thin positively oriented rectangle that encloses the segment $[0, \lM + \theta]$ and is contained in $\mathcal{M}$. We also let $d_\gamma = \mbox{dist}(\gamma, [0, \lM + \theta]) > 0$. We let $R$ denote the open rectangular region enclosed by $\gamma$ and let $\phi_\gamma: [0,1] \rightarrow \mathbb{C}$ be a fixed piecewise linear parametrization of $\gamma$. For $v \in R$ we define 
\begin{equation}\label{Ralt}
\tilde{R}_\mu(v) = \frac{1}{2\pi \i} \int_0^1 \frac{ R_\mu(\phi_\gamma(s)) }{v - \phi_\gamma(s)} \phi_\gamma'(s) ds.
\end{equation}
Observe that since $G_\mu(z)$ is analytic in $\mathcal{M} \setminus [0, \lM + \theta]$ (as the support of $\mu$ is contained in $[0, \lM + \theta]$) the same is true for $R_{\mu}(v)$. This implies that $R_\mu(\phi_\gamma(s))$ is continuous on $[0,1]$ and so by \cite[Chapter 2, Theorem 5.4]{SS} we conclude that $\tilde{R}_\mu(v)$ is analytic in $R$. Fix a compact set $K_1 \subset R$. We claim that 
\begin{equation}\label{Raltconv}
\limsup_{N \rightarrow \infty} \sup_{v \in K_1} \left| R_N(Nv) - \tilde{R}_\mu(v) \right| = 0.
\end{equation}
We will prove (\ref{Raltconv}) in Step 3 below. For now we assume it and finish the proof of the lemma.\\

In view of (\ref{convAway}) and (\ref{Raltconv}) we see that $R_{\mu}(v) =  \tilde{R}_\mu(v)$ for all $v \in (\mathcal{M} \setminus [0, \lM + \theta]) \cap R$. The latter implies that $R_\mu(z)$ can be analytically extended to $\mathcal{M}$ by setting $R_\mu(v):= \tilde{R}_\mu(v)$ for $v \in [0, \lM + \theta]$. This proves the analyticity of $R_\mu(z)$. Since by definition 
$$Q^2_\mu(z) = R^2_\mu(z) - 4 \Phi^-(z) \Phi^+(z),$$
the analyticity of $R_\mu(z)$ and $\Phi^{\pm}(z)$ in $\mathcal{M}$ implies that of $Q^2_\mu(z)$. Finally, we note that $G_\mu(z)$ is real if $z < 0$ and so $R_\mu(z)$ is real-valued on $(-d_\gamma, 0)$, which implies that $R_\mu(z)$ is real-valued on $\mathcal{M} \cap \mathbb{R}$ and then so is $Q^2_\mu(z)$ since $\Phi^{\pm}$ in Assumption 3 are analytic in $\mathcal{M}$ and real-valued on $\mathcal{M} \cap \mathbb{R}$. This proves the lemma.\\

{\bf \raggedleft Step 3.} In this step we continue to assume (\ref{convST}) and finish the proof of (\ref{Raltconv}). Let us denote 
$$F_N(z) :=  \Phi^-_N(Nz)  \mathbb{E}_{\mathbb{P}_N} \left[ \prod_{i = 1}^N  \frac{Nz - \ell_i - \theta}{Nz - \ell_i}\right] +  \Phi^+_N(Nz)  \mathbb{E}_{\mathbb{P}_N} \left[ \prod_{i = 1}^N  \frac{Nz - \ell_i  + \theta - 1}{Nz - \ell_i - 1} \right].$$
Using (\ref{SingleLevelEquationS10}) and Cauchy's integral formula we have for any $v \in K_1$ that
\begin{equation}\label{CauchyRN}
R_N(Nv) = \frac{1}{2\pi \i} \int_{\gamma} \frac{R_N(Nz)}{v -z} dz =  \frac{1}{2\pi \i}\int_0^1 \frac{F_N(\phi_\gamma(s))  }{v - \phi_\gamma(s)} \phi_\gamma'(s) ds -  A_N(v),
\end{equation}
where 
$$A_N(v) = \frac{r^-(N)}{N} \frac{1}{2\pi \i}\int_0^1 \frac{ \phi_\gamma'(s) ds}{(v - \phi_\gamma(s)) \phi_\gamma(s) } + \frac{r^+(N)}{N} \frac{1}{2\pi \i}\int_0^1 \frac{\phi_\gamma'(s) ds}{(v - \phi_\gamma(s)) (\phi_\gamma(s) - s_N \cdot N^{-1}) } .$$
Since $s_N \cdot N^{-1} \rightarrow \lM + \theta$ we conclude that we can find a constant $C_1 > 0$ such that for all large $N$  
$$ \sup_{v\in K_1}\left| A_N (v) \right| \leq C_1 \cdot N^{-1}  \cdot d_\gamma^{-1} \cdot d_1^{-1} \cdot \left[ |r^-(N)| + |r^+(N)| \right],$$
where $d_1 = \mbox{dist}( \gamma, K_1) > 0$. Combining the above with (\ref{ResidueDecay}) we conclude that 
\begin{equation}\label{ANBound}
\limsup_{N \rightarrow \infty} \sup_{v \in K_1} \left| A_N (v) \right|  = 0.
\end{equation}
Combining (\ref{ANBound}), (\ref{CauchyRN}) and (\ref{Ralt}) we see that 
$$\limsup_{N \rightarrow \infty} \sup_{v \in K_1}  \left| R_N(Nv) - \tilde{R}_\mu(v) \right|  \leq \frac{d_1^{-1}}{2\pi } \sup_{s \in [0,1]} |\phi'_\gamma(s)| \cdot \limsup_{N \rightarrow \infty} \sup_{z \in \gamma} |F_N( z) - R_\mu(z)| = 0,$$
where the last inequality used (\ref{convAway}) as $\gamma$ is compactly supported in $ \mathcal{M} \setminus [0, \lM + \theta]$. This proves (\ref{Raltconv}).\\

{\bf \raggedleft Step 4.} In this step we prove (\ref{convST}) and we fix a non-empty compact set $K \subset \mathcal{M} \setminus [0, \lM + \theta]$. Firstly, one readily observes the following asymptotic expansions
\begin{align}\label{S10AE1}
\begin{split}
&\prod_{i = 1}^N  \frac{Nz - \ell_i - \theta}{Nz - \ell_i} = \exp \left( - \theta \cdot N^{-1} \cdot G_N^t(z) +O(N^{-1})  \right)\\
& \prod_{i = 1}^N  \frac{Nz - \ell_i + \theta - 1}{Nz - \ell_i - 1} = \exp \left( \theta \cdot N^{-1} \cdot G_N^t(z) +O(N^{-1})\right),
\end{split}
\end{align}
where the constants in the big $O$ notation are deterministic and uniform as $z$ varies over $K$ and (\ref{S10AE1}) holds $\mathbb{P}_N$-almost surely. We recall that $G^t_N(z)$ was defined in Section \ref{Section3.2}. Let $\eta > 0$ be sufficiently small so that $I_\eta := [-\eta , \lM + \theta + \eta] \subset \mathcal{M}$ and $ I_\eta \cap K = \varnothing$. Let $h(x)$ be a smooth function such that $0 \leq h(x) \leq 1$,  $h(x) = 1$ if $x \in [-\eta/2, \lM + \theta + \eta/2]$, $h(x) = 0$ if $x \leq -\eta$ or $x \geq \lM + \theta + \eta$ and $\sup_{ x \in I_\eta} |h'(x)| \leq \eta^{-1} \cdot 10$. Since $M_N \cdot N^{-1} \rightarrow \lM$ as $N \rightarrow \infty$ we know that for all large $N$ and $v \in K$ we have $\mathbb{P}_N$ almost surely
\begin{equation}\label{compactify}
\left| \int_{\mathbb{R}}  g_v(x) \cdot h(x)  \mu_N(dx) -  \int_{\mathbb{R}} g_v(x) \cdot h(x)  \mu(dx) \right|  = \left| N^{-1} G_N^t(v) - G_\mu(v)\right|,
\end{equation}
where $g_v(x) = (v -x)^{-1}$. Let us denote 
\begin{equation}\label{c1c2}
c_1(K) := \sup_{v \in  K} \| g_v \cdot h \|_{1/2} \mbox{ and } c_2(K) := \sup_{v \in  K} \| g_v \cdot h \|_{\mbox{ \Small Lip}}.
\end{equation} 
It is clear that $c_1(K) > 0$ and $c_2(K) > 0$ and we claim that
\begin{equation}\label{c1c2Bound}
c_1(K) < \infty \mbox{ and } c_2(K) < \infty.
\end{equation} 
We will prove (\ref{c1c2Bound}) in Step 5. For now we assume its validity and finish the proof of (\ref{convAway}).\\

From (\ref{S10AE1}) we see that for all large enough $N$ and $v \in K$
\begin{align}\label{ANKBNK}
\begin{split}
 &\left| \mathbb{E}_{\mathbb{P}_N} \left[  \prod_{i = 1}^N  \frac{Nv - \ell_i - \theta}{Nv - \ell_i} \cdot e^{\theta G_\mu(v) } \right] - 1 \right| \leq A_N(K) + B_N(K), \mbox{ where }\\
& A_N(K) =   \Big{\vert} \exp \left( \theta \cdot  \left( N^{-1/4} c_1(K) + N^{-3} c_2(K)  \right) + N^{-1/2}  \right) \cdot \\
& \mathbb{P}_N\left(  \left| N^{-1} G_N^t(v) - G_\mu(v)\right| \leq N^{-1/4} c_1(K) + N^{-3} c_2(K)   \right) - 1 \Big{\vert}, \\
&B_{N}(K) =  N \cdot  \mathbb{P}_N\left(  \left| N^{-1} G_N^t(v) - G_\mu(v)\right| > N^{-1/4} c_1(K) + N^{-3} c_2(K)   \right),
\end{split}
\end{align}
where we used that from (\ref{S10AE1}) for all large $N$ and $v \in K$ we have $\mathbb{P}_N$-almost surely
$$\left| \prod_{i = 1}^N  \frac{Nv - \ell_i - \theta}{Nv - \ell_i} \cdot e^{\theta G_\mu(v) } \right| \leq \exp \left(  \theta \cdot | N^{-1} \cdot G_N^t(v) - G_\mu(v)| + N^{-1/2}\right) \leq N $$
and also we split the expectation in the first line of (\ref{ANKBNK}) over the events where $\left| N^{-1} G_N^t(v) - G_\mu(v)\right| \leq N^{-1/4} c_1(K) + N^{-3} c_2(K)$ and $\left| N^{-1} G_N^t(v) - G_\mu(v)\right| > N^{-1/4} c_1(K) + N^{-3} c_2(K)$.

Combining (\ref{compactify}) and (\ref{B16}) for $p = 3$, $a = N^{-1/4}$ we conclude that 
\begin{align}\label{ANKBNKUB}
\begin{split}
&\limsup_{N \rightarrow \infty} A_N(K) \leq  \limsup_{N \rightarrow \infty}   \Big{\vert} \exp \left( \theta \cdot  \left( N^{-1/4} c_1(K) + N^{-3} c_2(K)  \right) + N^{-1/2}  \right) - 1 \Big{\vert}  \\
& + \limsup_{N \rightarrow \infty}  \exp \left( \theta \cdot  \left( N^{-1/4} c_1(K) + N^{-3} c_2(K)  \right) + N^{-1/2}  \right) \\
&\times  \mathbb{P}_N\left(  \left| N^{-1} G_N^t(v) - G_\mu(v)\right| > N^{-1/4} c_1(K) + N^{-3} c_2(K)   \right) \leq  \\
&   \limsup_{N \rightarrow \infty}   \exp\left( \theta \cdot  \left( N^{-1/4} c_1(K) + N^{-3} c_2(K)  \right) + N^{-1/2}   + CN \log(N)^2 - 2\pi^2 \theta \cdot   N^{3/2} \right) = 0,\\
&\limsup_{N \rightarrow \infty} B_{N}(K) \leq   \limsup_{N \rightarrow \infty}  N \cdot  \exp\left( CN \log(N)^2 - 2\pi^2 \theta \cdot   N^{3/2} \right) = 0.
\end{split}
\end{align}
From (\ref{ANKBNK}) and (\ref{ANKBNKUB}) we conclude the first line in (\ref{convST}). The second line in (\ref{convST}) is derived in the same way -- we only need to replace the left side of the first line in (\ref{ANKBNK}) by 
$$\left| \mathbb{E}_{\mathbb{P}_N} \left[ \prod_{i = 1}^N  \frac{Nv - \ell_i + \theta - 1}{Nv - \ell_i - 1}  \cdot e^{-\theta G_\mu(v) } \right] - 1 \right|.$$

{\bf \raggedleft Step 5.} In this step we establish (\ref{c1c2Bound}). We first note that $g_v(x)$ is analytic and so in particular we can find a constant $C(K) > 0 $ such that 
$$\sup_{x \in [-\eta, \lM + \theta + \eta]}[ |g_v'(x)| + |g_v(x)|] = \sup_{x \in [-\eta, \lM + \theta + \eta]} \left[ \frac{1}{|v -x|^2} + \frac{1}{|v-x|} \right]\leq C(K).$$
Furthermore, by assumption $|h'(x)| \leq 10 \cdot \eta^{-1}$ and $|h(x)| \leq 1$. The latter implies that 
$$c_2(K) = \sup_{v \in  K} \| g_v \cdot h \|_{\mbox{ \Small Lip}} \leq \sup_{v \in  K}  \sup_{x \in [-\eta, \lM + \theta + \eta]} 2 \left| \frac{d}{dx} [g_v(x)  h(x)] \right| \leq 2C(K) \cdot \left(1 + \frac{10}{\eta} \right),$$
which proves that $c_2(K) < \infty$. 

Observe by definition that 
$$[g_v \cdot h]_{H^{1/2}(\mathbb{R})} := \int_{\mathbb{R}} \int_{\mathbb{R}} \frac{| g_v(x) \cdot h(x) -  g_v(y) \cdot h(y)|^2}{|x-y|^2} \leq c^2_2(K) \cdot [ \lM + \theta + 2\eta]^2.$$
On the other hand, as can be deduced from the proof of \cite[Proposition 3.4]{NPV} we have
$$[g_v \cdot h]_{H^{1/2}(\mathbb{R})}  = 2 C(1,1/2)^{-1} \cdot \|g_v \cdot h \|_{1/2}^2, \mbox{ where }C(1,1/2) = \int_{\mathbb{R}} \frac{1 - \cos (x)}{x^2}dx \in (0, \infty).$$ 
Combining the last two equations and $c_2(K) < \infty$ shows that $c_1(K) < \infty$. This concludes the proof of (\ref{c1c2Bound}) and hence the lemma.
\end{proof}

Our next goal is to give a formula for the equilibrium measure $\mu$ in Proposition \ref{LLN} in terms of the functions $R_\mu$ and $\Phi^{\pm}$.
\begin{lemma}\label{Lsupp} Suppose that Assumptions 1-4  from \ref{Section3.1} hold. Then $\mu$ has density
\begin{equation}\label{eqMForm}
 \mu(x) =  \frac{1}{\theta \pi } \cdot \mathrm{arccos} \left( \frac{R_\mu(x)}{2 \sqrt{\Phi^-(x)  \Phi^+(x)}}\right),
\end{equation}
for $x \in [0, \lM + \theta]$ and $0$ otherwise. In particular, $\mu(x)$ is continuous in $[0, \lM + \theta]$. 
\end{lemma}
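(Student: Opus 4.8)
The plan is to read off the density of $\mu$ from the boundary values of its Stieltjes transform $G_\mu$, using the quadratic functional equation built into the definitions (\ref{QRmu}) of $R_\mu$ and $Q_\mu$ together with the analyticity of $R_\mu$ from Lemma \ref{AnalRQ}; the branch ambiguity will be resolved by the constraint $0\le\mu\le\theta^{-1}$ supplied by Theorem \ref{LLN}.

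First I would record the functional equation. Multiplying $R_\mu(z)=\Phi^-(z)e^{-\theta G_\mu(z)}+\Phi^+(z)e^{\theta G_\mu(z)}$ in (\ref{QRmu}) by $e^{\theta G_\mu(z)}$ shows that $w(z):=e^{\theta G_\mu(z)}$ satisfies $\Phi^+(z)w(z)^2-R_\mu(z)w(z)+\Phi^-(z)=0$ on $\mathcal{M}\setminus[0,\lM+\theta]$, and that $Q_\mu(z)=R_\mu(z)-2\Phi^+(z)w(z)$. Lemma \ref{AnalRQ} guarantees $R_\mu$ is real analytic on all of $\mathcal{M}$, and $\Phi^\pm$ are real and positive on $(0,\lM+\theta)$ by Assumption 3; since $\mu$ is absolutely continuous with density $\le\theta^{-1}$, the boundary value $G_\mu(x+i0):=\lim_{\epsilon\downarrow 0}G_\mu(x+i\epsilon)$ exists for a.e.\ $x\in(0,\lM+\theta)$, so both relations pass to such boundary points.

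Next I would extract the density. Because $G_\mu$ is the Stieltjes transform of a nonnegative measure, $\operatorname{Im}G_\mu(x+i0)\le 0$, and by Stieltjes inversion $\operatorname{Im}G_\mu(x+i0)=-\pi\mu(x)$ a.e., so $\theta\operatorname{Im}G_\mu(x+i0)\in[-\pi,0]$ using $\mu\le\theta^{-1}$; hence $\theta\operatorname{Im}G_\mu(x+i0)$ is precisely the argument of $w(x+i0)$, and it suffices to compute it. From $Q_\mu(x+i0)=R_\mu(x)-2\Phi^+(x)w(x+i0)$ and $\Phi^+(x)>0$ we get $\operatorname{Im}Q_\mu(x+i0)=-2\Phi^+(x)\operatorname{Im}w(x+i0)\ge 0$. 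I would then split into two regimes. Where $Q_\mu(x)^2=R_\mu(x)^2-4\Phi^-(x)\Phi^+(x)<0$, this sign forces $Q_\mu(x+i0)=i\sqrt{4\Phi^-(x)\Phi^+(x)-R_\mu(x)^2}$, so $w(x+i0)=\tfrac{1}{2\Phi^+(x)}\big(R_\mu(x)-i\sqrt{4\Phi^-(x)\Phi^+(x)-R_\mu(x)^2}\big)$, which has modulus $\sqrt{\Phi^-(x)/\Phi^+(x)}$ and real-part-over-modulus equal to $R_\mu(x)/(2\sqrt{\Phi^-(x)\Phi^+(x)})\in(-1,1)$; since its imaginary part is negative, its argument is $-\arccos\big(R_\mu(x)/(2\sqrt{\Phi^-(x)\Phi^+(x)})\big)$ with $\arccos$ valued in $(0,\pi)$, which gives (\ref{eqMForm}) after division by $-\pi$. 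Where $Q_\mu(x)^2\ge 0$, $w(x+i0)$ is real, so $\sin(\theta\operatorname{Im}G_\mu(x+i0))=0$ and $\mu(x)\in\{0,\theta^{-1}\}$; inspecting the signs of $R_\mu(x)\pm Q_\mu(x+i0)=2\Phi^{\mp}(x)\,e^{\mp\theta G_\mu(x+i0)}$ one checks that $\mu(x)=0$ forces $R_\mu(x)\ge 2\sqrt{\Phi^-(x)\Phi^+(x)}$ and $\mu(x)=\theta^{-1}$ forces $R_\mu(x)\le-2\sqrt{\Phi^-(x)\Phi^+(x)}$, i.e.\ (\ref{eqMForm}) again under the stated convention for $\arccos$ outside $[-1,1]$.

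To conclude, I would observe that (\ref{eqMForm}) then holds for a.e.\ $x\in(0,\lM+\theta)$, that its right-hand side is a continuous function of $x$ on $[0,\lM+\theta]$ (continuity of $R_\mu,\Phi^\pm$ with $\Phi^\pm>0$ on the open interval, the $\arccos$ convention absorbing the zeros of $R_\mu^2-4\Phi^-\Phi^+$), so after adjusting $\mu$ on a null set it has exactly this continuous density; combined with $\operatorname{supp}\mu\subset[0,\lM+\theta]$ from Theorem \ref{LLN} this is the lemma. I expect the main obstacle to be the boundary-value analysis in the previous paragraph: justifying a.e.\ existence and the correct choice of $G_\mu(x+i0)$ among the two roots and controlling the sign of $\operatorname{Im}Q_\mu(x+i0)$; the Lebesgue-null set $\{R_\mu^2=4\Phi^-\Phi^+\}$ also needs care, but it is harmless because the final formula is continuous across it.
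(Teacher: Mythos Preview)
Your approach is essentially the same as the paper's: both extract $\mu(x)$ from the boundary values of $G_\mu$ via the quadratic equation $\Phi^+ w^2 - R_\mu w + \Phi^- = 0$ for $w = e^{\theta G_\mu}$, splitting into the regimes $R_\mu^2 \lessgtr 4\Phi^-\Phi^+$ and resolving the branch using the a~priori constraint $0 \le \mu \le \theta^{-1}$. The paper packages the boundary-value step through Titchmarsh's Hilbert-transform theorem rather than invoking Stieltjes inversion directly, but the substance is identical.

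There is, however, a genuine gap in your continuity argument. You assert that the right-hand side of (\ref{eqMForm}) is continuous on the \emph{closed} interval $[0,\lM+\theta]$, but the justification you give (``continuity of $R_\mu,\Phi^\pm$ with $\Phi^\pm>0$ on the open interval'') only yields continuity on $(0,\lM+\theta)$. Assumption~3 guarantees positivity of $\Phi^\pm$ only on the open interval; at the endpoints $0$ and $\lM+\theta$ one or both of $\Phi^\pm$ may vanish, in which case the ratio $R_\mu(x)/(2\sqrt{\Phi^-(x)\Phi^+(x)})$ is not a~priori well defined. The paper handles this by expanding $\Phi^-,\Phi^+,R_\mu$ in power series at each endpoint and checking, case by case on the relative orders of vanishing, that the arccos expression has a finite limit (equal to $0$, $(2\theta)^{-1}$, $\theta^{-1}$, or the arccos of a finite nonzero ratio, depending on the case). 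Without this endpoint analysis your argument establishes only continuity on the open interval, which is strictly weaker than the claimed conclusion.
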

\begin{proof}
Let us denote $g(x) = {\bf 1}\{x \in [0, \lM + \theta]\} \cdot \mu(x)$ for $x \in \mathbb{R}$ and note that $g \in L^2(\mathbb{R})$. Following \cite[Chapter 5, Theorem 91]{Tit} and its proof we have that the limit
$$- \frac{1}{\pi} \lim_{\epsilon \rightarrow 0^+} \int_{|t| > \epsilon} \frac{g(t)dt}{t - x} =: -\frac{1}{\pi} P \int_\mathbb{R} \frac{g(t)dt}{t -x},$$
exists almost everywhere and defines a function $f(x) \in L^2(\mathbb{R})$. $P$ means that we take the integral in the principal value sense. Furthermore for $z \in \mathbb{H}$ we have
\begin{equation}\label{PhiCDef}
\Phi(z):= \frac{1}{\i \pi} \int_{\mathbb{R}} \frac{f(t)dt}{t-z} = - \frac{1}{\pi} \int_{\mathbb{R}} \frac{g(t)dt}{t-z}
\end{equation}
and for almost every $x \in \mathbb{R}$ we have
\begin{equation}\label{PhiLim}
\lim_{y \rightarrow 0^+} \Phi(x + \i y) = f(x) - \i g(x) \mbox{ and }\lim_{y \rightarrow 0^+} \Phi(x - \i y) = f(x) + \i g(x).
\end{equation}

Recall by Assumption 3 that $\Phi^{\pm}(x)$ are analytic functions on $\mathcal{M}$, real-valued on $\mathcal{M} \cap \mathbb{R}$, and also $\Phi^{\pm}(x) > 0$ for $x \in (0, \lM + \theta)$. In addition, by Lemma \ref{AnalRQ} we know that $R_\mu(x)$ is analytic in $\mathcal{M}$ and real-valued on $\mathcal{M} \cap \mathbb{R}$. We may thus define the function
$$F(x) := \frac{R_\mu(x)}{2\sqrt{\Phi^-(x) \Phi^+(x)}}$$ 
for $x \in (0, \lM + \theta)$ and note that this function is smooth in $(0, \lM + \theta)$ and for each $(\lM + \theta)/2 > \delta > 0$ it is analytic in a complex neighborhood of $[\delta, \lM + \theta - \delta]$ and real-valued on its restriction to $\mathbb{R}$. Let us denote 
\begin{align*}
& S_b := \{x \in (0, \lM + \theta): -1 < F(x) < 1\}, \hspace{2mm}S_v := \{x \in (0, \lM + \theta): F(x) \geq 1\}, \\
& S_s := \{x \in (0, \lM + \theta): F(x) \leq -1\}.
\end{align*}

Recall that for $z \in \mathcal{M} \cap \mathbb{H}$ we have from (\ref{QRmu}) and (\ref{PhiCDef}) that 
\begin{equation}\label{RmuAlt}
R_\mu(z) = \Phi^-(z) \cdot e^{-\theta G_\mu(z)} + \Phi^+(z) \cdot e^{\theta G_\mu(z)} = \Phi^-(z) \cdot e^{-\theta \pi \Phi(z)} + \Phi^+(z) \cdot e^{\theta \pi \Phi(z)}. 
\end{equation}
Let $x \in S_b$ be such that the limit (\ref{PhiLim}) exists, it is finite and $g(x) \in [0, \theta^{-1}]$. Then taking the limit $\varepsilon \rightarrow 0^+$ with $z = x \pm \i \varepsilon$ in (\ref{RmuAlt}) we conclude 
\begin{equation*}
R_\mu(x) = \Phi^-(x) \cdot e^{-\theta \pi [f(x) \mp  \i g(x)]} + \Phi^+(x) \cdot e^{\theta \pi [f(x) \mp  \i g(x)]}.
\end{equation*}
The above implies that $e^{-\theta \pi [f(x) \mp \i g(x)]} $ are roots of 
\begin{equation}\label{quadEqn}
P(X) := \Phi^-(x) \cdot X^2 - R_\mu(x) X + \Phi^+(x) = 0
\end{equation}
 and we conclude that 
\begin{equation}\label{Roots}
\{ e^{-\theta \pi [f(x) \pm \i g(x)]}\} =  \left\{ \frac{ R_\mu(x) \pm \sqrt{  R^2_\mu(x) - 4\Phi^-(x) \Phi^+(x) }}{2 \Phi^-(x)} \right\},
\end{equation}
where the square root is with respect to the principal branch and assumed in $\mathbb{H}$ for negative values. In (\ref{Roots}) we have that the set (of at most two numbers) on the left side is the same as that on the right. Since $x \in S_b$ we know that $F^2(x) \in (0,1)$ and so
$$R^2_\mu(x) < 4 \Phi^-(x) \Phi^+(x).$$
The latter and the fact that $g(x) \in [0, \theta^{-1}]$ imply that $e^{-\theta \pi [f(x) - \i g(x)]}$ lies in $\mathbb{H}$ and so we conclude 
\begin{equation}\label{Roots2}
\begin{split}
&e^{-\theta \pi [f(x) \pm \i g(x)]}=  \frac{ R_\mu(x) \mp \sqrt{  R^2_\mu(x) - 4\Phi^-(x) \Phi^+(x) }}{2 \Phi^-(x)}\\
\end{split}
\end{equation}
Taking absolute values on both sides of the above we get
$$e^{-\theta \pi f(x)} = \sqrt{\Phi^+(x) / \Phi^-(x)},$$
and then taking the real part on both sides we get
\begin{equation}\label{BandEquality}
\sqrt{\Phi^+(x) / \Phi^-(x)} \cdot \cos ( \theta \pi g(x)) = \frac{ R_\mu(x) }{2 \Phi^-(x)} \iff g(x) = \frac{1}{\pi \theta} \cdot \mathrm{arccos} \left( F(x)\right).
\end{equation}
Since the latter is true for a.e. $x \in S_b$, we conclude (\ref{eqMForm}) for $x \in S_b$. \\

Suppose next that $x\in S_v$ is such that (\ref{PhiLim}) exists, it is finite and $g(x) \in [0, \theta^{-1}]$. We still have that $e^{-\theta \pi [f(x) - \i g(x)]}
$ is a root of $P(X)$ in (\ref{quadEqn}). If $F(x) \geq 1$ then the roots of $P(X)$ are still given by the right side of (\ref{Roots}) and so both are positive and real. Since $g(x) \in [0, \theta^{-1}]$ we conclude that $g(x) = 0$. We see that for a.e. $x \in S_v$ we have (\ref{eqMForm}). 

Suppose next that $x\in S_s$ is such that (\ref{PhiLim}) exists, it is finite and $g(x) \in [0, \theta^{-1}]$. We still have that $e^{-\theta \pi [f(x) - \i g(x)]}
$ is a root of $P(X)$ in (\ref{quadEqn}). If $F(x) \leq  -1$ then the roots of $P(X)$ are negative and real and since $g(x) \in [0, \theta^{-1}]$ we conclude that $g(x) = \theta^{-1}$. We see that for a.e. $x \in S_s$ we have (\ref{eqMForm}).

Combining all of the above work and the fact that $S_b \cup S_v \cup S_s = (0, \lM + \theta)$ we conclude (\ref{eqMForm}). In the remainder we focus on the last statement in the lemma. Clearly, $\mu(x)$ is continuous on $(0, \lM + \theta)$. We show that it can be continuously extended to the endpoints $0$ and $\lM + \theta$ as well.\\

We will only show that $\mu(x)$ can be continuously extended to $0$ and remark that a similar argument shows that the same can be done for $\lM + \theta$. In view of Assumption 3 we know that there exist non-negative integers $m, n$ and reals $\{a_k\}_{k = n}^\infty$ and $\{b_k\}_{k = m}^\infty$ with $a_n > 0$ and $b_m > 0$ such that $\Phi^{+}(z)$ and $\Phi^-(z)$ have the following absolutely convergent power series expansion near $0$
$$\Phi^{-}(z) = \sum_{k = n}^\infty a_k z^k \mbox{ and } \Phi^{-}(z) = \sum_{k = m}^\infty b_k z^k.$$
We further know by Lemma \ref{AnalRQ} we know that there is a non-negative integer $d$ and reals $\{c_k\}_{k = d}^\infty$ such that $c_d \neq 0$ and $R_\mu(z)$ has the following absolutely convergent power series expansion near $0$
$$R_{\mu}(z) = \sum_{k = d}^\infty c_k z^k.$$
We observe that
$$\lim_{\epsilon \rightarrow 0^+} \epsilon^{(m+n)/2 - d} \cdot F(\epsilon) = \frac{c_d}{2\sqrt{a_n b_m}}.$$
Suppose first that $c_d > 0$. If $d < (m + n) /2$ then we see that $F(\epsilon) > 1$ and so $\mu(\epsilon) = 0$ for all small enough $\epsilon > 0$, which means we can continuously extend $\mu(x)$ to $0$ by setting it to $0$ there. If $d> (m+n)/2$ then $F(x)$ continuously extends to $0$, where it equals $0$, which means we can continuously extend $\mu(x)$ to $0$ by setting it to $\theta^{-1}$ there. If $d = (m+n)/2$ then $F(x)$ continuously extends to $0$, where it equals $ \frac{c_d}{2\sqrt{a_n b_m}}$ and so we can continuously extend $\mu(x)$ to $0$ by setting it equal to 
$$\frac{1}{\theta \pi} \cdot \mathrm{arccos} \left(  \frac{c_d}{2\sqrt{a_n b_m}} \right).$$
A similar argument applies if $c_d < 0$. If $d < (m + n) /2$ then we can continuously extend $\mu(x)$ to $0$ by setting it to $\theta^{-1}$ there. If $d> (m+n)/2$ then we can continuously extend $\mu(x)$ to $0$ by setting it to $(2\theta)^{-1}$ there. If $d = (m+n)/2$ then we can continuously extend $\mu(x)$ to $0$ by setting it equal to 
$$\frac{1}{\theta \pi} \cdot \mathrm{arccos} \left(  \frac{c_d}{2\sqrt{a_n b_m}} \right).$$
\end{proof}

We end this section by proving the following result.
\begin{lemma}\label{NonVanish}  If Assumptions 1-5  from \ref{Section3.1} hold then $\Phi^-(x) + \Phi^+(x) - R_\mu(x) \neq 0$ for all $x \in [0, \lM + \theta]$. 
\end{lemma}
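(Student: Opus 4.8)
The plan is to analyze the quadratic polynomial $P(X) = \Phi^-(x) X^2 - R_\mu(x) X + \Phi^+(x)$, which already played the central role in the proof of Lemma \ref{Lsupp}. Observe that $\Phi^-(x) + \Phi^+(x) - R_\mu(x) = P(1)$, so it suffices to show that $X = 1$ is never a root of $P$ for $x \in [0, \lM + \theta]$. By \eqref{Roots} (equivalently \eqref{Roots2}), whenever $x \in (0, \lM + \theta)$ the two roots of $P(X)$ are $e^{-\theta\pi[f(x) \pm i g(x)]}$, where $g(x) = \mu(x) \in [0, \theta^{-1}]$ from Theorem \ref{LLN} and $e^{-\theta\pi f(x)} = \sqrt{\Phi^+(x)/\Phi^-(x)} > 0$ as derived in the proof of Lemma \ref{Lsupp}. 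So $X = 1$ is a root exactly when $\sqrt{\Phi^+(x)/\Phi^-(x)} = 1$ and $\cos(\theta\pi g(x)) = 1$ simultaneously, i.e. when $\Phi^+(x) = \Phi^-(x)$ and $g(x) \in \{0\}$ (recall $\theta g(x) \in [0,1]$, so $\cos(\theta \pi g(x)) = 1$ forces $g(x) = 0$). In that situation $P(X) = \Phi^-(x)(X-1)^2$, i.e. $X = 1$ is a double root, which forces $R_\mu(x)^2 - 4\Phi^-(x)\Phi^+(x) = 0$, hence by Assumption 5 we get $H(x)^2 (x-\alpha)(x-\beta) = Q_\mu(x)^2 = 0$; since $H$ is non-vanishing on $[0, \lM + \theta]$, this means $x \in \{\alpha, \beta\}$.

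So the only candidates for a zero of $\Phi^- + \Phi^+ - R_\mu$ in the open interval are $x = \alpha$ and $x = \beta$, and I also need to handle the two endpoints $x = 0$ and $x = \lM + \theta$. For the endpoints, I would use the power-series expansions of $\Phi^{\pm}$ and $R_\mu$ near $0$ (and symmetrically near $\lM + \theta$) introduced at the end of the proof of Lemma \ref{Lsupp}, together with continuity of $\mu$ from that lemma, to pass to the limit: the identity $P(1) = \Phi^-(x) e^{-\theta\pi\Phi(x)} + \Phi^+(x) e^{\theta\pi\Phi(x)} $ evaluated in the limit, combined with $g = \mu$ continuous, reduces the endpoint cases to the same dichotomy. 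For $x = \alpha$ or $x = \beta$, the key point is that $Q_\mu$ has a \emph{simple} zero there (by Assumption 5, since $H(\alpha), H(\beta) \neq 0$), whereas if $\Phi^- + \Phi^+ - R_\mu$ vanished at such a point then $P$ would have $X=1$ as a double root, forcing $Q_\mu^2 = R_\mu^2 - 4\Phi^-\Phi^+$ to vanish to order at least $2$ there unless $\Phi^-$ also degenerates. I would rule this out by a local analytic argument: writing $R_\mu(x) = 2\Phi^-(x) + (\text{higher order})$ near such a point and expanding $Q_\mu(x)^2 = R_\mu(x)^2 - 4\Phi^-(x)\Phi^+(x) = (R_\mu(x) - 2\sqrt{\Phi^-\Phi^+})(R_\mu(x) + 2\sqrt{\Phi^-\Phi^+})$, one sees the second factor is non-zero (it is $\approx 4\Phi^-(x) > 0$), so the order of vanishing of $Q_\mu^2$ equals that of $R_\mu - 2\sqrt{\Phi^-\Phi^+}$; a double root of $P$ at $X=1$ forces this to be at least order $2$, contradicting simplicity of the zero of $Q_\mu$.

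The main obstacle I anticipate is precisely this last local computation at $x = \alpha, \beta$: one must carefully track how the hypothesis "$X=1$ is a root of $P$" interacts with the factorization $Q_\mu = H \cdot \sqrt{(x-\alpha)(x-\beta)}$ and extract the correct order-of-vanishing contradiction, being attentive to the possibility that $\alpha$ or $\beta$ coincides with an endpoint or with a zero of $\Phi^-$. A cleaner route, which I would try first, is to argue directly from analyticity: $\Phi^- + \Phi^+ - R_\mu$ is real analytic on $\mathcal{M}$ (by Assumption 3 and Lemma \ref{AnalRQ}), so if it vanished at some $x_0 \in [0, \lM + \theta]$ it would vanish to some finite order $k \geq 1$; combining the relation $(R_\mu - \Phi^- - \Phi^+)(R_\mu + \Phi^- - \Phi^+) = Q_\mu^2 - (\Phi^- - \Phi^+)^2 + \ldots$ — more precisely using $R_\mu^2 - 4\Phi^-\Phi^+ = Q_\mu^2$ and the factorization $R_\mu^2 - (\Phi^- + \Phi^+)^2 = (R_\mu - \Phi^- - \Phi^+)(R_\mu + \Phi^- + \Phi^+)$ — one relates the vanishing of $\Phi^- + \Phi^+ - R_\mu$ to that of $Q_\mu$, and then Assumption 5's simple-zero structure plus the sign information $\mu(x) \le \theta^{-1}$ forces a contradiction. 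I would write the final proof in whichever of these two forms turns out to require the least case analysis.
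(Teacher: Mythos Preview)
Your observation that $\Phi^-(x) + \Phi^+(x) - R_\mu(x) = P(1)$ with $P(X) = \Phi^-(x)X^2 - R_\mu(x)X + \Phi^+(x)$ is correct and is in the spirit of the paper's approach. Your argument for $x_0 \in (\alpha,\beta)$ is essentially the paper's (there $Q_\mu^2(x_0) < 0$, so the discriminant of $P$ is negative and $P$ has no real roots, hence $P(1)\neq 0$).

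The genuine gap is in your treatment of $x_0 \in (0,\alpha)\cup(\beta,\lM+\theta)$. You assert that ``whenever $x \in (0,\lM+\theta)$ the two roots of $P(X)$ are $e^{-\theta\pi[f(x)\pm ig(x)]}$'' and that $e^{-\theta\pi f(x)} = \sqrt{\Phi^+(x)/\Phi^-(x)}$. But \eqref{Roots2} and the modulus identity were derived in Lemma~\ref{Lsupp} only for $x\in S_b=(\alpha,\beta)$, where the two roots are complex conjugates. In the void region $S_v$ (where $g(x)=0$) the boundary limits from above and below coincide, and the paper only shows that $e^{-\theta\pi f(x)}$ is \emph{one} root of $P$; the other root is $\tfrac{\Phi^+(x)}{\Phi^-(x)}e^{\theta\pi f(x)}$, which need not equal $e^{-\theta\pi f(x)}$. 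So $P(1)=0$ does \emph{not} force both roots to be $1$, and your conclusion ``$Q_\mu^2(x_0)=0$, hence $x_0\in\{\alpha,\beta\}$'' is unjustified there. The paper handles this case with a different idea: on $(\beta,\lM+\theta)$ the density $\mu$ vanishes, so $G_\mu$ is analytic and real there, and one uses the factorization
\[
R_\mu(x_0)-\Phi^-(x_0)-\Phi^+(x_0)=\bigl[\Phi^-(x_0)-\Phi^+(x_0)e^{\theta G_\mu(x_0)}\bigr]\bigl[e^{-\theta G_\mu(x_0)}-1\bigr],
\]
together with $G_\mu(x_0)>0$ and the sign of $Q_\mu(x_0)=H(x_0)\sqrt{(x_0-\alpha)(x_0-\beta)}>0$, to show both bracketed factors are strictly nonzero with the same sign.

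There is a second, related gap at $x_0\in\{\alpha,\beta\}$. You correctly note that at such a point $P$ has a double root, so if $P(1)=0$ then the double root is $1$. But you then claim this forces $Q_\mu^2$ to vanish to order $\geq 2$ in the variable $x$ at $x_0$, contradicting $H(x_0)\neq 0$. That inference is not immediate: ``$P$ has a double root at $X=1$'' is a statement at the single point $x_0$ and says nothing by itself about the $x$-order of vanishing of $Q_\mu^2$. The paper closes this gap by first establishing $\Phi^-+\Phi^+-R_\mu>0$ on both sides of $\beta$ (using the band and void arguments above), so a zero at $\beta$ would be a local minimum of a real-analytic function, forcing its first derivative to vanish too; expanding then gives $Q_\mu^2(x)=O((x-\beta)^2)$, contradicting the simple zero. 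Your proposal is missing precisely this ``sign on both sides $\Rightarrow$ second-order vanishing'' step, which in turn depends on already having handled the void region.
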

\begin{proof}
We split the proof into two parts for clarity.\\

{\bf \raggedleft Part I.} We continue with the same notation as in Lemma \ref{Lsupp}. Recall that by Assumption 5 we know that 
\begin{equation}\label{QmuRecall}
Q_\mu(z) = H(z) \cdot \sqrt{(z-\alpha)(z - \beta)},
\end{equation}
where $0 \leq \alpha < \beta \leq \lM + \theta$ and $H(z)$ is analytic in a complex neighborhood of $[0, \lM + \theta]$ and does not vanish in $[0, \lM + \theta]$. In this part we show that $H(x)$ is analytic in a complex neighborhood of $[0, \lM + \theta]$ and is strictly positive (and in particular real-valued) on $[0, \lM + \theta]$. We also show that
\begin{equation} \label{BANDS}
S_b =(\alpha, \beta) \mbox{ and } S_v \cup S_s = (0, \lM + \theta) \setminus (\alpha, \beta).
\end{equation}

Observe that for all $z \in \mathcal{M} \cap \mathbb{H}$ we have from (\ref{QRmu}) and (\ref{PhiCDef}) that 
\begin{equation}\label{QmuAlt}
Q_\mu(z) = \Phi^-(z) \cdot e^{-\theta G_\mu(z)} - \Phi^+(z) \cdot e^{\theta G_\mu(z)} = \Phi^-(z) \cdot e^{-\theta \pi \Phi(z)} - \Phi^+(z) \cdot e^{\theta \pi \Phi(z)}. 
\end{equation}

Let $x \in S_b$ be such that the limit (\ref{PhiLim}) exists, it is finite and $g(x) \in [0, \theta^{-1}]$. Then taking the limit $\varepsilon \rightarrow 0^+$ with $z = x + \i \varepsilon$ in (\ref{QmuAlt}) and using (\ref{Roots2}) we get
\begin{equation}\label{Qlim}
\lim_{\varepsilon \rightarrow 0^+} Q_\mu(x \pm \i \varepsilon) =  \Phi^-(x) \cdot   \frac{ R_\mu(x) \pm \sqrt{  R^2_\mu(x) - 4\Phi^-(x) \Phi^+(x) }}{2 \Phi^-(x)} -  \frac{2 \Phi^-(x) \Phi^+(x) }{ R_\mu(x) \pm \sqrt{  R^2_\mu(x) - 4\Phi^-(x) \Phi^+(x) }}.
\end{equation}
Rationalizing the second term and using that $x \in S_b$ we get
\begin{equation}\label{Qlimpm}
\lim_{\varepsilon \rightarrow 0^+} Q_\mu(x \pm \i \varepsilon) =  \pm \i \sqrt{  4\Phi^-(x) \Phi^+(x) -R^2_\mu(x) }.
\end{equation}
The above equation implies that a.e. on $S_b$ we have
$$\lim_{\varepsilon \rightarrow 0^+} Q_\mu(x + \i \varepsilon) - Q_\mu(x - \i \varepsilon) =2 \i \sqrt{  4\Phi^-(x) \Phi^+(x) -R^2_\mu(x) } \neq 0,$$
which in view of (\ref{QmuRecall}) implies that $S_b \subseteq (\alpha, \beta)$. 

On the other hand, suppose $x \in S_v$ is such that the limit (\ref{PhiLim}) exists, it is finite and $g(x) = 0$. Then taking the limit $\varepsilon \rightarrow 0^+$ with $z = x + \i \varepsilon$ in (\ref{QmuAlt}) we again obtain
\begin{equation}\label{Qlimpm2}
\lim_{\varepsilon \rightarrow 0^+} Q_\mu(x \pm \i \varepsilon) =   \Phi^-(x) \cdot  e^{-\theta \pi f(x)} -   \Phi^+(x) \cdot  e^{\theta \pi f(x)}.
\end{equation}
In view of Lemma \ref{Lsupp} we conclude that a.e. on $S_v$ we have
$$\lim_{\varepsilon \rightarrow 0^+} Q_\mu(x + \i \varepsilon) - Q_\mu(x - \i \varepsilon) =0,$$
which in view of (\ref{QmuRecall}) implies that $S_v \subseteq (0, \lM + \theta) \setminus (\alpha, \beta)$. An analogous argument shows that $S_s \subseteq (0, \lM + \theta) \setminus (\alpha, \beta)$. But now $S_b, S_v, S_s$ are pairwise disjoint and their union is $(0, \lM + \theta)$ and the same is true for $(\alpha, \beta)$ and $ (0, \lM + \theta) \setminus (\alpha, \beta)$. Consequently, we conclude (\ref{BANDS}). 

Combining (\ref{Qlimpm}) and (\ref{QmuRecall}) we see that for a.e. $x \in (\alpha, \beta)$ we have
$$H(x) \cdot \i \cdot \sqrt{(x - \alpha)(\beta - x)} = \i \cdot\sqrt{  4\Phi^-(x) \Phi^+(x) -R^2_\mu(x) },$$
and so we conclude that $H(x)$ is analytic in a complex neighborhood of $[0, \lM + \theta]$, non-negative on $[0, \lM + \theta]$, and since it does not vanish in $[0, \lM + \theta]$ we conclude it is strictly positive there.\\

{\bf \raggedleft Part II.} In this part we give the proof of the lemma. For the sake of contradiction suppose that $\Phi^+(x_0) + \Phi^-(x_0) = R_\mu(x_0) $ for some $x_0 \in [0, \lM + \theta]$.

 Suppose first that $ x_0 \in (\alpha, \beta)$. We then have that $F(x_0) \in (-1, 1)$ and so 
$$(\Phi^+(x_0) + \Phi^-(x_0) )^2 = R_\mu^2(x_0) < 4 \Phi^+(x_0) \Phi^-(x_0) \implies (\Phi^+(x_0) - \Phi^-(x_0) )^2 < 0,$$
which is clearly impossible. 

Suppose next that $x_0 \in (\beta, \lM + \theta) \neq \varnothing$. From (\ref{BANDS}) we know that $(\beta, \lM + \theta) \subseteq S_v \cup S_s $ and by the continuity of $F(x)$ we conclude that $(\beta, \lM + \theta) \subseteq S_s$ or $(\beta, \lM + \theta) \subseteq S_v$. In the former case we have $F(x_0) \leq -1$ and so $R_\mu(x_0) < 0$, which is a contradiction as $\Phi^+(x_0) + \Phi^-(x_0) > 0$. We thus conclude that $(\beta, \lM + \theta) \subseteq S_v$ and so $g(x) = 0$ for all $x \in (\beta, \lM + \theta).$ The latter implies that 
\begin{equation}\label{Gmupos}
G_\mu(z) = \int_0^{\lM + \theta} \frac{\mu(x)dx}{z - x}
\end{equation}
is analytic near $x_0$, $G_\mu(x_0) > 0$ and also
$$0 < H(x_0) \cdot \sqrt{(x_0 - \alpha)(x_0 - \beta)} = Q_\mu(x_0) = \Phi^-(x_0) \cdot e^{-\theta G_\mu(x_0)} - \Phi^+(x_0) \cdot e^{\theta G_\mu(x_0)} .$$
In particular, we see that 
$$\Phi^-(x_0)  >  \Phi^+(x_0) \cdot e^{2\theta G_\mu(x_0)} > \Phi^+(x_0) \cdot e^{\theta G_\mu(x_0)} > \Phi^+(x_0).$$
In view of 
\begin{equation*}
R_\mu(x_0) = \Phi^-(x_0) \cdot e^{-\theta G_\mu(x_0)} + \Phi^+(x_0) \cdot e^{\theta G_\mu(x_0)} .
\end{equation*}
we conclude that
\begin{equation}\label{Rmuneg}
0 =R_\mu(x_0) - \Phi^-(x_0) - \Phi^+(x_0) = \left[ \Phi^{-}(x_0) - \Phi^{+}(x_0)e^{\theta G_\mu(x_0)} \right] \cdot \left[e^{-\theta G_\mu(x_0)} - 1\right] < 0,
\end{equation}
which is again a contradiction. An analogous argument leads to a contradiction if $x_0 \in (0, \alpha) \neq \varnothing$.\\

The above considerations show that $x_0 \in \{\alpha, \beta, 0, \lM + \theta\}$. Suppose next that $x_0 = \beta$ and let
$$\Phi^-(x) = \sum_{k =0}^\infty A_k (x- \beta)^k, \Phi^+(x) = \sum_{k = 0}^\infty B_k(x-\beta)^k \mbox{ and }R_\mu(x) =  \sum_{k = 0}^\infty C_k(x-\beta)^k$$
be the power series expansion of $\Phi^{\pm}$ and $R_\mu(x)$ near $ \beta$. For $x \in (\alpha, \beta)$ we know that 
$$ R_\mu^2(x) < 4 \Phi^+(x) \Phi^-(x)$$
and  taking the limit as $x \rightarrow \beta$, we conclude that $(A_0+B_0)^2 = C_0^2 \leq 4A_0B_0$, which implies that $C_0/2 = A_0 = B_0 $. Suppose next that $\beta \neq \lM + \theta$. Then from our previous work we know that $\Phi^-(x) + \Phi^{+}(x) - R_\mu(x) > 0$ if $x \in(\alpha, \beta)$ and if $x \in (\beta, \lM + \theta)$. Consequently, we conclude that $C_1 = A_1 + B_1$. The latter implies that near $\beta$ we have
\begin{equation}\label{Qmubeta}
Q^2_\mu(x) = R^2_\mu(x) - 4 \Phi^+(x) \Phi^-(x) = O(|x-\beta|^2),
\end{equation}
which implies that $H(\beta) = 0$, which is a contradiction. 

Suppose instead that $\beta = \lM + \theta$. Notice that as $\Phi^{\pm}(x) > 0$ on $(\alpha, \beta)$ we have $A_0 \geq 0$. If $A_0 = 0$ then again (\ref{Qmubeta}) holds, leading to a contradiction. We may thus assume $A_0 > 0$. Notice that for $x = \beta + \epsilon$ and $\epsilon > 0$ small we may verbatim repeat the arguments from (\ref{Gmupos}) to (\ref{Rmuneg}) and conclude that $\Phi^-(x) + \Phi^{+}(x) - R_\mu(x) > 0$ for all such $x$. Since $\Phi^-(x) + \Phi^{+}(x) - R_\mu(x) > 0$ both to the left and right of $\beta$ we conclude as before that $C_1 = A_1 + B_1$, which implies (\ref{Qmubeta}) leading to the same contradiction. Summarizing the last two paragraphs, we see that $x_0 \neq \beta$ and an analogous argument shows that $x_0 \neq \alpha$.\\

What remains is to investigate the cases when $x_0 = 0 < \alpha$ and $x_0 = \lM + \theta > \beta$. Suppose that $x_0 = \lM + \theta > \beta$. As before we have that $(\beta, \lM + \theta) \subseteq S_s$ or $(\beta, \lM + \theta) \subseteq S_v$. In the former case we have $R_\mu(x) < 0$ for $x \in (\beta, \lM + \theta)$ and by continuity we conclude $R_\mu(x_0) \leq 0$. 
On the other hand, $\Phi^{\pm}(x) \geq 0$ for $x \in (\beta, \lM + \theta)$ and by continuity we conclude $\Phi^{\pm}(x_0) \geq 0$. We thus see that $R_\mu(x_0) = \Phi^{\pm}(x_0) = 0$. But then $Q^2_\mu(x_0) = R^2_\mu(x_0) - 4 \Phi^+(x_0) \Phi^-(x_0) = 0$, and so $H(\lM + \theta) = 0$, which is a contradiction. We thus conclude that $(\beta, \lM + \theta) \subseteq S_v$. Arguing as before we have for $x\in (\beta, x_0]$ that $G_\mu(x) > 0$ and 
\begin{equation*}
R_\mu(x) - \Phi^-(x) - \Phi^+(x) = \left[ \Phi^{-}(x) - \Phi^{+}(x)e^{\theta G_\mu(x)} \right] \cdot \left[e^{-\theta G_\mu(x)} - 1\right],
\end{equation*}
which implies that 
$$\Phi^{-}(x_0) =  \Phi^{+}(x_0)e^{\theta G_\mu(x_0)}.$$
Since $\Phi^-(x) > 0$ for $x \in (\beta, x_0)$ we conclude by continuity that $\Phi^-(x_0) \geq 0$ and so
$$0 < H(x_0) \cdot \sqrt{(x_0 - \alpha)(x_0 - \beta)}= \Phi^-(x_0) \cdot e^{-\theta G_\mu(x_0)} - \Phi^+(x_0) \cdot e^{\theta G_\mu(x_0)} \leq  \Phi^-(x_0)  - \Phi^+(x_0) \cdot e^{\theta G_\mu(x_0)} = 0,$$
which is a contradiction. We conclude that $x_0 \neq \lM + \theta$ and an analogous argument shows that $x_0 \neq 0 $. Overall, we reach a contradiction in all cases, which concludes the proof of the lemma.
\end{proof}

%
\section{Appendix B}\label{Section10}
In this section we prove Propositions \ref{SingleLevelNekrasov}, \ref{CLT} and \ref{MomentBoundSingleLevel}. We use the notation from Section \ref{Section3}. 

%
\subsection{Proof of Proposition \ref{SingleLevelNekrasov}}\label{Section10.0}
The function $R_N(z)$ has possible poles at $s = a +(N- i) \cdot \theta$ where $i= 1, \cdots, N$ and $a\in \{0,\dots, M_N + 1\}$. Note that all of these poles are simple, since $\ell_i$ are {\em strictly} increasing. We will write $\ell^{i,\pm}$ for the $N$-tuple $(\ell_1, \dots, \ell_{i-1}, \ell_i {\pm 1}, \ell_{i+1}, \dots \ell_N)$. 

Fix a possible pole $s$ and assume $s\neq 0, M_N+1+(N-1) \cdot  \theta.$ The expectation $\mathbb{E}_{\mathbb{P}_N} $ is a sum over elements  $\ell \in \mathfrak{X}^t$. Such an element
contributes to a residue if $\ell_i=s \text{ or } \ell_i=s-1,$ for some $i = 1, \dots , N$. Then the residue at $s$ is given by
\begin{align}\label{S3BigSum}
\begin{split}
&\sum_{i = 1}^N \sum_{ \ell \in \mathcal{G}^i_1 } \Phi^-_N(s) \mathbb{P}_N(\ell) \cdot (-\theta) \cdot \prod_{j \neq i}^N  \frac{s - \ell_j - \theta}{s - \ell_j} +  \Phi^+_N(s)  \mathbb{P}_N(\ell^-) \cdot (\theta) \cdot \prod_{j \neq i}^N  \frac{s - \ell_j + \theta - 1}{s - \ell_j - 1} \\
&+ \sum_{i = 1}^N \sum_{ \ell\in \mathcal{G}^i_2} \hspace{-1mm}\Phi^-_N(s) \mathbb{P}_N(\ell) \cdot (-\theta) \cdot \hspace{-1mm} \prod_{j \neq i}^N  \frac{s - \ell_j - \theta}{s - \ell_j} + \sum_{i = 1}^N \Phi^+_N(s)   \sum_{ \ell\in \mathcal{G}^i_3}\hspace{-1mm} \mathbb{P}_N(\ell^-) \cdot (\theta) \cdot \hspace{-1mm} \prod_{j \neq i}^N  \frac{s - \ell_j + \theta- 1}{s - \ell_j - 1},
\end{split}
\end{align}
$$\mbox{ where } \mathcal{G}^i_1 = \{ \ell: \ell_i = s \mbox{ and } \ell,  \ell^{i, -}  \in \mathbb{W}^\theta_{N,N}\}, \hspace{2mm}  \mathcal{G}^i_2 = \{ \ell: \ell_i = s, \ell \in \mathbb{W}^\theta_{N,N},  \ell^{i, -} \not \in \mathbb{W}^\theta_{N,N}\},$$
$$ \mathcal{G}^i_3 = \{ \ell: \ell_i = s, \ell \not \in \mathbb{W}^\theta_{N,N},  \ell^{i, -}  \in \mathbb{W}^\theta_{N,N}\}.$$
Notice that the first sum vanishes term-wise as can be seen from (\ref{SingleLevelMeasure}) and Assumption 3. We next note that if $\ell \in \mathcal{G}^i_2$ then either $s = 0$ in the case $i = N$, which we ruled out, or $i \neq N$ and $\ell_{i+1} = s -\theta$. This means that the product in the second sum vanishes and so we get no contribution to the residue from this sum. Similarly, if $\ell \in \mathcal{G}^i_3$ then either $i = 1$ and $s = M_N + 1 + (N-1)\cdot \theta$, which we ruled out or $i \neq 1$ and $\ell_{i-1} = s - 1 + \theta$. This means that the product in the third sum vanishes and so we get no contribution to the residue from this sum. Overall, the residue is zero provided $s \neq 0$ and $s \neq M_N + 1 + (N-1)\cdot \theta$. 

We finally consider the residues at $s = 0$ and $s = M_N + 1 + (N-1)\cdot  \theta$ starting with the former. If $s = 0$ then we get no contribution from the second expectation and the first expectation in (\ref{SingleLevelEquation}) only contributes if $\ell_N = 0$. Consequently, the residue is given by
$$  \Phi^-_N(0) \cdot (-\theta) \cdot \mathbb{P}_N(\ell_N = 0) \cdot \mathbb{E}_{\mathbb{P}_N} \left[ \prod_{i= 1}^{N-1}  \frac{  \ell_i + \theta }{ \ell_i}\Big{\vert}  \ell_N = 0 \right] -r^-(N),$$
which is zero by the definition of $r^-(N)$. 

If $s = M_N + 1 + (N-1)\cdot \theta$ then we get no contribution from the first expectation, while the second expectation in (\ref{SingleLevelEquation}) only contributes if $\ell_1 = M_N + (N-1)\cdot \theta$. Consequently, the residue is given by
$$  \Phi^+_N(\theta) \cdot \theta \cdot \mathbb{P}_N(\ell_1 = s - 1) \cdot \mathbb{E}_{\mathbb{P}_N} \left[\prod_{i = 2}^N  \frac{s - \ell_i + \theta-1 }{s - \ell_i - 1}\Big{\vert}  \ell_1 = s - 1\right] -r^+(N),$$
which is zero by the definition of $r^+(N)$.

%
\subsection{Applications of Nekrasov's equations}\label{Section10.1}
We assume that we have a sequence of probability measures $\mathbb{P}_N$ that satisfy Assumptions 1-5 as in Section \ref{Section3}.

Let us fix a compact subset $K$ of $\mathbb{C} \setminus [0, \lM + \theta]$ and suppose $\epsilon > 0$ is sufficiently small so that $K$ is at least distance $\epsilon$ from $[0, \lM + \theta]$. We also let $v_a$ for $a = 1, \dots, m$ be any $m$ points in $K$. We apply Nekrasov's equations, Proposition \ref{SingleLevelNekrasov}, to the measures $\mathbb{P}_N^{{\bf t}, {\bf v}}$ from Section \ref{Section3.2} (here $n = 0$) and obtain the following statement. Let $R_N$ be given by
\begin{align}\label{S9SLNEv1}
\begin{split}
R_N(Nz) = &\Phi^{-}_N(Nz)   A_1(z) \cdot   \mathbb{E} \left[ \prod_{i = 1}^N\frac{Nz- \ell_i -\theta}{Nz - \ell_i} \right] \\
&+ \Phi^{+}_N(Nz)    B_1(z)  \cdot  \mathbb{E} \left[ \prod_{i = 1}^{N}\frac{Nz- \ell_i + \theta - 1}{Nz - \ell_i - 1} \right] + W_1(z),
\end{split}
\end{align}
where 
\begin{align}\label{S9As}
\begin{split}
&A_1(z) = \prod_{a = 1}^m \left[ v_a +t_a - z + \frac{1}{N} \right] \left[ v_a- z \right], B_1(z) =  \prod_{a = 1}^m \left[ v_a +t_a- z \right] \left[ v_a - z + \frac{1}{N}\right], \\
& W_1(z) =   C_1(z)   \mathbb{E}\left[  \prod_{i= 1}^{N-1}  \frac{ \ell_i + \theta }{ \ell_i} \cdot {\bf 1} \{\ell_N = 0\} \right] - C_2(z)  \mathbb{E}\left[\prod_{i = 2}^N  \frac{s_N - \ell_i -1 + \theta}{s_N - \ell_i - 1}\cdot {\bf 1}\{ \ell_1 = s_N - 1\} \right] \\
&C_1(z) = \frac{\theta  \Phi^-_N(0)}{N \cdot z}\prod_{a = 1}^m \hspace{-1mm} \left[v_a + t_a + \frac{1}{N} \right] \hspace{-1mm}v_a,  C_2(z) =   \frac{\theta  \Phi_N^+(s_N)}{Nz- s_N} \prod_{a = 1}^m\hspace{-1mm} \left[v_a + t_a - \frac{s_N}{N}\right] \hspace{-1mm} \left[v_a - \frac{s_N}{N} + \frac{1}{N}\right],
\end{split}
\end{align}
 $\Phi^{\pm}_N$ are as in Assumption 3 and $s_N = M_N + 1 + (N-1) \cdot\theta$. All the expectations above are with respect to the measure $\mathbb{P}_N^{{\bf t}, {\bf v}}$. Then for all large $N$ function $R_N(Nz)$ is holomorphic in $\mathcal{M}$ as in Assumption 3, provided that the $\max_{i,j}|t_j| < \epsilon/2$. 

For integers $p \leq  q$, we will denote by $\llbracket p, q \rrbracket$ the set of integers $\{p, p+1, \dots ,q\}$. If $A \subseteq \llbracket 1, m\rrbracket$ and $\xi$ is a bounded random variable we write $M(\xi; A)$ for the joint cumulant of $\xi$ and $X_N^t(v_a)$ for $a \in A$, where we recall that $X^t_N$ was defined in (\ref{DefX}). If $A = \varnothing$ then $M(\xi;A) = \mathbb{E}[\xi]$. In addition, we fix $v \in K$ and let $\Gamma$ be a positively oriented contour, which encloses the segment $[0 , \lM + \theta]$, is contained in $\mathcal{M}$ as in Assumption 3 and avoids $K$.\\

Our goal in this section is to derive the following result
\begin{align}\label{SLExpand}
\begin{split} 
& M(X^t_N(v); \llbracket 1, m \rrbracket) =  \frac{N \cdot \theta^{-1}}{2\pi \i \sqrt{(v - \alpha) (v- \beta)}}\int_{\Gamma}  dz\frac{\Phi_N^+(Nz) e^{\theta G_\mu(z)} {\bf 1 }\{m = 0\}}{H(z) \cdot (z - v)} \cdot \left[ 1 +  \frac{[\theta^2 - 2\theta] \partial_z G_{\mu}(z)}{2N} \right]\\
& + N^{-1} \hspace{-2mm} \sum_{A \subseteq \llbracket 1,m \rrbracket}  \hspace{-2mm}    M \left(  \xi^{\Gamma}_N(z); A\right) +  M(\xi_N^{\Gamma}(z)  [X_N^t(z)]^2;A) +M( \xi_N^{\Gamma}(z)  \partial_z X_N^t(z);A) + M( \xi_N^{\Gamma}(z)  X_N^t(z);A)\\
& + \prod_{a = 1}^m  \left[ \frac{- N^{-1}}{ (v_a -z)(v_a - z^-)}\right]   \frac{\Phi_N^-(Nz) e^{-\theta G_\mu(z)} }{H(z) \cdot (z - v)} \mathbb{E}\left[1 - {\bf 1}\{m > 0\} \cdot \frac{\theta X_N^t(z)}{N} + \frac{\theta^2 \partial_z G_{\mu}(z) }{2N}\right] \hspace{-1mm} + O(N^{-1}), \\
\end{split}
\end{align}
where $z^- = z - N^{-1}$, all the expectations above are with respect to $\mathbb{P}_N$, $\xi_N^{\Gamma}(z)$ stands for a generic random functions that is $\mathbb{P}_N$ - almost surely $O(1)$ and the constants in the big $O$ notations are uniform as $z$ varies over $\Gamma$ and $v, v_1, \dots, v_m$ vary over $K$.

We start by dividing both sides of (\ref{S9SLNEv1}) by $2\pi \i \cdot H(z) \cdot (z - v)\cdot B_1(z)$ and integrating over $\Gamma$, where we recall that $H(z)$ was defined in Assumption 5. This gives
\begin{align*}
&\frac{1}{2\pi \i }\int_{\Gamma} \frac{R_N(Nz) dz}{H(z) \cdot (z - v) \cdot B_1(z) }= \frac{1}{2\pi \i }\int_{\Gamma} \frac{W_1(z) dz}{H(z) \cdot (z - v) \cdot B_1(z) }  \\
&+ \frac{1}{2\pi \i }\int_{\Gamma} \frac{ \Phi^{-}_N(Nz) A_1(z) dz}{H(z)  (z - v)  B_1(z)} \cdot  \mathbb{E} \left[ \prod_{i = 1}^N\frac{Nz- \ell_i -\theta}{Nz - \ell_i} \right] + \frac{1}{2\pi \i }\int_{\Gamma} \frac{\Phi^{+}_N(Nz)  dz}{H(z)  (z - v)}  \cdot   \mathbb{E} \left[ \prod_{i = 1}^{N}\frac{Nz- \ell_i + \theta - 1}{Nz - \ell_i - 1} \right].
\end{align*}
Notice that by Assumption 5, we have $H(z) \neq 0$ in a neighborhood of the region enclosed by $\Gamma$ and so by Cauchy's theorem the left side of the above expression vanishes. Furthermore, the integrand of the first term on the right has two simple poles in the region enclosed by $\Gamma$ at $z = 0$ and $z = s_N \cdot N^{-1}$. The last two observations show
\begin{align*}
\begin{split}
&0  = \frac{\theta \cdot \Phi_N^-(0)}{N \cdot H(0) \cdot(- v)} \cdot \prod_{a = 1}^m \frac{(v_a + t_a + 1/N)v_a}{(v_a + t_a)(v_a + 1/N)}\cdot  \mathbb{E}\left[  \prod_{i= 1}^{N-1}  \frac{ \ell_i + \theta }{ \ell_i} \cdot {\bf 1} \{\ell_N = 0\} \right]   \\
&- \frac{\theta \cdot \Phi_N^+(s_N)}{N \cdot H(s_N/N) \cdot (s_N/N -v)} \cdot  \mathbb{E}\left[\prod_{i = 2}^N  \frac{s_N - \ell_i + \theta -1 }{s_N - \ell_i - 1}\cdot {\bf 1}\{ \ell_1 = s_N - 1\} \right]  \\
&+ \frac{1}{2\pi \i }\int_{\Gamma} \frac{ \Phi^{-}_N(Nz) A_1(z) dz}{H(z) (z - v)  B_1(z)}  \cdot \mathbb{E} \left[ \prod_{i = 1}^N\frac{Nz- \ell_i -\theta}{Nz - \ell_i} \right] + \frac{1}{2\pi \i }\int_{\Gamma} \frac{\Phi^{+}_N(Nz)  dz}{H(z)  (z - v)} \cdot   \mathbb{E} \left[ \prod_{i = 1}^{N}\frac{Nz- \ell_i + \theta - 1}{Nz - \ell_i - 1} \right].
\end{split}
\end{align*}

We next apply the operator $\mathcal{D} : = \partial_{t_1} \cdots \partial_{t_m}$ to both sides and set $t_a = 0$ for $a = 1, \dots, m$. Notice that when we perform the differentiation to the right side some of the derivatives could land on $\frac{A_1(z)}{B_1(z)}$ or $\prod_{a = 1}^m \frac{(v_a + t_a + 1/N)v_a}{(v_a + t_a)(v_a + 1/N)}$ and some on the corresponding expectation. We will split the result of the differentiation based on subsets $A$, where $A$ consists of indices $a$ in $\{1, \dots, m\}$ such that $\partial_{t_a} $ differentiates the expectation. The result of this procedure is as follows
\begin{align}\label{S10Expand1}
\begin{split} 
& 0 = \sum_{A \subseteq \llbracket 1,m \rrbracket} \prod_{a \in A^c}  \left[ \frac{- N^{-1}}{ v_a(v_a  +N^{-1})}\right]   \frac{\theta \cdot \Phi_N^-(0) }{N \cdot H(0) \cdot (- v)}M \left(\prod_{i= 1}^{N-1}  \frac{ \ell_i + \theta }{ \ell_i} \cdot {\bf 1} \{\ell_N = 0\} ; A\right)    \\
& - \frac{\theta \cdot \Phi_N^+(s_N)}{N \cdot H(s_N/N) \cdot (s_N/N -v)} \cdot M\left(\prod_{i = 2}^N  \frac{s_N - \ell_i  + \theta-1}{s_N - \ell_i - 1}\cdot {\bf 1}\{ \ell_1 = s_N - 1\}  ; \llbracket 1, m \rrbracket \right)  \\
&+ \frac{1}{2\pi \i }\int_{\Gamma}dz  \sum_{A \subseteq \llbracket 1,m \rrbracket} \prod_{a \in A^c}  \left[ \frac{- N^{-1}}{ (v_a -z)(v_a - z^-)}\right]   \frac{\Phi_N^-(Nz) }{H(z) \cdot (z - v)}M \left(\prod_{i = 1}^N\frac{Nz- \ell_i -\theta}{Nz - \ell_i} ; A\right)  \\
&+  \frac{\Phi_N^+(Nz) }{H(z) \cdot (z - v) }  M\left( \prod_{i = 1}^{N}\frac{Nz- \ell_i +\theta - 1}{Nz - \ell_i - 1} ; \llbracket 1, m \rrbracket \right),
\end{split}
\end{align}
where $A^c = \llbracket 1,m \rrbracket \setminus A$. We now observe that since $\ell_i \geq (N-i)\theta$ and $s_N - \ell_i - 1 \geq (i-1) \theta$ we have
$$1 \leq  \prod_{i= 1}^{N-1}  \frac{ \ell_i + \theta }{ \ell_i} \leq N \mbox{ and }1 \leq \prod_{i = 2}^N  \frac{s_N - \ell_i  + \theta-1}{s_N - \ell_i - 1}\leq N.$$
Let us denote 
$$\xi^-_N = \frac{ 1}{N } \cdot \prod_{i= 1}^{N-1}  \frac{ \ell_i + \theta }{ \ell_i} \cdot {\bf 1} \{\ell_N = 0\} \mbox{ and } \xi_N^{+} =  \frac{ 1}{N} \cdot \prod_{i = 2}^N  \frac{s_N - \ell_i  + \theta-1}{s_N - \ell_i - 1}\cdot {\bf 1}\{\ell_1 = s_N - 1 \},$$
and observe that $\xi^{\pm}_N= O(1)$ and $X_N^t(v_a)$ are $O(N)$ almost surely. The latter two statements, together with Assumption 4, imply that the expressions on the first and second line in (\ref{S10Expand1}) are $O(N^m \exp(-cN^a))$ and so in particular $O(N^{-2})$. Combining the latter with (\ref{AEP1v2}) we may rewrite (\ref{S10Expand1}) as
\begin{align*}
\begin{split} 
& 0 = \frac{1}{2\pi \i}\int_{\Gamma} dz \sum_{A \subseteq \llbracket 1,m \rrbracket} \prod_{a \in A^c}  \left[ \frac{- N^{-1}}{ (v_a -z)(v_a - z^-)}\right]   \frac{\Phi_N^-(Nz) e^{-\theta G_\mu(z)} }{H(z) \cdot (z - v)}M \left(1 - \frac{\theta X_N^t(z)}{N} + \frac{\theta^2 \partial_z G_{\mu}(z) }{2N}; A\right)  \\
& + \frac{\Phi_N^+(Nz) e^{\theta G_\mu(z)} }{H(z) \cdot (z - v) } M\left( 1 +  \frac{\theta X^t_N(z)}{N} + \frac{[\theta^2 - 2\theta] \partial_z G_{\mu}(z)}{2N} ; \llbracket 1, m \rrbracket \right) + O(N^{-2}) \\
& + N^{-2} \sum_{A \subseteq \llbracket 1,m \rrbracket}   M \left(  \xi^{\Gamma}_N(z); A\right) +  M(\xi_N^{\Gamma}(z)  [X_N^t(z)]^2;A) +M( \xi_N^{\Gamma}(z)  \partial_z X_N^t(z);A)  ,
\end{split}
\end{align*}
where $\xi_N^{\Gamma}(z)$ is a random function that is almost surely $O(1)$ for $z \in \Gamma$ and $v, v_1, \dots, v_m \in K$.

We can now use the linearity of cumulants together with the fact that the joint cumulant of any non-empty collection of  bounded random variables and a constant is zero. In addition, we know that uniformly as $z$ varies over $\Gamma$ and $v \in K$ we have
$$\frac{1}{ (v -z)(v - z +N^{-1})} = \frac{1}{ (v -z)^2} + O(N^{-1}) \mbox{ and } \Phi^{\pm}_N(Nz) = \Phi^{\pm}(z) + O(N^{-1}).$$
Applying the last two statements we get
\begin{align*}
\begin{split} 
& 0 = \frac{1}{2\pi \i}\int_{\Gamma}dz \frac{\theta \cdot [ -\Phi^-(z) e^{-\theta G_\mu(z)} + \Phi^+(z) e^{\theta G_\mu(z)}] }{H(z) \cdot (z - v) \cdot N } \cdot M \left( X_N^t(z); \llbracket 1, m \rrbracket \right)  \\
& + {\bf 1 }\{m = 0\} \cdot \frac{\Phi_N^+(Nz) e^{\theta G_\mu(z)}}{H(z) \cdot (z - v)} \cdot \left[ 1 +  \frac{[\theta^2 - 2\theta] \partial_z G_{\mu}(z)}{2N} \right] + O(N^{-2})  \\
& + \prod_{a = 1}^m  \left[ \frac{- N^{-1}}{ (v_a -z)(v_a - z +N^{-1})}\right]   \frac{\Phi_N^-(Nz) e^{-\theta G_\mu(z)} }{H(z) \cdot (z - v)} \mathbb{E}\left[1 - {\bf 1}\{m > 0\} \cdot \frac{\theta X_N^t(z)}{N} + \frac{\theta^2 \partial_z G_{\mu}(z) }{2N}\right]  \\
& + N^{-2}\hspace{-2mm} \sum_{A \subseteq \llbracket 1,m \rrbracket}  \hspace{-2mm}  M \left(  \xi^{\Gamma}_N(z); A\right) +  M(\xi_N^{\Gamma}(z)  [X_N^t(z)]^2;A) +M( \xi_N^{\Gamma}(z)  \partial_z X_N^t(z);A) + M( \xi_N^{\Gamma}(z)  X_N^t(z);A).
\end{split}
\end{align*}
Using  (\ref{QRmu}) and Assumption 4 we see that the first term on the right above equals
$$\frac{\theta \cdot \sqrt{(z - \alpha) (z- \beta)} M \left( X_N^t(z); \llbracket 1, m \rrbracket \right) }{  (z - v) \cdot N }, $$
and so we can compute this integral as minus the residue at $z = v$ (notice that there is no residue at infinity). Substituting this above and multiplying the result by $(\sqrt{(z - \alpha) (z- \beta)})^{-1} \cdot N \cdot \theta^{-1}$ we arrive at (\ref{SLExpand}).

%
\subsection{Proof of Proposition \ref{MomentBoundSingleLevel} }\label{Section10.2}
In this section we prove Proposition \ref{MomentBoundSingleLevel}. We want to show that for each $k \geq 1$ 
\begin{equation}\label{S9E0}
  \mathbb{E} \left[ | X^t_N(z)] |^k \right] = O(1), 
\end{equation}
where the constants in the big $O$ notation depend on $k$ but not on $N$ (provided it is sufficiently large) and are uniform as $z$ varies over compact subsets of $\mathbb{C} \setminus [0, \lM + \theta]$.

The proof we present below is similar to the one given in Section \ref{Section5.2}. For the sake of clarity we split the proof into several steps.\\

{\bf \raggedleft Step 1.} From (\ref{SLExpand}) for $m = 0$ we get
\begin{align*}
\begin{split}
 \mathbb{E} [X^t_N(v)] = &\frac{N \cdot \theta^{-1}}{2\pi \i \sqrt{(v - \alpha) (v- \beta)}}\int_{\Gamma}dz  \frac{\Phi_N^+(Nz) e^{\theta G_\mu(z)}}{H(z) \cdot (z - v)} \cdot \left[ 1 +  \frac{[\theta^2 - 2\theta] \partial_z G_{\mu}(z)}{2N} \right]  \\
& +   \frac{\Phi_N^-(Nz) e^{-\theta G_\mu(z)} }{H(z) \cdot (z - v)} \mathbb{E}\left[1  + \frac{\theta^2 \partial_z G_{\mu}(z) }{2N}\right]  + O(N^{-1}) .
\end{split}
\end{align*}
We may now use that $\Phi_N^+(Nz)  = \Phi^{\pm}(z) + O(N^{-1})$ and (\ref{QRmu}) to rewrite the above as
\begin{equation*}
\begin{split}
& \mathbb{E} [X^t_N(v)] = \frac{N \cdot \theta^{-1}}{2\pi \i \sqrt{(v - \alpha) (v- \beta)}}\int_{\Gamma} dz  \frac{R_\mu(z)}{H(z) \cdot (z - v)}   + O(1) .
\end{split}
\end{equation*}
By Cauchy's theorem the above integral is zero and so we conclude 
\begin{equation}\label{S9E1}
\begin{split}
& \mathbb{E} [X^t_N(v)] = O(1) .
\end{split}
\end{equation}

{\bf \raggedleft Step 2.} In this step we reduce the proof of (\ref{S9E0}) to the establishment of the following self-improvement estimate claim.\\

{\bf \raggedleft Claim:} Suppose that for some $n, M \in \mathbb{N}$ we have that 
\begin{equation}\label{S9E2}
\mathbb{E} \left[ \prod_{a = 1}^m |X^t_N(v_a)| \right]= O(1) + O(N^{m/2 + 1 - M/2}) \mbox{ for $m = 1, \dots, 4n + 4$,}
\end{equation}
where the constants in the big $O$ notation are uniform as $v_a$ vary over compacts in $\mathbb{C} \setminus [0, \lM + \theta]$. Then
\begin{equation}\label{S9E3}
\mathbb{E} \left[ \prod_{a = 1}^m |X^t_N(v_a)| \right]= O(1) + O(N^{m/2 + 1 - (M+1)/2}) \mbox{ for $m = 1, \dots, 4n $.}
\end{equation}
We prove the above claim in the following steps. For now we assume its validity and finish the proof of (\ref{S9E0}). 

 Let $\eta > 0$ be sufficiently small so that $I_\eta := [-\eta , \lM + \theta + \eta] \subset \mathcal{M}$ and $ I_\eta \cap K = \varnothing$. Let $h(x)$ be a smooth function such that $0 \leq h(x) \leq 1$,  $h(x) = 1$ if $x \in [-\eta/2, \lM + \theta + \eta/2]$, $h(x) = 0$ if $x \leq -\eta$ or $x \geq \lM + \theta + \eta$ and $\sup_{ x \in I_\eta} |h'(x)| \leq \eta^{-1} \cdot 10$. Since $M_N \cdot N^{-1} \rightarrow \lM$ as $N \rightarrow \infty$ we know that for all large $N$ and $v \in K$ we have $\mathbb{P}_N$-almost surely
\begin{equation*}
\left| \int_{\mathbb{R}}  g_v(x) \cdot h(x)  \mu_N(dx) -  \int_{\mathbb{R}} g_v(x) \cdot h(x)  \mu(dx) \right|  = \left|X_N^t(v)\right|,
\end{equation*}
where $g_v(x) = (v -x)^{-1}$. Let us denote 
\begin{equation*}
c_1(K) := \sup_{v \in  K} \| g_v \cdot h \|_{1/2} \mbox{ and } c_2(K) := \sup_{v \in  K} \| g_v \cdot h \|_{\mbox{ \Small Lip}},
\end{equation*} 
and recall that in (\ref{c1c2Bound}) we showed that $c_1(K), c_2(K)$ are finite positive constants. 

We can apply Corollary \ref{BC1} for the function $g_v \cdot h$ with $a = r \cdot N^{1/2n - 1/2}$, $r > 0$ and $p = 3$ to get
$$\mathbb{P}_N \left( \left|  X_N^t(v) \right| \geq c_1(K) r N^{1/2+1/2n} + c_2(K) N^{-2} \right) \leq \exp\left( CN \log(N)^2 - 2\theta \pi^2 r^2 N^{1+1/n}\right),$$
which implies
\begin{equation*}
\mathbb{E} \left[ \left|X_N^t(v)\right|^n \right] =O \left( N^{n/2 + 1/2} \right).
\end{equation*}
Using H{\"o}lder's inequality, the above implies that (\ref{S9E2}) holds for the for the pair $n = 2k$ and $M = 1$. The conclusion is that (\ref{S9E2}) holds for the pair $n = 2k- 1$ and $M = 2$. Iterating the argument an additional $k$ times we conclude that (\ref{S9E2}) holds with $n = k - 1$ and $M = k+2$, which implies (\ref{S9E0}).\\

{\bf \raggedleft Step 3.} In this step we prove that 
\begin{equation}\label{S9E4}
M( X^t_N(v_0), X^t_N(v_1), \dots, X^t_N(v_m)) = O(1) + O( N^{m/2 + 1 - M/2}) \mbox{ for $m = 1, \dots, 4n+2$}.
\end{equation}
The constants in the big $O$ notation are uniform over $v_0, \dots, v_m$ in compact subsets of $\mathbb{C} \setminus [0, \lM + \theta]$. 

We start by fixing $\mathcal{V}$ to be a compact subset of $\mathbb{C} \setminus [0, \lM + \theta]$, which is invariant under conjugation. We also fix $\Gamma$ to be a positively oriented contour, which encloses the segment $[0, \lM + \theta]$, is contained in $\mathcal{M}$ as in Assumption 3 and excludes the set $\mathcal{V}$. 

From (\ref{SLExpand}) for $ m = 1, \dots, 4n+2$ and $v_0, v_1 \dots, v_m \in \mathcal{V}$ we have
\begin{align}\label{S9E5}
\begin{split} 
& M\left( X^t_N(v_0), X^t_N(v_1), \dots, X^t_N(v_m)  \right)  =  O(N^{-1}) + \frac{ \theta^{-1}}{2\pi \i \sqrt{(v_0 - \alpha) (v_0- \beta)}}  \\
& \times \int_{\Gamma} dz \prod_{a = 1}^m  \left[ \frac{- N^{-1}}{ (v_a -z)(v_a - z +N^{-1})}\right]  \frac{N \cdot \Phi_N^-(Nz) e^{-\theta G_\mu(z)} }{H(z) \cdot (z - v_0)} \mathbb{E}\left[1 - \frac{\theta X_N^t(z)}{N} + \frac{\theta^2 \partial_z G_{\mu}(z) }{2N}\right]  \\
& + N^{-1} \hspace{-3mm} \sum_{A \subseteq \llbracket 1,m \rrbracket}  \hspace{-2mm}  M \left(  \xi^{\Gamma}_N(z); A\right) +  M(\xi_N^{\Gamma}(z)  [X_N^t(z)]^2;A) +M( \xi_N^{\Gamma}(z)  \partial_z X_N^t(z);A) + M( \xi_N^{\Gamma}(z)  X_N^t(z);A).
\end{split}
\end{align}

We next use the fact that cumulants can be expressed as linear combinations of products of moments. This means that $M(\xi_1, \dots, \xi_r)$ can be controlled by the quantities $1$ and $\mathbb{E} \left[ |\xi_i|^k \right]$ for $ i = 1, \dots, k$. We use the latter and (\ref{S9E2}) to get
\begin{align}\label{S9E6}
\begin{split}
&N^{-1} \sum_{A \subseteq \llbracket 1,m \rrbracket}  \hspace{-2mm}  M \left(  \xi^{\Gamma}_N(z); A\right) +  M(\xi_N^{\Gamma}(z)  [X_N^t(z)]^2;A) + \\
& +M( \xi_N^{\Gamma}(z)  \partial_z X_N^t(z);A) + M( \xi_N^{\Gamma}(z)  X_N^t(z);A) = O(1) + O(N^{m/2 + 1 - M/2}).
\end{split}
\end{align}
One might be cautious about the term involving $\partial_z X^t(z)$; however, by Cauchy's inequalities, see e.g. \cite[Corollary 4.3]{SS}, the moment bounds we have for $\mathbb{E} \left[ |X^t_N(z)|^k \right]$ in (\ref{S9E2}) imply analogous ones for $\mathbb{E} \left[ |\partial_z X^t_N(z)|^k \right]$. Putting (\ref{S9E6}) into (\ref{S9E5}) we obtain (\ref{S9E4}).\\

{\bf \raggedleft Step 4.} In this step we will prove (\ref{S9E3}) except for a single case, which will be handled separately in the next step. Notice that by H{\"o}lder's inequality we have
\begin{equation*}
\sup_{v_1, \dots, v_m \in \mathcal{V}} \mathbb{E} \left[ \prod_{a = 1}^m |X^t_N(v_a)| \right]\leq \sup_{v \in \mathcal{V}} \mathbb{E} \left[ |X^t_N(v)|^m \right],
\end{equation*}
and so to finish the proof it suffices to show that for $m = 1, \dots, 4n$ we have
\begin{equation}\label{S9E11}
 \mathbb{E} \left[ |X^t_N(v)|^m \right] = O(1)  + O(N^{m/2 + 1 - (M+1)/2}).
\end{equation}
Using the fact that for centered random variables one can express joint moments as linear combinations of products of joint cumulants we deduce from (\ref{S9E4}) that
\begin{equation*}
\sup_{ v_0, v_1, \dots, v_{m-1} \in \mathcal{V}} \mathbb{E} \left[ \prod_{a = 0}^{m-1}[X^t_N(v_a) - \mathbb{E}[ X^t_N(v_a)]] \right] = O(1) + O( N^{(m-1)/2 + 1 - M/2}) \mbox{ for $m = 1, \dots, 4n+2$}.
\end{equation*}
Combining the latter with (\ref{S9E1}) we conclude that 
\begin{equation}\label{S9E12}
\sup_{ v_0, v_1, \dots, v_{m-1} \in \mathcal{V}} \mathbb{E} \left[ \prod_{a = 0}^{m-1}X^t_N(v_a) \right] = O(1) + O( N^{(m-1)/2 + 1 - M/2}) \mbox{ for $m = 1, \dots, 4n+2$}.
\end{equation}

If $m = 2m_1$ then we set $v_0 = v_1 = \cdots = v_{m_1 - 1} = v$ and $v_{m_1} = \cdots = v_{2m_1 - 1} = \overline{v}$ in (\ref{S9E12}), which yields 
\begin{equation}\label{S9E13}
\sup_{ v \in \mathcal{V}} \mathbb{E} \left[ |X^t_N(v)|^m \right] = O(1) + O( N^{m/2 + 1/2 - M/2}) \mbox{ for $m = 1, \dots, 4n+2$}.
\end{equation}
In deriving the above we used that $X^t_N(\overline{v}) = \overline{X^t_N(v)}$ and so $X^t_N(v) \cdot X^t_N(\overline{v}) = |X^t_N(v)|^2$. 

We next let $m = 2m_1 + 1$ be odd and notice that by the Cauchy-Schwarz inequality and (\ref{S9E13}) 
\begin{align}\label{S9E14}
\begin{split}
&\sup_{ v \in \mathcal{V}} \mathbb{E} \left[ |X^t_N(v)|^{2m_1 + 1} \right] \leq \sup_{ v \in \mathcal{V}} \mathbb{E} \left[ |X^t_N(v)|^{2m_1 + 2} \right]^{1/2} \cdot \mathbb{E} \left[ |X^t_N(v)|^{2m_1} \right]^{1/2} =  \\  
&O(1) + O( N^{m/2 + 1 - M/2}) + O(N^{m_1/2 + 3/4 - M/4}) .
\end{split}
\end{align}
We note that the bottom line of (\ref{S9E14}) is $O(1) + O(N^{m_1 + 1 - M/2})$ except when $M = 2m_1 + 2$, since
$$m_1 + 3/4 - M/4 \leq \begin{cases} m_1 + 1 - M/2 &\mbox{ when $M \leq 2m_1 + 1$,} \\ 0 &\mbox{ when $M \geq 2m_1 + 3$.} \end{cases}$$
Consequently, (\ref{S9E13}) and (\ref{S9E14}) together imply (\ref{S9E11}), except when $M = 2m_1 +2$ and $m = 2m_1 + 1$. We will handle this case in the next step.\\

{\bf \raggedleft Step 5.} In this step we will show that (\ref{S9E11}) even when $M = 2m_1 + 2$ and $4n > m = 2m_1 + 1$. In the previous step we showed in (\ref{S9E11}) that
$\sup_{v \in \mathcal{V}} \mathbb{E} \left[ |X^t_N(v)|^{2m_1 + 2} \right] = O(N^{1/2})$, and below we will improve this estimate to
\begin{equation}\label{S9E15}
\sup_{v \in \mathcal{V}} \mathbb{E} \left[ |X^t_N(v)|^{2m_1 + 2} \right] = O(1).
\end{equation}
The trivial inequality $x^{2m_1 + 2} + 1 \geq |x|^{2m_1 + 1}$ together with (\ref{S9E15}) imply
$$\sup_{v \in \mathcal{V}} \mathbb{E} \left[ |X^t_N(v)|^{2m_1 + 1} \right]  = O(1).$$
Consequently, we have reduced the proof of the claim to establishing (\ref{S9E15}). \\

Let us list the relevant estimates we will need
\begin{align}\label{S9E16}
\begin{split}
& \mathbb{E} \left[ \prod_{a = 1}^{2m_1 + 2} |Y_N(v_a)| \right] = O(N^{1/2})\mbox{, } \mathbb{E} \left[ \prod_{a = 1}^j |Y_N(v_a)| \right] = O(1) \mbox{ for $0 \leq j \leq 2m_1$,} \\\
&\mathbb{E} \left[ \prod_{a = 1}^{2m_1 + 3} |Y_N(v_a)| \right] = O(N) , \mathbb{E} \left[ \prod_{a = 1}^{2m_1 + 1} |Y_N(v_a)| \right] = O(N^{1/2}).
\end{split}
\end{align}
The first three identities follow from (\ref{S9E11}), which we showed to hold unless $m = 2m_1 + 1$ in the previous step. The last identity is a consequence of the first one and the inequality $x^{2m_1 + 2} + 1 \geq |x|^{2m_1 + 1}$. All constants are uniform over $v_a \in \mathcal{V}$. Below we feed the improved estimates of (\ref{S9E16}) into Steps 3. and 4., which ultimately yield (\ref{S9E15}).\\

In Step 3. we have the following improvement over (\ref{S9E6}) using the estimates of (\ref{S9E16})
\begin{equation*}
\begin{split}
N^{-1} \hspace{-3mm}\sum_{A \subseteq \llbracket 1,m \rrbracket}  \hspace{-2mm}  M \left(  \xi^{\Gamma}_N(z); A\right) +  M(\xi_N^{\Gamma}(z)  [X_N^t(z)]^2;A) +M( \xi_N^{\Gamma}(z)  \partial_z X_N^t(z);A) + M( \xi_N^{\Gamma}(z)  X_N^t(z);A) =  O(1).
\end{split}
\end{equation*}
Substituting the above into (\ref{S9E5}) we obtain the following improvement over (\ref{S9E4})
\begin{equation}\label{S9E19}
\begin{split} 
& M\left( X^t_N(v_0), X^t_N(v_1), \dots, X^t_N(v_{2m_1 + 1})  \right) = O(1),
\end{split}
\end{equation}
We may now repeat the arguments in Step 4. and note that by using (\ref{S9E19}) in place of (\ref{S9E4}) we obtain the following improvement over (\ref{S9E12})
\begin{equation}\label{S9E20}
\sup_{ v_0, v_1, \dots, v_{2m_1 + 1} \in \mathcal{V}} \mathbb{E} \left[ \prod_{a = 0}^{2m_1 + 1}Y_N(v_a) \right] = O(1).
\end{equation}
Setting $v_0 = v_1 = \cdots = v_{m_1} = v$ and $v_{m_1+1} = \cdots = v_{2m_1 + 1} = \overline{v}$ in (\ref{S9E20}) we get (\ref{S9E15}).\\

\begin{remark} Proposition \ref{MomentBoundSingleLevel} is implied in \cite{BGG} for general $\theta$; however, it is only stated and proved when $\theta = 1$ as \cite[Proposition 2.18]{BGG}. Moreover, when $\theta = 1$ the expansion of the Nekrasov's equations, see \cite[equation (44)]{BGG}, is missing the terms corresponding to $[X_N^t(z)]^2$ and $X_N^t(z)$ in (\ref{SLExpand}) and we believe they should be present. Of course, one can introduce these extra terms in their proof and they can be handled in the same way we have handled them. A more serious oversight in the proof is near the end of the proof of \cite[Proposition 2.18]{BGG}, where the special case we encountered in Step 4. was not recognized. The way it can be overcome, is through an extra dummy step of the self improving estimates, which is what we did in Step 5. For these reasons we decided to include the proof of this proposition in the present paper.
\end{remark}

\subsection{Proof of Proposition \ref{CLT}} In this section we prove Proposition \ref{CLT}. The proof we present contains many of the same ideas as in \cite{BGG} and we include it for the sake of completeness.

Since we are dealing with centered random variables it suffices to show that second and higher order cumulants of $G^t_N(z) - \mathbb{E} [G^t_N(z)]$ converge to those specified in the statement of the proposition. Moreover, since cumulants remain unchanged upon shifts by constants, we can replace $G^t_N(z) - \mathbb{E} [G^t_N(z)]$ with $X^t_N(z)$ and establish the convergence of second and higher order cumulants for the latter instead. In the sequel we fix a compact set $K \subset \mathbb{C} \setminus [0, \lM + \theta] $ and a positively oriented contour $\Gamma$ that contains $[0, \lM + \theta]$, is contained in $\mathcal{M}$ as in Assumption 3 and excludes $K$. 

From (\ref{SLExpand}) for $m = 1$ and Proposition \ref{MomentBoundSingleLevel} we get
\begin{equation*}
\begin{split} 
& Cov(X^t_N(v)  ; X^t_N(v_1)) = \frac{ \theta^{-1}}{2\pi \i \sqrt{(v - \alpha) (v- \beta)}}\int_{\Gamma}   \frac{- \Phi_N^-(Nz) e^{-\theta G_\mu(z)}dz }{ H(z) \cdot (z - v) (v_1 -z)(v_1 - z +N^{-1})}+O(N^{-1}).
\end{split}
\end{equation*}
We may now use that $\Phi^{\pm}_N(Nz) = \Phi^{\pm}(z) + O(N^{-1})$, (\ref{QRmu}) to obtain

\begin{equation*}
\begin{split} 
& Cov(X^t_N(v)  ; X^t_N(v_1)) = \frac{ \theta^{-1}}{2\pi \i \sqrt{(v - \alpha) (v- \beta)}}\int_{\Gamma}   \frac{-(1/2) [R_\mu(z) - Q_\mu(z) ]dz}{ H(z) \cdot (z - v) (v_1 -z)^2}+O(N^{-1}).
\end{split}
\end{equation*}
The term involving $R_\mu$ integrates to $0$ by Cauchy's theorem. By Assumption 5 the remainder is
\begin{equation*}
\begin{split} 
& Cov(X^t_N(v)  ; X^t_N(v_1)) = \frac{ \theta^{-1}}{4\pi \i \sqrt{(v - \alpha) (v- \beta)}}\int_{\Gamma}   \frac{ \sqrt{(z - \alpha)(z- \beta)} dz}{ (z - v) (v_1 -z)^2}+O(N^{-1}).
\end{split}
\end{equation*}
Evaluating the above integral as minus the sum of the residues at $z = v$ and $z =v_1$ we obtain (\ref{eq:GField}).

Furthermore, from (\ref{SLExpand}) for $m \geq 2$ and Proposition \ref{MomentBoundSingleLevel} we get
\begin{equation*}
\begin{split} 
& M(X^t_N(v); \llbracket 1, m \rrbracket) =O(N^{-1}),
\end{split}
\end{equation*}
which concludes the proof the proposition.

\bibliographystyle{amsplain}
\bibliography{refs}
\end{document}